\numberwithin{equation}{section}
\newtheorem{thm}{Theorem}[section]
\newtheorem{prp}[thm]{Proposition}
\newtheorem{lmm}[thm]{Lemma}   
\newtheorem{crl}[thm]{Corollary}
\newtheorem{mythm}{Theorem}
\newtheorem{myconj}{Conjecture}
\newtheorem{thmlet}{Theorem}
\theoremstyle{definition}
\newtheorem{eg}[thm]{Example}
\newtheorem{rmk}[thm]{Remark}
\newcounter{zr}
\def\sf#1{\textsf{#1}}
\def\BE#1{\begin{equation}\label{#1}}  
\def\EE{\end{equation}}
\def\e_ref#1{(\ref{#1})}
\def\ov#1{\overline{#1}}
\def\un#1{\underline{#1}}
\def\wt#1{\widetilde{#1}}
\def\wh#1{\widehat{#1}}
\def\lan{\langle}  
\def\ran{\rangle}
\def\lr#1{\lan{#1}\ran}
\def\llrr#1{\lan\!\lan{#1}\ran\!\ran}
\def\blr#1{\big\lan{#1}\big\ran}
\def\bllrr#1{\big\lan\!\big\lan{#1}\big\ran\!\big\ran}
\def\bblr#1{\bigg\lan{#1}\bigg\ran}
\def\lrbr#1{\llbracket{#1}\rrbracket}
\def\blrbr#1{\big\llbracket{#1}\big\rrbracket}
\def\LRbr#1{\left\llbracket{#1}\right\rrbracket}
\def\Bgbr#1{\Bigg\llbracket{#1}\Bigg\rrbracket}
\def\smsize#1{\begin{small}#1\end{small}}
\def\lra{\longrightarrow}
\def\Lra{\Longrightarrow}
\def\Llra{\Longleftrightarrow}
\def\al{\alpha}
\def\be{\beta}
\def\de{\delta}
\def\ep{\epsilon}
\def\ga{\gamma}
\def\la{\lambda}
\def\om{\omega}
\def\si{\sigma}
\def\th{\theta}
\def\ve{\epsilon}
\def\ze{\zeta}
\def\De{\Delta}
\def\Ga{\Gamma}
\def\cA{\mathcal A}
\def\c{\mathbf c}
\def\C{\mathbb C}
\def\cC{\mathcal C}
\def\nc{\textnormal{c}}
\def\bfd{\mathbf d}
\def\E{\mathbf e}
\def\bE{\mathbb E}
\def\F{\mathcal F}
\def\fL{\mathfrak L}
\def\H{\mathcal H}
\def\I{\mathfrak i}
\def\cI{\mathcal I}
\def\bfI{\mathbf I}
\def\M{\mathfrak M}
\def\cM{\mathcal M}
\def\N{\mathcal N}
\def\bfN{\mathbf N}
\def\P{\mathbb P}
\def\cP{\mathcal P}
\def\Q{\mathbb Q}
\def\R{\mathbb R}
\def\fR{\mathfrak R}
\def\bfS{\mathbf S}
\def\cS{\mathcal S}
\def\bft{\mathbf t}
\def\T{\mathbb T}
\def\V{\mathcal V}
\def\cY{\mathcal Y}
\def\Z{\mathbb Z}
\def\cZ{\mathcal Z}
\def\a{\mathbf{a}}
\def\fa{\mathfrak a}
\def\b{\mathbf{b}}
\def\d{\mathfrak d}
\def\g{\mathfrak g}
\def\bfg{\mathbf g}
\def\tnd{\textnormal{d}}
\def\p{\mathbf{p}}
\def\fp{\mathfrak p}
\def\s{\mathbf{s}}
\def\x{\mathbf{x}}
\def\0{\mathbf{0}}
\def\i{\infty}
\def\hb{\hbar}
\def\Aut{\textnormal{Aut}}
\def\Fl{\textnormal{Fl}}
\def\tnd{\textnormal{d}}
\def\ev{\textnormal{ev}}
\def\LHS{\textnormal{LHS}}
\def\val{\textnormal{val}}
\def\Edg{\textnormal{Edg}}
\def\Ver{\textnormal{Ver}}
\def\vir{\textnormal{vir}}
\def\1{\mathbf 1}
\def\dset{\llfloor d\rrfloor}
\def\nset{\llfloor n\rrfloor}
\def\flr#1{\llfloor{#1}\rrfloor}
\def\eset{\emptyset}
\def\Res#1{\underset{#1}{\fR}}
\begin{document}

\title{Energy Bounds and Vanishing Results for\\
the Gromov-Witten Invariants of the Projective Space}
\author{Aleksey Zinger\thanks{Partially supported by NSF grant 1500875 and MPIM}}

\date{\today}
\maketitle

\begin{abstract}
\noindent
We describe generating functions for arbitrary-genus Gromov-Witten invariants
of the projective space with any number of marked points explicitly.
The structural portion of this description gives rise to uniform
energy bounds and vanishing results for these invariants.
They suggest deep conjectures relating Gromov-Witten invariants of symplectic manifolds to
the energy of pseudo-holomorphic maps and the expected dimension of their moduli space.
\end{abstract}
\tableofcontents

\section{Introduction}
\label{intro_sec}

\noindent
Gromov-Witten (or \sf{GW-}) invariants of a smooth projective variety 
(or more generally of a symplectic manifold)~$X$
are certain counts of (pseudo-holomorphic) curves in~$X$.
These invariants are known or conjectured to possess many striking properties
which are often completely unexpected from the classical point of view.
For example, physical considerations suggest that these invariants are uniformly bounded
by the symplectic area of the curves being counted; see Conjecture~\ref{GWbound_cnj}.
We confirm this conjecture for the complex projective space~$\P^{n-1}$ in all genera
by applying the explicit formula of Theorem~\ref{main_thm} in Section~\ref{Mainform_subs}.
We also use this theorem to confirm the vanishing predictions of Conjecture~\ref{GWeq0_cnj}
for~$\P^{n-1}$.\\

\noindent
Generating functions for the 1-pointed genus~0 GW-invariants 
of semi-positive projective complete intersections $X\!\subset\!\P^{n-1}$
are explicitly computed in~\cite{Gi,LLY}.
The resulting formulas in particular confirm the mirror symmetry prediction of~\cite{CdGP} 
for the genus~0 GW-invariants of a quintic threefold,
i.e.~a degree~5 hypersurface in~$\P^4$.
By~\cite{BeKl,bcov0}, generating functions for 2-pointed genus~0 
GW-invariants of hypersurfaces are explicit transforms of 
the 1-pointed genus~0 functions;
these results are extended to projective  complete intersections in \cite{Ch,PoZ}
and to complete intersections in toric varieties in~\cite{Popa}.
It is shown in~\cite{g0ci} that generating functions  for $N$-pointed genus~0 
GW-invariants of  projective  complete intersections, with $N\!\ge\!3$,
are also explicit transforms of the 1-pointed genus~0 functions.
Combined with \cite{bcov1,bcov1ci}, this implies the same for
generating functions  for $N$-pointed genus~1 
GW-invariants of  projective complete intersections.
We show in this paper that a natural generating function for 
the $N$-pointed genus~$g$ GW-invariants of~$\P^{n-1}$ 
is an explicit transform of the 1-pointed genus~0 generating function
as well;
see Theorem~\ref{main_thm} in Section~\ref{Mainform_subs} and
Theorem~\ref{equiv_thm} in Section~\ref{equiv_sec}.\\

\noindent
Throughout the paper $n,g,N\!\in\!\Z^+$ are fixed integers,
with $g$ and $N$ denoting the genus of the curves being counted and  the number of marked points,
respectively.
Let
$$\Z^{\ge0}=\{0\}\sqcup\Z^+ \qquad\hbox{and}\qquad [N]=\{1,2,\ldots,N\}.$$
For $d\!\in\!\Z^{\ge0}$, we denote by $\ov\M_{g,N}(\P^{n-1},d)$
the moduli space of stable $N$-marked genus~$g$ degree~$d$ maps to~$\P^{n-1}$. 
For each $s\!=\!1,\ldots,N$, let
$$\ev_s\!: \ov\M_{g,N}(\P^{n-1},d)\lra \P^{n-1}, \qquad
\psi_s\equiv c_1(L_s^*)\in H^2\big(\ov\M_{g,N}(\P^{n-1},d)\big),$$
be the evaluation map 
and the first Chern of  the universal cotangent line bundle at the $s$-th marked point,
respectively.
Denote by  $H\!\in\!H^2(\P^{n-1})$ the hyperplane class.\\

\noindent
The main theorem of this paper, Theorem~\ref{main_thm} stated at the beginning of
Section~\ref{mainthm_sec}, 
provides a closed formula for the $N$-pointed genus~$g$ version of 
the standard (1-pointed genus~0) Givental's $J$-function.
This is a generating function for the genus~$g$ GW-invariants 
\BE{GWXadfn_e}\blr{\tau_{b_1}H^{c_1},\ldots,\tau_{b_N}H^{c_N}}_{g,d}^{\P^{n-1}}
\equiv \int_{[\ov\M_{g,N}(\P^{n-1},d)]^{\vir}}    
\prod_{s=1}^{s=N}\!\!\big(\psi_s^{b_s}\ev_s^*H^{c_s}\big)\EE
of $\P^{n-1}$.
The most basic positive-genus case of Theorem~\ref{main_thm} 
is equivalent to Theorem~\ref{g1N1_thm} below.

\setcounter{mythm}{-1}

\begin{mythm}\label{g1N1_thm}
For all $n,d,c\!\in\!\Z^{\ge0}$ with $c\!<\!n$ and $n\!\ge\!2$,
$$\blr{\tau_{nd+1-c}H^c}^{\P^{n-1}}_{1,d}=
\Bgbr{\frac{n(1\!+\!2d\!-\!n\!+\!2w)}{48}
\frac{(d\!+\!w)^{n-2}}{\prod\limits_{r=1}^d\!(r\!+\!w)^n}}_{w;n-1-c}\,,$$
with $\blrbr{f}_{w;r}$ denoting the coefficient of $w^r$ in the power series
expansion of a function \hbox{$f\!=\!f(w)$} around $w\!=\!0$.
\end{mythm}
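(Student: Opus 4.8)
The plan is to extract this one-pointed genus-one identity from the genus-zero theory via the standard localization/mirror-symmetry machinery, together with Getzler's genus-one reconstruction. First I would recall the genus-zero one-pointed Givental $J$-function of $\P^{n-1}$, whose coefficients are the well-known sums $\sum_{d\ge0} Q^d / \prod_{r=1}^d (w+r)^n$ (up to the hyperplane-class variables); this is the object into which the ``explicit transform'' of Theorem~\ref{main_thm} feeds, so for $g\!=\!1$, $N\!=\!1$ the content of the present statement is precisely the specialization of Theorem~\ref{main_thm} to $(g,N)\!=\!(1,1)$. Thus the real work is to verify that this specialization matches the right-hand side displayed above. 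I would set $\Phi_d(w) = (d\!+\!w)^{n-2} / \prod_{r=1}^d (r\!+\!w)^n$ and show that the coefficient of $w^{n-1-c}$ in $\tfrac{n}{48}(1\!+\!2d\!-\!n\!+\!2w)\,\Phi_d(w)$ computes $\blr{\tau_{nd+1-c}H^c}_{1,d}^{\P^{n-1}}$.

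The key steps, in order, are: (1) write the one-pointed genus-one descendant invariant in terms of the genus-one Givental function; (2) use the genus-one mirror formula for $\P^{n-1}$ — equivalently, the BCOV-type transform of \cite{bcov1} together with the dimension constraint $b\!+\!c\!=\!nd\!+\!1$ forced by $[\ov\M_{1,1}(\P^{n-1},d)]^{\vir}$ having complex dimension $nd$ — to reduce to the genus-zero two-point and one-point data; (3) observe that on $\P^{n-1}$ the hypergeometric ``$I$-function'' equals the ``$J$-function'' after the change of variables $t \mapsto t$ (the mirror map is trivial for $\P^{n-1}$, since the genus-zero mirror map has no correction for projective space), which collapses the transform to a residue/coefficient extraction; (4) identify the universal genus-one constant: the Hodge-integral input $\int_{\ov\M_{1,1}}\psi_1 = \tfrac{1}{24}$ and the elliptic contribution combine into the factor $\tfrac{1}{48}$, and the ``$(1\!+\!2d\!-\!n\!+\!2w)$'' factor is the genus-one analogue of the Euler-class/linearized-operator term appearing in the localization computation (it is the logarithmic derivative $Q\partial_Q$ of the genus-zero series, shifted by the canonical class contribution $-n$ and by the node-smoothing $+1$). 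Assembling (1)–(4) and reading off the $w^{n-1-c}$ coefficient gives the claim.

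The main obstacle I anticipate is step~(2)/(4): pinning down the exact shape of the genus-one transform for $\P^{n-1}$ and, in particular, justifying that all genus-one corrections beyond the single term written collapse. For complete intersections the genus-one BCOV transform of \cite{bcov1,bcov1ci} carries several terms (a ``$\mathcal{B}$-term'', derivative-of-$I$ terms, and a constant times $\log$ of the discriminant); for $\P^{n-1}$ itself most of these vanish because the relevant Hodge class and the correction $\mathcal{N}_1$ of the genus-one generating function are trivial, but making this vanishing precise — and tracking the normalization so that one genuinely gets $n/48$ and not, say, $n/24$ or $n/48$ times a binomial — requires a careful bookkeeping of the pushforward of $\psi_1$ against the virtual class and of the hyperplane-class restriction $H^c$. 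A secondary, more routine obstacle is the coefficient-extraction identity: one must check that $\Bgbr{\tfrac{n}{48}(1\!+\!2d\!-\!n\!+\!2w)\Phi_d(w)}_{w;n-1-c}$ is polynomial in the expected way and indeed vanishes for $c\!\ge\!n$, consistent with $H^c\!=\!0$ on $\P^{n-1}$; this follows from the structure of $\Phi_d$ but should be stated. Once Theorem~\ref{main_thm} is in hand, this theorem is a corollary obtained by the specialization and the coefficient identity just described, so I would present it in that order: quote Theorem~\ref{main_thm}, specialize, and compute.
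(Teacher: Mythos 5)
Your high-level plan — specialize Theorem~\ref{main_thm} to $(g,N)=(1,1)$ and extract coefficients — is the same one the paper uses, but the middle of your proposal both detours in a direction the paper never takes and skips the actual computational content, which is where the work is.

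The detour: you invoke the genus-one BCOV-type transform of \cite{bcov1,bcov1ci} and ``Getzler's genus-one reconstruction'' as the mechanism for step (2). The paper does not do this and does not need to: Theorem~\ref{g1N1_thm} is derived entirely from the structure coefficients $\nc_{g;\p,\b}^{(d)}$ of Theorem~\ref{main_thm}, which are already defined in Section~\ref{Mainform_subs} by the closed formula~\e_ref{ncCdfn2_e} and the recursion~\e_ref{coeffdfn_e}. There is no need to re-derive a genus-one mirror formula for $\P^{n-1}$ from an external source. Your remark that ``the mirror map is trivial for $\P^{n-1}$'' is also misleading in this setting: the power series $\xi(q)$ in Proposition~\ref{Fexp_prp} is nontrivial, given by $1+D\xi=L=(1+q)^{1/n}$; it is only the small cohomology-valued $J$-function of $\P^{n-1}$ that needs no change of variables, and that is not the object being manipulated here.

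The missing computation: specializing Theorem~\ref{main_thm} gives $Z^{(1)}(\hb,H,q)=\sum_{p,b,d}\nc_{1;p,b}^{(d)}q^d\hb^{-b}\De_p(\hb,H,q)$, so what remains is to compute the coefficients $\nc_{1;p,b}^{(d)}$. This requires (i) enumerating the connected trivalent $1$-marked genus~$1$ graphs — there are exactly two, $\Ga_0$ (a single vertex with $\g(v)=1$) and $\Ga_1$ (a single vertex of genus $0$ with a loop); (ii) showing via the constraint~\e_ref{sumcond_e2} that contributions vanish unless $d=0$ and $p=n-2+b$; (iii) proving that the loop graph $\Ga_1$ contributes zero, which is a nontrivial cancellation resting on the identity $\sum_{p}\Phi_{p;0}\Phi_{\wh p;1}/\Phi_0^2 = nL^{n-1}\Phi_1/\Phi_0 - \tfrac{n^2-1}{12}L^{n-2}(1-L^{-n})$ together with the explicit forms of $\Phi_0,\Phi_1$; and (iv) evaluating $\Ga_0$'s contribution using $\Phi_{(1);\0}^{(1,(0))}=-\tfrac{(n-1)n}{48}$ (which comes from $C_{1;n;(1)}=-\tfrac{(n-1)n}{2}$ times the Hodge integral $\int_{\ov\cM_{1,1}}\la_1=\tfrac{1}{24}$) and $\Phi_{(0);\0}^{(1,(1))}=-\tfrac{n}{24}$ (from $C_{1;n;(0)}=n$ and $\int_{\ov\cM_{1,1}}\psi_1=\tfrac{1}{24}$), yielding $\nc_{1;n-1,1}^{(0)}=\tfrac{n}{24}$ and $\nc_{1;n-2,0}^{(0)}=-\tfrac{(n-1)n}{48}$. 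Your proposal collapses all of this into the assertion that ``$\tfrac1{24}$ and the elliptic contribution combine into $\tfrac1{48}$'', which is not a derivation, and it does not anticipate the loop graph or its nontrivial vanishing at all. The factor $(1+2d-n+2w)$ then emerges mechanically from expanding $\De_{n-1}$ and $\De_{n-2}$ with these two coefficients, rather than from any ``logarithmic derivative of the genus-zero series'' heuristic.
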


\noindent
This theorem is obtained in Section~\ref{g1N1pf_subs}.
While the precise statement of Theorem~\ref{main_thm} is quite involved in general,
its qualitative corollaries, Theorems~\ref{GWbound_thm} and~\ref{GWeq0_thm} below, 
are quite simple to state; they are established in Section~\ref{applpf_subs}.

\begin{mythm}\label{GWbound_thm}
For all $n\!\in\!\Z^+$ and $g\!\in\!\Z^{\ge0}$,
there exists $C_{n,g}\!\in\!\R^+$ such~that 
$$\bigg|\frac{\blr{b_1!\,\tau_{b_1}H^{c_1},\ldots,b_N!\,\tau_{b_N}H^{c_N}}_{g,d}^{\P^{n-1}}}{N!}
\bigg| \le C_{n,g}^{d+N}$$
for all $N\!\in\!\Z^+$  and $d,b_1,\ldots,b_N,c_1,\ldots,c_N\!\in\!\Z^{\ge0}$.
\end{mythm}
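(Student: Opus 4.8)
The plan is to deduce Theorem~\ref{GWbound_thm} from the explicit formula of Theorem~\ref{main_thm}, which expresses the natural $N$-pointed genus~$g$ generating function as an explicit transform of the 1-pointed genus~0 $J$-function. The key observation is that the quantities $\blr{b_1!\,\tau_{b_1}H^{c_1},\ldots}_{g,d}^{\P^{n-1}}/N!$ are, up to elementary combinatorial factors, coefficients of a fixed power series whose building blocks are the entries of that transform. The transform involves: (i) the hypergeometric-type series $\prod_{r=1}^d(r\!+\!w)^{-n}$ and finitely many of its $w$-derivatives, which by inspection have coefficients bounded by $C^d$ for a dimension-dependent constant $C\!=\!C_n$; and (ii) a finite collection of universal (genus~$g$, $X$-independent) structure constants coming from the Hodge-integral/topological-recursion input, which are independent of $d$ and $N$ altogether. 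So the strategy is to isolate these two sources, bound each geometrically, and then control the combinatorics of how they are assembled over the $N$ insertions and the degree-$d$ data.

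First I would reduce to the case where all $c_s$ and all $b_s$ lie in a bounded range: the string and dilaton equations, together with the fact that $\psi_s^{b_s}$ acts trivially once $b_s$ exceeds the (virtual) dimension minus the contributions of the other insertions, force $\blr{b_1!\,\tau_{b_1}H^{c_1},\ldots}_{g,d}^{\P^{n-1}}$ to vanish unless $\sum_s(b_s\!+\!c_s)$ equals the expected dimension $(n\!-\!2)(1\!-\!g)+nd+N$; in particular $b_s,c_s\le nd+N$ and the number of genuinely distinct insertion types is polynomially bounded in $d\!+\!N$. Second, I would extract from Theorem~\ref{main_thm} the precise shape of the generating function — schematically a sum over a fixed (genus-$g$-dependent, hence bounded) index set of products of: one scalar from the genus-$g$ structure constants, one hypergeometric factor $[f]_{w;*}$ in the degree variable, and $N$ factors each contributing the 1-pointed genus~0 data at one marked point. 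Third, bound the hypergeometric coefficients: writing $F_d(w)=\prod_{r=1}^d(r\!+\!w)^{-n}$, a residue/Cauchy-estimate on a circle of fixed radius gives $\big|[w^j]\big(F_d(w)\cdot(\text{polynomial correction})\big)\big|\le C_n^{\,d}$ uniformly in the finitely many relevant $j$. Fourth, the 1-pointed genus~0 $J$-function of $\P^{n-1}$ has coefficients of comparable geometric growth, so each of the $N$ marked-point factors contributes at most $C_n^{\,d_s}$ where $\sum d_s=d$; multiplying and summing over the bounded index set and over compositions of $d$ produces a bound of the form $(\text{poly})\cdot C_n^{\,d}\cdot 2^{O(d)}$, and the $N!$ in the denominator is exactly what absorbs the combinatorial overcount of distributing and ordering the $N$ insertions. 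Collecting the $N$-dependence into a factor $C_{n,g}^{\,N}$ and the $d$-dependence into $C_{n,g}^{\,d}$ yields the claimed bound $C_{n,g}^{\,d+N}$.

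The main obstacle I anticipate is bookkeeping in Theorem~\ref{main_thm} rather than any genuine analytic difficulty: one must verify that the transform relating the $N$-pointed genus~$g$ function to the genus~0 input has \emph{finitely many} terms with \emph{$d$- and $N$-independent} coefficients — i.e. that all the genus-$g$ topological-recursion data is packaged into a fixed finite list — and that the way these are glued does not introduce an unbounded-in-$N$ number of factors per term beyond the $N!$ that we are dividing by. If the formula instead produced, say, a sum over set partitions of $[N]$ with a factorially-growing number of terms each of size $C^{d+N}$, the division by $N!$ alone would not suffice and one would need a finer matching of the combinatorial generating function against $N!$; I would handle this by recognizing the relevant sum as the $N$-th coefficient of an exponential generating function in a formal variable tracking marked points, so that $\blr{\cdots}_{g,d}/N!$ is literally a coefficient and the desired bound becomes a Cauchy estimate for that generating function in two variables (one for $d$, one for $N$), both evaluated on fixed circles.
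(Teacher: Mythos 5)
Your high-level strategy does match the paper's: both go through Theorem~\ref{main_thm}, bound the coefficients of the hypergeometric factors $F_p$ via a Cauchy-type estimate, and then combine with a bound on the structure coefficients.  However, the proposal has a genuine gap at exactly the point you flag as a risk, and the fix you sketch is not enough as stated.

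The claim that the transform in Theorem~\ref{main_thm} is ``a sum over a fixed (genus-$g$-dependent, hence bounded) index set'' is false.  In~\e_ref{mainthm_e} the sum runs over all $\p\!\in\!\nset^N$ (size $n^N$) and all $\b\!\in\!(\Z^{\ge0})^N$ with $|\b|\!\le\!3(g\!-\!1)\!+\!N$ (exponentially many), and --- crucially --- each structure coefficient $\nc_{g;\p,\b}^{(d)}$ is itself, via~\e_ref{ncCdfn2_e}, a sum over the set $\cA_{g,N}$ of connected trivalent $N$-marked genus-$g$ graphs, whose cardinality grows like $N!$ times an exponential in~$N$.  None of the genus-$g$ data is ``packaged into a fixed finite list'': the combinatorics explicitly grows with~$N$.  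You correctly anticipate that this might happen (``If the formula instead produced, say, a sum over set partitions of $[N]$ \dots''), and your suggested remedy --- pass to an exponential generating function in the marked-point variable and do a Cauchy estimate --- is indeed the right idea; but stated that abstractly it is not a proof.  The actual content the paper supplies at this step is Proposition~\ref{Coeffbnd_prp}, the bound $|\nc_{g;\p,\b}^{(d)}|\le (N!/\b!)\,C_{n,g}^{N+d}$, and establishing it requires three nontrivial inputs you do not produce: (i) the $d$- and $m$-uniform estimate~\e_ref{DMstr_e2} on the Hodge-integral structure constants from Proposition~\ref{HodgeInt_prp}; (ii) Lemma~\ref{Phibnd_lmm} controlling the power series $\Phi_{I;\c}^{(g,\un\ep)}$; and (iii) Lemma~\ref{graphcnt_lmm}, which shows $a_{g,N}\le C_g C^N N!$ via a recursion on $g$ that itself invokes the $g\!=\!0$ case from~\cite{g0ci} --- this lemma is precisely the rigorous form of your EGF/Cauchy idea, but it has to be proved, not invoked.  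Without these estimates your argument stops before the key inequality.

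Two smaller points: the expected-dimension constraint is $\sum_s(b_s\!+\!c_s)=(n\!-\!4)(1\!-\!g)+nd+N$, not $(n\!-\!2)(1\!-\!g)+nd+N$; and when summing over compositions $\bfd$ of $d$, it is not just that each hypergeometric coefficient is $\le C_n^{d_s}$ --- the paper uses the sharper form $\le C_n^{d_s}/(nd_s)!$ from~\cite[Corollary~5.3]{g0ci}, and this factorial decay matters for absorbing the number of compositions into $C_{n,g}^{d}$.
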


\noindent
In the basic $d\!=\!0$ case, the invariants of Theorem~\ref{GWbound_thm} become
\BE{d0GWs_e}\blr{\tau_{b_1}H^{c_1},\ldots,\tau_{b_N}H^{c_N}}_{g,0}^{\P^{n-1}}
=\int_{\ov\cM_{g,N}\times\P^{n-1}}
e\big(\bE_g^*\!\otimes\!T\P^{n-1}\big)\prod_{i=1}^N\big(\psi_i^{b_i}H^{c_i}\big),\EE
where 
$$\bE_g\lra\ov\cM_{g,N}$$
is the Hodge vector bundle of holomorphic differentials over
the Deligne-Mumford moduli space of genus~$g$ curves
with $N$~marked points.
By~\e_ref{d0GWs_e}, \e_ref{CgIdfn_e0}, and induction via~\e_ref{StrDil_e},
$$\bigg|\frac{\blr{b_1!\,\tau_{b_1}H^{c_1},\ldots,b_N!\,\tau_{b_N}H^{c_N}}_{g,0}^{\P^{n-1}}}{N!}\bigg|
\le C_{n,g}2^N$$
for some $C_{n,g}\!\in\!\R^+$ determined by the numbers $C_{g;n;I}$
in~\e_ref{CgIdfn_e0} and by the top-dimensional intersections of $\la$ and $\psi$-classes on
the moduli spaces $\ov\cM_{g,N}$ with $N\!\le\!6g\!-\!6$.
The base~2 above can be replaced by any number arbitrarily close to~1 
at the cost of increasing~$C_{n,g}$.
The $d\!=\!0$ case of Theorem~\ref{GWbound_thm} is thus straightforward.
The case of Theorem~\ref{GWbound_thm} with $n\!=\!3$, $b_i\!=\!0$ for all~$i$,
and $c_i\!=\!2$ for all~$i$ is consistent with the asymptotic prediction
of \cite[Footnote~2]{FI} for the number~$n_{g,d}$ of genus~$g$ degree~$d$ curves in~$\P^2$ 
passing through $3d\!-\!1\!+\!g$ general points.

\begin{mythm}\label{GWeq0_thm}
Suppose $n,g,N\!\in\!\Z^+$ with $2g\!+\!N\!\ge\!3$
and $(b_s)_{s\in[N]},(c_s)_{s\in[N]}\!\in\!(\Z^{\ge0})^N$.
If there exists $S\!\subset\![N]$ such~that 
$$b_s\!+\!c_s<n~~\forall\,s\!\in\!S \qquad\hbox{and}\qquad
\sum_{s\in S}b_s>3(g\!-\!1)\!+\!N,$$ 
then $\blr{\tau_{b_1}H^{c_1},\ldots,\tau_{b_N}H^{c_N}}_{g,d}^{\P^{n-1}}=0$.
\end{mythm}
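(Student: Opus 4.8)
The plan is to derive Theorem~\ref{GWeq0_thm} as a consequence of the explicit structural formula of Theorem~\ref{main_thm}, using that formula to control the degree in the formal variable in which the $\psi$-powers from the marked points in $S$ accumulate. Concretely, Theorem~\ref{main_thm} expresses the $N$-pointed genus~$g$ generating function as an explicit transform of the 1-pointed genus~0 $J$-function; the invariants $\blr{\tau_{b_1}H^{c_1},\ldots,\tau_{b_N}H^{c_N}}_{g,d}^{\P^{n-1}}$ appear as coefficients of $\prod_s \hbar_s^{-b_s-1}$ (or of monomials in the formal variables attached to each point) in that transform. The key observation is that the transform has a finite, $g$-and-$N$-controlled order in each of these variables \emph{modulo} the contributions that are fed back in through the string and dilaton relations and the recursive structure coming from the genus~0 input — so imposing the two hypotheses $b_s+c_s<n$ for $s\in S$ and $\sum_{s\in S}b_s>3(g-1)+N$ forces the relevant coefficient to lie beyond the support of the generating function and hence to vanish.

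First I would set up notation matching Section~\ref{mainthm_sec}: unwind the definition \e_ref{GWXadfn_e} so that the claimed vanishing is a statement about a single coefficient of an explicit power/Laurent series, namely the one produced by applying Theorem~\ref{main_thm}. Second, I would isolate the marked points in $S$: using the splitting/factorization built into the structural formula (the generating function being a product or convolution over the marked points of the basic 1-pointed genus~0 building block, glued by a genus~$g$, $|S^c|$-point "correction" term supported on $\ov\cM_{g,\bullet}$ as in \e_ref{d0GWs_e}), the $\psi_s$-dependence for $s\in S$ enters only through factors whose expansion in the $\hbar_s$ variables is governed by the 1-pointed genus~0 $J$-function, whose relevant coefficients vanish once $c_s<n$ forces us past the polynomiality range dictated by $H^n=0$ in $\P^{n-1}$. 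Third — the quantitative heart — I would track how the total $\psi$-weight $\sum_{s\in S} b_s$ must be absorbed: each such $\psi_s$ must be "paid for" either by a $\psi$-class on the Deligne–Mumford factor $\ov\cM_{g,N}$, whose dimension $3g-3+N$ caps the total degree of $\psi$-monomials there, or by dimension coming from the target through the $\ev_s^*H^{c_s}$ and the Euler class in \e_ref{d0GWs_e}, which the constraint $b_s+c_s<n$ precisely prevents from supplying enough; combining these two bounds yields $\sum_{s\in S} b_s \le 3(g-1)+N$, contradicting the hypothesis, so the integrand is supported in zero degree and the invariant vanishes. I would phrase the dimension count through the virtual dimension formula $\dim[\ov\M_{g,N}(\P^{n-1},d)]^{\vir} = n\,d + (n-1)(1-g) + N$ together with the observation that the codimension constraint from $S$ outstrips what is available.

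The main obstacle I anticipate is making the "the transform has bounded order in the $\hbar_s$ variables" step rigorous: Theorem~\ref{main_thm} is explicit but, by the excerpt's own admission, "quite involved," so one must carefully read off from its structure precisely which monomials in the formal variables it supports, keeping track of the interplay between the $c_s$-truncation (from $H^n=0$) and the $b_s$-truncation (from $\dim\ov\cM_{g,N}$), and ensuring that the string and dilaton relations used to reduce to the $d=0$ case \e_ref{d0GWs_e} do not secretly reintroduce higher $\psi$-powers. A secondary technical point is the role of the subset $S$: since the hypothesis only constrains points in $S$, the argument must show that the points outside $S$ are harmless, i.e.\ that they can be pushed into the $\ov\cM_{g,|S^c|+\ldots}$ factor without affecting the support bound on the $S$-variables — this should follow from the multiplicativity of the structural formula over marked points, but it needs to be stated cleanly. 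Once these bookkeeping issues are handled, the vanishing is immediate from comparing $\sum_{s\in S}b_s>3(g-1)+N$ with the established upper bound.
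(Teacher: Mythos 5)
Your strategy---deducing the vanishing from the structural formula of Theorem~\ref{main_thm} via the bound $|\b|\le 3(g-1)+N$ on the coefficients $\nc_{g;\p,\b}^{(d)}$ and the hypothesis $b_s+c_s<n$ for $s\in S$---is the route the paper takes, and your picture of two ``buckets'' absorbing $\psi$-weight (one capped by $\dim\ov\cM_{g,N}$, one supplied by the genus-$0$ building block) is the right one. However, the specific mechanism you propose for closing the argument would not work as stated: a dimension count via $\dim[\ov\M_{g,N}(\P^{n-1},d)]^{\vir}=nd+(n-1)(1-g)+N$ cannot yield a $d$-independent upper bound on $\sum_{s\in S}b_s$, because the virtual dimension grows linearly with $d$, so arbitrarily large $\psi$-insertions are always accommodated at sufficiently large degree. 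The vanishing for all $d$ must instead come from the explicit coefficient formula~\e_ref{GWcoeff_e} extracted from~\e_ref{mainthm_e}.

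The missing step is the following. In~\e_ref{GWcoeff_e}, the invariant is a sum over tuples $(\bfd,\b',d')$ with $|\b'|\le 3(g-1)+N$, each summand carrying a factor $\lrbr{\lrbr{F_{p_s(\bfd,\b')}(w,q)}_{q;d_s}}_{w;b_s-b_s'}$ at each marked point. A non-vanishing summand requires $p_s(\bfd,\b')\in\nset$; since $p_s(\bfd,\b')=nd_s+(n\!-\!1\!-\!c_s)-b_s+b_s'$, the inequality $p_s\le n-1$ forces $nd_s\le b_s+c_s-b_s'\le b_s+c_s$, so $b_s+c_s<n$ gives $d_s=0$. Because $\lrbr{F_p(w,q)}_{q;0}=1$ is constant in $w$, extracting the coefficient of $w^{b_s-b_s'}$ then forces $b_s'=b_s$ for every $s\in S$. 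Hence $|\b'|\ge\sum_{s\in S}b_s'=\sum_{s\in S}b_s>3(g-1)+N$, contradicting the structural bound, and every summand vanishes. In other words, your ``target bucket'' contributes nothing for $s\in S$ not because of a dimension constraint on the moduli space, but because the genus-$0$ building block $F_p$ has $q^0$-coefficient equal to~$1$; pinning down that precise feature of $F_p$ is exactly the piece you flagged as needing care.
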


\noindent
Theorems~\ref{GWbound_thm} and~\ref{GWeq0_thm} are potential indications of
fundamental properties of GW-invariants that are out of reach of the current methods.
Their statements have natural intrinsic extensions to more general symplectic manifolds,
formulated in the two conjectures below.
The exponent $\lr{\om,\be}$  in Conjecture~\ref{GWbound_cnj}
is the energy of the $J$-holomorphic maps of class~$\be$, while
$\lr{\om,\be}\!+\!N$ is essentially the energy of the induced ``graph map".
A symplectic manifold~$(X,\om)$ is called \sf{monotone with minimal Chern number $\nu\!\in\!\R^+$}
in Conjecture~\ref{GWeq0_cnj}
if 
$$c_1(X)=\la[\om]\in H^2(X;\R)$$ 
for some $\la\!\in\!\R^+$ and $\nu$ is the minimal value of $c_1(X)$
on the homology classes representable by non-constant $J$-holomorphic maps $\P^1\!\lra\!X$
for every $\om$-compatible almost complex structure on~$X$.

\begin{myconj}[{\cite[Conjecture~1]{g0ci}}]\label{GWbound_cnj}
Suppose $(X,\om)$ is a compact symplectic manifold and $g\!\in\!\Z$.
For all $H_1,\ldots,H_k\!\in\!H^*(X)$, there exists $C_{X,g}\!\in\!\R^+$ such~that 
$$\bigg|\frac{\blr{b_1!\,\tau_{b_1}H_{c_1},\ldots,b_N!\,\tau_{b_N}H_{c_N}}_{g,\be}^X}{ N!}
\bigg|
\le C_{X,g}^{\lr{\om,\be}+N}
\qquad\forall\,\be\!\in\!H_2(X),\,N,b_s\!\ge\!0,\,c_s\!\in\![k].$$
\end{myconj}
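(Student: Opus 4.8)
\medskip
\noindent\textbf{Proof proposal.}
This is a conjecture, open for a general compact symplectic manifold, so I can only propose a strategy, and even that needs a semisimplicity hypothesis on quantum cohomology; it does cover the one case settled in this paper, $X\!=\!\P^{n-1}$ (Theorem~\ref{GWbound_thm}). The plan is: (1)~derive the bound for $\P^{n-1}$ from the explicit formula of Theorem~\ref{main_thm}; (2)~for a general (projective, say)~$X$ with generically semisimple quantum cohomology, propagate the genus-$0$ bound to all genera via the Givental--Teleman reconstruction; (3)~locate the point at which this fails in general.

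\emph{Step 1.} By Theorem~\ref{main_thm} the $N$-pointed genus-$g$ generating function of $\P^{n-1}$ is an explicit transform of the genus-$0$ one-pointed $J$-function $\sum_{d\ge0}Q^d\prod_{r=1}^d(H\!+\!r\hbar)^{-n}$. Unwinding the transform, $\blr{b_1!\,\tau_{b_1}H^{c_1},\ldots,b_N!\,\tau_{b_N}H^{c_N}}^{\P^{n-1}}_{g,d}$ is a coefficient extraction, in boundedly many auxiliary variables, of a \emph{finite} sum of products of three kinds of factors: (i)~one ``genus-$g$ vertex'' factor, which after the string and dilaton equations is built from a finite list of $\la$- and $\psi$-class integrals over the $\ov\cM_{g,M}$ with $M$ bounded in terms of $g$; (ii)~$N$ ``leg'' factors, each a single Taylor coefficient, in a variable $w_s$, of a fixed-order derivative of the hypergeometric series $\prod_{r=1}^d(r\!+\!w)^{-n}$; (iii)~finitely many ``edge'' factors coming from the genus-$0$ two-point function. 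One then estimates each type: the $w$-coefficients of $\prod_{r=1}^d(r\!+\!w)^{-n}$ are $O(C^d)$ (they in fact decay superexponentially, being $O((d!)^{-n})$ up to subexponential factors); the $b_s!$ normalization cancels the $1/b_s!$ appearing in the relevant $\psi$-class intersection numbers, the number of gluings is $\le C^N$, and the residual factorial growth in $N$ is absorbed by the $1/N!$; so, summing the resulting geometric-type series over the partitions of the degree among the vertex, the legs, and the edges, one gets $C_{n,g}^{d+N}$ as the desired bound. The $d\!=\!0$ case is the elementary estimate noted after Theorem~\ref{GWbound_thm}.

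\emph{Step 2.} For a general semisimple $X$, the Givental--Teleman classification of semisimple cohomological field theories writes the full descendant potential as a sum over stable graphs, each vertex carrying a translate by the $R$-matrix of a $\ov\cM_{g,n}$ Hodge integral and each edge an $R$-matrix bilinear, all assembled from the genus-$0$ $S$- and $R$-matrices at a semisimple point. The conjecture would then follow from: (a)~the genus-$0$ case (which holds for projective complete intersections by the explicit genus-$0$ formulas of~\cite{g0ci}); (b)~a bound $|R^{(\be)}_k|\le C^k\,C^{\lr{\om,\be}}$ on the Novikov-degree-$\be$ component of the $R$-matrix entries; (c)~a $C^N$ bound on the number of contributing stable graphs, which is routine combinatorics.

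\emph{Main obstacle.} Part~(b). The $R$-matrix is produced only as a formal power series in the flat and Novikov variables, with no intrinsic growth control, so upgrading the \emph{formal} reconstruction to a bound that is exponential in the symplectic area $\lr{\om,\be}$ is exactly what current methods cannot do. Compounding this, non-semisimple~$X$ admit no reconstruction at all, and the appearance of the \emph{real} number $\lr{\om,\be}$ rather than an integral degree strongly suggests that a proof for arbitrary $(X,\om)$ should ultimately rest on an a priori energy/compactness estimate for the moduli space of pseudo-holomorphic maps --- independent of any algebraic formula --- which is presently out of reach.
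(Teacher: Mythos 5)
The statement is a conjecture, not a theorem of the paper; the paper proves it only for $X=\P^{n-1}$ (Theorem~\ref{GWbound_thm}), and you correctly frame your answer as a strategy rather than a proof. Your Step~1 captures the spirit of the paper's argument for $\P^{n-1}$ but compresses the technical work and mismatches the decomposition. The paper's proof in Section~\ref{applpf_subs} reads the invariant off formula~\e_ref{GWcoeff_e}, bounds the hypergeometric leg factors via~\e_ref{GWcoeff_e4}, and reduces everything to Proposition~\ref{Coeffbnd_prp}, the bound $|\nc_{g;\p,\b}^{(d)}|\le (N!/\b!)\,C_{n,g}^{N+d}$ on the structure constants. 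That bound is proved from the closed formula~\e_ref{ncCdfn2_e}, which is a sum over \emph{connected trivalent, generally multi-vertex} genus-$g$ graphs $\Ga\in\cA_{g,N}$ with loops --- not a single genus-$g$ vertex plus legs and edges as your sketch suggests. Bounding the resulting weighted graph count $a_{g,N}$ is Lemma~\ref{graphcnt_lmm}, a genuine recursion on generating series $f_g(q)$ with an induction on $g$ whose base case is \cite[Lemma~5.10]{g0ci}; calling this ``routine combinatorics'' undersells the step that absorbs the factorial growth into $1/N!$. The per-vertex contributions are controlled by Lemma~\ref{Phibnd_lmm}, which in turn rests on Proposition~\ref{HodgeInt_prp}'s structural bound~\e_ref{DMstr_e2} on the Hodge-integral coefficients $A_{I;\c}^{(g,\un\ep)}$, and the $1/\b!$ cancellation is an output of that proposition, not of $\psi$-class intersection numbers per se. Your Steps~2--3 (Givental--Teleman reconstruction and the $R$-matrix growth obstruction) are your own commentary; they are plausible as a diagnosis of the obstacle but do not appear in the paper, which makes no claim about the general case beyond Theorem~\ref{GWbound_thm}.
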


\begin{myconj}\label{GWeq0_cnj}
Suppose $(X,\om)$ is a compact monotone symplectic manifold with minimal 
Chern number~$\nu$, 
$$g,N\in\Z^{\ge0} ~~\hbox{with}~  2g\!+\!N\ge3,\quad
(b_s)_{s\in[N]},(c_s)_{s\in[N]}\in (\Z^{\ge0})^N, 
\quad\hbox{and}~~ H_s\!\in H^{2c_s}(X)~~\forall\,s\!\in\![N].$$
If there exists $S\!\subset\![N]$ such~that
$$b_s\!+\!c_s<\nu~~\forall\,s\!\in\!S
\qquad\hbox{and}\qquad \sum_{s\in S}b_s>3(g\!-\!1)\!+\!N,$$
then $\blr{\tau_{b_1}H_1,\ldots,\tau_{b_N}H_N}_{g,\be}^X=0$.
\end{myconj}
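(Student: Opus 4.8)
\medskip
\noindent
\textbf{A proposed approach to Conjecture~\ref{GWeq0_cnj}.}
The plan is to push the descendant invariant onto the Deligne--Mumford space $\ov\cM_{g,N}$, which is available since $2g\!+\!N\!\ge\!3$, and to use monotonicity only to control the correction terms this reduction produces. Write $\textnormal{st}\!:\ov\M_{g,N}(X,\be)\lra\ov\cM_{g,N}$ for the morphism forgetting the map and stabilizing the domain, and $\bar\psi_s$ for the $s$-th cotangent line class of $\ov\cM_{g,N}$. The descendant and ancestor cotangent lines satisfy $\psi_s\!=\!\textnormal{st}^*\bar\psi_s\!+\!\Delta_s$, where $\Delta_s$ is the sum of the boundary divisors of $\ov\M_{g,N}(X,\be)$ along which the $s$-th marked point sits on a rational component that $\textnormal{st}$ contracts --- necessarily of positive degree (a genus-$0$ component with two special points being otherwise unstable). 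Iterating this identity, applying the genus-$0$ topological recursion relations, and resumming the resulting chains of contracted tails is precisely the ancestor/descendant correspondence of Kontsevich--Manin and Givental; it would express
\[
\blr{\tau_{b_1}H_1,\ldots,\tau_{b_N}H_N}_{g,\be}^X
=\sum\;\biggl(\int_{[\ov\M_{g,N}(X,\be_0)]^{\vir}}\!\!\textnormal{st}^*\bigl(\bar\psi_1^{a_1}\!\cdots\bar\psi_N^{a_N}\bigr)\prod_{s=1}^N\ev_s^*\ga_s\biggr)\prod_{s:\,\be_s\ne0}\!\!L_s\,,
\]
a finite sum over splittings $\be_0\!+\!\sum_s\be_s\!=\!\be$, over classes $\ga_s\!\in\!H^*(X)$, and over ancestor powers $a_s\!\le\!b_s$ (with $a_s\!=\!b_s$ whenever $\be_s\!=\!0$). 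Each $L_s$ would be a product of genus-$0$ two-point descendant GW-invariants of $X$, of classes summing to $\be_s$; the factor carrying the original insertion $H_s$ would be of the form $\blr{\tau_{k}H_s,\phi}_{0,\be'}^X$ with $0\!\ne\!\be'\!\le\!\be_s$, $\phi\!\in\!H^*(X)$, and $k\!\le\!b_s\!-\!a_s\!-\!1$.

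\medskip
\noindent
Granting this, I would conclude in two steps. First I would show that no marked point of $S$ can grow a leg: if $s\!\in\!S$ and $\be_s\!\ne\!0$, the factor of $L_s$ carrying $H_s$ is $\blr{\tau_{k}H_s,\phi}_{0,\be'}^X$ with $\phi\!\in\!H^{2c}(X)$, $0\!\le\!c\!\le\!\dim_\C\!X$, $k\!\le\!b_s\!-\!1$, and $0\!\ne\!\be'$; by monotonicity $\lr{c_1(X),\be'}\!\ge\!\nu$, and the virtual dimension of $\ov\M_{0,2}(X,\be')$ equals $\dim_\C\!X\!-\!1\!+\!\lr{c_1(X),\be'}$, so this invariant can be nonzero only if $k\!+\!c_s\!+\!c=\dim_\C\!X\!-\!1\!+\!\lr{c_1(X),\be'}$, which forces $k\!+\!c_s\ge\lr{c_1(X),\be'}\!-\!1\ge\nu\!-\!1$. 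But $k\!+\!c_s\le(b_s\!-\!1)\!+\!c_s\le\nu\!-\!2$, since $b_s\!+\!c_s\!<\!\nu$ for $s\!\in\!S$ --- a contradiction. Hence a term can be nonzero only when $\be_s\!=\!0$, and therefore $a_s\!=\!b_s$, for every $s\!\in\!S$. Second I would observe that, for such a term, $\sum_{s\in S}a_s=\sum_{s\in S}b_s>3(g\!-\!1)\!+\!N=\dim_\C\ov\cM_{g,N}$, so $\bar\psi_1^{a_1}\!\cdots\bar\psi_N^{a_N}$ lies in $H^{>2\dim_\C\ov\cM_{g,N}}(\ov\cM_{g,N};\Q)=0$; then $\textnormal{st}^*\bigl(\bar\psi_1^{a_1}\!\cdots\bar\psi_N^{a_N}\bigr)=0$ and the ancestor integral vanishes. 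Every term of the sum is thus $0$.

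\medskip
\noindent
For a smooth projective $X$ the ancestor/descendant correspondence and the genus-$0$ two-point invariants above are classical, so this argument would already settle the statement in the algebraic category. The main obstacle to the conjecture in general will be that these structural inputs --- the comparison of cotangent lines under the forgetful morphism, the genus-$0$ topological recursion relations, the splitting axiom, and the resulting $\cS$-matrix formalism --- must all be carried through the virtual-class machinery of \emph{symplectic} GW-theory; this is also the one place where positivity enters, and the first step (hence the conjecture) genuinely fails for non-monotone $X$. A secondary, more technical point will be to justify the claim that the original insertion $H_s$ always ends up on a component of positive degree carrying $\psi$-power at most $b_s\!-\!1$, so that the dimension inequality of the first step applies to every factor of every leg-chain at $s$ and not merely to a single rational tail; this amounts to bookkeeping on how $\psi$-powers migrate across nodes in the recursion. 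Finally, the method here is purely qualitative and yields no analogue of the energy bound of Conjecture~\ref{GWbound_cnj}, for which the explicit transform furnished by Theorem~\ref{main_thm} seems indispensable.
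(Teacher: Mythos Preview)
The paper does not prove Conjecture~\ref{GWeq0_cnj}; it is posed as open, and only the special case $X=\P^{n-1}$ is established (Theorem~\ref{GWeq0_thm}) by reading the vanishing off the explicit structure formula~\e_ref{GWcoeff_e} produced by Theorem~\ref{main_thm} via torus localization. Your route through the ancestor/descendant comparison is genuinely different and strictly stronger: it yields the conjecture for every smooth projective~$X$, whereas the paper's argument is tied to the $\T$-action on~$\P^{n-1}$ and the hypergeometric asymptotics of Proposition~\ref{Fexp_prp}. What the paper's machinery buys in return is the quantitative companion, Theorem~\ref{GWbound_thm}, which your method cannot see.

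Your argument is correct and can be tightened. From the telescoping identity
\[
\psi_s^{b_s}-(\textnormal{st}^*\bar\psi_s)^{b_s}
=\sum_{a=0}^{b_s-1}(\textnormal{st}^*\bar\psi_s)^{\,a}\,D_s\,\psi_s^{\,b_s-1-a}
\]
and one application of the splitting axiom, the leg at~$s$ is a \emph{single} two-point invariant $\blr{\tau_{b_s-1-a_s}H_s,\phi_\mu}_{0,\beta_s}$, not a product along a chain; any chain of contracted rational components is already absorbed into the compactified moduli space $\ov\M_{0,2}(X,\beta_s)$. So the ``secondary technical point'' you flag about tracking $\psi$-powers through chains is moot: one has $k=b_s-1-a_s\le b_s-1$ exactly, the first-step dimension count then forces $\beta_s=0$ and hence $a_s=b_s$ for every $s\in S$, and the second step finishes as written. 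The only outstanding issue is the one you name---carrying the comparison $\psi_s=\textnormal{st}^*\bar\psi_s+D_s$ and the splitting axiom through a symplectic virtual-class framework.
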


\noindent
Theorems~\ref{GWbound_thm} and~\ref{GWeq0_thm} establish Conjectures~\ref{GWbound_cnj} 
and~\ref{GWeq0_cnj} for $X\!=\!\P^{n-1}$.
Theorems~1 and~2 in~\cite{g0ci} establish 
the $g\!=\!0$ cases of these conjectures for complete intersections $X\!\subset\!\P^n$
with each $H_i$ being a power of~$H$.
Conjecture~\ref{GWbound_cnj} for a Calabi-Yau threefold~$X$ corresponds to 
the string theory presumption that the partition functions determined by 
GW-invariants have positive radii of convergence.
If~$X$ is a Calabi-Yau  intersection 3-fold in~$\P^n$, 
this is equivalent to the existence of $C_{X,g}\!\in\!\R^+$ such~that 
\BE{CYbnd_e}\big|\blr{}_{g,d}^X\big|\le C_{X,g}^d\qquad\forall~d\!\in\!\Z^+\,.\EE
For a Calabi-Yau~$X$ of (complex) dimension at least~4,
the GW-invariants of genus~2 and higher vanish.
Conjecture~\ref{GWbound_cnj} then reduces to its cases for the genus~0 GW-invariants
with primary insertions ($b_i\!=\!0$ for all~$i$)
and for the genus~1 GW-invariants with no insertions.
For complete intersections $X\!\subset\!\P^n$,
such genus~0 bounds with each $H_i$ being a power of~$H$
are implied by the mirror formulas established  in~\cite{Gi,LLY};
these mirror formulas and bounds extend to many other GIT quotients.
The required genus~1 bounds for complete intersections $X\!\subset\!\P^n$ 
are implied by the genus~1 mirror formulas established in~\cite{bcov1,bcov1ci}.\\

\noindent
The virtual localization theorem of~\cite{GP} reduces the computation 
of positive-genus GW-invariants of~$\P^{n-1}$ to a sum over weighted graphs.
We use the approach of~\cite{bcov1} for breaking such graphs at special nodes 
to show that a generating function for the $N$-pointed genus~$g$ GW-invariants of~$\P^n$
is a linear combination of $N$-fold products of derivatives of 
a generating function for the 1-pointed genus~0 GW-invariants
with coefficients that are polynomials of total degree at most~$3(g\!-\!1)\!+\!N$.
In contrast to the application of this approach in~\cite{g0ci}
to compute $N$-pointed  genus~0 
GW-invariants of complete intersections $X\!\subset\!\P^n$ with $N\!\ge\!3$,
the present application requires dealing with graphs containing loops
and understanding the structure of Hodge integrals over~$\ov\cM_{g,N}$ 
as $N$ increases.
While we describe two explicit ways of computing the relevant coefficients,
the final formulas become rather complicated as~$g$ and~$N$ increase.
Nevertheless, our qualitative description of these coefficients suffices
to deduce Theorem~\ref{GWbound_thm} and to immediately obtain Theorem~\ref{GWeq0_thm}.\\

\noindent
The approach in this paper can be used to compute twisted $N$-pointed genus~$g$ GW-invariants
of~$\P^n$, but these do not correspond to the usual GW-invariants of the associated complete intersection
for $g\!\ge\!1$.
There are two necessary inputs for doing~so.
The first is Proposition~\ref{HodgeInt_prp}, which concerns the structure of Hodge integrals
only and is thus directly applicable in all cases.
The second input is Proposition~\ref{Fexp_prp}, which provides an asymptotic expansion
of the mirror hypergeometric function corresponding to the standard Givental's $J$-function.
The approach of~\cite{ZaZ} can be used directly to determine the power series~$\xi$ and~$\Phi_b$
appearing in such expansions in the cases of twisted invariants;
in the cases relevant to the projective complete intersections, 
they are determined in~\cite{bcov1ci}.\\

\noindent
In principle, all genus~$g$ GW-invariants of~$\P^{n-1}$ can be determined via 
\cite[Theorem~1]{Gi01}.
However, it is unclear how feasible it is to obtain such qualitative conclusions
as our Theorems~\ref{GWbound_thm} and~\ref{GWeq0_thm} from~\cite{Gi01}.
The $g\!=\!0$ case of Theorem~\ref{GWbound_thm}, i.e.~\cite[Theorem~1]{g0ci},
in fact confirmed a conjecture of R.~Pandharipande.
This statement was part of the idea of~\cite{MaP} to establish the bounds~\e_ref{CYbnd_e}
in all genera by reducing them to the $n\!=\!4$ bounds of Theorem~\ref{GWbound_thm} 
via a degeneration scheme of~\cite{MaP0} and reducing 
the latter bounds to the $g\!=\!0$ case via~\cite{Gi01}. 
As far as we are aware, the approach of establishing 
the bounds of Theorem~\ref{GWbound_thm} from the $g\!=\!0$ case 
 via~\cite{Gi01} has not been completed~yet.\\

\noindent
The main theorem of this paper, Theorem~\ref{main_thm}, 
is stated at the beginning of Section~\ref{mainthm_sec}.
Two descriptions of the structure coefficients appearing in this theorem are given
in Section~\ref{Mainform_subs} after the two necessary inputs are introduced in
Sections~\ref{HodgeInt_subs0} and~\ref{HG_subs}.
We compute these coefficients  in the $(g,N)\!=\!(1,1)$ case explicitly in
Section~\ref{g1N1pf_subs} and establish Theorem~\ref{g1N1_thm}.
Theorems~\ref{GWbound_thm} and~\ref{GWeq0_thm} are proved in Section~\ref{applpf_subs}.
The former is deduced directly from the structural description of Theorem~\ref{main_thm};
its proof makes no use of Sections~\ref{HodgeInt_subs0}-\ref{Mainform_subs}.
Theorem~\ref{main_thm} is an immediate consequence of its equivariant version,
Theorem~\ref{equiv_thm}, stated in Section~\ref{equivGW_subs}.
Section~\ref{outline_subs} applies the virtual localization theorem of~\cite{GP} 
to the equivariant $N$-point genus~$g$ Givental's $J$-function appearing 
of Theorem~\ref{equiv_thm}, reducing it to a sum of rational functions in
the equivariant weights over infinitely graphs. 
Section~\ref{pfoutline_subs} describes two approaches for breaking these graphs 
at special vertices and reducing the associated infinite sum to 
a sum of finitely many power series.
The necessary equivariant inputs for the resulting finite sums are collected
in Section~\ref{prelim_subs}.
After a quick preparation in Section~\ref{prelimcomp_subs}, 
Sections~\ref{ClFormComp_subs} and~\ref{RecFormComp_subs} implement the two approaches outlined 
in Section~\ref{pfoutline_subs} and establish Theorem~\ref{equiv_thm} 
with the two descriptions of the structure coefficients of Section~\ref{Mainform_subs}.
Sections~\ref{HodgeInt_subs} and~\ref{HodgeIntGS_subs} establish the key combinatorial identities 
involving Hodge integrals that are used in the proof of Theorem~\ref{equiv_thm}, 
Propositions~\ref{HodgeInt_prp} and~\ref{HodgeIntGS_prp}, respectively.

\section{Main theorem and some applications}
\label{mainthm_sec}

\noindent
For  $n,N\!\in\!\Z^{\ge0}$, let
$$\llfloor{n}\rrfloor=\big\{p\!\in\!\Z^{\ge0}\!:\,p\!<\!n\big\}, \qquad
\P^{n-1}_N\!=\!(\P^{n-1})^N\,.$$
For each $s\!=\!1,\ldots,N$, we~set
$$H_s=\pi_s^*H\in H^2\big(\P^{n-1}_N\big),$$
where $\pi_s\!:\P^{n-1}_N\!\lra\!\P^{n-1}$ is the projection onto 
the $s$-th coordinate.
If in addition \hbox{$g,d\!\in\!\Z^{\ge0}$},
the virtual fundamental class of $\ov\M_{g,N}(\P^{n-1},d)$ determines 
a cohomology push-forward 
$$\ev_*^d\equiv \big\{\ev_1\!\times\!\ldots\!\times\!\ev_N\big\}_*\!:
H^*\big(\ov\M_{g,N}(\P^{n-1},d)\big)\lra H^*\big(\P^{n-1}_N\big).$$
With $\un\hb\!=\!(\hb_1,\ldots,\hb_N)$, 
$\un\hb^{-1}\!=\!(\hb_1^{-1},\ldots,\hb_N^{-1})$, 
and $\un{H}\!=\!(H_1,\ldots,H_N)$, let 
\BE{Zdfn_e} 
Z^{(g)}\big(\un\hb,\un{H},q\big)= \sum_{d=0}^{\i}q^d
\ev_*^d\Bigg\{\prod_{s=1}^{s=N}\!\!\frac{1}{\hb_s\!-\!\psi_s}\Bigg\}
\in H^*(\P^{n-1}_N)\big[\un\hb^{-1}\big]\big[\big[q\big]\big].\EE
This power encodes all descendant $N$-pointed genus~$g$  GWs of~$\P^{n-1}$
defined in~\e_ref{GWXadfn_e}.\\

\noindent
For $\b\!\equiv\!(b_s)_{s\in[N]}\!\in\!(\Z^{\ge0})^N$, a tuple $\hb$ as above, 
and $p\!\in\!\Z^{\ge0}$, let
$$|\b|=\sum_{s=1}^{s=N}\!b_s, \quad
\un\hb^{-\b}=\prod_{s=1}^{s=N}\!(\hb_s^{-1})^{b_s}\,, \quad
F_p(w,q)=\sum_{d=0}^{\i}q^d\frac{(w\!+\!d)^pw^{nd-p}}{\prod\limits_{r=1}^d(w\!+\!r)^n}
\in \Q(w)\big[\big[q\big]\big]\,.$$
For  $\p\!\equiv\!(p_s)_{s\in[N]}\!\in\!\nset^N$
and a tuple $\un{H}\!=\!(H_s)_{s\in[N]}$ of formal variables, define
\BE{Depdfn_e}
w_s=\frac{H_s}{\hb_s}\,,\quad q_s=\frac{q}{H_s^n}\,, \quad
\De_{\p}(\un\hb,\un{H},q)=\prod_{s=1}^{s=N}
\frac{H_s^{p_s}}{\hb_s}F_{p_s}(w_s,q_s)\in 
\Q[\un{H}]\big[\big[\un\hb^{-1}\big]\big]\big[\big[q\big]\big]\,.\EE

\begin{thmlet}\label{main_thm}
Suppose $n,N\!\in\!\Z^+$ and $g\!\in\!\Z^{\ge0}$ with $n\!\ge\!2$ and $2g\!+\!N\!\ge\!3$.
The generating function~\e_ref{Zdfn_e} for the $N$-pointed genus~$g$ GW-invariants of~$\P^{n-1}$ 
is given~by 
\BE{mainthm_e} 
Z^{(g)}\big(\un\hb,\un{H},q\big)= 
\sum_{\p\in\nset^N}
\sum_{\begin{subarray}{c}\b\in(\Z^{\ge0})^N\\ |\b|\le 3(g-1)+N  \end{subarray}}
\sum_{d=0}^{\i}
\nc_{g;\p,\b}^{(d)}q^d\un\hb^{-\b}\De_{\p}(\un\hb,\un{H},q),\EE
with the coefficients $\nc_{g;\p,\b}^{(d)}\!\in\!\Q$ as specified in Section~\ref{Mainform_subs}.
\end{thmlet}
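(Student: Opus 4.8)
The plan is to deduce Theorem~\ref{main_thm} as an immediate corollary of its equivariant refinement, Theorem~\ref{equiv_thm}, by taking the non-equivariant limit. The starting point is the virtual localization theorem of~\cite{GP} applied to the torus-equivariant version of~\e_ref{Zdfn_e}: the standard $(\C^*)^n$-action on $\P^{n-1}$ lifts to $\ov\M_{g,N}(\P^{n-1},d)$, its fixed loci are indexed by decorated graphs, and the equivariant pushforward $\ev_*^d$ becomes a sum of rational functions in the equivariant weights $\al_1,\dots,\al_n$ over all such graphs of the given genus, degree, and number of marked points. First I would set up this localization sum, organizing the graphs by their vertices (which carry genera $g_v$ summing appropriately and are labeled by fixed points of $\P^{n-1}$), edges (carrying degrees $d_e$ summing to $d$), and the distribution of the $N$ marked points among the vertices.

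Next I would implement the graph-breaking procedure of~\cite{bcov1}: at each vertex one separates the ``vertex contribution'' — a Hodge integral over $\ov\cM_{g_v,\val(v)}$ involving the Hodge bundle $\bE_{g_v}$ and $\psi$-classes — from the ``edge contributions,'' which are purely combinatorial factors in the weights and assemble into the hypergeometric series $F_p$. The two key structural inputs for making this work are Proposition~\ref{HodgeInt_prp}, which describes the Hodge integrals over $\ov\cM_{g,N}$ that arise (and in particular shows they are governed by a finite amount of data, controlling the range $|\b|\le 3(g-1)+N$ of the $\psi$-exponents via the dimension $3g-3+N$ of Deligne–Mumford space together with the $\la$-class constraints), and Proposition~\ref{Fexp_prp}, which gives the asymptotic expansion of the mirror hypergeometric function matching the standard $1$-pointed genus~$0$ $J$-function, thereby producing the building blocks $\De_\p$. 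Assembling the localization sum along these lines and carefully tracking the bookkeeping gives that $Z^{(g)}$ is a finite $\Q$-linear combination of the products $q^d\un\hb^{-\b}\De_\p(\un\hb,\un{H},q)$ with $\p\in\nset^N$ and $|\b|\le 3(g-1)+N$, which is exactly~\e_ref{mainthm_e}; the coefficients $\nc_{g;\p,\b}^{(d)}$ are read off from the explicit vertex and edge factors, yielding the two descriptions promised in Section~\ref{Mainform_subs} (a closed form via Proposition~\ref{HodgeInt_prp} and a recursive one from the graph-splitting recursion).

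Carrying this out rigorously requires two things I would treat separately. First, one must check that the infinite sum over graphs, after grouping, actually collapses into the finitely many power series $\De_\p$ — this is where the reorganization of~\cite{bcov1}, summing over all subtrees attached at a fixed ``core'' vertex, is essential; the generating-function identities for $F_p$ (the ``dilaton/string''-type relations, e.g.~\e_ref{StrDil_e}) are what make the resummation close. Second, one must verify that the non-equivariant limit $\al_i\to 0$ exists and gives back the ordinary $Z^{(g)}$ of~\e_ref{Zdfn_e}; here I would invoke the standard fact that the equivariant pushforwards have well-defined limits and that the $H_s$ in~\e_ref{Depdfn_e} arise precisely as the non-equivariant images of the equivariant hyperplane classes.

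The hard part will be the combinatorics of the graphs containing loops, which is genuinely new relative to the genus~$0$ case of~\cite{g0ci}: one must understand how a loop at the core contributes to the Hodge integral over $\ov\cM_{g,N}$ and confirm that Mumford's relations (and the structure recorded in Proposition~\ref{HodgeInt_prp}) still force the $\psi$-exponent bound $|\b|\le 3(g-1)+N$. Equivalently, the main obstacle is proving Proposition~\ref{HodgeInt_prp} itself — controlling how the relevant Hodge integrals over $\ov\cM_{g,N}$ behave as $N$ grows — together with matching its output to the hypergeometric expansion of Proposition~\ref{Fexp_prp}; once those two inputs are in hand, the assembly into~\e_ref{mainthm_e} is essentially bookkeeping. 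These two propositions are exactly what Sections~\ref{HodgeInt_subs}, \ref{HodgeIntGS_subs}, and~\ref{HG_subs} are devoted to, and the remainder of the argument is the graph sum of Sections~\ref{ClFormComp_subs} and~\ref{RecFormComp_subs}.
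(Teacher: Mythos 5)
Your proposal matches the paper's route: Theorem~\ref{main_thm} is deduced from the equivariant Theorem~\ref{equiv_thm}, which is established via virtual localization and graph-breaking at special vertices (breaking at all of them giving the closed formula~\e_ref{ncCdfn2_e}, breaking at only $\ov\eta(N)$ giving the recursion~\e_ref{coeffdfn_e}), with Propositions~\ref{HodgeInt_prp} and~\ref{Fexp_prp} as the two structural inputs. Two details you gloss over but that the paper needs: the passage from Theorem~\ref{equiv_thm} to Theorem~\ref{main_thm} uses not only $\al\to 0$ but also \cite[Theorem~5]{PoZ} to identify the equivariant building blocks~$\cZ_p$ of~\e_ref{cZpdfn_e} with the hypergeometric factors $\De_\p$; and the resummation over the one-pointed strands closes via Proposition~\ref{HodgeIntGS_prp} (equivariantly, Proposition~\ref{pt1sum_prp}) rather than by the string/dilaton equations~\e_ref{StrDil_e} directly, which are instead the input to the proof of Proposition~\ref{HodgeInt_prp}.
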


\subsection{Structure of Hodge integrals}
\label{HodgeInt_subs0}

\noindent
One of the two inputs determining the structure coefficients $\nc_{g;\p,\b}^{(d)}$
in Theorem~\ref{main_thm} are properties of the Hodge integrals on the Deligne-Mumford
moduli spaces~$\ov\cM_{g,N}$ arising from the string and dilaton equations.
This input is provided by Proposition~\ref{HodgeInt_prp} below;
it is proved in Section~\ref{HodgeInt_subs}.\\

\noindent
Let $g\!\in\!\Z^{\ge0}$.
For a tuple $I\!\equiv\!(i_1,i_2,\ldots)$ in $\Z^{\i}$ 
or in  \hbox{$\Z^g\!\subset\!\Z^{\i}$}, define 
$$|I|=\sum_{k=1}^{\i}i_k\in\Z\,, \qquad 
\|I\|=\sum_{k=1}^{\i}ki_k\in\Z\,, \qquad
\mu_g(I)=3(g\!-\!1)\!-\!\|I\|\in\Z.$$
If in addition $m\!\in\!\Z^{\ge0}$ with $2g\!+\!m\!\ge\!3$ and
$I\!\in\!(\Z^{\ge0})^g$, let
$$\la_{g;I}=\prod_{k=1}^g\!c_k(\bE_g)^{i_k}\in H^{2\|I\|}\big(\ov\cM_{g,m}\big)\,.$$
If $m'\!\in\!\Z^{\ge0}$ and $\b'\!\in\!(\Z^{\ge0})^{m'}$,
denote by $\b\b'$ the $(m\!+\!m')$-tuple obtained 
by adjoining~$\b'$ to~$\b$ at the~end.\\

\noindent
If $\b\!\equiv\!(b_k)_{k\in[m]}\in\!(\Z^{\ge0})^m$, let
\begin{gather}
\label{DMdfn_e}
\bllrr{\la_{g;I};\tau_{\b}} =
\begin{cases}
\int\limits_{\ov\cM_{g,|\b|-\mu_g(I)}}\!\!\!\!\!\!\!\!\!
\la_{g;I}\!
\prod\limits_{k=1}^m\!\!\psi_k^{b_k},&\hbox{if}~|\b|\!\ge\!\mu_g(I)\!+\!m;\\
0,&\hbox{if}~|\b|\!<\!\mu_g(I)\!+\!m;
\end{cases}\\
\label{DMdfn_e2}
\bllrr{\la_{g;I};\wt\tau_{\b}}
=\bigg(\prod_{k=1}^m\!b_k!\bigg)\bllrr{\la_{g;I};\tau_{\b}}\,.
\end{gather}
For $\c\!\equiv\!(\c_r)_{r\in\Z^+}\!\in\!(\Z^{\ge0})^{\i}$, let
\begin{gather*}
S(\c)=\big\{(r,j)\!\in\!\Z^+\!\times\!\Z^+\!\!:
(r,j)\!\in\!\{r\}\!\times\![c_r]~\forall\,r\!\in\!\Z^+\big\},\\
A_{I,\b;\c}^{(g)}=
\sum_{\b'\in(\Z^{\ge0})^{S(\c)}}\!\!\!\Bigg(\!\! (-1)^{|\b'|}
\frac{\llrr{\la_{g;I};\wt\tau_{\b\b'}}}
{(|\b|\!+\!|\b'|\!-\!\mu_g(I)\!-\!m\!-\!|\c|)!}
\prod_{(r,j)\in S(\c)}\!\!\binom{r}{b_{r,j}'}\!\!\Bigg).
\end{gather*}
In particular, $|S(\c)|\!=\!|\c|$ and the numerator above vanishes whenever
the argument of the factorial in the denominator is negative.
%For example,
%\BE{AgIb0_e1}A_{I,\b;\0}^{(g)}=\frac{\llrr{\la_{g;I};(\wt\tau_{b_k})_{k\in[m]}}}
%{(|\b|\!-\!\mu_g(I)\!-\!m)!}\,.\EE

\begin{prp}\label{HodgeInt_prp}
Let $g,m\!\in\!\Z^{\ge0}$  with $2g\!+\!m\!\ge\!3$ and $I\!\in\!(\Z^{\ge0})^g$.
There exists a collection 
$$A_{I;\c}^{(g,\un\ep)}\in\Q \qquad\hbox{with}\quad
\un\ep\in(\Z^{\ge0})^m,~\c\!\in\!(\Z^{\ge0})^{\i},$$
which is invariant under the permutations of the components of~$\un\ep$ such~that
\BE{DMstr_e} 
A_{I,\b;\c}^{(g)}=(-1)^{\|\c\|}\hspace{-.2in}
\sum_{\begin{subarray}{c} \un\ep\in (\Z^{\ge0})^m\\
|\un\ep|\le\mu_g(I)+m\end{subarray}} \hspace{-.2in}
A_{I;\c}^{(g,\un\ep)}
\binom{|\b|\!-\!|\un\ep|}{\mu_g(I)\!+\!m\!+\!|\c|\!-\!\|\c\|\!-\!|\un\ep|}
\!\!\prod_{k=1}^m\!\!\binom{b_k}{\ep_k}\EE
for all $\b\!\in\!(\Z^{\ge0})^m$ and $\c\!\in\!(\Z^{\ge0})^{\i}$. 
These collections can be chosen so that there exists $C_g\!\in\!\R$ such~that 
\BE{DMstr_e2}\big|A_{I;\c}^{(g,\un\ep)} \big|\le 
C_g\,2^{\|\c\|}\big(3(g\!-\!1)\!+\!m\!+\!|\c|\big)!\EE
for all $I\!\in\!(\Z^{\ge0})^g$, $\c\!\in\!(\Z^{\ge0})^{\i}$,  
$\un\ep\!\in\!(\Z^{\ge0})^m$, and $m\!\in\!\Z^{\ge0}$
with $2g\!+\!m\!\ge\!3$.
\end{prp}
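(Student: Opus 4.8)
The plan is in four steps: (i) express the $\b$-dependence of the Hodge integral $\llrr{\la_{g;I};\wt\tau_\b}$ using only the string equation; (ii) substitute this into $A_{I,\b;\c}^{(g)}$ and collapse the sum over $\b'$ with an elementary binomial identity; (iii) read off a valid collection $A_{I;\c}^{(g,\un\ep)}$ and verify~\eqref{DMstr_e}; (iv) prove~\eqref{DMstr_e2} by a short induction on the number of marked points. For step~(i), fix $m'$ with $2g+m'\ge3$ and $\b\in(\Z^{\ge0})^{m'}$ with $|\b|\ge\mu_g(I)+m'$. By~\eqref{DMdfn_e}, $\llrr{\la_{g;I};\tau_\b}$ is an integral over $\ov\cM_{g,|\b|-\mu_g(I)}$ in which $m'$ of the marked points carry $\psi$-powers and the remaining $|\b|-\mu_g(I)-m'$ carry $\psi^0$. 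Deleting the latter points one at a time via the string equation — legitimate in the presence of $\la_{g;I}$, since the Hodge bundle is pulled back along the forgetful maps — and counting the lattice paths of exponent decrements, one obtains
$$\bllrr{\la_{g;I};\wt\tau_\b}=\big(|\b|-\mu_g(I)-m'\big)!\!\!\sum_{\begin{subarray}{c}\al\in(\Z^{\ge0})^{m'}\\ |\al|=\mu_g(I)+m'\end{subarray}}\!\!\Bigg(\prod_{k=1}^{m'}\binom{b_k}{\al_k}\Bigg)\bllrr{\la_{g;I};\wt\tau_\al},$$
the sum running over the ``top'' integrals over $\ov\cM_{g,m'}$, both sides vanishing when $|\b|<\mu_g(I)+m'$, consistently with~\eqref{DMdfn_e}. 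The same reduction records the $\wt\tau$-forms of the string and dilaton equations, each of which rewrites a top integral on $m'+1$ points in terms of top integrals on $m'$ points.

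\emph{Collapsing the $\c$-sum.} Insert the displayed formula with $m'=m+|\c|$ into $A_{I,\b;\c}^{(g)}$; the factorial $(|\b|+|\b'|-\mu_g(I)-m-|\c|)!$ cancels the denominator. Write each top index on $\ov\cM_{g,m+|\c|}$ as a pair $(\un\ep,\eta)$ with $\un\ep\in(\Z^{\ge0})^m$ on the $\b$-slots and $\eta\in(\Z^{\ge0})^{S(\c)}$ on the $\b'$-slots, and interchange the sums over $\b'$ and over $(\un\ep,\eta)$. The $\b'$-sum then factorizes over $S(\c)$ into factors $\sum_{b'=0}^{r}(-1)^{b'}\binom{b'}{e}\binom{r}{b'}$, which by $\binom{b'}{e}\binom{r}{b'}=\binom{r}{e}\binom{r-e}{b'-e}$ equal $\binom{r}{e}(-1)^e(1-1)^{r-e}=(-1)^r\delta_{e,r}$; hence only the term with $\eta$ equal to $\eta^{*}:=(r)_{(r,j)\in S(\c)}$ survives, it carries the sign $\prod_{(r,j)}(-1)^r=(-1)^{\|\c\|}$, and since $|\eta^{*}|=\|\c\|$ it forces $|\un\ep|=L:=\mu_g(I)+m+|\c|-\|\c\|$. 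Therefore
$$A_{I,\b;\c}^{(g)}=(-1)^{\|\c\|}\!\!\sum_{\begin{subarray}{c}\un\ep\in(\Z^{\ge0})^m\\ |\un\ep|=L\end{subarray}}\!\!\bllrr{\la_{g;I};\wt\tau_{\un\ep\eta^{*}}}\ \prod_{k=1}^m\binom{b_k}{\ep_k}.$$
Now set $A_{I;\c}^{(g,\un\ep)}:=\llrr{\la_{g;I};\wt\tau_{\un\ep\eta^{*}}}$ if $|\un\ep|=L$ and $A_{I;\c}^{(g,\un\ep)}:=0$ otherwise; since $\|\c\|\ge|\c|$ we have $L\le\mu_g(I)+m$. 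As this collection is supported on $|\un\ep|=L$, where $\binom{|\b|-|\un\ep|}{\mu_g(I)+m+|\c|-\|\c\|-|\un\ep|}=\binom{|\b|-L}{0}=1$, the right-hand side of~\eqref{DMstr_e} collapses to exactly the display above, so~\eqref{DMstr_e} holds; permutation invariance in $\un\ep$ is inherited from that of the moduli integral. (If a choice spread over all $|\un\ep|\le L$ is desired, one may instead binomial-invert $\un\ep\mapsto\llrr{\la_{g;I};\wt\tau_{\un\ep\eta^{*}}}$ and apply Vandermonde's convolution, but this is not needed here.)

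\emph{The uniform bound.} It remains to bound $|A_{I;\c}^{(g,\un\ep)}|=|\llrr{\la_{g;I};\wt\tau_{\un\ep\eta^{*}}}|$, a top integral over $\ov\cM_{g,m+|\c|}$. I claim that for each fixed $g$ there is $C_g$ with $|\llrr{\la_{g;I};\wt\tau_\al}|\le C_g\,(3(g-1)+m')!$ for all $m'$ with $2g+m'\ge3$, all $I$, and all top $\al$ over $\ov\cM_{g,m'}$; with $m'=m+|\c|$ this gives~\eqref{DMstr_e2}, the factor $2^{\|\c\|}$ being pure slack. One argues by induction on $m'$. When $m'>\mu_g(I)$, the condition $|\al|=\mu_g(I)+m'$ forces $\al$ to have an entry equal to $0$ or to $1$, so the $\wt\tau$-string resp.\ $\wt\tau$-dilaton identity reduces $\llrr{\la_{g;I};\wt\tau_\al}$ to top integrals over $\ov\cM_{g,m'-1}$ with the same $I$ and multiplier at most $|\al|=3(g-1)-\|I\|+m'\le3(g-1)+m'$, resp.\ $2g-2+(m'-1)\le3(g-1)+m'$; since $(3(g-1)+m')\cdot(3(g-1)+m'-1)!=(3(g-1)+m')!$, the inductive hypothesis on $\ov\cM_{g,m'-1}$ reproduces the bound. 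The remaining values are $m'\le\max\big(\mu_g(I),\max(1,3-2g)\big)$ — namely $m'\le3$ for $g=0$, $m'=1$ for $g=1$, and $m'\le3g-3$ for $g\ge2$ — and, since $\la_{g;I}=0$ unless $\|I\|$ is at most the dimension of $\ov\cM_{g,m'}$, these comprise only finitely many integrals, so one takes $C_g$ to dominate all of them.

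The only places that need genuine care are the interchange-and-factorize manipulation of step~(ii), together with keeping straight which multi-index lives where (the three lengths $m$, $|\c|$, $m+|\c|$ and the threshold conventions of~\eqref{DMdfn_e}), and, in step~(iv), checking that deleting $0$- and $1$-entries always terminates among the finitely many ``core'' integrals while preserving top degree. After that, the binomial collapse and the induction are each a one-line computation.
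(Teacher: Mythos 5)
Your proof is correct, and it takes a genuinely different and more direct route than the paper's. The paper first establishes (Lemma~\ref{DM_lmm}, by an involved inductive construction) a universal polynomial expansion $\bllrr{\la_{g;I};\tau_{\b}}=\sum_{\un\ep}C_{I;\un\ep}^{(g)}\binom{|\b|-|\un\ep|}{\b-\un\ep}$, feeds it into the defining series for $A_{I,\b;\c}^{(g)}$, collapses the $\b'$-sum using two binomial identities from Lemma~\ref{comb_l0} (Lemma~\ref{DM_lmm2}), and finally extracts the coefficients $A_{I;\c}^{(g,\un\ep)}$. You instead observe that the string equation applied to the $\psi^0$-slots implicit in \eqref{DMdfn_e} already gives the closed formula $\bllrr{\la_{g;I};\wt\tau_{\b}}=(|\b|-\mu_g(I)-m')!\sum_{|\al|=\mu_g(I)+m'}\big(\prod_k\binom{b_k}{\al_k}\big)\bllrr{\la_{g;I};\wt\tau_{\al}}$ — which is sound: the $\wt\tau$-form of the string equation is $\llrr{\la;\wt\tau_{\b0}}=\sum_i b_i\llrr{\la;\wt\tau_{\b-e_i}}$, and $\sum_i b_i\binom{b_i-1}{\al_i}\prod_{k\neq i}\binom{b_k}{\al_k}=(|\b|-|\al|)\prod_k\binom{b_k}{\al_k}$ closes the induction. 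After the factorial cancels, the $\b'$-sum factorizes over $S(\c)$ and the orthogonality $\sum_{b'=0}^{r}(-1)^{b'}\binom{b'}{e}\binom{r}{b'}=(-1)^r\de_{e,r}$ kills all terms except $\eta^*=(r)_{(r,j)\in S(\c)}$, forcing $|\un\ep|=L:=\mu_g(I)+m+|\c|-\|\c\|$ and yielding $A_{I;\c}^{(g,\un\ep)}=\llrr{\la_{g;I};\wt\tau_{\un\ep\eta^*}}$ supported on $|\un\ep|=L$; \eqref{DMstr_e} then holds tautologically since the binomial reduces to $\binom{|\b|-L}{0}$. Your collection differs from the paper's (which is spread over all $|\un\ep|\le\mu_g(I)+m$), but the proposition asks only for some valid choice, and the two agree at the level of $A_{I,\b;\c}^{(g)}$ — e.g., at $g=0$, $\c=\0$, your version reproduces $(m-3)!\binom{|\b|}{m-3}$ via Vandermonde exactly as the paper's coefficient supported at $\un\ep=\0$ does directly. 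For the bound your induction on the number of marked points is right, provided one estimates the string reduction by the sum $\sum_i\al_i'=|\al'|\le3(g-1)+m'$ rather than (number of terms) times (max), which is what you implicitly do. What your route does not buy for free is Corollary~\ref{DM_crl3}: the paper's proof of that corollary leans on the specific form \eqref{DMcrl2_e1} of $A_{I;\c}^{(g,\un\ep)}$ produced by Lemmas~\ref{DM_lmm} and~\ref{DM_lmm2}, and that corollary is used in the proof of Proposition~\ref{HodgeIntGS_prp}; with your coefficients swapped in globally, you would have to re-derive its analogue from your closed form — straightforward, but it is exactly the extra structure the paper's more circuitous construction is designed to front-load.
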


\noindent 
For example, 
\BE{AgIb0_e2}
\frac{\llrr{\la_{g;I};\wt\tau_{\b}}}
{(|\b|\!-\!\mu_g(I)\!-\!m)!}=A_{I,\b;\0}^{(g)}
=\sum_{\begin{subarray}{c} \un\ep\in (\Z^{\ge0})^m\\
|\un\ep|\le\mu_g(I)+m\end{subarray}} \hspace{-.2in}
A_{I;\0}^{(g,\un\ep)}
\binom{|\b|\!-\!|\un\ep|}{\mu_g(I)\!+\!m\!-\!|\un\ep|}
\!\!\prod_{k=1}^m\!\!\binom{b_k}{\ep_k}\,.\EE
If $r\!\in\!\Z^+$ and  $e_r\!\in\!(\Z^{\ge0})^{\i}$ is the $r$-th standard coordinate vector, 
then
\begin{equation*}\begin{split}
&\sum_{b'=0}^{b'=r}\!\!\Bigg(\!\! (-1)^{b'}
\frac{\llrr{\la_{g;I};\wt\tau_{\b b'}}}
{(|\b|\!+\!b'\!-\!\mu_g(I)\!-\!m\!-\!1)!}\binom{r}{b'}\!\!\Bigg)
=A_{I,\b;e_r}^{(g)}\\
&\hspace{2in}=
(-1)^r\!\!\!\!\!\!\!\!\!\!
\sum_{\begin{subarray}{c} \un\ep\in (\Z^{\ge0})^m\\
|\un\ep|\le\mu_g(I)+m\end{subarray}} \hspace{-.2in}
A_{I;e_r}^{(g,\un\ep)}
\binom{|\b|\!-\!|\un\ep|}{\mu_g(I)\!+\!m\!+\!1\!-\!r\!-\!|\un\ep|}
\!\!\prod_{k=1}^m\!\!\binom{b_k}{\ep_k}\,.
\end{split}\end{equation*}  
Proposition~\ref{HodgeInt_prp} is established in Section~\ref{HodgeInt_subs};
all numbers $A_{I;\c}^{(0,\un\ep)}$ and some numbers $A_{I;\c}^{(1,\un\ep)}$
are determined in Examples~\ref{AgepIc_eg01} and~\ref{AgepIc_eg1}.
The bound~\e_ref{DMstr_e2} is used in the proof of Theorem~\ref{GWbound_thm}.

\subsection{Asymptotic expansions}
\label{HG_subs}

\noindent
The second input determining the structure coefficients $\nc_{g;\p,\b}^{(d)}$
in Theorem~\ref{main_thm} is the asymptotic expansion of 
the hypergeometric~series 
\BE{Fdfn_e}
F(w,q)\equiv \sum_{d=0}^{\i}q^d 
\frac{w^{n d}}{\prod\limits_{r=1}\limits^{r=d}((w\!+\!r)^n\!-\!w^n)}
\in\Q(w)\big[\big[q\big]\big]\EE
as $w\!\lra\!\i$ provided by Proposition~\ref{Fexp_prp} below.
By \cite{Gi,LLY}, this hypergeometric~series encodes
the 1-marked genus~0 GW-invariants of~$\P^{n-1}$.\\

\noindent
For $n\!\in\!\Z^+$, let
\BE{Ldfn_e0}
L(q)=(1\!+\!q)^{1/n}\in 1+q\Q[[q]] \,.\EE
For $m,j\!\in\!\Z$, we define $\H_{m,j}\in\Q(u)$ recursively by
\begin{equation*}\begin{split}
\H_{m,j}&\equiv0 \quad\hbox{unless}~~0\le j\le m,\qquad
\H_{0,0}\equiv1;\\
\H_{m,j}(u)&\equiv\H_{m-1,j}(u)+
\big(u\!-\!1\big)
\bigg(\frac{\tnd}{\tnd u}+\frac{m\!-\!j}{nu}\bigg)\H_{m-1,j-1}(u)
\quad\hbox{if}~ m\ge1,~0\le j\le m. 
\end{split}\end{equation*}
For example, 
\BE{Hlow_e} 
\H_{m,0}(u)=1, \quad \H_{m,1}(u)=\binom{m}{2}\frac{u\!-\!1}{n u},\quad
\H_{m,2}(u)=\binom{m}{3}\!\bigg(\!\!n+\frac{3m\!-\!5}{4}(u\!-\!1)\!\!\bigg)
\frac{u\!-\!1}{n^2u^2}\EE
for $m\!\ge\!0$.
Finally, we define differential operators $\fL_1,\ldots,\fL_n$ on $\Q[[q]]$ by
\BE{fLkdfn_e}
D=q\frac{\tnd}{\tnd q}\,, \qquad
\fL_k=L^n\sum_{i=0}^k\binom{n}{i}\H_{n-i,k-i}(L^n)D^i\,.\EE
By~\e_ref{Ldfn_e0} and~\e_ref{Hlow_e}, the first two operators~are 
\begin{gather*}\begin{split}
\fL_1&=n L^n D+\frac{n\!-\!1}{2}\big(L^n\!-\!1\big)
=nL^n\Bigg\{L^{\frac{1-n}{2}}DL^{\frac{n-1}{2}}\Bigg\},\\
\fL_2&=\binom{n}{2}L^nD^2+\binom{n\!-\!1}{2}(L^n\!-\!1)D
+\frac{(n\!-\!1)(n\!-\!2)}{24nL^n}
\big((3n\!-\!5)L^n\!+\!n\!+\!5\big)(L^n\!-\!1)\,.
\end{split}\end{gather*}

\begin{prp}[{\cite[Proposition~2.1]{g0ci}}]\label{Fexp_prp}
The power series~$F$ of~\e_ref{Fdfn_e} admits an asymptotic expansion
\BE{F0exp_e} F(w,q)\sim e^{\xi(q)w}
\sum_{b=0}^{\i}\Phi_b(q)w^{-b}
\qquad\hbox{as}\quad w\lra\i,\EE
with $\xi,\Phi_1,\ldots\!\in\!q\Q[[q]]$ and $\Phi_0\!\in\!1\!+\!q\Q[[q]]$ 
determined by the first-order ODEs  
\BE{PhiODE_e}
1+D\xi=L\,,\qquad \fL_1\Phi_b+\frac{1}{L}\fL_2\Phi_{b-1}
+\ldots +\frac{1}{L^{n-1}}\fL_n\Phi_{b+1-n}=0,\EE
where $\Phi_b\equiv0$ for $b\!<\!0$.
\end{prp}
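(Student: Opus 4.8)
\noindent
\emph{Proof strategy.}
The plan is to realize $F$ as a solution of a single linear ODE in~$q$ (with $w$ a parameter), to construct a formal solution of that ODE of the shape appearing on the right of~\eqref{F0exp_e}, to recognize the relations~\eqref{PhiODE_e} as exactly the conditions under which such a solution exists and is uniquely determined, and finally to match the constructed series with~$F$ coefficientwise in~$q$. For the first point, write $F(w,q)=\sum_{d\ge0}c_d(w)q^d$; the product in~\eqref{Fdfn_e} gives the recursion $\big((w\!+\!d)^n\!-\!w^n\big)c_d(w)=w^nc_{d-1}(w)$ with $c_0\equiv1$, and since $D^iF=\sum_dd^ic_d(w)q^d$ this is equivalent to
\[
\big[(w\!+\!D)^n-(1\!+\!q)w^n\big]F=0 .
\]

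\noindent
Next, from $D\big(e^{\xi w}h\big)=e^{\xi w}\big(Dh+w(D\xi)h\big)$ one sees that substituting $F=e^{\xi w}\Psi$ conjugates the ODE into $\big[(wL_\xi+D)^n-(1\!+\!q)w^n\big]\Psi=0$, where $L_\xi:=1+D\xi$. Comparing coefficients of~$w^n$ (with $\Phi_0\not\equiv0$) forces $L_\xi^n=1+q$; since $\xi\in q\Q[[q]]$ and $D$ is a bijection of $q\Q[[q]]$, the unique solution is $1+D\xi=(1+q)^{1/n}=L$, the first equation in~\eqref{PhiODE_e}, and it determines~$\xi$. With this choice one expands
\[
(wL+D)^m=\sum_{k=0}^m w^{m-k}\,\Pi_{m,k},
\]
where $\Pi_{m,k}$ is a differential operator of order~$k$ in~$D$ with coefficients rational in~$L$ (all $q$-derivatives of~$L$ are rational in~$L$, via $DL=(L^n\!-\!1)/(nL^{n-1})$), and $\Pi_{m,0}=L^m$. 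Taking $m=n$, substituting $\Psi=\sum_b\Phi_bw^{-b}$, and reading off the coefficient of $w^{n-1-b}$ yields
\[
\Pi_{n,1}\Phi_b+\Pi_{n,2}\Phi_{b-1}+\ldots+\Pi_{n,n}\Phi_{b+1-n}=0 .
\]

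\noindent
The third step, which I expect to be the main obstacle, is the combinatorial identification $L^k\Pi_{n,k}=\fL_k$. Peeling one factor off on the left, $(wL+D)^m=(wL+D)(wL+D)^{m-1}$ gives the operator recursion $\Pi_{m,k}=L\,\Pi_{m-1,k}+D\circ\Pi_{m-1,k-1}$ with $\Pi_{0,0}=\mathrm{id}$. One then proves by induction on~$m$ that
\[
\Pi_{m,k}=L^{m-k}\sum_{i=0}^k\binom{m}{i}\H_{m-i,k-i}(L^n)\,D^i;
\]
in the inductive step one clears the common factor~$L^{m-k}$, uses $D\circ(fD^j)=(Df)D^j+fD^{j+1}$ together with $Du=u-1$ and $D\big(L^{m-k}\big)=(m\!-\!k)L^{m-k}(u\!-\!1)/(nu)$ for $u=L^n$, and the coefficient of each~$D^i$ collapses---after the binomial identity $\binom{m}{i}-\binom{m-1}{i-1}=\binom{m-1}{i}$---exactly into the recursion defining~$\H_{m,j}$ in the statement; the conventions $\H_{m,j}\equiv0$ for $j\notin\{0,\ldots,m\}$ make the boundary terms consistent. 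For $m=n$ this is $L^k\Pi_{n,k}=\fL_k$ by~\eqref{fLkdfn_e}, so multiplying the displayed recursion by~$L$ turns it into~\eqref{PhiODE_e}.

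\noindent
Finally, $\fL_1=nL^n\big(L^{(1-n)/2}\!\circ D\circ L^{(n-1)/2}\big)$, and since $D$ is a bijection of $q\Q[[q]]$, so is~$\fL_1$ on $q\Q[[q]]$, while $\fL_1\Phi_0=0$ forces $\Phi_0=cL^{-(n-1)/2}$, with $c=1$ dictated by the normalization $\Phi_0\in1\!+\!q\Q[[q]]$. For $b\ge1$ the right-hand side $-\sum_{j\ge2}L^{1-j}\fL_j\Phi_{b+1-j}$ vanishes at $q=0$ (each $\fL_j$ raises the $q$-order for $j\ge1$, as $L|_{q=0}=1$ and the only surviving term of $\fL_j$ at $q=0$ is $\binom{n}{j}D^j$), so it lies in $q\Q[[q]]$ and $\fL_1\Phi_b$ has a unique preimage $\Phi_b\in q\Q[[q]]$. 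Thus $\xi,\Phi_0,\Phi_1,\ldots$ exist and are unique, and $\wt F:=e^{\xi w}\sum_b\Phi_bw^{-b}$ is a formal solution of the ODE. Extracting the coefficient of~$q^d$ shows $[q^d]\wt F$---a Laurent series in $w^{-1}$---satisfies the recursion $\big((w\!+\!d)^n\!-\!w^n\big)[q^d]\wt F=w^n[q^{d-1}]\wt F$ obeyed by $c_d(w)$, while $[q^0]\wt F=\Phi_0(0)=1=c_0(w)$; hence $[q^d]\wt F$ is the asymptotic expansion at $w=\infty$ of the rational function $c_d(w)$ for every~$d$, which is precisely the claim~\eqref{F0exp_e}.
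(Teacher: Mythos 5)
Your proposal is correct, and since the paper itself does not prove this proposition but quotes it from \cite[Proposition~2.1]{g0ci}, the right comparison is with that cited argument, which proceeds in essentially the same way: derive the hypergeometric ODE $[(w\!+\!D)^n\!-\!(1\!+\!q)w^n]F\!=\!0$, conjugate by $e^{\xi w}$ so that the functions $\H_{m,j}$ arise exactly from the operator recursion $\Pi_{m,k}\!=\!L\Pi_{m-1,k}\!+\!D\circ\Pi_{m-1,k-1}$, and solve the resulting triangular system, using $\H_{m,j}(1)\!=\!0$ for $j\!\ge\!1$ to see that $\fL_1$ can be inverted on $q\Q[[q]]$ at each stage. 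Your verification steps (the Pascal-identity collapse onto the defining recursion of $\H_{m,j}$, and the coefficientwise matching of $[q^d]\wt F$ with the Laurent expansion of $c_d(w)$ at $w\!=\!\i$) are sound, so nothing essential is missing.
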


\noindent
For example,
\BE{F0exp_e2}\begin{split}
\Phi_0(q)&= L(q)^{-(n-1)/2}=(1\!+\!q)^{-(n-1)/2n},\\
\Phi_1(q)&=\frac{n\!-\!1}{24n}\Big(n\!+\!(n\!+\!1)L(q)^{-1}\!-\!(2n\!+\!1)L(q)^{-(n+1)}\Big)
\Phi_0(q).
\end{split}\EE

\vspace{.2in}

\noindent
By Proposition~\ref{Fexp_prp}, for each $p\!\in\!\Z^{\ge0}$ there is an asymptotic expansion
\BE{Fpexp_e} 
\big\{1\!+\!w^{-1}D\big\}^pF(w,p) 
\sim e^{\xi(q)w}\sum_{b=0}^{\i}\Phi_{p;b}(q)w^{-b}
\qquad\hbox{as}\quad w\lra\i,\EE
with   $\Phi_{p;0}\!\in\!1\!+\!q\Q[[q]]$ and
$\Phi_{p;1},\Phi_{p;2}\ldots\!\in\!q\Q[[q]]$ described~by
$$\Phi_{0;b}=\Phi_b, \quad
\Phi_{p;-1}\equiv0,\quad
\Phi_{p;b}=L\Phi_{p-1;b}+D\Phi_{p-1;b-1} \qquad\forall\,p\!\in\!\Z^+,\,b\!\in\!\Z^{\ge0}.$$
We set $\Phi_{p;b}\!=\!0$ if $b\!<\!0$.
For example, 
\BE{F0exp_e2b}\Phi_{p;0}=L^p\Phi_0, \qquad
\Phi_{p;1}=L^p\Phi_1-\frac{(n\!-\!p)p}{2n}L^{p-1}\big(1\!-\!L^{-n}\big)\Phi_0;\EE
the second identity above follows by induction from the first one
along with~\e_ref{Ldfn_e0} and~\e_ref{F0exp_e2}.\\

\noindent
For $g\!\in\!\Z^{\ge0}$, let $(C_{g;n;I})_{I\in(\Z^{\ge0})^g}$ be a tuple of integers 
such~that 
\BE{CgIdfn_e0}
e\big(\bE_g^*\!\otimes\!T\P^{n-1}\big)
=\sum_{I\in(\Z^{\ge0})^g}\!\!\!\!\!C_{g;n;I}\la_{g;I}H^{(n-1)g-\|I\|}
\in H^*\big(\ov\cM_{g,m}\!\times\!\P^{n-1}\big)\EE
for all $m\!\in\!\Z^{\ge0}$ with $2g\!+\!m\!\ge\!3$.
Let  $g,m\!\in\!\Z^{\ge0}$ with $2g\!+\!m\!\ge\!3$, $I\!\in\!(\Z^{\ge0})^g$,  
$\c\!\in\!(\Z^{\ge0})^{\i}$, and \hbox{$\un\ep\!\in\!(\Z^{\ge0})^m$}. 
With $A_{I;\c}^{(g,\un\ep)}\!\in\!\Q$ as in Proposition~\ref{HodgeInt_prp},
define
\begin{gather}
\label{hatAdfn_e}
\wh{A}_{I;(c_1,c_2,\ldots)}^{(g,\un\ep)}=
A_{I;(0,c_1,c_2,\ldots)}^{(g,\un\ep)}\prod_{k=1}^m\!\frac{1}{\ep_k!}\,,\\
\label{PsimiCdfn_e}
\Phi_{I;\c}^{(g,\un\ep)}(q)=
\frac{(-1)^{\mu_g(I)+m+|\c|}C_{g;n;I}\wh{A}_{I;\c}^{(g,\un\ep)}}{\Phi_0(q)^{2g-2}}
\prod_{r=1}^{\i}\frac{1}{c_r!}\!\bigg(\frac{\Phi_r(q)}{(r\!+\!1)!\,\Phi_0(q)}\!\bigg)^{\!\!c_r}\,.
\end{gather}

\begin{eg}\label{PhiIc_eg}
Using that the rank of $\bE_g$ is~$g$ and Euler's sequence for~$\P^{n-1}$, we obtain
\BE{CgnI_e} C_{0;n;()}=1, \qquad C_{1;n;(0)}=n, 
\qquad C_{1;n;(1)}=-\frac{(n\!-\!1)n}{2}\,.\EE 
For $\0\!\in\!(\Z^{\ge0})^m$, these statements and Example~\ref{AgepIc_eg01} give
\begin{equation*}\begin{split}
\Phi_{();\c}^{(0,\0)}(q)&=(-1)^{m-3+|\c|}\big(m\!-\!3\!-\!|\c|\big)!
\Phi_0(q)^2\prod_{r=1}^{\i}\frac{1}{c_r!}
\!\bigg(\frac{\Phi_r(q)}{(r\!+\!1)!\,\Phi_0(q)}\!\bigg)^{\!\!c_r},\\
\Phi_{(1);\c}^{(1,\un\0)}(q)&=(-1)^{m+|\c|}
\frac{(n\!-\!1)n(m\!-\!1\!-\!|\c|)!}{48}\prod_{r=1}^{\i}\frac{1}{c_r!}
\!\bigg(\frac{\Phi_r(q)}{(r\!+\!1)!\,\Phi_0(q)}\!\bigg)^{\!\!c_r}\,.
\end{split}\end{equation*}
All other power series $\Phi_{I;\c}^{(g,\un\ep)}$ with $g\!=\!0$ and 
$(g,I)\!=\!(1,(1))$ vanish.
The first expression above equals $\Phi_{m-3,\c}(q)$ in \cite[(2.30)]{g0ci}.
For $\un\ep\!\in\!\{0,1\}^m$, Example~\ref{AgepIc_eg1} gives
$$\Phi_{(0);\0}^{(1,\un\ep)}(q)=
(-1)^m \frac{n(m\!-\!|\un\ve|)!}{24}\cdot\begin{cases}
0,&\hbox{if}~|\un\ve|\!=\!0;\\
1,&\hbox{if}~|\un\ve|\!=\!1;\\
-(|\un\ve|\!-\!2)!,&\hbox{if}~|\ve|\!\ge\!2.\end{cases}$$
If in addition $r\!\in\!\Z^+$ and $e_r\!\in\!(\Z^{\ge0})^{\i}$ is 
the $r$-th standard coordinate vector, then
$$\Phi_{(0);e_r}^{(1,\un\ep)}(q)=
(-1)^{m+1} \frac{n(m\!-\!|\un\ve|)!}{24(r\!+\!1)!}
\frac{\Phi_r(q)}{\Phi_0(q)}\cdot\begin{cases}
(r\!+\!1),&\hbox{if}~|\un\ve|\!=\!0;\\
(m\!-\!1\!-\!r),&\hbox{if}~|\un\ve|\!=\!1;\\
-(|\un\ve|\!-\!2)!((|\un\ve|\!-\!1)r\!+\!m),
&\hbox{if}~|\ve|\!\ge\!2.
\end{cases}$$
The power series $\Phi_{I;\c}^{(1,\un\ep)}$  
with $\un\ep\!\not\in\!\{0,1\}^m$ for any $m\!\in\!\Z^+$ vanish.
\end{eg}

\subsection{The structure coefficients}
\label{Mainform_subs}

\noindent
We now describe the coefficients $\nc_{g;\p,\b}^{(d)}$ in~\e_ref{mainthm_e}
explicitly in two ways.
The first description provides a closed formula for these coefficients
as sums over connected trivalent $N$-marked genus~$g$ graphs;
see~\e_ref{ncCdfn2_e}.
The second description provides a recursive definition of these coefficients
which reduces the value of $3g\!+\!N$ (or equivalently of the dimension of~$\ov\cM_{g,N}$)
with the base case provided by~\e_ref{coeff2_e}, when this value is~2;
see~\e_ref{coeffdfn_e}.
We also show that these coefficients satisfy
\BE{ncvan_e}
\nc_{g;\p,\b}^{(d)}\neq0 \quad\Lra\quad
|\b|\le 3(g\!-\!1)\!+\!N, ~~
|\p|\!-\!|\b|\!+\!nd=(n\!-\!4)(g\!-\!1)\!+\!(n\!-\!2)N.\EE
It is fairly straightforward to see that the two descriptions are equivalent.
This also follows from the two variations of the main localization computation
in Sections~\ref{ClFormComp_subs} and~\ref{RecFormComp_subs}.
For a ring~$R$, \hbox{$\Phi\!\in\!R[[q]]$}, and $d\!\in\!\Z$, let
$$\blrbr{\Phi}_{q;d}\in R$$
denote the coefficient of~$q^d$ (with $\lrbr{\Phi}_{q;d}\!\equiv\!0$ if $d\!<\!0$).\\

\noindent
Let $S$ be a finite set.
An \sf{$S$-marked graph} is a tuple
\BE{GaNdfn_e} \Ga\equiv 
\big(\g\!:\!\Ver\!\lra\!\Z^{\ge0},
\eta\!:S\!\sqcup\!\Fl\!\lra\!\Ver,\Edg\big),\EE
where $\Ver$ and $\Fl$ are finite sets (of \sf{vertices} and \sf{flags}, respectively)
and $\Edg$ is a partition of $\Fl$ into subsets~$e$ with $|e|\!=\!2$.
For $N\!\in\!\Z^{\ge0}$, an \sf{$N$-marked graph} is an $[N]$-marked graph. 
Figure~\ref{trivalent_fig} depicts some 2-marked graphs~$\Ga$,
representing each vertex of~$\Ga$ by a~dot and each edge by a curve between its vertices.
The number next to a vertex~$v$, if any, is~$\g(v)$; we omit it if $\g(v)\!=\!0$.
The elements of the~set $[N]\!=\![2]$ are shown in bold face and 
are linked by line segments to their images under~$\eta$.\\

\noindent
An \sf{equivalence} between an $S$-marked graph as in~\e_ref{GaNdfn_e}
and another $S$-marked graph
$$ \Ga'\equiv \big(\g'\!:\!\Ver'\!\lra\!\Z^{\ge0},
\eta'\!:S\!\sqcup\!\Fl'\!\lra\!\Ver',\Edg'\big)$$
is a pair of bijections $h_{\Ver}\!:\Ver\!\lra\!\Ver'$ and $h_{\Fl}\!:\Fl\!\lra\!\Fl'$
such that  
$$\g=\g'\!\circ\!h_{\Ver}, \quad 
h_{\Ver}\!\circ\!\eta|_S=\eta'|_S, \quad
h_{\Ver}\!\circ\!\eta|_{\Fl}=\eta'\!\circ\!h_{\Fl},\quad
h_{\Fl}(e)\in\Edg'~\forall\,e\!\in\!\Edg.$$
We denote by $\Aut(\Ga)$ the group of \sf{automorphisms}, i.e.~self-equivalences, of~$\Ga$.
For example, \hbox{$\Aut(\Ga)\!=\!2$} for the second and third graphs in Figure~\ref{trivalent_fig}.\\

\noindent
For $\Ga$ as in~\e_ref{GaNdfn_e} and $f\!\in\!\Fl$, 
we denote by $e_f\!\in\!\Edg$ the unique element of~$\Edg$
containing~$f$.
For each $v\!\in\!\Ver$, let
\begin{gather*}
g_v=\g(v), \quad S_v=S\!\cap\!\eta^{-1}(v), ~~
\Fl_v(\Ga)=\Fl\!\cap\!\eta^{-1}(v), ~~
\ov\Fl_v(\Ga)=\eta^{-1}(v)\subset S\!\sqcup\!\Fl, \\
\val_{\Ga}(v)=2\big(g_v\!-\!1)+\big|\ov\Fl_v(\Ga)\big|,\quad
m_v(\Ga)=3\big(g_v\!-\!1\big)+\big|\ov\Fl_v(\Ga)\big|.
\end{gather*}
A vertex $v\!\in\!\Ver$ of~$\Ga$ is \sf{trivalent}~if $\val_{\Ga}(v)\!>\!0$.
The graph~$\Ga$ is \sf{trivalent} if all its vertices are trivalent.\\

\begin{figure}
\begin{pspicture}(7,-1.1)(10,1.5)
\psset{unit=.3cm}
% 1st diagram starts here
\pscircle*(30,0){.3}\rput(29.2,-.2){\smsize{$1$}}
\psline[linewidth=.04](30,0)(32,2)\rput(32.5,2){\smsize{$\bf 1$}}
\psline[linewidth=.04](30,0)(32,-2)\rput(32.5,-2){\smsize{$\bf 2$}}
% 2nd diagram starts here
\pscircle(38,0){1}\pscircle*(39,0){.3}
\psline[linewidth=.04](39,0)(41,2)\rput(41.5,2){\smsize{$\bf 1$}}
\psline[linewidth=.04](39,0)(41,-2)\rput(41.5,-2){\smsize{$\bf 2$}}
% 3rd diagram starts here
\pscircle*(52,0){.3}\pscircle*(48,0){.3}
\psline[linewidth=.04](48,0)(46,-2)\rput(45.5,-2){\smsize{$\bf 2$}}
\psline[linewidth=.04](52,0)(54,2)\rput(54.5,2){\smsize{$\bf 1$}}
\pnode(52,0){A}\pnode(48,0){B}
\nccurve[ncurv=.7,nodesep=.1,angleA=210,angleB=-30]{-}{A}{B}
\nccurve[ncurv=.7,nodesep=.1,angleA=150,angleB=30]{-}{A}{B}
% 4th diagram starts here
\pscircle*(62,0){.3}\pscircle*(58,0){.3}\psline[linewidth=.06](58,0)(62,0)
\psline[linewidth=.04](62,0)(64,-2)\rput(64.5,-2){\smsize{$\bf 2$}}
\psline[linewidth=.04](62,0)(64,2)\rput(64.5,2){\smsize{$\bf 1$}}
\rput(57.2,-.2){\smsize{$1$}}
% 4th diagram starts here
\pscircle(68,0){1}
\pscircle*(73,0){.3}\pscircle*(69,0){.3}\psline[linewidth=.06](69,0)(73,0)
\psline[linewidth=.04](73,0)(75,-2)\rput(75.5,-2){\smsize{$\bf 2$}}
\psline[linewidth=.04](73,0)(75,2)\rput(75.5,2){\smsize{$\bf 1$}}
\end{pspicture}
\caption{The trivalent 2-marked genus~1 graphs}
\label{trivalent_fig}
\end{figure}
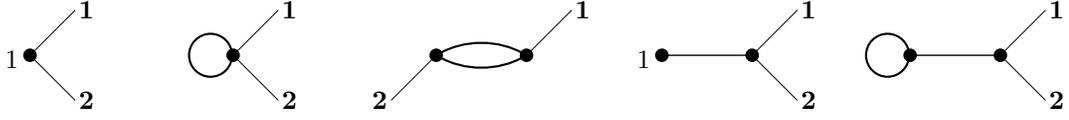

\noindent
A graph~$\Ga$  as in~\e_ref{GaNdfn_e} is \sf{connected} if for all $v,v'\!\in\!\Ver$ distinct 
there exist
\begin{gather*}
m\!\in\!\Z^+,~f_1^-,f_1^+,\ldots,f_m^-,f_m^+\!\in\!\Fl\qquad\hbox{s.t.}\\
\eta\big(f_1^-\big)\!=\!v,~\eta\big(f_m^+\big)\!=\!v',~
\eta\big(f_i^+\big)\!=\!\eta\big(f_{i+1}^-\big)~\forall\,i\!\in\![m\!-\!1],~
e_{f_i^-}=e_{f_i^+}~\forall\,i\!\in\![m].
\end{gather*}
For a connected graph~$\Ga$ as in~\e_ref{GaNdfn_e}, we define
\BE{gGAcond_e}
g_{\Ga}\equiv1\!-\!|\Ver|\!+\!|\Edg|\in\Z^{\ge0} \qquad\hbox{and}\qquad
\fa(\Ga)\equiv g_{\Ga}+\sum_{v\in\Ver}\!\!\g(v)\EE
to be the \sf{arithmetic genus} of the underlying graph without the map~$\g$
and the \sf{arithmetic genus} of the graph~$\Ga$ itself, respectively.
For such a graph,
\BE{mvsum_e}\sum_{v\in\Ver}\!\!|\ov\Fl_v(\Ga)|=2\,|\Edg|+|S|, \qquad 
\sum_{v\in\Ver}\!\!m_v(\Ga)+|\Edg|=3\big(\fa(\Ga)\!-\!1\big)+|S|;\EE
the first equality above does not depend on $\Ga$ being connected.
For $g,N\!\in\!\Z^{\ge0}$, we denote by $\cA_{g,N}$ 
the set of (equivalence classes of) connected trivalent $N$-marked genus~$g$  graphs. 
The two elements of~$\cA_{1,1}$ and the five elements of~$\cA_{1,2}$ 
are shown in Figures~\ref{g1N1_fig} and~\ref{trivalent_fig}, respectively.\\

\noindent
Let $\Ga$ be an $N$-marked graph as in~\e_ref{GaNdfn_e} with $S\!=\![N]$.
For $\b\!\in\!(\Z^{\ge0})^{\ov\Fl_v(\Ga)}$ and \hbox{$f\!\in\!\ov\Fl_v(\Ga)$},
we denote by $b_f\!\in\!\Z^{\ge0}$ the component of~$\b$ corresponding to~$f$.
For 
$$\b\in\big(\Z^{\ge0}\big)^{\ov\Fl(\Ga)}, \qquad  
\c\in \prod_{v\in\Ver}\!\!\!\!\big(\Z^{\ge0}\big)^{\i}, 
\qquad\hbox{and}\qquad
\bfI\in \prod_{v\in\Ver}\!\!\!\!\big(\Z^{\ge0}\big)^{g_v}\,,$$
we similarly denote their respective components by
$$b_f\!\in\!\Z^{\ge0}~~\hbox{for}~f\!\in\!\ov\Fl(\Ga) \quad\hbox{and}\quad
\b_v\!\in\!(\Z^{\ge0})^{\ov\Fl_v(\Ga)},~
\c_v\!\in\!(\Z^{\ge0})^{\i},~I_v\!\in\!(\Z^{\ge0})^{g_v}
~~\hbox{for}~v\!\in\!\Ver\,.$$
Define
\BE{wtcAstaredfn_e}\begin{split}
\wt\cA^{\star}(\Ga)&=\big\{(\b,\un\ep,\c,\bfI)\!\in\!
(\Z^{\ge0})^{\ov\Fl(\Ga)}\!\!\times\!\!(\Z^{\ge0})^{\ov\Fl(\Ga)}\!\!\times\!\!\!
\prod_{v\in\Ver}\!\!\!\!\big(\Z^{\ge0}\big)^{\i}
\!\!\times\!\!\!
\prod_{v\in\Ver}\!\!\!(\Z^{\ge0})^{g_v}\!:\\
&\hspace{1.5in}
|\b_v|\!+\!|\un\ep_v|\!+\!\|\c_v\|\!=\!\mu_{g_v}(I_v)\!+\!|\ov\Fl_v(\Ga)|~\forall\,v\!\in\!\Ver\big\}.
\end{split}\EE

\vspace{.15in}

\noindent
Choose a labeling of the flags so that each edge~$e$ of~$\Ga$ contains
flags with opposite signs.
For each vertex $v\!\in\!\Ver$ of~$\Ga$, denote~by
$$\Fl_v^{\pm}(\Ga)\subset\Fl_v(\Ga)$$ 
the subset of positive/negative flags at~$v$.
For $p\!\in\!\nset$, let 
$$\wh{p}=n\!-\!1\!-\!p \in \nset\,.$$
For $d\!\in\!\Z^{\ge0}$, $\p\!\in\!\nset^N$, and $\b\!\in\!(\Z^{\ge0})^N$, 
we define
\BE{cSGadfn_e}
\cS_{\Ga}(d,\p,\b)\subset (\Z^{\ge0})^{\Ver}\times \nset^{\Edg}\times(\Z^{\ge0})^{\Edg}\EE
to be the subset of triples $(\bfd,\p',\b')$ such~that
\begin{gather}\notag
\sum_{v\in\Ver}\!\!d_v=d,\\
\label{sumcond_e2}
\begin{split}
&\sum_{s\in S_v}\!\!\big(\wh{p}_s\!+\!b_s\big)
+\!\!\sum_{f\in\Fl_v^-(\Ga)}\!\!\!\!\!\!\big(p_{e_f}'\!+\!b_{e_f}'\big)
+\!\!\sum_{f\in\Fl_v^+(\Ga)}\!\!\!\!\!\!\big(\wh{p}_{e_f}'\!-\!1\!-\!b_{e_f}'\big)\\
&\hspace{1in}
=(n\!-\!4)(1\!-\!g_v)+\big|\ov\Fl_v(\Ga)\big|+n d_v \qquad\forall~v\!\in\!\Ver.
\end{split}
\end{gather}
By~\e_ref{gGAcond_e}  and the first statement in~\e_ref{mvsum_e}, 
this set is empty unless the second property on the right-hand side of~\e_ref{ncvan_e} 
with $g\!=\!\fa(\Ga)$  holds.\\

\noindent
For $(\p,\b)\!\in\nset^N\!\times\!(\Z^{\ge0})^N$ and $d\!\in\!\Z^{\ge0}$, let
\BE{ncCdfn2_e}\begin{split}
&\nc_{g;\p,\b}^{(d)} =
\sum_{\Ga\in\cA_{g,N}}\!\frac{1}{|\Aut(\Ga)|}
\sum_{\begin{subarray}{c}(\bfd,\p',\b')\in\cS_{\Ga}(d,\p,\b)\\
(\b'',\un\ep,\c,\bfI)\in\wt\cA^{\star}(\Ga)\end{subarray}}  
\hspace{-.3in}
(-1)^{|\b|+|\b'|}\!\!\!
\prod_{v\in\Ver}\!\\
&\hspace{.2in}
\left\llbracket \!\Phi_{I_v;\c_v}^{(g_v,\un\ep_v)}(q)\!\!
\prod_{s\in S_v}\!\frac{\Phi_{\wh{p}_s;b_s''+\ep_s-b_s}\!(q)}{b_s''!\,\Phi_0(q)}
\prod_{f\in\Fl_v^-(\Ga)}
\!\!\!\!\!\!\frac{\Phi_{p_{e_f}';b_f''+\ep_f-b_{e_f}'}\!(q)}{b_f''!\Phi_0(q)}
\prod_{f\in\Fl_v^+(\Ga)}\!\!\!\!\!
\frac{\Phi_{\wh{p}_{e_f}';b_f''+\ep_f+1+b_{e_f}'}\!(q)}{b_f''!\,\Phi_0(q)}
\right\rrbracket_{q;d_v}\,.
\end{split}\EE
Since 
$$\sum_{s\in S_v}\!b_s\le 
\sum_{s\in S_v}\!\big(b_s''\!+\!\ep_s\big)\le 3(g_v\!-\!1)+\big|\ov\Fl_v(\Ga)\big|
\equiv m_v(\Ga)$$
if the corresponding factor on the second line of~\e_ref{ncCdfn2_e} is nonzero, 
the second identity in~\e_ref{mvsum_e}  implies the first property in~\e_ref{ncvan_e}.\\

\noindent
We next give a recursive description of the coefficients 
$\nc_{g;\p,\b}^{(d,0)}\!\equiv\!\nc_{g;\p,\b}^{(d)}$ appearing in Theorem~\ref{main_thm}.
For $N\!\in\!\Z^{\ge0}$, a tuple $\p\!\equiv\!(p_s)_{s\in[N]}$ of integers, and $S\!\subset\![N]$, 
let $\p|_S$ be the $S$-tuple consisting of the elements of~$\p$ indexed by~$S$ and
$$|\p|_S\equiv \big|\p|_S\big|\equiv\sum_{s\in S}p_s\,.$$

\vspace{.2in}

\noindent
For $m\!\in\!\Z^{\ge0}$, let $\cP_{g,N}^{(m)}$ denote the collection 
of tuples $(g',(g_i,S_i,N_i)_{i\in[m]})$ such~that
\begin{gather*}
g'\!\in\!\Z^{\ge0}, \quad
g_i\!\in\!\Z^{\ge0},~N_i\!\in\!\Z^+~~\forall\,i\!\in\![m],\quad
[N]=\bigsqcup_{i=1}^{i=m}\!S_i,  \quad
2g'\!+\!\sum_{i=1}^{i=m}\!N_i\ge3,\\
(0,\{N\},1)\in\big\{(g_i,S_i,N_i)\!:i\!\in\![m]\big\}, \quad
g_i\!+\!|S_i|\!+\!|N_i|\ge2~~\forall\,i\!\in\![m],
\quad
g'\!+\!\sum_{i=1}^{i=m}(g_i\!+\!N_i)=g\!+\!m\,.
\end{gather*}
We write elements of $\cP_{g,N}^{(m)}$ as tuples $(g',\bfg,\bfS,\bfN)$
and denote the components of the $m$-tuples $\bfg$, $\bfS$, and~$\bfN$ by 
$g_i$, $S_i$, and~$N_i$, respectively.
We note that
$$3g_i\!+\!|S_i|\!+\!N_i\!<3g\!+\!N \qquad\forall\,i\!\in\![m]$$
if $(g',(g_i,S_i,N_i)_{i\in[m]})$ is an element of $\cP_{g,N}^{(m)}$.

\begin{rmk}\label{graphbr_rmk}
Let $\Ga$ be an $N$-marked genus~$g$ graph as in~\e_ref{GaNdfn_e} with $S\!=\![N]$
and $v\!\equiv\!\eta(N)$ be the vertex to which the last marked point is attached.
Breaking~$\Ga$ at~$v$ and replacing the flags at~$v$ with marked points, 
we~obtain $m$~connected strands similarly to Figure~\ref{strands_fig} 
on page~\pageref{strands_fig} for some $m\!\in\!\Z^+$.
Each of the strands is of some genus~$g_i$, carries the subset $S_i\!\subset\![N]$
of the original marked points, and $N_i\!\in\!\Z^+$ additional marked points
arising from the flags at~$v$.
Then,
$$\big(\g(v),(g_i,S_i,N_i)_{i\in[m]}\big)\in\cP_{g,N}^{(m)}\,.$$
Furthermore, every element of $\Ga$ describes a type of collections of strands
obtained by breaking an $N$-marked genus~$g$ graph~$\Ga$ at the vertex 
$v\!\equiv\!\eta(N)$ to which the last marked point is attached.
\end{rmk}

\noindent
For $m$, $g'$, and $\bfN$ as above, define 
\BE{Phimcdfn_e0}\begin{split}
\wt\cA^{\star}_{g',\bfN}&=\big\{(\b,\un\ep,\c,I)\!\in\!
\prod_{i=1}^{i=m}\!(\Z^{\ge0})^{N_i}\!\!\times\!\!
\prod_{i=1}^{i=m}\!(\Z^{\ge0})^{N_i}\!\!\times\!\!\big(\Z^{\ge0}\big)^{\i}
\!\!\times\!\!(\Z^{\ge0})^{g'}\!\!:\\
&\hspace{2.5in} |\b|\!+\!|\un\ep|\!+\!\|\c\|\!=\!\mu_{g'}(I)\!+\!|\bfN|\big\}.
\end{split}\EE
If in addition $d,t\!\in\!\Z$, let
\begin{gather}
\label{Phimcdfn_e}
\cS_{g',\bfN}(d,t)=\big\{(\p,\b)\!\in\!
\prod_{i=1}^{i=m}\!\nset^{N_i}\!\times\!\prod_{i=1}^{i=m}\!\Z^{N_i}\!:\,
|\p|\!-\!|\b|=(n\!-\!4)(1\!-\!g')\!+\!2|\bfN|\!+\!n(d\!+\!t)\big\}\,;
\end{gather}
the number $|\b|$ above denotes the sum of the three components of~$\b$
(not of their absolute values).
For tuples $\p$ and $\b$ as above and $i\!\in\![m]$, we denote
by $\p_i\!\in\!\nset^{N_i}$ and $\b_i\!\in\!\Z^{N_i}$ 
the $i$-th components of~$\p$ and~$\b$.
If in addition $f\!\in\![N_i]$, let $p_{i;f}\!\in\!\nset$ and $b_{i;f}\!\in\!\Z$
denote the $f$-th components of~$\p_i$ and $\b_i$, respectively.\\

\noindent
For any $p,p'\!\in\!\nset$ and $b,b',d,t\!\in\!\Z$, let 
\BE{coeff2_e}\nc_{0;(p,p'),(b,b')}^{(d,t)}=
\begin{cases}
%\de_{b+b',-1}\de_{p+p'+nt,n-1+l}
(-1)^b\,,
&\hbox{if}~b\!\ge\!0,\,b\!+\!b'\!=\!-1,\,d,t\!=\!0,\,p\!+\!p'\!=\!n\!-\!1;\\
0,&\hbox{otherwise}.
\end{cases}\EE
For all $g\!\in\!\Z^{\ge0}$, $N\!\in\!\Z^+$ with $2g\!+\!N\!\ge\!3$,
$N$-tuples $\p\!\in\!\nset^N$ and $\b\!\in\!(\Z^{\ge0})^N$,
and $d,t\!\in\!\Z^{\ge0}$, we inductively define
\BE{coeffdfn_e}\begin{split}
\nc_{g;\p,\b}^{(d,t)}&=\!\!\!
\sum_{\begin{subarray}{c}m,d'\in\Z^{\ge0}\\ t'\in\Z\end{subarray}}
\!\frac{1}{m!} \!\!\!\!\!
\sum_{\begin{subarray}{c}(g',\bfg,\bfS,\bfN)\in\cP_{g,N}^{(m)}\\
(\b'',\un\ep,\c,I)\in\wt\cA^{\star}_{g',\bfN}\\
(\p',\b')\in\cS_{g',\bfN}(d',t')\end{subarray}} 
\sum_{\begin{subarray}{c}\bfd,\bft\in(\Z^{\ge0})^m\\
|\bfd|=d-d',|\bft|=t-t'\end{subarray}}  \!\!\!\!
\Bigg(\prod_{i=1}^{i=m}\frac{\nc_{g_i;\p|_{S_i}\p_i',\b|_{S_i}\b_i'}^{(d_i,t_i)}}{N_i!}\\
&\hspace{1.5in}
\times\Bgbr{\Phi_{I;\c}^{(g',\un\ep)}(q)\!\prod_{i=1}^{i=m}\prod_{f\in[N_i]}\!\!
\frac{\Phi_{p_{i;f}';b_{i;f}''+\ep_{i;f}+1+b_{i;f}'}(q)}
{b_{i;f}''!\Phi_0(q)}\!}_{q;d'}\Bigg),
\end{split}\EE
with $\nc_{g_i;\p|_{S_i}\p_i',\b|_{S_i}\b_i'}^{(d_i,t_i)}\!\equiv\!0$ if 
$b_{i;f}'\!<\!0$ for some $f\!\in\![N_i]$ and 
$2g_i\!+\!|S_i|\!+\!N_i\!\ge\!3$
(if the last sum is~2, $\nc_{g_i;\p|_{S_i}\p_i',\b|_{S_i}\b_i'}^{(d_i,t_i)}$ is given
by~\e_ref{coeff2_e}).\\

\noindent
If the summand in~\e_ref{coeffdfn_e} above does not vanish, then
$$-b_{i;f}'\le b_{i;f}''\!+\!\ep_{i;f}\!+\!1
~~\forall\,f\!\in\![N_i],\,i\!\in\![m] \quad\Lra\quad
-|\b'|\le 3g'\!-\!3\!+\!2|\bfN|;$$
the last implication makes use of the condition $(\b'',\un\ep,\c,I)\in\wt\cA^{\star}_{g',\bfN}$.
By induction, the non-vanishing coefficients $\nc_{g;\p,\b}^{(d,t)}$ thus satisfy 
the bound in~\e_ref{ncvan_e}.
Furthermore,
\BE{ncvan_e2}
\nc_{g;\p,\b}^{(d,t)}\neq0 \quad\Lra\quad
|\p|\!-\!|\b|\!+\!n(d\!+\!t)=(n\!-\!4)(g\!-\!1)\!+\!(n\!-\!2)N.\EE
Thus, the coefficients $\nc_{g;\p,\b}^{(d,t)}\!\equiv\!\nc_{g;\p,\b}^{(d,0)}$
defined by~\e_ref{coeff2_e} and~\e_ref{coeffdfn_e} satisfy~\e_ref{ncvan_e}.\\

\noindent
Since $\Phi_{I;\c}^{(g',\un\ep)}\!\in\!q\Q[[q]]$  unless $\c\!=\!\0$ and
$\Phi_{p';b'}\!\in\!q\Q[[q]]$  unless \hbox{$b'\!=\!0$}, 
all nonzero contributions in the $d\!=\!0$ case of~\e_ref{coeffdfn_e} arise
from the elements $(g',\bfg,\bfS,\bfN)$ of $\cP_{g,N}^{(m)}$ so that 
$g_i\!=\!0$ and $|S_i|,|N_i|\!=\!1$ for every $i\!\in\![m]$.
These conditions imply that $g'\!=\!g$ and $m\!=\!N$.
By the first statement after~\e_ref{Fpexp_e} and~\e_ref{PsimiCdfn_e},
$$\blrbr{\Phi_{p';0}}_{q;0}=1~\forall\,p'\!\in\!\nset, \quad
\blrbr{\Phi_{I;\0}^{(g,\un\ep)}(q)}_{q;0}=
(-1)^{\mu_g(I)+N}C_{g;n;I}\wh{A}_{I;\0}^{(g,\un\ep)}
~\forall\,I\!\in\!(\Z^{\ge0})^g,\,\un\ep\!\in\!(\Z^{\ge0})^N.$$
Combining these observations with~\e_ref{coeffdfn_e} and~\e_ref{coeff2_e}, 
we obtain 
\BE{coeffnon0_e2}\begin{split}
\nc_{g;\p,\b}^{(0,t)}&=
\sum_{(\b'',\un\ep,\0,I)\in\wt\cA^{\star}_{g,[N]}}
\sum_{\begin{subarray}{c}\p'\in\nset^N\\
|\p'|=(n-1)(1-g)+nt+\|I\|\end{subarray}}
 \hspace{-.55in}(-1)^{\mu_g(I)+N}C_{g;n;I}\wh{A}_{I;\0}^{(g,\un\ep)}
\prod_{s=1}^{s=N}(-1)^{b_s}
\frac{\de_{\wh{p}_s,p_s'}\de_{b_s,b_s''+\ep_s}}{b_s''!}\\
&=\sum_{\begin{subarray}{c}I\in(\Z^{\ge0})^g\\ \mu_g(I)=|\b|-N \end{subarray}}
\!\!\!\!\!\!\!\!\!\!
\de_{|\p|+\|I\|+nt,(n-1)(N-1+g)} \!\!\!\!\!\!\!
\sum_{\un\ep\in(\Z^{\ge0})^N}\!\!\!\!\!\!C_{g;n;I}A_{I;\0}^{(g,\un\ep)}
\prod_{s=1}^{s=N}\frac{1}{b_s!}\binom{b_s}{\ep_s}\\
&=\de_{|\p|-|\b|+nt,(n-4)(g-1)+(n-2)N}\!\!\!\!\!\!\!\!\!\!
\sum_{\begin{subarray}{c}I\in(\Z^{\ge0})^g\\ 
|\b|+\|I\|=3(g-1)+N\end{subarray}}
\hspace{-.42in} C_{g;n;I}\llrr{\la_{g;I};\tau_{\b}}
\end{split}\EE
where $\de_{a,b}$ is the Kronecker delta function
(equals~1 if $a\!=\!b$ and~0 otherwise);
the last equality above follows from~\e_ref{AgIb0_e2}.
The $t\!=\!0$ case of the above statement can also be obtained 
from~\e_ref{ncCdfn2_e}.\\

\noindent
The coefficients $\nc_{g;\p,\b}^{(d)}$ must be invariant under the permutations of
$[N]$ (same permutations in the components of~$\p$ and~$\b$).
This is not apparent from either of the above two descriptions of these coefficients,
even in the extremal cases;
thus, this is a consequence of the proofs of Theorem~\ref{main_thm}
in Section~\ref{maincomp_sec}.

\subsection{Proof of Theorem~\ref{g1N1_thm}}
\label{g1N1pf_subs}

\noindent 
The statement of Theorem~\ref{g1N1_thm} is equivalent~to
\BE{g1N1_e0}
\sum_{p=0}^{n-1}
 \bblr{\frac{H^{n-1-p}}{\hb\!-\!\psi_1}}^{\!\!\P^{n-1}}_{\!\!1,d}\!\!\!\!H^p
=\frac{n}{24}\frac{(H\!+\!d\hb)^{n-1}}{\hb^2\!\prod\limits_{r=1}^d\!(H\!+\!r\hb)^n}
-\frac{(n\!-\!1)n}{48}\frac{(H\!+\!d\hb)^{n-2}}{\hb\!\prod\limits_{r=1}^d\!(H\!+\!r\hb)^n},\EE
with the identity holding modulo $H^n$ and as a power series in $\hb^{-1}$.
We deduce this identity from Theorem~\ref{main_thm} below.\\

\noindent
By the defining property of the cohomology pushforward,
the generating function~\e_ref{Zdfn_e} with $(g,N)\!=\!(1,1)$ is given~by
$$Z^{(1)}\big(\hb,H,q\big)=\sum_{d=0}^{\i}\!\bigg(\!q^d
\sum_{p=0}^{n-1}
 \bblr{\frac{H^{n-1-p}}{\hb\!-\!\psi_1}}^{\!\!\P^{n-1}}_{\!\!1,d}\!\!\!\!H^p\!\!\bigg)
\in H^*(\P^{n-1})\big[\hb^{-1}\big]\big[\big[q\big]\big].$$
By~\e_ref{mainthm_e},
\BE{g1N1_e}Z^{(1)}\big(\hb,H,q\big)=\sum_{p\in\nset}
\sum_{b\in\{0,1\}}\sum_{d=0}^{\i}\nc_{1;p,b}^{(d)}q^d\hb^{-b}\De_p(\hb,H,q)\EE 
with the coefficients $\nc_{1;p,b}^{(d)}\!\in\!\Q$ 
determined by either the closed formula~\e_ref{ncCdfn2_e} or 
the recursion~\e_ref{coeffdfn_e}.
In order to compute these coefficients, we will make use~of
\begin{gather}\label{g1N1_e3a}
\Phi_{();\0}^{(0,(\0_3))}=\Phi_0^2, ~~
\Phi_{(1);\0}^{(1,(0))}=-\frac{(n\!-\!1)n}{48},  ~~
\Phi_{(0);\0}^{(1,(0))}=0,  ~~
\Phi_{(0);\0}^{(1,(1))}=-\frac{n}{24},  ~~
\Phi_{(0);e_1}^{(1,(0))}=\frac{n}{24}\frac{\Phi_1}{\Phi_0},\\
\label{g1N1_e3b}
\sum_{p=0}^{n-1}\frac{\Phi_{p;0}}{\Phi_0}\frac{\Phi_{\wh{p};1}}{\Phi_0}
=nL^{n-1}\frac{\Phi_1}{\Phi_0}-\frac{n^2\!-\!1}{12}L^{n-2}\big(1\!-\!L^{-n}\big),
\end{gather}
where $\0_3\!\in\!(\Z^{\ge0})^3$ and $\0\!\in\!(\Z^{\ge0})^{\i}$ are the zero vectors and
$e_1\!=\!(1,0,\ldots)\!\in\!\Z^{\i}$; see Example~\ref{PhiIc_eg} and~\e_ref{F0exp_e2b}.\\

\noindent
The set $\cA_{1,1}$ of connected trivalent 1-marked genus~1 graphs consists of two elements,
$\Ga_0$ and~$\Ga_1$; they are depicted in Figure~\ref{g1N1_fig} along
with the corresponding collections~\e_ref{wtcAstaredfn_e}. 
The associated collections~\e_ref{cSGadfn_e} are 
\begin{equation*}\begin{split}
\cS_{\Ga_0}(d,p,b)&=\begin{cases}
\{(d,\cdot,\cdot)\},&\hbox{if}~d\!=\!0,\,p\!=\!n\!-\!2\!+\!b;\\
\eset,&\hbox{otherwise};
\end{cases}\\
\cS_{\Ga_1}(d,p,b)&=\begin{cases}
\{d\}\!\times\!\nset\!\times\!\Z^{\ge0},&\hbox{if}~d\!=\!0,\,p\!=\!n\!-\!2\!+\!b;\\
\eset,&\hbox{otherwise}.
\end{cases}
\end{split}\end{equation*}
In particular, $\nc_{1;p,b}^{(d)}\!=\!0$ unless $d\!=\!0$ and $p\!=\!n\!-\!2\!+\!b$.
Below we assume that the pair $(p,b)$ satisfies the last condition.\\

\begin{figure}
\begin{pspicture}(0,-1.5)(10,1.5)
\psset{unit=.3cm}
% 1st diagram starts here
\pscircle*(10,0){.3}\rput(9.2,-.2){\smsize{$1$}}
\psline[linewidth=.04](10,0)(12,2)\rput(12.5,2){\smsize{$\bf 1$}}
\rput(18,1){\begin{tabular}{l} $|\Aut(\Ga_0)|\!=\!1$\\ 
$\Ver\!=\!\{v\}$\\ $S_v\!=\!\{1\}$\\ $\ov\Fl_v(\Ga_0)\!=\!\{1\}$\end{tabular}}
% 2nd diagram starts here
\pscircle(32,0){1}\pscircle*(33,0){.3}
\psline[linewidth=.04](33,0)(35,2)\rput(35.5,2){\smsize{$\bf 1$}}
\rput(42,1){\begin{tabular}{l} $|\Aut(\Ga_1)|\!=\!2$\\ 
$\Ver\!=\!\{v\}$\\ $S_v\!=\!\{1\}$\\ $\ov\Fl_v(\Ga_1)\!=\!\{1,+,-\}$\end{tabular}}
\rput(29,-4){$\wt\cA^{\star}(\Ga_0)\!=\!
\big\{(0,0,e_1,(0)),(0,0,\0,(1)),(0,1,\0,(0)),(1,0,\0,(0))\big\}, ~~
\wt\cA^{\star}(\Ga_1)\!=\!\big\{(\0_3,\0_3,\0,\cdot)\big\}$}
\end{pspicture}
\caption{The connected trivalent 1-marked genus~1 graphs}
\label{g1N1_fig}
\end{figure}
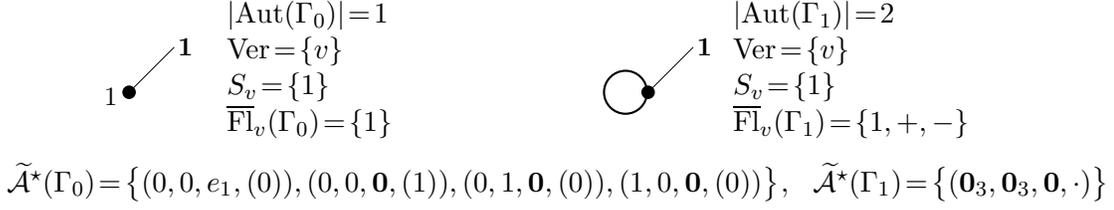

\noindent
By~\e_ref{ncCdfn2_e}, the contribution of an element $(d,p',b')$ 
of $\cS_{\Ga_1}(d,p,b)$ to $\nc_{1;p,b}^{(0)}$ is
\BE{g1N1_e7a}\begin{split}
&\frac{(-1)^{b+b'}}2 
\left\llbracket \!\Phi_{();\0}^{(0,\0_3)}(q)
\frac{\Phi_{\wh{p};-b}(q)}{\Phi_0(q)}
\frac{\Phi_{p';-b'}(q)}{\Phi_0(q)}
\frac{\Phi_{\wh{p}';1+b'}(q)}{\Phi_0(q)}\right\rrbracket_{q;0}\\
&\hspace{1.5in}
=\begin{cases}
\frac12\blrbr{\Phi_{();\0}^{(0,\0_3)}(q)
\frac{\Phi_{1;0}(q)}{\Phi_0(q)}
\frac{\Phi_{p';0}(q)}{\Phi_0(q)}
\frac{\Phi_{\wh{p}';1}(q)}{\Phi_0(q)}}_{q;0},&\hbox{if}~b,b'\!=\!0;\\
0,&\hbox{otherwise}.\end{cases}\end{split}\EE
In particular, $\Ga_1$  does not contribute to $\nc_{1;n-1,1}^{(0)}$.
By~\e_ref{g1N1_e3a}, \e_ref{g1N1_e3b}, and~\e_ref{F0exp_e2}, it contributes
$$\frac{1}{2}\Bgbr{nL(q)\frac{\Phi_1(q)}{\Phi_0(q)}-
\frac{n^2\!-\!1}{12}\big(1\!-\!L(q)^{-n}\big)}_{q;0}=0$$
to $\nc_{1;n-2,0}^{(0)}$.\\

\noindent 
The contribution of an element $(b'',\ep,\c,(i))$ of $\wt\cA^{\star}(\Ga_0)$
to $\nc_{1;p,b}^{(0)}$ is
\BE{g1N1_e7b}
(-1)^b  \left\llbracket \!\Phi_{(i);\c}^{(1,(\ep))}\!(q)
\frac{\Phi_{\wh{p};b''+\ep-b}(q)}{\Phi_0(q)}\right\rrbracket_{q;0}
=\begin{cases}
\blrbr{\Phi_{(i);\c}^{(1,(\ep))}\!(q)
\frac{\Phi_{1;b''+\ep}(q)}{\Phi_0(q)}}_{q;0},&\hbox{if}~b\!=\!0;\\
-\blrbr{\Phi_{0;\0}^{(1,(1))}\!(q)}_{q;0},
&\hbox{if}~b\!=\!1,\,\ep\!=\!1;\\
0,&\hbox{otherwise}.\end{cases}\EE
Along with the conclusion of the previous paragraph, \e_ref{g1N1_e3a}, 
and~\e_ref{F0exp_e2}, this implies~that 
\BE{g1N1_e9}\nc_{1;n-1,1}^{(0)}=\frac{n}{24}, \qquad 
\nc_{1;n-2,0}^{(0)}=-\frac{(n\!-\!1)n}{48}\,.\EE
Along with~\e_ref{g1N1_e}, this establishes~\e_ref{g1N1_e0}.
These two coefficients can also be obtained directly 
from~\e_ref{coeffnon0_e2} and~\e_ref{CgnI_e}.\\

\noindent
We now obtain~\e_ref{g1N1_e9} and the vanishing of the remaining coefficients
$\nc_{1;p,b}^{(d)}$ in~\e_ref{g1N1_e} from the recursion~\e_ref{coeffdfn_e}.
The collections~$\cP_{1,1}^{(1)}$ and $\cP_{1,1}^{(2)}$ contain one element each,
$$\Ga_0\equiv\big(1,(0,\{1\},1)\big) \qquad\hbox{and}\qquad
\Ga_1\equiv\big(0,\big((0,\{1\},1),(0,\eset,2)\!\big)\!\big),$$
respectively ($\cP_{1,1}^{(2)}$ also contains a copy of $\Ga_1$ with the two triplets
swapped); the collections $\cP_{1,1}^{(m)}$ with $m\!\ge\!3$ are empty.
The elements of the collections~$\cP_{1,1}^{(m)}$ precisely correspond
 to the two 1-marked genus~1 graphs depicted in Figure~\ref{g1N1_fig}
(this is because $N\!=\!1$; see Remark~\ref{graphbr_rmk}).
The collections~\e_ref{Phimcdfn_e0} corresponding to~$\Ga_0$ and~$\Ga_1$ are also as 
in the first computation, i.e.
$$\wt\cA^{\star}_{1,(1)}=\wt\cA^{\star}(\Ga_0) \qquad\hbox{and}\qquad
\wt\cA^{\star}_{0,(1,2)}=\wt\cA^{\star}(\Ga_1)\,.$$
The associated collections~\e_ref{Phimcdfn_e} are given by
\begin{gather}
\label{g1N1_e11a}
\cS_{1,(1)}(d,t)=\big\{(p',b')\!\in\!\nset\!\times\!\Z\!:\,
p'\!-\!b'=2\!+\!n(d\!+\!t)\big\}\,,\\
\label{g1N1_e11b}
\cS_{0,(1,2)}(d,t)=\big\{(\p',\b')\!\in\!\big(\nset\!\times\!\nset^2\big)
\!\times\!\big(\Z\!\times\!\Z^2\big)\!:|\p'|\!-\!|\b'|=n\!+\!2\!+\!n(d\!+\!t)\big\}\,,
\end{gather}
with $|\b'|$ above denoting the sum of the three components of~$\b'$
(not of their absolute values).\\

\noindent
We first determine the contribution of $\cP_{1,1}^{(1)}$ in~\e_ref{coeffdfn_e}
with $(g,N)\!=\!(1,1)$.
The first product in~\e_ref{coeffdfn_e} vanishes unless
\BE{g1N1_e12a}d'\!=\!d,\qquad t'\!=\!t,
\qquad b\ge0, \qquad b'=-1\!-\!b, \qquad p'\!=\!n\!-\!1\!-\!p;\EE
if these conditions hold, then this product equals $(-1)^b$. 
Along with the condition on~$(p',b')$ in~\e_ref{g1N1_e11a}, 
this implies that $\cP_{1,1}^{(1)}$ does not contribute to 
$\nc_{1;p,b}^{(d)}\!\equiv\!\nc_{1;p,b}^{(d,0)}$
unless $d\!=\!0$ and \hbox{$p\!=\!n\!-\!2\!-\!b$}.
If these two conditions hold, then the contribution of each element of 
$\wt\cA^{\star}_{1,(1)}\!=\!\wt\cA^{\star}(\Ga_0)$ is given by~\e_ref{g1N1_e7b}.
The sums of these contributions over the four elements of $\wt\cA^{\star}_{1,(1)}$
in the two possibly non-trivial cases are given
by~\e_ref{g1N1_e9}.\\

\noindent
We next determine the contribution of $\cP_{1,1}^{(2)}$ in~\e_ref{coeffdfn_e}
with $(g,N)\!=\!(1,1)$.
We write triples~$\p'$ and~$\b'$ appearing in~\e_ref{g1N1_e11b}
as $(p',p_+,p_-)$ and $(b',b_+,b_-)$, respectively.
The first product in~\e_ref{coeffdfn_e} vanishes unless \e_ref{g1N1_e12a} is satisfied and
$$b_+\ge0, \qquad b_-=-1\!-\!b_+, \qquad p_-\!=\!n\!-\!1\!-\!p_+;$$
if these conditions hold, then this product equals $(-1)^{b+b_+}$. 
Along with the condition on~$(\p',\b')$ in~\e_ref{g1N1_e11b}, 
this implies that $\cP_{1,1}^{(2)}$ does not contribute to 
$\nc_{1;p,b}^{(d)}\!\equiv\!\nc_{1;p,b}^{(d,0)}$
unless $d\!=\!0$ and \hbox{$p\!=\!n\!-\!2\!-\!b$}.
If these two conditions hold, then the contribution of  
$\wt\cA^{\star}_{1,(1,2)}\!=\!\wt\cA^{\star}(\Ga_1)$ is half of~\e_ref{g1N1_e7a}
and thus still vanishes.
This re-establishes~\e_ref{g1N1_e9} and the vanishing of the remaining coefficients
$\nc_{1;p,b}^{(d)}$ in~\e_ref{g1N1_e}.

\subsection{Proofs of Theorems~\ref{GWbound_thm} and~\ref{GWeq0_thm}}
\label{applpf_subs}

\noindent
These two theorems for $g\!=\!0$ are established in~\cite{g0ci}.
Theorem~\ref{GWeq0_thm} is meaningless if $n\!=\!1$,
while Theorem~\ref{GWeq0_thm} in this case is justified immediately after its statement.
Thus, we can assume that $n\!\ge\!2$ and $2g\!+\!N\!\ge\!3$.
We also assume that the numbers in~\e_ref{GWXadfn_e} satisfy
$$c_s\!\in\!\nset~\forall\,s\!\in\![N] \qquad\hbox{and}\qquad
\sum_{s=1}^N(b_s\!+\!c_s)=nd+(n\!-\!4)(1\!-\!g)+N.$$
By~\e_ref{Zdfn_e}, the GW-invariant~\e_ref{GWXadfn_e} is the coefficient of
$$q^d\prod_{s=1}^N \!H^{\wh{c}_s}\hb_s^{-b_s-1}\,, 
\qquad\hbox{where}\quad \wh{c}_s=n\!-\!1\!-\!c_s,$$
on the right-hand side of~\e_ref{mainthm_e}.\\

\noindent
For tuples $\bfd\!\equiv\!(d_s)_{s\in[N]}$ and $\b'\!\equiv\!(b_s')_{s\in[N]}$
in $(\Z^{\ge0})^N$, define
$$\p(\bfd,\b')\in\Z^N \qquad\hbox{by}\qquad
p_s(\bfd,\b')=n d_s+\wh{c}_s-b_s+b_s'\,.$$ 
By~\e_ref{mainthm_e} and~\e_ref{Depdfn_e},
\BE{GWcoeff_e}\begin{split}
&\blr{\tau_{b_1}H^{c_1},\ldots,\tau_{b_N}H^{c_N}}_{g,d}^{\P^{n-1}}\\
&\hspace{.5in}=\sum_{d'=0}^{d'=d}
\sum_{\begin{subarray}{c}\bfd\in(\Z^{\ge0})^N\\ |\bfd|=d-d'\end{subarray}}
\sum_{\begin{subarray}{c}\b'\in(\Z^{\ge0})^N\\
|\b'|\le 3(g-1)+N\end{subarray}}\hspace{-.3in}
\nc_{g;\p(\bfd,\b'),\b'}^{(d')}\!\!
\prod_{s=1}^{s=N}\!\!
\LRbr{\LRbr{F_{p_s(\bfd,\b')}(w,q)}_{q;d_s}}_{w;b_s-b_s'},
\end{split}\EE
with $\lrbr{\ldots}_{q;d}$ denoting the coefficient of~$q^d$ and
$\nc_{g;\p(\bfd,\b'),\b'}^{(d')}\!\equiv\!0$ if $p_s(\bfd,\b')\!\not\in\!\nset$ 
for some $s\!\in\![N]$.\\

\noindent
For any non-vanishing summand on the right-hand side of \e_ref{GWcoeff_e}, 
$$p_s(\bfd,\b)\le n\!-\!1,\quad b_s\!+\!c_s\ge n d_s \qquad\forall~s\!\in\![N].$$
Thus, $d_s\!=\!0$ if $b_s\!+\!c_s\!<\!n$. 
Since the coefficient of~$q^0$ in $\wh{F}_p(w,q)$ is~$1$, it follows that 
$b_s'\!=\!b_s$ and $p_s(\bfd,\b')\!=\!\wh{c}_s$ in such a case.
Since $|\b'|\!\le\!3(g\!-\!1)\!+\!N$, this implies Theorem~\ref{GWeq0_thm}.
By \cite[Corollary~5.3]{g0ci},
\BE{GWcoeff_e4}\bigg|\LRbr{\LRbr{F_{p_s(\bfd,\b')}(w,q)}_{q;d_s}}_{w;b_s-b_s'}
\bigg|\le \frac{C_n^{d_s}}{(nd_s)!}\EE
for some $C_n\!\in\!\R^+$ dependent only on~$n$.
Along with the next statement below, which is proved in the reminder of this
section, \e_ref{GWcoeff_e} and~\e_ref{GWcoeff_e4} imply Theorem~\ref{GWbound_thm}.\\

\noindent
For $\b\!\in\!(\Z^{\ge0})^N$ as above, let
$$\b!=\prod_{s=1}^{s=N}\!b_s, \qquad 
\binom{|\b|}{\b}=\binom{|\b|}{b_1,\ldots,b_N}\,.$$

\begin{prp}\label{Coeffbnd_prp}
Let $g,n\!\in\!\Z^{\ge0}$ with $n\!\ge\!2$.
There exists $C_{n,g}\!\in\!\R$ such that
$$\big|\nc_{g;\p,\b}^{(d)}\big|\le \frac{N!}{\b!}C_{n,g}^{N+d}
\qquad\forall\,d\!\in\!\Z^{\ge0},\,\p\!\in\!\llfloor{n}\rrfloor^N,
\,\b\!\in\!(\Z^{\ge0})^N.$$
\end{prp}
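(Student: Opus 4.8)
The plan is to prove Proposition~\ref{Coeffbnd_prp} by induction on $3g\!+\!N$ using the recursive description~\e_ref{coeffdfn_e} of the coefficients $\nc_{g;\p,\b}^{(d)}\!\equiv\!\nc_{g;\p,\b}^{(d,0)}$. The base case is $3g\!+\!N\!=\!2$, i.e.\ $(g,N)\!=\!(0,2)$, where the coefficients are given explicitly by~\e_ref{coeff2_e} and are bounded by~$1$ in absolute value, so the claim holds for any $C_{n,0}\!\ge\!1$. For the inductive step I would first reduce the statement about $\nc_{g;\p,\b}^{(d,0)}$ to the auxiliary coefficients $\nc_{g;\p,\b}^{(d,t)}$, proving by the same induction the slightly stronger bound $|\nc_{g;\p,\b}^{(d,t)}|\le\frac{N!}{\b!}C_{n,g}^{N+d+t}$ (or absorbing the $t$-dependence into a separate geometric factor); this is what makes the recursion~\e_ref{coeffdfn_e} self-contained, since the inner coefficients appearing there carry both a $d_i$ and a $t_i$.

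The heart of the argument is estimating the right-hand side of~\e_ref{coeffdfn_e}. Each summand is a product of $m$ inductively-bounded coefficients $\nc_{g_i;\p|_{S_i}\p_i',\b|_{S_i}\b_i'}^{(d_i,t_i)}/N_i!$ times a $q$-coefficient of a product of the power series $\Phi_{I;\c}^{(g',\un\ep)}$ and $\Phi_{p';b'}/\Phi_0$. For the power-series factor I would invoke the bound~\e_ref{DMstr_e2} on the Hodge-integral constants $A_{I;\c}^{(g,\un\ep)}$ (which feed into~\e_ref{PsimiCdfn_e}) together with standard bounds on the coefficients of $\Phi_0$, $\Phi_r$, $\xi$ — these come from Proposition~\ref{Fexp_prp} and the ODEs~\e_ref{PhiODE_e}, and the relevant growth estimate is exactly \cite[Corollary~5.3]{g0ci} / the estimate~\e_ref{GWcoeff_e4} already cited — to show that $\big\llbracket\Phi_{I;\c}^{(g',\un\ep)}(q)\prod\Phi_{p';b'}(q)/\Phi_0(q)\big\rrbracket_{q;d'}$ is bounded by $C^{\,d'+\,(\#\text{factors})}$ times a factorial in $|\b''|\!+\!|\un\ep|\!+\!|\c|$ that, by the constraint $(\b'',\un\ep,\c,I)\!\in\!\wt\cA^{\star}_{g',\bfN}$ in~\e_ref{Phimcdfn_e0}, is controlled by $3(g'\!-\!1)\!+\!|\bfN|$ and hence (since $g'\!\le\!g$) by a constant depending only on $g$. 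The combinatorial factors $\binom{b_s}{\ep_s}$ implicit in~\e_ref{DMstr_e} and the explicit $1/b''!$ factors are then what produce the crucial $N!/\b!$ on the right: summing $\prod_s\binom{b_s}{\ep_s}$ over $\un\ep$ and tracking how the multinomial coefficient $\binom{|\b|}{\b}$ distributes over the partition $[N]=\bigsqcup S_i$ gives, after using $\prod_i \binom{|\b|_{S_i}}{\b|_{S_i}}\le\binom{|\b|}{\b}$ and $\prod_i N_i!\le N!$-type inequalities, the stated shape.

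The main obstacle — and where the proof needs genuine care rather than bookkeeping — is controlling the \emph{number of terms} in the nested sums of~\e_ref{coeffdfn_e} so that it is bounded by $C_{n,g}^{N+d}$ rather than something super-exponential. The outer sum over $m$ and over $(g',\bfg,\bfS,\bfN)\!\in\!\cP_{g,N}^{(m)}$ involves partitioning $[N]$ into $m$ blocks, which naively gives Bell-number growth; this must be tamed by the $1/m!$ prefactor together with the constraint $g'\!+\!\sum(g_i\!+\!N_i)=g\!+\!m$ (so $m\!\le\!g\!+\!N$, bounding $m$ in terms of $g$ and $N$) and by noting that once the blocks are unordered the $1/m!$ exactly cancels the overcounting, leaving a sum over \emph{set partitions} whose size is $\le N^{O(N)}$ but absorbable into $C_{n,g}^N$. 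The sums over $(\b'',\un\ep,\c,I)$ and over $(\p',\b')$ are finite once $d,t$ are fixed because of the degree constraints in~\e_ref{Phimcdfn_e0} and~\e_ref{Phimcdfn_e} (and the vanishing recorded in~\e_ref{ncvan_e}/\e_ref{ncvan_e2}); the sum over $\bfd,\bft$ with $|\bfd|\!=\!d\!-\!d'$, $|\bft|\!=\!t\!-\!t'$ contributes a factor $\binom{d-d'+m-1}{m-1}\le 2^{\,d+m}$, again absorbable. Assembling all these polynomial-and-geometric factors and choosing $C_{n,g}$ large enough relative to the finitely many constants $C_n$ (from~\e_ref{GWcoeff_e4}), $C_g$ (from~\e_ref{DMstr_e2}), the $C_{g_i,n}$ from the inductive hypothesis, and the combinatorial overhead, closes the induction.
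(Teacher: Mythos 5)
Your approach is a genuinely different route from the paper's. The paper proves Proposition~\ref{Coeffbnd_prp} using the \emph{closed formula}~\e_ref{ncCdfn2_e}, not the recursion~\e_ref{coeffdfn_e}: it first bounds each bracketed factor via Lemma~\ref{Phibnd_lmm} and the estimate~\e_ref{Coeffbnd_e5} (which is your cited bound from~\cite{g0ci}), then counts the nonzero terms in the nested sums using the elementary combinatorics~\e_ref{Coeffbnd_e0a}--\e_ref{Coeffbnd_e2} and the trivalent-graph structure, sums over~$\c$ via Lemma~\ref{grcomb_lmm}, and finally controls the sum over graphs $\Ga\!\in\!\cA_{g,N}$ through the purely combinatorial Lemma~\ref{graphcnt_lmm}, which gives $a_{g,N}\!\le\!C_g C^N N!$. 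What that buys is a clean separation of the analytic estimates from the graph-counting combinatorics; your approach instead feeds both through a single induction on $3g\!+\!N$. Interestingly, Lemma~\ref{graphcnt_lmm} is itself proved by a recursion that mirrors the structure of~\e_ref{coeffdfn_e} (breaking a graph at one vertex and recursing on the remaining genera), so your plan essentially folds that lemma into the main induction rather than isolating it.

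A few of your assertions would need sharpening. The bound ``$m\!\le\!g\!+\!N$'' does not follow directly from $g'\!+\!\sum(g_i\!+\!N_i)\!=\!g\!+\!m$ alone; the correct reasoning is that the constraints $g_i\!+\!|S_i|\!+\!N_i\!\ge\!2$ and $\sum N_i - m = g - g' - \sum g_i\!\le\!g$ imply each strand either has $N_i\!\ge\!2$ (at most $g$ of these), $g_i\!\ge\!1$ (at most $g$ of these), or $|S_i|\!\ge\!1$ (at most $N$ of these), giving $m\!=\!O(g\!+\!N)$ but not exactly $g\!+\!N$. Similarly, ``tracking how the multinomial coefficient distributes'' is the right idea but the detail is that the inductive hypothesis produces $\prod_i(|S_i|\!+\!N_i)!/\bigl((\b|_{S_i})!\,(\b_i')!\,N_i!\bigr)$; you need to recognize $\prod_i\frac{(|S_i|+N_i)!}{|S_i|!\,N_i!}\!=\!\prod_i\binom{|S_i|+N_i}{N_i}\!\le\!2^{N+\sum N_i}$ (exponential, absorbable) and $\prod_i|S_i|!\!\le\!N!$, and separately verify that summing the $1/(\b_i')!$ factors over the admissible $\b_i'$ converges geometrically. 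None of these are fatal gaps — the architecture holds — but the submultiplicativity bookkeeping is substantive and would need to be written out carefully to close the argument.
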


\begin{lmm}\label{Phibnd_lmm}
Let $g\!\in\!\Z^{\ge0}$.
There exists $C_g\!\in\!\R$ such that 
$$\bigg|\LRbr{\Phi_{I;\c}^{(g,\un\ep)}(q)}_{q;d} \bigg|
\le \frac{(3(g\!-\!1)\!+\!m\!+\!|\c|)!}{|\c|!\,\un\ep!}
C_g^{\|\c\|}\!\!\LRbr{(1\!-\!C_gq)^{-\|\c\|-1}}_{q;d}
\binom{|\c|}{\c}\!\!\prod_{r=1}^{\i}\bigg(\frac{1}{r\!+\!1}\bigg)^{\!\!c_r}$$ 
for all $I\!\in\!(\Z^{\ge0})^g$, $\c\!\in\!(\Z^{\ge0})^{\i}$,  
$\un\ep\!\in\!(\Z^{\ge0})^m$, and $m\!\in\!\Z^{\ge0}$
with $2g\!+\!m\!\ge\!3$.
\end{lmm}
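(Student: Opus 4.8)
The plan is to estimate $\LRbr{\Phi_{I;\c}^{(g,\un\ep)}(q)}_{q;d}$ by unwinding the definition~\e_ref{PsimiCdfn_e} and bounding the three sources of growth separately: the combinatorial coefficient $\wh{A}_{I;\c}^{(g,\un\ep)}$, the power $1/\Phi_0(q)^{2g-2}$, and the product $\prod_{r\ge1}\big(\Phi_r(q)/((r\!+\!1)!\,\Phi_0(q))\big)^{c_r}$. First I would invoke~\e_ref{hatAdfn_e} together with the bound~\e_ref{DMstr_e2} of Proposition~\ref{HodgeInt_prp}: since $\wh{A}_{I;\c}^{(g,\un\ep)}=A_{I;(0,c_1,c_2,\ldots)}^{(g,\un\ep)}\prod_k 1/\ep_k!$, one gets immediately
$$\big|\wh{A}_{I;\c}^{(g,\un\ep)}\big|\le \frac{C_g}{\un\ep!}\,2^{\|\c\|}\big(3(g\!-\!1)\!+\!m\!+\!|\c|\big)!,$$
which already explains the factorial and the $1/\un\ep!$ appearing on the right-hand side of the claimed inequality (the shift of $\c$ by a leading zero changes neither $|\c|$ nor $\|\c\|$). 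The factors $(-1)^{\mu_g(I)+m+|\c|}C_{g;n;I}$ contribute only a constant absorbed into $C_g$; note that $C_{g;n;I}$ ranges over a finite set of integers as $I$ varies, by~\e_ref{CgIdfn_e0}.

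Next I would handle the analytic factors. From~\e_ref{F0exp_e2}, $\Phi_0(q)=(1\!+\!q)^{-(n-1)/2n}$, so $1/\Phi_0(q)^{2g-2}=(1\!+\!q)^{(n-1)(g-1)/n}$ is a fixed power series whose coefficients grow at most geometrically; its $q^d$-coefficient is bounded by $C^d$ for a constant depending only on $n,g$. The key quantitative input for the series $\Phi_r$ is a bound of the form $\big|\LRbr{\Phi_r(q)/\Phi_0(q)}_{q;d}\big|\le C^{r}\LRbr{(1\!-\!Cq)^{-1}}_{q;d}$ uniformly in $r$ — i.e. the coefficients of $\Phi_r/\Phi_0$ grow geometrically in both $r$ and $d$. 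Such an estimate should follow from the recursive ODE system~\e_ref{PhiODE_e} for the $\Phi_b$: writing $\fL_1\Phi_b = -\sum_{i\ge2}L^{1-i}\fL_i\Phi_{b+1-i}$ and using that $\fL_1 = nL^nD + \tfrac{n-1}{2}(L^n-1)$ is invertible on $q\Q[[q]]$ (as $D$ lowers the order of vanishing favorably), an induction on $b$ controls the coefficient growth; the operators $\fL_i$ and the functions $L^{\pm 1}, \H_{m,j}(L^n)$ are all fixed rational expressions in $L=(1\!+\!q)^{1/n}$ with geometrically bounded coefficients. This is essentially the content of the asymptotic-expansion machinery of~\cite{ZaZ}, and I would either cite such a coefficient bound or reprove it by this induction.

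Finally I would assemble the pieces. Taking absolute values in~\e_ref{PsimiCdfn_e} and using $\big|\LRbr{fg}_{q;d}\big|\le \sum_{d_1+d_2=d}|\LRbr{f}_{q;d_1}||\LRbr{g}_{q;d_2}|$ repeatedly over the (finitely many nonzero, since $\|\c\|\le\mu_g(I)+m$) factors indexed by $r$, the per-factor bound $\big|\LRbr{\Phi_r/((r+1)!\,\Phi_0)}_{q;\cdot}\big|\le \tfrac{1}{r+1}C_g^{r}\LRbr{(1\!-\!C_gq)^{-1}}_{q;\cdot}$ (keeping one explicit $1/(r\!+\!1)$ factor and a $\tfrac{1}{r!}\le 1$ from the $(r\!+\!1)!$) combines, via the elementary convolution identity $\prod\LRbr{(1\!-\!C_gq)^{-1}}_{q;\cdot} \le \LRbr{(1\!-\!C_gq)^{-|\c|-1}}_{q;\cdot}$ applied with the extra convolution against $1/\Phi_0^{2g-2}$ absorbed by enlarging $C_g$, to give $C_g^{\|\c\|}\LRbr{(1\!-\!C_gq)^{-\|\c\|-1}}_{q;d}\prod_r (r\!+\!1)^{-c_r}$; here I use $\sum_r r c_r = \|\c\|$ to convert $\prod_r C_g^{r c_r}$ into $C_g^{\|\c\|}$, and the $\prod_r 1/c_r!$ from~\e_ref{PsimiCdfn_e} is bounded by $\binom{|\c|}{\c}/|\c|! \cdot \text{(something)}$ — more precisely $\prod_r 1/c_r! = \binom{|\c|}{\c}/|\c|!$, which supplies exactly the $\binom{|\c|}{\c}/|\c|!$ on the right-hand side. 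Multiplying by the $\wh{A}$-bound above (factorial times $1/\un\ep!$ times $2^{\|\c\|}$, the $2^{\|\c\|}$ absorbed into $C_g^{\|\c\|}$) yields the stated inequality. The main obstacle is the uniform-in-$r$ geometric coefficient bound on $\Phi_r/\Phi_0$ from~\e_ref{PhiODE_e}; everything else is bookkeeping of convolution estimates and constants.
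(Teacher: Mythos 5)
Your overall strategy coincides with the paper's proof: bound $\wh{A}_{I;\c}^{(g,\un\ep)}$ via \e_ref{hatAdfn_e} and \e_ref{DMstr_e2}, bound the coefficients of $\Phi_0(q)^{2-2g}$ from the explicit formula \e_ref{F0exp_e2}, bound the coefficients of $\Phi_r/\Phi_0$, and then convolve through \e_ref{PsimiCdfn_e}. The bookkeeping at the end (one factor $1/(r\!+\!1)$ per $r$, $\prod_r 1/c_r!=\binom{|\c|}{\c}/|\c|!$, powers of $2$ absorbed into $C_g^{\|\c\|}$) is fine, and one slip is harmless: prepending a zero to $\c$ in \e_ref{hatAdfn_e} does change $\|\c\|$ (the shifted tuple has norm $\|\c\|\!+\!|\c|$), so \e_ref{DMstr_e2} gives $2^{|\c|+\|\c\|}$ rather than $2^{\|\c\|}$; since $|\c|\!\le\!\|\c\|$, this is still absorbed into $C_g^{\|\c\|}$.

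The genuine problem is your key analytic input. You assert $\big|\LRbr{\Phi_r(q)/\Phi_0(q)}_{q;d}\big|\le C^r\LRbr{(1\!-\!Cq)^{-1}}_{q;d}$ uniformly in $r$, i.e.\ purely geometric growth with no factorial. This is strictly stronger than the known estimate the paper uses, \cite[Lemma~5.6]{g0ci}, which reads $\big|\LRbr{\Phi_r/\Phi_0}_{q;d}\big|\le C_n^r\LRbr{(1\!-\!C_nq)^{-r}}_{q;d}\,r!$, with both an $r!$ and the exponent $-r$; the growth of $\LRbr{\Phi_r}_{q;d}$ in $r$ at fixed $d$ has rate increasing with $d$ (roughly like $d^{\,r}$), which no bound of the form $C^{r+d}$ can accommodate, consistent with \e_ref{F0exp_e} being only an asymptotic expansion. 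Your sketched induction through \e_ref{PhiODE_e} would not deliver your claim either: each $\fL_i$ contains $D^i$ with $i$ up to $n$, so passing from $\Phi_{b+1-i}$ to $\Phi_b$ multiplies $q^d$-coefficients by factors of order $d^{\,i}$, while inverting $\fL_1$ divides by only one factor of order $d$; iterating in $b$ therefore produces growth whose ratio depends on $d$, not a uniform geometric bound. Fortunately the correct, weaker bound suffices for the lemma: the $(r\!+\!1)!$ in the denominator of \e_ref{PsimiCdfn_e} cancels the $r!$ (leaving exactly your $1/(r\!+\!1)$), and the per-factor $(1\!-\!C_nq)^{-r}$ convolves to $(1\!-\!Cq)^{-\|\c\|}$, which together with the $\Phi_0^{2-2g}$ factor gives the $(1\!-\!C_gq)^{-\|\c\|-1}$ of the statement. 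So your proof is repaired simply by citing the bound with the $r!$, but as written the middle step asserts an unsupported (and almost certainly false) estimate, and the argument you propose for it would fail.
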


\begin{proof}
By \e_ref{hatAdfn_e} and~\e_ref{DMstr_e2}, 
there exists $C_g\!\in\!\R$ such that 
$$\big| \wh{A}_{I;\c}^{(g,\un\ep)} \big|\le 
\frac{(3(g\!-\!1)\!+\!m\!+\!|\c|)!}{\un\ep!}\,C_g2^{|\c|+\|\c\|}$$
for all $I$, $\c$,  $\un\ep$, and $m$ as above.
By~\e_ref{F0exp_e2}, there exists $C_{n,g}\!\in\!\R$ such~that 
$$\Big|\blrbr{\Phi_0(q)^{2-2g}}_{q;d}\Big|
=\Big|\blrbr{(1\!+\!q)^{\frac{(n-1)(g-1)}{n}}}_{q;d}\Big|
\le C_{n,g}^d =\blrbr{(1\!-\!C_{n,g}q)^{-1}}_{q;d}\,.$$
By \cite[Lemma~5.6]{g0ci}, there exists $C_n\!\in\!\R$ such~that 
$$\bigg|\LRbr{\frac{\Phi_r(q)}{\Phi_0(q)}}_{q;d} \bigg|
\le C_n^r \LRbr{(1\!-\!C_nq)^{-r}}_{q;d} r!
\qquad\forall\,r,d\!\in\!\Z^{\ge0}\,. $$
Combining~\e_ref{PsimiCdfn_e} with the above three estimates, 
we obtain the claim. 
\end{proof}

\noindent
For $g,N\!\in\!\Z^{\ge0}$ with $2g\!+\!N\ge\!2$, let 
$$a_{g,N}=\sum_{\Ga\in\cA_{g,N+1}}\frac{1}{|\Aut(\Ga)|}
\prod_{v\in\Ver}\!\!\!\big(3(\g(v)\!-\!1)\!+\!|\ov\Fl_v(\Ga)|\big)!\,,$$
with each $\Ga$ as in the $S\!=\![N\!+\!1]$ case of~\e_ref{GaNdfn_e}.

\begin{lmm}\label{graphcnt_lmm}
There exist $C\!\in\!\R^+$ and  $C_g\!\in\!\R$ for each $g\!\in\!\Z^{\ge0}$ such~that 
\BE{graphcnt_e}a_{g,N}\le C_gC^N N!
\quad\forall\,g,N\!\in\!\Z^{\ge0}~\hbox{with}~2g\!+\!N\ge2.\EE
\end{lmm}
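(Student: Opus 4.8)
The plan is to realize $a_{g,N}$ as a multiple of the coefficient of $x^{N+1}$ in the genus-$g$ part of a graph free energy, and then to prove that this genus-$g$ generating function is holomorphic at $x=0$ with a radius of convergence that is bounded below uniformly in~$g$. Writing $[x^M]\Phi$ for the coefficient of $x^M$ in $\Phi\!\in\!\Q[[x]]$, set $\tilde a_{g,M}=\sum_{\Ga\in\cA_{g,M}}|\Aut(\Ga)|^{-1}\prod_{v\in\Ver}(3(\g(v)-1)+|\ov\Fl_v(\Ga)|)!$ for $2g+M\!\ge\!3$, so that $a_{g,N}=\tilde a_{g,N+1}$, and put $\mathcal F(\hbar,x)=\sum_{g\ge0}\hbar^{2g-2}F_g(x)$ with $F_g(x)=\sum_{M}\tilde a_{g,M}x^M/M!$, so that $a_{g,N}=(N+1)!\,[x^{N+1}]F_g$. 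By the exponential formula for stable graphs (gluing of half-edges being implemented by $\exp(\tfrac{\hbar^2}{2}\partial_y^2)$ followed by $y\!=\!0$), the series $\mathcal F$ is the connected part of $\bigl[\exp(\tfrac{\hbar^2}{2}\partial_y^2)\exp\mathcal V\bigr]_{y=0}$, where $x,y$ mark marked points and half-edges and the corolla series is
$$\mathcal V(\hbar,x,y)=\sum_{h\ge0}\hbar^{2h-2}\!\!\sum_{\substack{n,k\ge0\\ 2h+n+k\ge3}}\!\!(3h-3+n+k)!\,\frac{x^n}{n!}\frac{y^k}{k!}=\sum_{h\ge0}\hbar^{2h-2}\!\!\sum_{j\ge\max(3-2h,0)}\!\!\frac{(3h-3+j)!}{j!}\,(x+y)^j.$$
If $F_g$ is holomorphic on $\{|x|\!<\!\rho\}$ then $|[x^{N+1}]F_g|\le C_g R^{N+1}$ for an $R$ depending only on $\rho$, hence $a_{g,N}=(N+1)!\,[x^{N+1}]F_g\le C_g'C^NN!$ with $C$ depending only on~$\rho$, after absorbing the factor $N\!+\!1$ into the exponential. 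Thus it suffices to exhibit one $\rho>0$, independent of~$g$, on whose disc every $F_g$ is holomorphic.

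Two inputs make this feasible. First, each $\mathcal V_h:=[\hbar^{2h-2}]\mathcal V$ has non-negative coefficients and, by the resummation above, is holomorphic on $\{|x|+|y|<1\}$: the factor $(3h-3+j)!/j!$ is a polynomial in $j$ of degree $\max(3h-3,0)$ and does not move the radius. Second, $F_0$ is a sum over trees and hence is algebraic at $x=0$, as is the associated tree one-point function $\psi(x)$ cut out by a functional equation of the form $\psi=\partial_y\mathcal V_0(x,\psi)$; this is, in substance, the $g=0$ case of the lemma, available from~\cite{g0ci}. Let $\rho_0>0$ be a common radius for $\psi$ and $F_0$.

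The main step is the reduction to cores. Contracting into each vertex the maximal genus-$0$ subtrees attached to it by a single edge turns $\mathcal V$ into an effective corolla series whose coefficients are polynomials in $x$ and $\psi(x)$, holomorphic for $|x|<\rho_0$; contracting each maximal chain of genus-$0$ vertices with $|\Fl_v(\Ga)|=2$ replaces the edges along it by a single dressed propagator $P(x)$, an explicit geometric sum that is algebraic and holomorphic for $|x|<\rho_1$ for some fixed $\rho_1>0$ (the cyclic such chains being summed off directly as a further $g$-independent holomorphic function). After these operations every vertex of a genus-$g$ graph either has positive genus or has $|\Fl_v(\Ga)|\ge3$; there are at most $g$ of the former, and, since $\sum_{v}(|\Fl_v(\Ga)|-2)=2(g_\Ga-1)\le2g-2$ while the at most $g$ positive-genus vertices each contribute at least $-1$ to this sum, there are at most $3g$ of the latter (forcing in turn $|\Fl_v(\Ga)|=O(g)$ for every remaining vertex). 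Hence the possible cores form a finite set depending only on~$g$, and $F_g$ is a finite $\Q[[x]]$-combination of products of the fixed holomorphic functions $\psi$, $P$ and the effective vertex weights; in particular $F_g$ is holomorphic on $\{|x|<\rho\}$ with $\rho:=\min(\rho_0,\rho_1)$ independent of~$g$, which by the first paragraph yields~\e_ref{graphcnt_e}.

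The point I expect to be the real obstacle is precisely the uniformity in~$g$: one must check not only that the core reduction leaves finitely many graphs per genus but also that the new singularities it can introduce — zeros of $1-P(x)$ coming from loops of dressed propagators, or branch points of the effective vertex weights — sit at locations that do not drift toward $0$ as $g$ grows. This holds because every ingredient of $F_g$ is produced by a bounded number of arithmetic operations from the single pair of fixed functions $\psi$, $P$, so that all singularities of $F_g$ lie among the finitely many, $g$-independent branch points and special values of $\psi$ and $P$. A more self-contained alternative, avoiding analytic continuation, is to prove~\e_ref{graphcnt_e} by induction on $3g+N$: break each $\Ga\in\cA_{g,N+1}$ at the vertex carrying the last marked point, exactly as in Remark~\ref{graphbr_rmk}, so that the resulting strands are indexed by the strata $\cP_{g,N}^{(m)}$ and carry strictly smaller values of the parameter $3g+N$, and estimate the resulting sum termwise against the inductive bounds for the strands — here the obstacle is instead the combinatorial bookkeeping of verifying that the central-vertex factorial $(3(\g(v)-1)+|\ov\Fl_v(\Ga)|)!$ is absorbed by the multinomial coefficient distributing the original marked points together with the factors $\prod_i N_i!$ contributed by the strands.
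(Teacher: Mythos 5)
Your framing — realize $a_{g,N}$ as $(N\!+\!1)!$ times a Taylor coefficient of $f_g(q)\!\equiv\!\sum_N a_{g,N}q^N/N!$ and prove that the $f_g$ have a common positive radius of convergence — is exactly the paper's framing (the paper writes the claim in that form immediately and takes the $g\!=\!0$ case from \cite[Lemma~5.10]{g0ci}). The vertex-breaking you describe in your second, ``self-contained'' alternative, breaking $\Ga\!\in\!\cA_{g,N+1}$ at the vertex carrying the last marked point as in Remark~\ref{graphbr_rmk}, is also exactly the paper's move, producing the identity
$$a_{g,N}=\sum_{g'}\sum_{m\ge1}\sum_{\substack{\bfg,\s\in(\Z^{\ge0})^m\\ 2g'+m+|\s|\ge2\\ g'+|\bfg|+|\s|=g}}\sum_{\substack{\bfN\\ |\bfN|=N}}\frac{(3g'+m+|\s|-2)!}{m!}\binom{N}{\bfN}\prod_i\frac{a_{g_i,N_i+s_i}}{(s_i+1)!}\,.$$

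The piece you are missing, and which is precisely the obstacle you flag, is a resummation rather than a termwise estimate. In the above sum $m$ is unbounded (one may attach arbitrarily many strands with $(g_i,s_i)=(0,0)$), and the factorial $(3g'+m+|\s|-2)!$ grows with $m$, so no direct termwise application of the inductive bound closes. The paper splits off the terms in which at most one strand has $(g_i,s_i)\ne(0,0)$, recognises the resulting subsum as $f_g(q)\sum_{m'\ge1}f_0(q)^{m'}/m'=-f_g(q)\ln(1-f_0(q))$, and then, using the last identity of Lemma~\ref{comb_l0} to absorb the remaining $f_0^{m'}$-contributions into negative powers of $(1-f_0(q))$, arrives at~\e_ref{graphcnt_e9}: $(1+\ln(1-f_0))f_g$ equals a \emph{finite} sum (because the constraint $(g_i,s_i)\ne\mathbf 0$ for all $i$ together with $g'+|\bfg|+|\s|=g$ forces $m\le g-g'$) of terms involving only $f_{g'}$ with $g'<g$, their $q$-derivatives, and $(1-f_0)^{-k}$. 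Since differentiation and multiplication by $(1-f_0)^{-k}$ preserve the radius of convergence, induction on $g$ — not on $3g+N$ — then gives the uniform radius. So the multinomial bookkeeping you were worried about is not the issue; the issue is that no bookkeeping survives the unbounded sum over $m$ until you resum the geometric tower of trivial strands, and it is the subsequent collapse to a finite $m$-sum and a genuine drop in $g$ that makes the induction close.

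As for your first sketch: the decomposition into dressed cores is morally the same resummation, and your self-diagnosis is accurate — the ``effective corolla series'' and ``dressed propagator'' are never written down, and without them the uniformity claim is an assertion. In particular the weight of a vertex after dressing is not bounded in $g$-independent terms until you identify it with the explicit $(1-f_0)^{-k}$ and derivative factors of~\e_ref{graphcnt_e9}, which is what the paper does. Note also that the graphs in $\cA_{g,N+1}$ are already trivalent, so ``contracting non-trivalent vertices'' is not the relevant reduction; what one actually wants is to detach and resum the strands of type~(S1) at one special vertex, which is again the paper's vertex-breaking.
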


\begin{proof}
We define $a_{0,0}\!=\!0$, $a_{0,1}\!=\!1$, and 
$$f_g(q)=\sum_{N=0}^{\i}\frac{a_{g,N}}{N!}q^N\in \Q[[q]] \qquad\forall\,g\!\in\!\Z^{\ge0}.$$
The claim of the lemma is equivalent to the existence of some $C\!\in\!\R^+$
such that all power series~$f_g(q)$ converge whenever $|q|\!<\!1/C$.
The $g\!=\!0$ case of this claim is \cite[Lemma~5.10]{g0ci}.
It implies that the power series 
\BE{graphcnt_e3}f_0(q)\in q\Q[[q]], \qquad 
\big(1\!+\!\ln(1\!-\!f_0(q)\big)^{-1},\big(1\!-\!f_0(q)\big)^{-m}\in\Q[[q]]
~\hbox{with}~m\!\in\!\Z\EE
converge for $|q|\!<\!1/C$  for some $C\!\in\!\R^+$.\\

\noindent
We thus assume that $g\!\in\!\Z^+$.
Suppose $N\!\in\!\Z^{\ge0}$, 
$\Ga$ is a connected trivalent $(N\!+\!1)$-marked genus~$g$ graph 
as in~\e_ref{GaNdfn_e} with
$$S =\flr{N\!+\!1}\!\equiv\!\big\{0,1,\ldots,N\big\},$$ 
and $v\!=\!\eta(0)$ is the vertex to which the  marked point labeled by~0
is attached.
Let $S_v\!\subset\!\flr{N\!+\!1}$ be the subset of marked points attached to~$v$, 
as in Section~\ref{Mainform_subs}.
Breaking~$\Ga$ at~$v$ and replacing the flags at~$v$ with marked points, we~obtain
connected strands similarly to Figure~\ref{strands_fig} on page~\pageref{strands_fig}.
The set of these strands consists~of
\begin{enumerate}[label=$\bullet$,leftmargin=*]

\item a genus~0 two-vertex graph, with the marked point~0 at one of the vertices
and another marked point at the other vertex;

\item $m$ additional strands~$\Ga_i$, each of which is of genus~$g_i$ and carries $N_i\!\in\!\Z^{\ge0}$
of the original marked points of the set~$[N]$ and $(s_i\!+\!1)\!\in\!\Z^+$ additional marked points
so that \hbox{$g_i\!+\!N_i\!+\!s_i\!>\!0$} and $\Ga_i$ is a trivalent graph if 
\hbox{$2g_i\!+\!N_i\!+\!s_i\!\ge\!2$}.

\end{enumerate}
Since $\Ga$ is a  trivalent $N$-marked genus~$g$ graph,
$$2\g(v)\!+\!m\!+\!\sum_{i=1}^{i=m}\!s_i \ge2,  \quad 
\g(v)\!+\!\sum_{i=1}^{i=m}\!g_i\!+\!\sum_{i=1}^{i=m}\!s_i =g, \quad \sum_{i=1}^{i=m}\!N_i=N.$$
Summing over all possibilities for~$\Ga$, we thus obtain
$$a_{g,N}=\sum_{g'=0}^{g'=g}\sum_{m=1}^{\i}
\sum_{\begin{subarray}{c}\bfg,\s\in(\Z^{\ge0})^m\\ 2g'+m+|\s|\ge2\\
g'+|\bfg|+|\s|=g\end{subarray}}
\!\sum_{\begin{subarray}{c}\bfN\in(\Z^{\ge0})^m\\ |\bfN|=N\end{subarray}}\hspace{-.2in}
\frac{(3g'\!+\!m\!+\!|\s|\!-\!2)!}{m!}\binom{N}{\bfN}
\!\!\prod_{i=1}^{i=m}\frac{a_{g_i,N_i+s_i}}{(s_i\!+\!1)!}\,.$$
This is equivalent to
$$f_g(q)=\sum_{g'=0}^{g'=g}\sum_{m=1}^{\i}
\sum_{\begin{subarray}{c}\bfg,\s\in(\Z^{\ge0})^m\\ 2g'+m+|\s|\ge2\\
g'+|\bfg|+|\s|=g\end{subarray}}\hspace{-.25in}
\frac{(3g'\!+\!m\!+\!|\s|\!-\!2)!}{m!}
\prod_{i=1}^{i=m}\!\!\bigg(\frac{1}{(s_i\!+\!1)!}
\frac{\tnd^{s_i}f_{g_i}}{\tnd q^{s_i}}(q)\!\!\bigg).$$
We note that
$$3g'\!+\!\big|\big\{i\!\in\![m]\!:(g_i,s_i)\!\neq\!(0,0)\big\}\big|\!+\!|\s|\ge 2$$
above unless $(g_i,s_i)\!=\!(g,0)$ for some $i\!\in\![m]$ and 
$(g_j,s_j)\!=\!(0,0)$ for all $j\!\neq\!i$.
Thus,
\begin{equation*}\begin{split}
f_g(q)&=f_g(q)\!\!\sum_{m'=1}^{\i}\!\!\frac{f_0(q)^{m'}}{m'}+
\sum_{g'=0}^{g'=g}\sum_{m=0}^{\i}
\sum_{\begin{subarray}{c}\bfg,\s\in(\Z^{\ge0})^m\\  2g'+m+|\s|\ge2\\
g'+|\bfg|+|\s|=g\\ (g_i,s_i)\neq\0\,\forall i\end{subarray}}
\!\!\!\!\Bigg(\!\!
\frac{(3g'\!+\!m\!+\!|\s|\!-\!2)!}{m!}
\prod_{i=1}^{i=m}\!\!\bigg(\frac{1}{(s_i\!+\!1)!}
\frac{\tnd^{s_i}f_{g_i}}{\tnd q^{s_i}}(q)\!\!\bigg)\\
&\hspace{3in}\times\!
\sum_{m'=0}^{\i}\!\binom{3g'\!+\!m\!+\!m'\!+\!|\s|\!-\!2}{m'}f_0(q)^{m'}\!\Bigg)\,.
\end{split}\end{equation*}
Combining the above with the last identity in Lemma~\ref{comb_l0}, we obtain 
\BE{graphcnt_e9}\begin{split}
&\big(1\!+\!\ln(1\!-\!f_0(q)\big)f_g(q)\\
&\hspace{.5in}=\sum_{g'=0}^{g'=g}\sum_{m=0}^{\i}
\sum_{\begin{subarray}{c}\bfg,\s\in(\Z^{\ge0})^m\\  2g'+m+|\s|\ge2\\
g'+|\bfg|+|\s|=g\\ (g_i,s_i)\neq\0\,\forall i\end{subarray}}
\!\!\!
\frac{(3g'\!+\!m\!+\!|\s|\!-\!2)!}{m!(1\!-\!f_0(q))^{3g'+m+|\s|-1}}
\prod_{i=1}^{i=m}\!\!\bigg(\frac{1}{(s_i\!+\!1)!}
\frac{\tnd^{s_i}f_{g_i}}{\tnd q^{s_i}}(q)\!\!\bigg)\,.
\end{split}\EE
The right-hand side above is a finite sum (the terms with $m\!>\!g\!-\!g'$ vanish)
and $g_i\!<\!g$ for all $i\!\in\![m]$ and for all summands in~\e_ref{graphcnt_e9}.
Since the power series~\e_ref{graphcnt_e3} converge for $|q|\!<\!1/C$,
it follows by induction that so does the power series~$f_g(q)$.
This establishes the claim.
\end{proof}

\begin{proof}[{\bf{\emph{Proof of Proposition~\ref{Coeffbnd_prp}}}}]
Let $a!\!\equiv\!0$ if $a\!<\!0$.
We first note~that 
\BE{Coeffbnd_e0a}
\frac{(b''\!+\!\ep\!-\!b)!}{b''!}\le \frac{(b''\!+\!\ep)!}{b''!b!}
\le   \binom{b''\!+\!\ep}{\ep}\frac{\ep!}{b!}
\le 2^{b''+\ep}\frac{\ep!}{b!}  \qquad\forall\,b,b'',\ep\!\in\!\Z^{\ge0}.\EE
Furthermore,
\BE{Coeffbnd_e0b}\begin{split}
\frac{(b_-''\!+\!\ep_-\!-\!b')!}{b_-''!}
\frac{(b_+''\!+\!\ep_+\!+\!1\!+\!b')!}{b_+''!}
&\le \binom{b_-''\!+\!b_+''\!+\!\ep_-\!+\!\ep_+\!+\!1}{b_-'',b_+'',\ep_-,\ep_+,1}\ep_-!\ep_+!\\
&\le 5^{b_-''+b_+''+\ep_-+\ep_++1}\ep_-!\ep_+!
\end{split}\EE
for all $b',b_-'',b_+'',\ep_-,\ep_+\!\in\!\Z^{\ge0}$.\\

\noindent
Let $\Ga\!\in\!\cA_{g,N}$ be as in~\e_ref{GaNdfn_e}. 
Since $\Ga$ is a trivalent graph, 
$$2g_v\!+\!\big|\ov\Fl_v(\Ga)\big|\ge3 ~~\forall\,v\!\in\!\Ver, \quad
\big|\Ver\big|\le 
2\bigg(\sum_{v\in\Ver}\!\!\!\big(g_v\!-\!1)+\big|\Edg\big|\bigg)\!+\!N
=2\big(g\!-\!1\big)\!+\!N\,;$$
the second statement above follows from the first one,
the first statement in~\e_ref{mvsum_e}, and~\e_ref{gGAcond_e}.
Combining~\e_ref{gGAcond_e}, the above bound, and
the first statement in~\e_ref{mvsum_e}, we~obtain
\BE{Coeffbnd_e2a}
\big|\Edg\big|\le g\!-\!1+\big|\Ver\big|\le 3(g\!-\!1)\!+\!N, \quad
\sum_{v\in\Ver}\!\!\big((g_v\!-\!1)+|\ov\Fl_v(\Ga)|\big)\le 4(g\!-\!1)+2N\,.\EE
If in addition $\b'\!\in\!(\Z^{\ge0})^{\Edg}$ and $v\!\in\!\Ver$, let
$$ \De_v(\b')=3(g_v\!-\!1)\!+\!3\big|\ov\Fl_v(\Ga)\big|\!+\!1
-\sum_{f\in \Fl_v^-(\Ga)}\!\!\!\!\!\!b_{e_f}'
+\sum_{f\in \Fl_v^+(\Ga)}\!\!\!\!\!\!b_{e_f}'\,.$$
By the above bounds,
\BE{Coeffbnd_e2}
\sum_{v\in\Ver}\!\!\De_v(\b')\le 14(g\!-\!1)+7N\,.\EE

\vspace{.2in}

\noindent
Let $(\b'',\un\ep,\c,\bfI)\!\in\!\wt\cA^{\star}(\Ga)$ and $\b'\!\in\!(\Z^{\ge0})^{\Edg}$.
By~\e_ref{wtcAstaredfn_e}, 
\BE{Coeffbnd_e3}\begin{split}
\|\c_v\|\!+\!1+\!\!\sum_{s\in S_v}\!\!\big(b_s''\!+\!\ep_s\!-\!b_s\!+\!1\big)
&+\!\!\!\sum_{f\in\Fl_v^-(\Ga)}\!\!\!\!\!\!\big(b_f''\!+\!\ep_f\!-\!b_{e_f}'\!+\!1\big)\\
&+\!\!\!\sum_{f\in\Fl_v^+(\Ga)}\!\!\!\!\!\!\big(b_f''\!+\!\ep_f\!+\!1\!+\!b_{e_f}'\!+\!1\big)
\le\De_v(\b')
\end{split}\EE
for every $v\!\in\!\Ver$.
Along with~\e_ref{Coeffbnd_e0a} and~\e_ref{Coeffbnd_e0b}, this implies~that 
\BE{Coeffbnd_e4}\frac{1}{\un\ep_v!}
\prod_{s\in S_v}\!\frac{(b_s''\!+\!\ep_s\!-\!b_s)!}{b_s''!}
\!\!\!\!\prod_{f\in\Fl_v^-(\Ga)}\!\!\!\!\!\!\!
\frac{(b_f''\!+\!\ep_f\!-\!b_{e_f}')!}{b_f''!}
\!\!\!\!\prod_{f\in\Fl_v^+(\Ga)}\!\!\!\!\!\!\!
\frac{(b_f''\!+\!\ep_f\!+\!1\!+\!b_{e_f}')!}{b_f''!}
\le  5^{\De_v(\b')}\!\!\prod_{s\in S_v}\!\!\frac{1}{b_s!}\,.\EE

\vspace{.2in}

\noindent
By the first statement in \cite[Corollary~5.8]{g0ci},
there exists $C_n\!\in\!\R$ such~that
\BE{Coeffbnd_e5}\bigg|\LRbr{\frac{\Phi_{p;b}(q)}{\Phi_0(q)}}_{q;d} \bigg|
\le  C_n^b \LRbr{\big(1\!-\!C_nq\big)^{-b-1}}_{q;d}b! 
\quad\forall\,b,d\!\in\!\Z^{\ge0},\,p\!\in\!\nset.\EE
By Lemma~\ref{Phibnd_lmm}, \e_ref{Coeffbnd_e5}, \e_ref{Coeffbnd_e3}, and~\e_ref{Coeffbnd_e4},
there exists $C_{n,g}\!\in\!\R$ such~that
the absolute value of each nonzero factor~$\lrbr{\cdot}$ in~\e_ref{ncCdfn2_e} 
is bounded above~by
\begin{gather*}
\frac{(3(g_v\!-\!1)\!+\!|\ov\Fl_v(\Ga)|\!+\!|\c_v|)!}{|\c_v|!}
\binom{|\c_v|}{\c_v}\!\!\prod_{r=1}^{\i}\!\!\bigg(\frac{1}{r\!+\!1}\bigg)^{\!c_{v;r}}
C_{n,g}^{\De_v(\b')}\!\!\LRbr{(1\!-\!C_{n,g}q)^{-\De_v(\b')}}_{q;d_v}
\prod_{s\in S_v}\!\!\frac{1}{b_s!}\,
\end{gather*}
Along with~\e_ref{Coeffbnd_e2},
this implies that 
the absolute value of each summand in~\e_ref{ncCdfn2_e} 
is bounded above~by
\begin{equation*}\begin{split}
&\frac{C_{n,g}^{14(g-1)+7N}\LRbr{(1\!-\!C_{n,g}q)^{-(14(g-1)+7N)}}_{q;d}}{\b!}\\
&\hspace{1in}\times\!\!\!
\prod_{v\in\Ver}\!\!\Bigg(\!\!
\frac{(3(g_v\!-\!1)\!+\!|\ov\Fl_v(\Ga)|\!+\!|\c_v|)!}{|\c_v|!}
\binom{|\c_v|}{\c_v}\!\!\prod_{r=1}^{\i}\!\!\bigg(\frac{1}{r\!+\!1}\bigg)^{\!c_{v;r}}\!\Bigg).
\end{split}\end{equation*}

\vspace{.2in}

\noindent
By~\e_ref{wtcAstaredfn_e}, the number of possibly nonzero factors in~\e_ref{ncCdfn2_e} 
with $\c_v$ fixed (and $\b'',\un\ep$, and $I$ varying)  is bounded above~by
$$\binom{3(g_v\!-\!1)\!+\!|\ov\Fl_v(\Ga)|\!-\!\|\c_v\|
+g_v\!+\!2|\ov\Fl_v(\Ga)|\!-\!1}{g_v\!+\!2|\ov\Fl_v(\Ga)|\!-\!1} 
\le 2^{4(g_v-1)+3|\ov\Fl_v(\Ga)|-\|\c_v\|}\,.$$
Along with the conclusion of the previous paragraph and~\e_ref{Coeffbnd_e2a}, this implies that
the absolute value of the sum in~\e_ref{ncCdfn2_e} with $\Ga$, $(\bfd,\p',\b')$,
and $\c\!\equiv\!(\c_v)_{v\in\Ver}$ fixed is bounded above~by
\begin{equation*}\begin{split}
&\frac{C_{n,g}^{30(g-1)+15N}\LRbr{(1\!-\!C_{n,g}q)^{-(14(g-1)+7N)}}_{q;d}}{\b!}
\prod_{v\in\Ver}\!\!\!\!\big(3(g_v\!-\!1)\!+\!|\ov\Fl_v(\Ga)|\big)!\\
&\hspace{1in}\times2^{-\|\c\|}\!\!\!\prod_{v\in\Ver}\!\!\Bigg(\!
\frac{(3(g_v\!-\!1)\!+\!|\ov\Fl_v(\Ga)|\!+\!|\c_v|)!}{(3(g_v\!-\!1)\!+\!|\ov\Fl_v(\Ga)|)!|\c_v|!}
\binom{|\c_v|}{\c_v}\!\!\prod_{r=1}^{\i}\!\!\bigg(\frac{1}{r\!+\!1}\bigg)^{\!c_{v;r}}\!\Bigg).
\end{split}\end{equation*}
By Lemma~\ref{grcomb_lmm}, the sum of the terms on the second line above
over all possibilities for~$\c$ is bounded by~$2^{3g+N}$.\\

\noindent
By~\e_ref{wtcAstaredfn_e} and the second equality in~\e_ref{mvsum_e},
$$|\b'|\le |\b''|\!+\!|\un\ep|\le 3(g\!-\!1)\!+\!N-\big|\Edg\big|$$
for every nonzero summand~$\lrbr{\cdot}$ in~\e_ref{ncCdfn2_e}. 
Thus, the number of tuples~$\b'$ in~\e_ref{ncCdfn2_e} which contribute 
to~\e_ref{ncCdfn2_e} with $\Ga$ and $(\bfd,\p')$ fixed is bounded above~by
$$\binom{3(g\!-\!1)\!+\!N\!-\!|\Edg|+|\Edg|\!-\!1}{|\Edg|\!-\!1}
\le 2^{3(g-1)+N}\,.$$
Along with~\e_ref{sumcond_e2} and the first statement in~\e_ref{Coeffbnd_e2a}, 
this implies that  the number of elements of $\cS_{\Ga}(d,\p,\b)$ 
with nonzero summands in~\e_ref{ncCdfn2_e} is bounded above~by
$$2^{3(g-1)+N}\cdot n^{|\Edg|}\le (2n)^{3(g-1)+N}\,.$$

\vspace{.2in}

\noindent
By the conclusions of the last two paragraphs,
there exists $C_{n,g}\!\in\!\R$ such~that
the absolute value of the contribution of each $\Ga\!\in\!\cA_{g,n}$ in~\e_ref{ncCdfn2_e} 
times~$|\Aut(\Ga)|$ is bounded above~by
\begin{equation*}\begin{split}
&\frac{C_{n,g}^N}{\b!}\bigg|\binom{-(14(g\!-\!1)\!+\!7N)}{d}\bigg|C_{n,g}^d\cdot
\prod_{v\in\Ver}\!\!\!\!\big(3(g_v\!-\!1)\!+\!|\ov\Fl_v(\Ga)|\big)!\\
&\hspace{1in}
=\frac{C_{n,g}^N}{\b!}\binom{14(g\!-\!1)\!+\!7N+d\!-\!1}{d}C_{n,g}^d\cdot
\prod_{v\in\Ver}\!\!\!\!\big(3(g_v\!-\!1)\!+\!|\ov\Fl_v(\Ga)|\big)!\\
&\hspace{1in}
\le \frac{C_{n,g}^N}{\b!}2^{14(g-1)+7N+d}C_{n,g}^d\cdot
\prod_{v\in\Ver}\!\!\!\!\big(3(g_v\!-\!1)\!+\!|\ov\Fl_v(\Ga)|\big)!\,;
\end{split}\end{equation*}
on the first line above
$$\binom{a}{b}\equiv \frac{a(a\!-\!1)\ldots(a\!-\!d\!+\!1)}{d!}$$
is as in the Binomial Theorem.
The  claimed bound for~$\nc_{g;\p,\b}^{(d)}$ now follows from  Lemma~\ref{graphcnt_lmm}.
\end{proof}

\section{Torus equivariant setting}
\label{equiv_sec}

\noindent
In Section~\ref{equivGW_subs}, we first review the relevant aspects of equivariant cohomology;
a more detailed discussion can be found in \cite[Section~1.1]{bcov1}.
We then state an equivariant version of Theorem~\ref{main_thm}; 
see Theorem~\ref{equiv_thm}.
Theorem~\ref{main_thm} is obtained from Theorem~\ref{equiv_thm} 
by setting $\al\!=\!0$ and using the $l\!=\!0$ case of the second statement 
of \cite[Theorem~5]{PoZ}.
In Section~\ref{outline_subs}, we apply the Virtual Equivariant Localization Theorem
of~\cite{GP} to reduce the generating series~\e_ref{cZdfn_e} for equivariant GW-invariant
to a sum over the fixed loci
of the  actions of the $n$-torus $\T$ on the moduli spaces $\ov\M_{g,N}(\P^{n-1},d)$.
The two proofs of Theorem~\ref{equiv_thm} carried out in Sections~\ref{ClFormComp_subs} 
and~\ref{RecFormComp_subs} are outlined in Section~\ref{pfoutline_subs};
they involve breaking the fixed loci into  pieces of finitely many types
for each fixed pair~$(g,N)$.
The technical observations and background data needed for these proofs are
gathered in Section~\ref{prelim_subs}.

\subsection{Equivariant GW-invariants}
\label{equivGW_subs}

\noindent
Denote by $\ga\!\lra\!\P^{\i}$ the tautological line bundle and 
by $\T$ the complex $n$-torus $(\C^*)^n$.
Its group cohomology is the polynomial algebra on $n$~generators:
\BE{HTdfn_e}H_{\T}^*\equiv H^*(B\T;\Q)=\Q[\un\al]\equiv\Q[\al_1,\ldots,\al_n],\EE
where $\al_i\!=\!\pi_i^*c_1(\ga^*)$  and
$$\pi_i\!: B\T\!\equiv\!\big(\P^{\i}\big)^{\!N}  \lra B\C^*\!\equiv\!\P^{\i}$$
is the projection onto the $i$-th component.
For $r\!\in\!\Z^{\ge0}$, let $\si_r\!\in\!H_{\T}^*$ be 
the $r$-th elementary symmetric polynomial in $\al_1,\al_2,\ldots,\al_n$ and
$$\wh\si_r=(-1)^{r-1}\si_r\in H_{\T}^*\,.$$
Denote by 
$$\cI\subset\Q[\al_1,\ldots,\al_n]^{S_n} \subset H_{\T}^*$$
the ideal generated by $\si_1,\si_2,\ldots,\si_{n-1}$ inside of 
the ring of symmetric polynomials.\\

\noindent
If $\T$ is acting on a topological space $M$, let
$$H_{\T}^*(M)\equiv H^*(BM;\Q), \qquad\hbox{where}\qquad BM=E\T\!\times_{\T}\!M,$$
be the \sf{equivariant cohomology} of $M$.
If the $\T$-action on $M$ lifts to an action on a (complex) 
vector bundle $V\!\lra\!M$, let
$$\E(V)\equiv e(BV),{\bf c}(V)\equiv c(BV)\in H_{\T}^*(M)$$
denote the \sf{equivariant Euler} and \sf{Chern classes~of} $V$.\\

\noindent
The projection map $BM\!\lra\!B\T$ induces an action of $H_{\T}^*$ on $H_{\T}^*(M)$.
If in addition $M$ is a compact oriented manifold, 
this projection induces a well-defined integration-along-the-fiber homomorphism
\BE{intMTdfn_e}\int_M\!: H_{\T}^*(M)\lra H_{\T}^*\EE
for the fiber bundle $BM\!\lra\!B\T$.
It commutes with the actions of~$H_{\T}^*$.
If $M'$ is another compact oriented manifold with a $\T$-action, 
a $\T$-equivariant continuous map $f\!:M\!\lra\!M'$ determines 
an \sf{equivariant cohomology push-forward homomorphism}
\BE{HTpushdfn_e}f_*\!: H_{\T}^*(M)\lra H_{\T}^*(M')\,.\EE
It is characterized by the property that 
\BE{HTpushdfn_e2}\int_M\!\psi\big(f^*\psi'\big)=\int_{M'}\!\big(f_*\psi\big)\psi'\in H_{\T}^*
\qquad\forall~\psi\!\in\!H_{\T}^*(M),~\psi'\!\in\!H_{\T}^*(M').\EE
The homomorphism~\e_ref{HTpushdfn_e} commutes with the actions of~$H_{\T}^*$.\\

\noindent
Throughout the paper we work with the standard action of $\T$ on $\P^{n-1}$:
$$\big(e^{\I\th_1},\ldots,e^{\I\th_n}\big)\cdot [z_1,\ldots,z_n] 
=\big[e^{\I\th_1}z_1,\ldots,e^{\I\th_n}z_n\big].$$
It naturally lifts to the tautological line bundle $\ga$
and to the tangent bundle~$T\P^{n-1}$.
Let 
$$\x\equiv \E(\ga^*) \in H_{\T}^2(\P^{n-1})$$
be the \sf{equivariant hyperplane class}.
For $N\!\in\!\Z^{\ge0}$,
the $\T$-equivariant cohomology of $\P^{n-1}_N$ with respect 
to the induced diagonal $\T$-action on~$\P^{n-1}_N$ is given~by 
\BE{pncoh_e}
H_{\T}^*(\P^{n-1}_N) = \Q\big[\un\al,\x_1,\ldots,\x_n\big]\Big/
\big\{(\x_s\!-\!\al_1)\ldots(\x_s\!-\!\al_n)\!:s\!=\!1,\ldots,N\big\},\EE
where $\x_s\!=\!\pi_s^*\x$ and $\pi_s\!:\P^{n-1}_N\!\lra\!\P^{n-1}$ is
the projection onto the $s$-th component.
For each $\p\!\in\!\nset^N$, let
$$\un\x^{\p}=\prod_{s=1}^{s=N}\!\x_s^{p_s}\in H_{\T}^*(\P^{n-1}_N)\,;$$
these elements form a basis for $H_{\T}^*(\P^{n-1}_N)$ as a module 
over $H_{\T}^*\!=\!\Q[\un\al]$.\\

\noindent
For $g,N\!\in\!\Z^{\ge0}$, the action of $\T$ on $\P^{n-1}$
induces an action on $\ov\M_{g,N}(\P^{n-1},d)$ so that the evaluation~map
$$\ev^d\!\equiv\!\ev_1\!\times\!\ldots\!\times\!\ev_N\!: 
\ov\M_{g,N}(\P^{n-1},d)\lra \P^{n-1}_N$$
is $\T$-equivariant.
By~\cite{BF,LiT0}, the moduli space $\ov\M_{g,N}(\P^{n-1},d)$ carries 
an equivariant \sf{virtual fundamental class}.
It defines a homomorphism~\e_ref{HTpushdfn_e} 
with $M\!=\!\ov\M_{g,N}(\P^{n-1},d)$ which satisfies~\e_ref{HTpushdfn_e2}
with $\int_M$ replaced by the integration against this class.
In particular, there is a well-defined equivariant cohomology push-forward homomorphism
\BE{evPNdfn_e}\ev_*^d\equiv \big\{\ev_1\!\times\!\ldots\!\times\!\ev_N\big\}_*\!:
H_{\T}^*\big(\ov\M_{g,N}(\P^{n-1},d)\big)\lra H_{\T}^*\big(\P^{n-1}_N\big).\EE
It is characterized by the property that 
\BE{evPNdfn_e2}
\int_{[\ov\M_{g,N}(\P^{n-1},d)]^{\vir}}\!\!\!\!\psi\big(\ev^{d*}\psi'\big)
=\int_{\!M'}\!\!\big(\ev_{d*}\psi\!\big)\psi'\in H_{\T}^*\EE
for all $\T$-equivariant cohomology classes $\psi$ on $\ov\M_{g,N}(\P^{n-1},d)$
and $\psi'$ on~$\P^{n-1}_N$.
The homomorphism~\e_ref{evPNdfn_e} commutes with the actions of~$H_{\T}^*$.\\

\noindent
With $\ev_*^d$ as in~\e_ref{evPNdfn_e}, 
$\un\hb$ and $\un\hb^{-1}$ as in~\e_ref{Zdfn_e}, and $\un\x\!=\!(\x_1,\ldots,\x_n)$, 
let
\BE{cZdfn_e} \cZ^{(g)}\big(\un\hb,\un\x,q\big)= \sum_{d=0}^{\i}q^d
\ev_*^d\bigg\{\prod_{s=1}^{s=N}\!\!\frac{1}{\hb_s\!-\!\psi_s}\bigg\}
\in H_{\T}^*(\P^{n-1}_N)\big[\big[\un\hb^{-1},q\big]\big]\,.\EE
For $g\!=\!0$ and $N\!=\!1,2$, we define the coefficient of $q^0$ in~\e_ref{cZdfn_e} to~be
$$ 1 \qquad\hbox{and}\qquad
-\frac{1}{\hb_1\!+\!\hb_2}
\sum_{\begin{subarray}{c}p_1,p_2,r\in\Z^{\ge0}\\ p_1+p_2+r=n-1 \end{subarray}}
\!\!\!\!\!\!\!\!\!\!\!\wh\si_r\x_1^{p_1}\x_2^{p_2}\,,$$
respectively.
For each $p\!\in\!\nset$, let
\BE{cZpdfn_e}\cZ_p(\hb,\x,q)=\x^p
+\sum_{d=1}^{\i}q^d\ev_{1*}^d\bigg\{\!\frac{\ev_2^{d*}\x^p}{\hb\!-\!\psi_1}\!\bigg\}
\in H_{\T}^*(\P^{n-1})\big[\big[\hb^{-1},q\big]\big],\EE
where $\ev_1^d,\ev_2^d\!:\ov\M_{0,2}(\P^{n-1},d)\!\lra\!\P^{n-1}$.
For  $\p\!=\!(p_1,p_2,\ldots,p_N)\!\in\!\nset^N$, define
\BE{cZbpdfn_e}
\cZ_{\p}(\un\hb,\un\x,q)
=\prod_{s=1}^{s=N}\hb_s^{-1}\cZ_{p_s}(\hb,\x_s,q)\,.\EE

\begin{thmlet}\label{equiv_thm}
Suppose $n,N\!\in\!\Z^+$ and $g\!\in\!\Z^{\ge0}$ with $n\!\ge\!2$ and $2g\!+\!N\!\ge\!3$.
The generating function~\e_ref{cZdfn_e} for the equivariant $N$-pointed genus~$g$ GW-invariants 
of~$\P^{n-1}$ 
is given~by 
\BE{equivthm_e}\cZ^{(g)}\big(\un\hb,\un\x,q\big)= 
\sum_{\p\in\nset^N}\sum_{\b\in(\Z^{\ge0})^N}
\sum_{d=0}^{\i}\cC_{g;\p,\b}^{(d)}q^d\un\hb^{-\b}\cZ_{\p}(\un\hb,\un\x,q)\EE
for some $\cC_{g;\p,\b}^{(d)}\!\in\!\Q[\al]$ such that 
\BE{equivthm_e2}\cC_{g;\p,\b}^{(d)}-\sum_{t=0}^{\i} \nc_{g;\p,\b}^{(d,t)}
\wh\si_n^t\in\cI,\EE
where $\nc_{g;\p,\b}^{(d,t)}\!\in\!\Q$ are the numbers defined in Section~\ref{Mainform_subs}.
\end{thmlet}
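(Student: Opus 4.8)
The plan is to establish Theorem~\ref{equiv_thm} by a torus-localization computation in the spirit of~\cite{bcov1}, extending to all genera the $g\!=\!0$ argument of~\cite{g0ci}. First I would apply the Virtual Equivariant Localization Theorem of~\cite{GP} to the generating series~\e_ref{cZdfn_e}, as carried out in Section~\ref{outline_subs}. The $\T$-fixed loci of $\ov\M_{g,N}(\P^{n-1},d)$ are indexed by connected graphs whose vertices $v$ carry a fixed point of $\P^{n-1}$, a genus $g_v$, and a subset of the marked points, and whose edges carry positive degrees; the associated fixed locus is a finite quotient of a product of Deligne--Mumford moduli spaces --- one per vertex, of the vertex's genus and with a marked point for each incident flag --- and its contribution to~\e_ref{cZdfn_e} is a product of vertex factors (Hodge integrals over those moduli spaces), edge factors (rational functions of the weights $\al_i$ and the edge degrees), and the descendant factors $1/(\hb_s\!-\!\psi_s)$ at the marked points. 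Standard pole cancellation in the localization sum is what will make the resulting coefficients $\cC_{g;\p,\b}^{(d)}$ polynomial rather than merely rational in the equivariant weights.

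The central step, set up in Section~\ref{pfoutline_subs}, is to \emph{break} these graphs at the vertices carrying marked points, as in Remark~\ref{graphbr_rmk}. Peeling off the chains running out to the $N$ marked points and re-summing them --- using the genus-$0$ two-pointed localization, cf.~\cite{Gi,LLY,PoZ} --- produces exactly the factors $\cZ_{p_s}(\hb,\x_s,q)$ of~\e_ref{equivthm_e}. After contracting each peeled chain to its attachment vertex, what remains is a connected trivalent $N$-marked genus-$g$ \emph{core} $\Ga\!\in\!\cA_{g,N}$, with each core vertex and core edge carrying the residual ``dressing'' of the contracted pieces: the edges' dressings are governed by the hypergeometric series $F$ of~\e_ref{Fdfn_e}, the vertices' by Hodge integrals over Deligne--Mumford moduli spaces. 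Substituting the asymptotic expansion of $F$ from Proposition~\ref{Fexp_prp} and its shifted variants~\e_ref{Fpexp_e}, and invoking Proposition~\ref{HodgeInt_prp} to rewrite each vertex Hodge integral in the binomial-coefficient form carried by the numbers $A_{I;\c}^{(g,\un\ep)}$, collapses the data of a fixed core into the products of the power series $\Phi_{I_v;\c_v}^{(g_v,\un\ep_v)}$ and $\Phi_{p;b}$ appearing in~\e_ref{ncCdfn2_e}. Summing over all cores $\Ga\!\in\!\cA_{g,N}$, over the consistent degree distributions in $\cS_{\Ga}(d,\p,\b)$ of~\e_ref{cSGadfn_e}, and over the index set $\wt\cA^{\star}(\Ga)$ of~\e_ref{wtcAstaredfn_e} then reproduces the closed formula~\e_ref{ncCdfn2_e}; this is Section~\ref{ClFormComp_subs}. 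Breaking off one strand at a time and setting up an induction on $3g\!+\!N$ produces instead the recursion~\e_ref{coeffdfn_e} with base case~\e_ref{coeff2_e}; this is Section~\ref{RecFormComp_subs}, and the two descriptions are equivalent, as noted in Section~\ref{Mainform_subs}. The equivariant inputs for either route --- the genus-$0$ two-pointed data and the edge and vertex contributions --- are gathered in Section~\ref{prelim_subs} and prepared in Section~\ref{prelimcomp_subs}.

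The congruence~\e_ref{equivthm_e2} is then a matter of tracking the equivariant-parameter dependence modulo $\cI$. Since the full localization sum is invariant under permuting the coordinates of $\C^n$, each $\cC_{g;\p,\b}^{(d)}$ lies in $\Q[\al_1,\ldots,\al_n]^{S_n}$; as $\cI$ is generated there by $\si_1,\ldots,\si_{n-1}$ and $\Q[\al_1,\ldots,\al_n]^{S_n}/\cI\!\cong\!\Q[\wh\si_n]$, the coefficient $\cC_{g;\p,\b}^{(d)}$ is automatically congruent modulo $\cI$ to a polynomial in $\wh\si_n$. To identify its coefficients with the rational numbers $\nc_{g;\p,\b}^{(d,t)}$ of Section~\ref{Mainform_subs}, I would carry the localization computation out over $H_{\T}^*(\P^{n-1}_N)/\cI$, in which the relation $\prod_{i=1}^{n}(\x_s\!-\!\al_i)\!=\!0$ becomes $\x_s^n\!\equiv\!\wh\si_n$; the second degree variable $t$ in~\e_ref{coeff2_e} and~\e_ref{coeffdfn_e} records precisely the number of applications of this relation needed to return the equivariant classes on $\P^{n-1}_N$ to the chosen basis, and the dimensional identity~\e_ref{ncvan_e2} fixes this number, so the coefficient of $\wh\si_n^t$ comes out to be $\nc_{g;\p,\b}^{(d,t)}$. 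This yields~\e_ref{equivthm_e2}; setting the equivariant parameters to zero afterwards, so that only $t\!=\!0$ survives, together with~\cite{PoZ} then recovers Theorem~\ref{main_thm}.

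I expect the main obstacle to be the occurrence of \emph{loops} in the cores $\Ga$ (that is, $g_{\Ga}\!>\!0$), which does not arise in~\cite{g0ci} and occurs only in its simplest form in~\cite{bcov1}: one must allow Hodge classes $\la_{g;I}$ of every degree on $\ov\cM_{g,m}$, verify that a loop edge still fits the $F$-expansion framework of Proposition~\ref{Fexp_prp}, and --- most importantly --- keep the Hodge integrals on the Deligne--Mumford spaces $\ov\cM_{g,m}$ under control as $m$ grows. The latter is exactly what Proposition~\ref{HodgeInt_prp} supplies, via the string and dilaton equations, and its binomial form is what makes the re-summation into~\e_ref{ncCdfn2_e} and~\e_ref{coeffdfn_e} possible. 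A secondary source of friction is the sign and index bookkeeping for the two insertions at the two ends of each edge --- the $-1\!-\!b$ shifts visible in~\e_ref{coeff2_e} and in~\e_ref{sumcond_e2} --- which must be matched consistently against the shifted expansions~\e_ref{Fpexp_e}.
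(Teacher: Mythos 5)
Your plan reproduces the paper's proof (Sections~\ref{pfoutline_subs}--\ref{RecFormComp_subs}): virtual localization, breaking each fixed-locus graph into a trivalent core and peripheral strands, re-summing the strands via the genus-zero one- and two-pointed series together with the Hodge-integral reorganization of Proposition~\ref{HodgeInt_prp}, and the residue arguments modulo~$\cI$ that extract the $\wh\si_n^t$-coefficients. The only slips are terminological: the breaking is at the \emph{special} vertices, i.e.\ the vertices of the trivalent core~$\ov\Ga$, not at the vertices carrying marked points (a core vertex need not carry any marked point); and polynomiality of the~$\cC_{g;\p,\b}^{(d)}$ is established not by ``pole cancellation in the localization sum'' but by inverting~\e_ref{cZcoeff_e} over the free $\Q[\al]$-basis $\{\un\x^{\p}\}$ of $H_\T^*(\P^{n-1}_N)$, as in Lemma~\ref{cCpolyn_lmm}.
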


\noindent
The closed formula~\e_ref{cCformula_e} and separately the recursion~\e_ref{cCformula_e4}
compute the coefficients~$\cC_{g;\p,\b}^{(d)}$ in~\e_ref{equivthm_e} 
and thus provide a straightforward (though laborious) 
algorithm for computing the  generating function~\e_ref{cZdfn_e}
for the equivariant $N$-pointed genus~$g$ GW-invariants of~$\P^{n-1}$.
Let
$$\cY(\hb,\x,q)=
\sum_{d=0}^{\i}\frac{q^d}
{\prod\limits_{r=1}^{r=d}\left(\prod\limits_{k=1}^{k=n}\!\!(\x\!-\!\al_k\!+\!r\hb)
-\prod\limits_{k=1}^{k=n}\!\!(\x\!-\!\al_k)\!\right)}
\in \big(\Q_{\al;\hb,\x}'\!\cap\!\Q_{\al}[\x][[\hb^{-1}]]\big)\big[\big[Q\big]\big].$$
By \cite[Section~29.1]{MirSym} and \cite[Lemma~A.1]{g0ci}, 
the power series~$\ze(\x,q)$ and $\Psi_b(\x,q)$ in~\e_ref{cZexp_e} are described~by
$$\cY(\hb,\x,q)=e^{\ze(\x,q)/\hb}\sum_{b=0}^{\i}\Psi_b(\x,q)\hb^b\,.$$
By the proofs of Lemma~A.1 and Proposition~2.1 in~\cite{g0ci}, 
this relation determines~$\ze(\x,q)$ and $\Psi_b(\x,q)$ through an explicit recursion
involving differential operators.
By~\e_ref{Psimcdfn_e} and the proof of \cite[Proposition~4.2]{g0ci},
the power series~$\Psi_b(\x,q)$ determine the power series
$\Psi_{I;\c}^{(g,\un\ep)}$ in~\e_ref{cZmB_e} and 
$\Psi_{p;b}(\x,q)$  in~\e_ref{cZpexp_e}.\\

\noindent
For concreteness, we now describe the power series~$\ze(\x,q)$ and $\Psi_0(\x,q)$ explicitly.
For any $r\!\in\!\Z^{\ge0}$ and any power series~$f$, 
denote by~$\si_r(f)$ the power series
obtained from~$f$ by taking the $r$-th elementary symmetric polynomial in
$\{f\!-\!\al_i\}_{i\in[n]}$. 
Define
$$ L(\x,q)\in \x\!+\!q\cdot\Q[\al,\x,\si_{n-1}(\x)^{-1}]\big[\big[\x^{-1}q\big]\big] 
\quad\hbox{by}\quad  \si_n\big(L(\x,q)\big)-q=\si_n(\x).$$
The power series $\ze(\x,q)$ and $\Psi_0(\x,q)$ are described~by
\begin{gather*}
\ze\in \x q\cdot\Q[\al,\x,\si_{n-1}(\x)^{-1}]\big[\big[q\big]\big], \qquad
\x+D\ze(\x,q)=L(\x,q)\,,\\
\Psi_0(\x,q)=\Bigg(\frac{\x\si_{n-1}(\x)}{L(\x,q)\,\si_{n-1}(L(\x,q))}\Bigg)^{1/2}
\bigg(\frac{L(\x,q)}{\x}\bigg)^{1/2}\,;
\end{gather*}
setting $\al\!=\!0$ and $\x\!=\!1$ above gives the first formulas in~\e_ref{PhiODE_e}
and in~\e_ref{F0exp_e2}.\\
%The power series~$\Psi_0(\x,q)$ suffices to completely determine 
%the generating function~\e_ref{cZdfn_e} for the equivariant $N$~marked  genus~$g$ GW-invariants
%of~$\P^{n-1}$ with $(g,N)\!=\!(0,3)$ and  $(g,N)\!=\!(1,1)$.

\noindent
As demonstrated in \cite{GWvsSQ,PoZ,bcov1}, equivariant localization computations
in GW-theory can sometimes be carried out by working with the residues of 
the equivariant mirror B-side functions and by extracting the non-equivariant terms at the~end.
In such situations, precise knowledge of the equivariant coefficients 
$\cC_{g;\p,\b}^{(d)}$ in~\e_ref{equivthm_e} is not avoidable.

\subsection{Equivariant localization setup}
\label{outline_subs}

\noindent
Denote by
$$\H_{\T}^*=\Q_{\al}\equiv \Q(\al_1,\ldots,\al_n)$$
the field of fractions of $H_{\T}^*$.
If $M$ is a topological space with a $\T$-action, let 
$$\H_{\T}^*(M)=H_{\T}^*(M)\otimes_{H_{\T}^*}\H_{\T}^*.$$
In the case $M$ is a compact oriented manifold,
the classical equivariant localization theorem of~\cite{AB}
relates the homomorphism~\e_ref{intMTdfn_e} to the fixed locus of the $\T$-action.
The latter is a union of compact orientable manifolds~$F$
and $\T$ acts on the normal bundle $\N F$ of each~$F$.
Once an orientation of $F$ is chosen, there is a well-defined 
integration-along-the-fiber homomorphism
$$\int_F\!: H_{\T}^*(F)\lra H_{\T}^*.$$
The localization theorem of~\cite{AB} states that 
\BE{ABothm_e}
\int_M\psi = \sum_F\int_F\frac{\psi|_F}{\E(\N F)} \in H_{\T}^* \subset\H^*_{\T}
\qquad\forall~\psi\in  H_{\T}^*(M),\EE
where the sum is taken over all components $F$ of the fixed locus of $\T$. 
Part of the statement of~\e_ref{ABothm_e} is that $\E(\N F)$
is invertible in~$\H_{\T}^*(F)$.\\

\noindent
The standard action of~$\T$ on~$\P^{n-1}$ has $n$~fixed points:
$$P_1\equiv[1,0,\ldots,0], \qquad P_2\equiv[0,1,0,\ldots,0], 
\quad\ldots\quad P_n\equiv [0,\ldots,0,1].$$
By the choice of the lift of $\T$-action to the tautological line bundle~$\ga$
over~$\P^{n-1}$,
\BE{xrestr_e}\x|_{P_i}=\al_i\in H_{\T}^*\!=\!H_{\T}^*(P_i)
 \qquad\forall\,i\!\in\![n].\EE
Along with the $\T$-equivariance of Euler's sequence for $\P^{n-1}$,
this implies~that 
\BE{ETPn_e}
\E\big(T\P^{n-1}\big)\big|_{P_i}\equiv
\E\big(T_{P_i}\P^{n-1}\big)
=\prod_{k\in[n]-i}\!\!\!\!(\al_i\!-\!\al_k)\in 
H_{\T}^*(P_i)=H_{\T}^*=\Q[\un\al] \qquad\forall\,i\!\in\![n].\EE
For each $i\!\in\![n]$, define
\BE{phidfn_e}
\phi_i= \prod_{k\neq i}(\x\!-\!\al_k) \in H_{\T}^*(\P^{n-1}).\EE
This is the equivariant Poincare dual of $P_i$ in $\P^{n-1}$ 
in the sense of the $N\!=\!1$ case of~\e_ref{phidfn_e2} below.\\

\noindent
The standard diagonal $\T$-action on $\P^{n-1}_N$ has $n^N$~fixed points:
$$P_{i_1\ldots i_N}\equiv 
P_{i_1}\!\times\!\ldots\!\times\!P_{i_N}, \qquad
i_1,\ldots,i_N\!\in\![n].$$
By~\e_ref{ABothm_e} and~\e_ref{ETPn_e}, this implies that  
\BE{phidfn_e2}\eta|_{P_{i_1\ldots i_N}}\equiv
\int_{P_{i_1\ldots i_N}}\!\!\!\!\eta=
\int_{\P_N^{n-1}}\!
\eta\!\prod_{s=1}^N\!\pi_{i_s}^*\phi_{i_s}\in H_{\T}^*
\qquad\forall~\eta\!\in\!H_{\T}^*\big(\P^{n-1}_N\big),~
i_1,\ldots,i_N\!\in\![n].\EE
Under the identifications~\e_ref{HTdfn_e} and~\e_ref{pncoh_e},
the restriction maps on the equivariant cohomology induced by the inclusions
of $P_{i_1\ldots i_N}$ into~$\P^{n-1}_N$ are the homomorphisms
\BE{restrmap_e}
H_{\T}^*(\P_N^{n-1})\lra H_{\T}^*, \qquad 
\x_s\lra\al_{i_s},~s\!\in\![N].\EE
By~\e_ref{pncoh_e} and \e_ref{restrmap_e},
an element of $H_{\T}^*(\P_N^{n-1})$ is determined by its restrictions
$$\eta|_{P_{i_1\ldots i_N}}\equiv 
\int_{P_{i_1}\times\ldots\times P_{i_N}}\!\!\!\eta \in H_{\T}^*$$
to the $n^N$ fixed points of the $\T$-action on~$\P_N^{n-1}$.
Along with~\e_ref{phidfn_e2} and~\e_ref{evPNdfn_e2},  
this implies that 
the power series $\cZ^{(g)}(\un\hb,\un\x,q)$ in~\e_ref{cZdfn_e} is  
determined by the $n^N$ power series
\BE{cZval_e}\cZ^{(g)}\big(\un\hb,\al_{i_1,\ldots,i_N},q\big)
=\sum_{d=0}^{\i}q^d\!\!
\int_{\ov\M_{g,N}(\P^{n-1},d)}\prod_{s=1}^{s=N}\!\!
\frac{\ev_s^*\phi_{i_s}}{\hb_s\!-\!\psi_s},\EE
where $\al_{i_1\ldots i_N}\!\equiv\!(\al_{i_1},\ldots,\al_{i_N})$.\\

\noindent
The virtual localization theorem of~\cite{GP} extends~\e_ref{ABothm_e}
to the integration against equivariant virtual fundamental classes.
It in particular determines an induced $\T$-action 
on the virtual normal bundle $\N\cZ_{\Ga}^{\vir}$ of each
topological components~$\cZ_{\Ga}$ of the fixed locus of the $\T$-action
and reduces~\e_ref{cZval_e} to integrals over~$\cZ_{\Ga}$.
By \cite[Section~4]{GP},
\BE{GPthm_e}
\int_{\ov\M_{g,N}(\P^{n-1},d)}\!\!\prod_{s=1}^{s=N}\!\!
\frac{\ev_s^*\phi_{i_s}}{\hb_s\!-\!\psi_s}
= \sum_{\Ga\in\cA_{g,N}(n,d)}\int_{\cZ_{\Ga}}\frac{1}{\E(\N\cZ_{\Ga}^{\vir})}
\prod_{s=1}^{s=N}\!\!
\frac{\ev_s^*\phi_{i_s}}{\hb_s\!-\!\psi_s},\EE
where $\cA_{g,N}(n,d)$ is the set of equivalence classes of
connected $[n]$-valued $N$-marked genus~$g$ degree~$d$ graphs;
these are defined below.
Part of the statement of~\e_ref{GPthm_e} is that $\E(\N\cZ_{\Ga}^{\vir})$
is invertible in~$\H_{\T}^*(\cZ_{\Ga})$.
In Section~\ref{ClFormComp_subs}, we use~\e_ref{GPthm_e} to first reduce~\e_ref{cZval_e}
to a sum over the set $\cA_{g,N}(n)$ of equivalence classes of 
connected $[n]$-valued $N$-marked genus~$g$ graphs defined below.
We then sum up over all possibilities for the $[n]$-values of the vertices
to reduce the resulting sum to a sum over the collection~$\cA_{g,N}$ 
of connected trivalent $N$-marked genus~$g$  graphs 
to obtain Theorem~\ref{equiv_thm} with the first definition of 
the structure coefficients~$\nc_{g;\p,\b}^{(d)}$ in Section~\ref{Mainform_subs}.\\

\noindent
An \sf{$[n]$-valued $S$-marked weighted graph} is a tuple
\BE{decortgraphdfn_e}
\Ga\equiv 
\big((\g,\mu)\!:\!\Ver\!\lra\!\Z^{\ge0}\!\times\![n],
\eta\!:S\!\sqcup\!\Fl\!\lra\!\Ver,\d\!:\Edg\!\lra\!\Z^+\big)\EE
such that the tuple
\BE{decortgraphdfn_e0}\Ga_0\equiv \big(\g\!:\!\Ver\!\lra\!\Z^{\ge0},
\eta\!:S\!\sqcup\!\Fl\!\lra\!\Ver,\Edg\big)\EE
is an $S$-marked graph and
\BE{decorgraphcond_e}
\mu(f_e^-)\neq\mu(f_e^+\big)  \qquad\forall~e\!\equiv\!\big\{f_e^-,f_e^+\big\}\in\Edg.\EE
The first diagram in Figure~\ref{graphcore_fig} represents 
an $[n]$-valued 2-marked weighted graph~$\Ga$
with $\g(v)\!=\!0$ for all $v\!\in\!\Ver$.
The values of~$\mu$ on the vertices and of~$\d$ on the edges are indicated by 
the numbers next to the vertices and the edges.
By~\e_ref{decorgraphcond_e}, no two consecutive vertex labels are the same.\\

\noindent
An \sf{equivalence} between an $S$-marked weighted graph as in~\e_ref{decortgraphdfn_e}
and another $S$-marked weighted graph
$$ \Ga'\equiv 
\big((\g',\mu')\!:\!\Ver'\!\lra\!\Z^{\ge0}\!\times\![n],
\eta'\!:S\!\sqcup\!\Fl'\!\lra\!\Ver',\d'\!:\Edg'\!\lra\!\Z^+\big)$$
is an equivalence $(h_{\Ver},h_{\Fl})$ between the associated $S$-marked graphs~$\Ga_0$ 
and~$\Ga_0'$ such~that  
$$\mu=\mu'\!\circ\!h_{\Ver}, \qquad 
\d(e)=\d'\big(h_{\Fl}(e)\big)~~\forall\,e\!\in\!\Edg.$$
We denote by $\Aut(\Ga)$ the group of \sf{automorphisms} of~$\Ga$.\\

\noindent
For $\Ga$ as in~\e_ref{decortgraphdfn_e}, we denote by
$$|\Ga|\equiv\sum_{e\in\Edg}\!\!\!\d(e)$$
its \sf{degree}.
We call a vertex~$v$ of~$\Ga$ \sf{trivalent} if $v$ is a trivalent 
vertex of the associated $N$-marked graph~$\Ga_0$.
We call $\Ga$ \sf{connected} if $\Ga_0$ is connected.
If $\Ga$ is connected, we define its arithmetic genus~$\fa(\Ga)$ 
to be~$\fa(\Ga_0)$.
Let $\cA_{g,N}(n,d)$ be
the set of (equivalence classes~of) connected 
$[n]$-valued $N$-marked  genus~$g$ degree~$d$ graphs
and $\cA_{g,N}(n,*)$ be the union of the sets $\cA_{g,N}(n,d)$ over $d\!\in\!\Z$.\\

\begin{figure}
\begin{pspicture}(6,-1.1)(10,2)
\psset{unit=.4cm}
\pscircle*(30,0){.3}\rput(30,-.7){\smsize{$2$}}
\psline[linewidth=.04](30,0)(26,0)\pscircle*(26,0){.2}
\rput(28,.5){\smsize{$3$}}\rput(26,.6){\smsize{$4$}}
\psline[linewidth=.04](26,0)(23.5,-2.5)\pscircle*(23.5,-2.5){.2}
\rput(24.1,-1.3){\smsize{$3$}}\rput(23,-2.5){\smsize{$2$}}
\psline[linewidth=.04](26,0)(28.5,-2.5)\pscircle*(28.5,-2.5){.2}
\rput(28.1,-1.5){\smsize{$2$}}\rput(29.1,-2.5){\smsize{$3$}}
\psline[linewidth=.04](26,0)(22,0)\pscircle*(22,0){.3}
\rput(24,.5){\smsize{$2$}}\rput(22.5,-.6){\smsize{$1$}}
\psline[linewidth=.04](22,0)(19.5,2.5)\pscircle*(19.5,2.5){.2}
\rput(21,1.7){\smsize{$2$}}\rput(18.9,2.5){\smsize{$3$}}
\psline[linewidth=.04](22,0)(19.5,-2.5)\pscircle*(19.5,-2.5){.2}
\rput(20.2,-1){\smsize{$2$}}\rput(19,-2.6){\smsize{$3$}}
\psline[linewidth=.04](19.5,-2.5)(17.5,-.5)\rput(17.1,-.4){\smsize{$\bf 2$}}
\rput(26,2.6){\smsize{$2$}}
\pnode(30,0){A}\pnode(22,0){B}
\nccurve[ncurv=1,nodesep=.1,angleA=135,angleB=45]{-}{A}{B}
\rput(23,3){$\Ga$}
% right side of 1st diagram starts here
\psline[linewidth=.04](30,0)(33,2)\pscircle*(33,2){.2}
\rput(31.4,1.5){\smsize{$2$}}\rput(32.7,2.5){\smsize{$1$}}
\psline[linewidth=.04](30,0)(33,-2)\pscircle*(33,-2){.2}
\rput(31.6,-.5){\smsize{$1$}}\rput(32.7,-2.5){\smsize{$4$}}
\psline[linewidth=.04](35.5,3.5)(33,2)\rput(35.8,3.8){\smsize{$\bf 1$}}
\psline[linewidth=.04](36,0)(33,2)\pscircle*(36,0){.2}
\rput(34.6,1.5){\smsize{$1$}}\rput(36.3,.6){\smsize{$3$}}
\psline[linewidth=.04](36.5,-2)(33,-2)\pscircle*(36.5,-2){.2}
\rput(34.7,-1.5){\smsize{$1$}}\rput(36.8,-2.5){\smsize{$3$}}
% second diagram starts here
\pscircle*(50,0){.3}\pscircle*(46,0){.3}
\psline[linewidth=.04](46,0)(44,-2)\rput(43.5,-2){\smsize{$\bf 2$}}
\psline[linewidth=.04](50,0)(52,2)\rput(52.5,2){\smsize{$\bf 1$}}
\pnode(50,0){A}\pnode(46,0){B}
\nccurve[ncurv=.7,nodesep=.1,angleA=210,angleB=-30]{-}{A}{B}
\nccurve[ncurv=.7,nodesep=.1,angleA=150,angleB=30]{-}{A}{B}
\rput(46,.8){\smsize{$1$}}\rput(50,.8){\smsize{$2$}}
\rput(45,3.1){$\ov\Ga$}
\end{pspicture}
\caption{An $[n]$-valued 2-marked genus~1 degree~21 graph~$\Ga$, 
with special vertices indicated by larger dots, 
and its core~$\ov\Ga$.}
\label{graphcore_fig}
\end{figure}
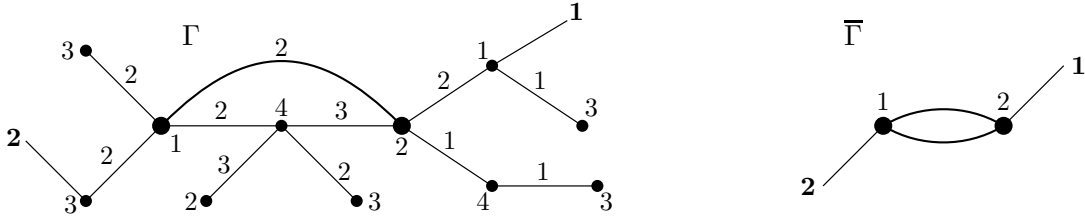

\noindent
An \sf{$[n]$-valued $N$-marked graph} is a tuple
$$\Ga\equiv 
\big((\g,\mu)\!:\!\Ver\!\lra\!\Z^{\ge0}\!\times\![n],
\eta\!:[N]\!\sqcup\!\Fl\!\lra\!\Ver,\Edg\big)$$
such that the tuple~\e_ref{decortgraphdfn_e0} is an $N$-marked graph and
$\mu$ satisfies~\e_ref{decorgraphcond_e}.
We define the notions of equivalence, trivalence, connectedness, and genus for such graphs
via the associated graph~\e_ref{decortgraphdfn_e0} as above.
We denote by $\Aut(\Ga)$ the group of automorphisms of a graph~$\Ga$ as above 
and by $\cA_{g,N}(n)$ the set of equivalence classes of 
connected trivalent $[n]$-valued $N$-marked genus~$g$ graphs.

\subsection{Outline of proofs of Theorem~\ref{equiv_thm}}
\label{pfoutline_subs}

\noindent
Let $\Ga$ be a connected $N$-marked genus~$g$ graph as in~\e_ref{GaNdfn_e}
with $2\fa(\Ga)\!+\!N\!\ge\!3$.
A vertex $v\!\in\!\Ver$ such that $\val_v(\Ga)\!\le\!0$
can then be \sf{contracted} to obtain another connected $N$-marked genus~$g$ graph
\begin{gather*}
\Ga'\equiv 
\big(\g'\!:\!\Ver'\!\lra\!\Z^{\ge0},
\eta'\!:[N]\!\sqcup\!\Fl'\!\lra\!\Ver',\Edg'\big) \qquad\hbox{s.t.}\\
\Ver'=\Ver\!-\!\{v\}, \quad
\g'=\g|_{\Ver'}, \quad 
\Fl'\subset\Fl\!\cap\!\eta^{-1}\big(\Ver'\big),\quad
\eta'=\eta~~\hbox{on}~\big([N]\!\cap\!\eta^{-1}(\Ver')\!\big)\!\sqcup\!\Fl',
\end{gather*}
as follows. 
If $|\Fl\!\cap\!\eta^{-1}(v)|\!=\!2$, we take 
$$\Fl'=\Fl\!\cap\!\eta^{-1}\big(\Ver'\big), \quad
\Edg'=\big\{e\!\in\!\Edg\!:e\!\cap\!\eta^{-1}(v)\!=\!\eset\}
\sqcup\big\{\{f\!\in\!\Fl'\!:e_f\!\!\cap\!\eta^{-1}(v)\!\neq\!\eset\}\big\}.$$
If $|\Fl\!\cap\!\eta^{-1}(v)|\!=\!1$, we take 
$$\Fl'=\big\{f\!\in\!\Fl\!:e_f\!\cap\!\eta^{-1}(v)\!=\!\eset\big\},\quad
\Edg'=\big\{e\!\in\!\Edg\!:e\!\cap\!\eta^{-1}(v)\!=\!\eset\}.$$
In this case, there is a unique $f^c\!\in\!\Fl\!\cap\!\eta^{-1}(\Ver')$
with $e_{f^c}\!\cap\!\eta^{-1}(v)\!\neq\!\eset$.
We complete the definition of~$\eta'$ by requiring that
$$\eta'(S_v)=\big\{f^c\big\};$$
the set $S_v$ consists of at most one element in this case
(it is empty in the previous case).\\

\noindent
Let $\Ga$ be a connected $[n]$-valued $N$-marked genus~$g$ weighted graph
with $2\fa(\Ga)\!+\!N\!\ge\!3$. 
We call the connected trivalent $[n]$-valued $N$-marked genus~$g$ graph
\BE{ga0dfn_e}\ov\Ga \equiv 
\big((\ov\g,\ov\mu)\!:\!\ov\Ver\!\lra\!\Z^{\ge0}\!\times\![n],
\ov\eta\!:[N]\!\sqcup\!\ov\Fl\!\lra\!\ov\Ver,\ov\Edg\big)\EE
obtained by forgetting the map~$\d$ and 
repeatedly contracting the non-trivalent vertices of~$\Ga_0$ 
until all vertices become trivalent the \sf{core} of~$\Ga$.
It is independent of the choice of the order in which 
the non-trivalent vertices are contracted and satisfies
\begin{gather*}
\big\{v\!\in\!\Ver\!:g(v)\!\ge\!1\big\}\cup
\big\{v\!\in\!\Ver\!:\{v\}\!=\!\eta(e)~\hbox{for some}~e\!\in\!\Edg\big\}
\subset\ov\Ver\subset\Ver, \\
\ov\Edg\!\cap\!\Edg=\big\{e\!\in\!\Edg\!:\eta(e_f)\!\subset\!\ov\Ver\big\}, \quad
\{f\!\in\!\Fl\!:e_f\!\in\!\ov\Edg\!\cap\!\Edg\big\}\subset\ov\Fl\subset\Fl,\\
(\ov\g,\ov\mu)=(\g,\mu)\big|_{\ov\Ver}, \quad
\ov\eta\big|_{([N]\cap\eta^{-1}(\ov\Ver))\sqcup\ov\Fl}
=\eta\big|_{([N]\cap\eta^{-1}(\ov\Ver))\sqcup\ov\Fl}\,.
\end{gather*}
We call the vertices $\ov\Ver$ of the core $\ov\Ga$ the \sf{special vertices} of~$\Ga$.
The graph~$\ov\Ga$ on the right-hand side of Figure~\ref{graphcore_fig}
is the core of the graph~$\Ga$ on its left-hand side.\\

\noindent
We compute~\e_ref{cZval_e} by breaking each graph $\Ga\!\in\!\cA_{g,N}(n,*)$
at its special vertices into~\sf{strands}
\BE{stranddfn_e}
\Ga_{\fp}\equiv 
\big((\g_{\fp},\mu_{\fp})\!:\!\Ver_{\fp}\!\lra\!\Z^{\ge0}\!\times\![n],
\eta_{\fp}\!:S_{\fp}\!\sqcup\!\Fl_{\fp}\!\lra\!\Ver_{\fp},
\d_{\fp}\!:\Edg_{\fp}\!\lra\!\Z^+\big);\EE
see Figure~\ref{strands_fig}.
The sets~$\Edg_{\fp}$ of the edges of these strands partition the set~$\Edg$
of the edges of~$\Ga$. 
Each edge $e\!\equiv\!\{f,f'\}$ of~$\Ga$ so that $v\!\equiv\!\eta(f)$ is a special vertex
of~$\Ga$
keeps a copy~$v_f$ of~$v$ with
\BE{stranddfn_e2} \g_{\fp}(v_f)=0, \qquad \mu_{\fp}(v_f)=\mu(v).\EE
We also add a marked point labeled by~$\wh{f}$ to this vertex.
Thus, the collection~$S_{\fp}$ of the marked points of the strands~$\Ga_{\fp}$
consists of the original $[N]$-marked points of~$\Ga$ and of a copy~$\wh{f}$
of each flag $f\!\in\!\Fl$ of~$\Ga$ so that $\mu(f)\!\in\!\ov\Ver$.\\

\begin{figure}
\begin{pspicture}(6,-1.1)(10,2)
\psset{unit=.4cm}
% top left diagram 
\pscircle*(23,2){.2}\rput(23.2,2.6){\smsize{$1$}}
\psline[linewidth=.04](23,2)(19,2)\pscircle*(19,2){.2}
\rput(21,2.5){\smsize{$2$}}\rput(18.8,2.6){\smsize{$3$}}
\psline[linewidth=.04](23,2)(25,.5)\rput(25.6,.6){\smsize{$\bf\wh{f}_3$}}
% bottom left diagram
\pscircle*(23,-2){.2}\rput(23.2,-2.6){\smsize{$1$}}
\psline[linewidth=.04](23,-2)(19,-2)\pscircle*(19,-2){.2}
\rput(21,-2.5){\smsize{$2$}}\rput(18.8,-2.6){\smsize{$3$}}
\psline[linewidth=.04](19,-2)(17,0)\rput(16.5,0){\smsize{$\bf 2$}}
\psline[linewidth=.04](23,-2)(25,-.5)\rput(25.5,-.6){\smsize{$\bf\wh{f}_2$}}
% top middle diagram
\psline[linewidth=.04](38,2)(30,2)
\pscircle*(30,2){.2}\pscircle*(38,2){.2}
\rput(30,2.7){\smsize{$1$}}\rput(38,2.7){\smsize{$2$}}
\psline[linewidth=.04](38,2)(40,3.5)\rput(40.7,3.8){\smsize{$\bf\wh{f}_6$}}
\psline[linewidth=.04](30,2)(28,3.5)\rput(27.5,3.6){\smsize{$\bf\wh{f}_4$}}
% bottom middle diagram
\pscircle*(38,0){.2}\rput(38,-.7){\smsize{$2$}}
\psline[linewidth=.04](38,0)(34,0)\pscircle*(34,0){.2}
\rput(36,.5){\smsize{$3$}}\rput(34,.6){\smsize{$4$}}
\psline[linewidth=.04](34,0)(31.5,-2.5)\pscircle*(31.5,-2.5){.2}
\rput(32.1,-1.3){\smsize{$3$}}\rput(31,-2.5){\smsize{$2$}}
\psline[linewidth=.04](34,0)(36.5,-2.5)\pscircle*(36.5,-2.5){.2}
\rput(36.1,-1.5){\smsize{$2$}}\rput(37.1,-2.5){\smsize{$3$}}
\psline[linewidth=.04](34,0)(30,0)\pscircle*(30,0){.2}
\rput(32,.5){\smsize{$2$}}\rput(30.5,-.6){\smsize{$1$}}
\psline[linewidth=.04](38,0)(40,1.5)\rput(40.7,1.8){\smsize{$\bf\wh{f}_7$}}
\psline[linewidth=.04](30,0)(28,1.5)\rput(27.5,1.6){\smsize{$\bf\wh{f}_5$}}
% top right diagram
\pscircle*(44,2){.2}\rput(44,2.7){\smsize{$2$}}
\psline[linewidth=.04](44,2)(48,2)\pscircle*(48,2){.2}
\rput(46,2.5){\smsize{$2$}}\rput(47.7,2.5){\smsize{$1$}}
\psline[linewidth=.04](50.5,3.5)(48,2)\rput(50.8,3.8){\smsize{$\bf 1$}}
\psline[linewidth=.04](51,0)(48,2)\pscircle*(51,0){.2}
\rput(49.6,1.5){\smsize{$1$}}\rput(51.3,.6){\smsize{$3$}}
\psline[linewidth=.04](44,2)(42,.5)\rput(41.6,.5){\smsize{$\bf\wh{f}_1$}}
% bottom right diagram
\pscircle*(44,-2){.2}\rput(44,-2.7){\smsize{$2$}}
\psline[linewidth=.04](44,-2)(48,-2)\pscircle*(48,-2){.2}
\rput(46,-1.5){\smsize{$1$}}\rput(47.7,-2.5){\smsize{$4$}}
\psline[linewidth=.04](51.5,-2)(48,-2)\pscircle*(51.5,-2){.2}
\rput(49.7,-1.5){\smsize{$1$}}\rput(51.8,-2.5){\smsize{$3$}}
\psline[linewidth=.04](44,-2)(42,-.5)\rput(41.6,-.5){\smsize{$\bf\wh{f}_8$}}
\end{pspicture}
\caption{The strands of the graph in the first diagram in Figure~\ref{graphcore_fig}.}
\label{strands_fig}
\end{figure}
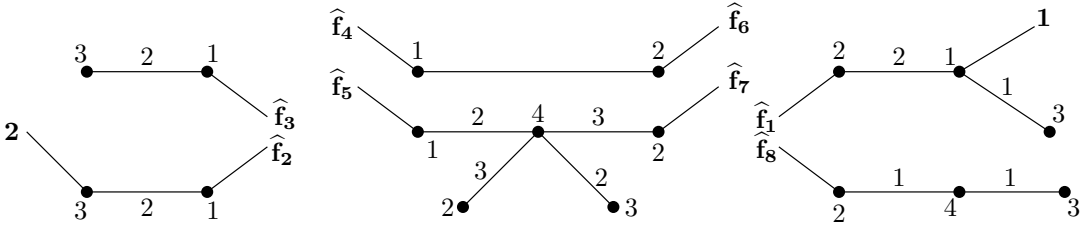

\noindent
There are three types of strands~$\Ga_{\fp}$:
\begin{enumerate}[label=(S\arabic*),leftmargin=*]
\item genus~0 strands with one new marked point;
\item\label{Z2conn_item} genus~0 strands with two new marked points;
\item\label{Z2main_item} genus~0 strands 
with one new marked point and one of the original $N$ marked points.
\end{enumerate}
By~\e_ref{ABothm_e}, each one-pointed strand at a special vertex 
$v\!\in\!\ov\Ver\!\subset\!\Ver$ contributes to
\BE{cZprdfn_e}\cZ^*\big(\hb,\al_j,q\big)
\equiv\sum_{d=1}^{\i}q^d\!\!
\int_{\ov\M_{0,1}(\P^{n-1},d)}
\frac{\ev_1^*\phi_j}{\hb\!-\!\psi_1},\EE
where $j\!=\!\mu(v)\!\in\![n]$ is the label of the vertex $v$ of $\Ga$.
By the dilaton relation \cite[p527]{MirSym},
$$\wt\cZ^*\big(\hb,\al_j,q\big)\equiv
\sum_{d=1}^{\i}q^d\!\!
\int_{\ov\M_{0,2}(\P^{n-1},d)}
\bigg(\frac{\ev_1^*\phi_j}{\hb\!-\!\psi_1}\bigg)
=\hb^{-1}\cZ^*\big(\hb,\al_j,q\big).$$
Each of the two-pointed strands contributes to 
\BE{cZ2stardfn_e}\cZ^*\big(\hb_1,\hb_2,\al_{j_1},\al_{j_2},q\big)
\equiv\sum_{d=1}^{\i}q^d\!\!
\int_{\ov\M_{0,2}(\P^{n-1},d)}
\frac{\ev_1^*\phi_{j_1}}{\hb_1\!-\!\psi_1}\frac{\ev_2^*\phi_{j_2}}{\hb_2\!-\!\psi_2},\EE
where $j_1,j_2\!\in\![n]$ are the labels of the vertices to which the marked points
are attached.
This implies that the power series $\cZ^{(g)}(\un\hb,\un\x,q)$ in~\e_ref{cZdfn_e} 
is determined by the previously computed power series for
one- and two-pointed GW-invariants
and by Hodge integrals over the Deligne-Mumford moduli spaces 
of stable curves.\\

\noindent
While the number of one-marked strands at each node can be arbitrarily large,
as indicated in~\cite[Sections~2.1,2.2]{bcov1} 
it is possible to sum over all possibilities 
for these strands at each special vertex; see Proposition~\ref{pt1sum_prp} below.
On the other hand, the numbers of special vertices, 
of two-pointed strands of type~\ref{Z2conn_item},
and of two-pointed strands of type~\ref{Z2main_item},
are bounded (by $3g\!+\!N$).
Using the Residue Theorem for~$\P^1$, one can then sum up over all 
possibilities of the markings for each of the special nodes.
Thus, the approach of breaking trees at special vertices reduces~\e_ref{cZval_e} 
to a finite sum, with one summand for 
each connected trivalent  $N$-marked genus~$g$ graph.\\

\noindent
The first description of the structure constants $\nc_{g;\p,\b}^{(d)}$ 
after Theorem~\ref{main_thm}
is obtained by breaking every graph $\Ga\!\in\!\cA_{g,N}(n,d)$ at all special vertices
of~$\Ga$.
On the other hand, the second description is obtained 
by breaking each such graph at the special vertex~$\ov\eta(N)$ only.
In addition to the strands~(S1), 
we then obtain strands~$\Ga_{\fp}$ of various genera~$g_{\fp}$ 
with $N_{\fp}\!\in\!\Z^+$ new marked points~$\wh{f}$ corresponding to 
the flags $f\!\in\!\Fl_v(\Ga)$ with $e_f\!\in\!\Edg_{\fp}$
and subsets $S_{\fp}^*\!\subset\![N]$
partitioning the original marked points~$s$ of~$\Ga$ not lying on~$v$
(i.e.~$\eta(s)\!\neq\!v$).
If $N\!\in\!S_{\fp}^*$ for some strand~$\Ga_{\fp}$, then 
$g_{\fp}\!=\!0$, $N_{\fp}\!=\!1$, and $S_{\fp}^*\!=\!\{N\}$.
Setting $g_s\!=\!0$, $N_s\!=\!1$, and $S_s^*\!=\!\{s\}$ for 
$s\!\in\!S_v$, we thus obtain an element
$$\big(g_v,(g_{\fp},S_{\fp}^*,N_{\fp})_{\fp\in[m]}\big)\in \cP_{g,N}^{(m)},$$
where $m\!\in\!\Z^+$ is the number the non-(S1) strands 
and of the marked points in~$S_v$.
With either approach, the main step is summing over all possibilities for 
the strands~(S1), as is done in Proposition~\ref{pt1sum_prp}.

\subsection{Key equivariant inputs}
\label{prelim_subs}

\noindent
With the notation as at the beginning of Section~\ref{equivGW_subs}, let
$$D_{\al}=\prod_{j\neq k}(\al_j\!-\!\al_k) \in\Q[\al_1,\ldots,\al_n]^{S_n}
\subset H_{\T}^*\,.$$ 
If $f\!=\!f(\hb)$ is a rational function in $\hb$ and $\hb_0\!\in\!\P^1$,
let
$$\Res{\hb=\hb_0}\big\{f(\hb)\big\} = \frac{1}{2\pi\I}\oint f(\hb)\tnd\hb\,,$$
where the integral is taken over a positively oriented loop around $\hb\!=\!\hb_0$
containing no other singular points of~$f$, 
denote the residue of $f(\hb)\tnd\hb$ at $\hb\!=\!\hb_0$.
With this definition,
$$\Res{\hb=\i}\big\{f(\hb)\big\}=-\Res{w=0}\big\{w^{-2}f(w^{-1})\big\}.$$
If $f$ involves variables other than $\hb$, 
$\Res{\hb=\hb_0}\big\{f(\hb)\big\}$ will be a function  of such variables.
If $f$ is a power series in $q$ with coefficients that are rational
functions in~$\hb$ and possibly other variables, denote by 
$\Res{\hb=\hb_0}\big\{f(\hb)\big\}$ the  power series in~$q$ obtained by 
replacing each of the coefficients by its residue at $\hb\!=\!\hb_0$.
If $\hb_1,\ldots,\hb_k$ is a collection of distinct points in~$\P^1$, let
$$\Res{\hb=\hb_1,\ldots,\hb_k}\big\{f(\hb)\big\} =\sum_{i=1}^{i=k} \Res{\hb=\hb_i}\big\{f(\hb)\big\}$$
be the sum of the residues at the specified values of $\hb$.\\

\noindent
We denote by
$$\Q_{\al}'\equiv \Q\big[\al,\si_n^{-1},D_{\al}^{-1}\big]^{S_n}
\subset\Q_{\al}$$
the subring of symmetric rational functions in $\al_1,\ldots,\al_n$
with denominators that are products of~$\si_n$ and~$D_{\al}$.
Let 
$$\Q_{\al;\hb,\x}'\equiv 
\Q_{\al}'[\hb,\x^{\pm1}]_{\left\langle(\x+r\hb)^n-\x^n,
\prod\limits_{k=1}^{k=n}(\x-\al_k+r\hb)-\prod\limits_{k=1}^{k=n}(\x-\al_k)
\big|r\in\Z^+\right\rangle}\subset\Q_{\al}(\hb,\x)$$
be the subring of rational functions in $\al_1,\ldots,\al_n$, $\hb$, and~$\x$,
symmetric in $\al_1,\ldots,\al_n$, 
with numerators that are polynomials in $\al_1,\ldots,\al_n$, $\hb$, and~$\x$,
and with denominators that are products~of 
$$\si_n\,,\quad D_{\al}\,,\quad \x\,, \quad (\x\!+\!r\hb)^n\!-\!\x^n\,, \quad 
\prod\limits_{k=1}^{k=n}(\x\!-\!\al_k\!+\!r\hb)-\prod\limits_{k=1}^{k=n}(\x\!-\!\al_k)\,,
\qquad\hbox{with}~~r\in\Z^+.$$
If $R$ is one of the rings $\Q_{\al}'$, $\Q_{\al}'[\x^{\pm1}]$, or $\Q_{\al;\hb,\x}'$
and $f_1$ and $f_2$ are elements of~$R$ or~$R[[q]]$, we will write   $f_1\!\sim\!f_2$
if $f_1\!-\!f_2$ lies in $\cI\cdot R$ or  $\cI\cdot R[[q]]$, respectively.
By the next lemma, certain operations on these rings respect these equivalence relations.

\begin{lmm}\label{ressum_lmm}
\begin{enumerate}[label=(\arabic*),leftmargin=*]
\item If $f\!\in\!\Q_{\al;\hb,\x}'$, there exists 
$g\!\in\!\Q_{\al}'[\x^{\pm1}]$ such that
$$\Res{\hb=0}\big\{f(\hb,\x\!=\!\al_j)\big\}=g(\x\!=\!\al_j)
\qquad\forall\,j\!\in\![n].$$
%If $f$ is symmetric in $\al_1,\ldots,\al_n$, then 
%$g$ can be chosen to be symmetric as~well.
\item If $g\!\in\!\Q_{\al}'[\x^{\pm1}]$,
$$\Res{\x=0,\i}\left\{\frac{g(\x)}{\prod\limits_{k=1}^{k=n}(\x-\al_k)}\right\}
\in\Q_{\al}'\,.$$
\item For every $p\!\in\!\Z$, 
$$-\Res{\x=0,\i}\left\{\frac{\x^p}{\prod\limits_{k=1}^{k=n}(\x-\al_k)}\right\}
\sim\begin{cases} \wh\si_n^t,&\hbox{if}~p\!=\!n\!-\!1+nt~\hbox{with}~t\in\Z;\\
0,&\hbox{if}~p\!+\!1\not\in n\Z.
\end{cases}$$
\end{enumerate}
\end{lmm}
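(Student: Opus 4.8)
The plan is to prove all three parts by residue and Laurent-expansion computations carried out over the ring $\Q[\al]^{S_n}$ of symmetric polynomials and its localizations. Throughout, set $\Pi(\x)\!\equiv\!\prod_{k=1}^{k=n}(\x\!-\!\al_k)$ and let $\Pi'$ be its $\x$-derivative. For part~(1) I would begin by noting that near $\hb\!=\!0$ each $\hb$-dependent generator of the denominators of $\Q_{\al;\hb,\x}'$ has a simple zero with a controllable leading term:
$$(\x\!+\!r\hb)^n\!-\!\x^n=nr\x^{n-1}\hb+O(\hb^2),\qquad
\Pi(\x\!+\!r\hb)-\Pi(\x)=r\,\Pi'(\x)\,\hb+O(\hb^2).$$
Hence the reciprocal of each such factor lies in $\hb^{-1}$ times a power series in $\hb$ with coefficients in $R_0\!\equiv\!\Q[\al]^{S_n}[\x^{\pm1},\si_n^{-1},D_\al^{-1},\Pi'(\x)^{-1}]$, so any $f\!\in\!\Q_{\al;\hb,\x}'$ has a Laurent expansion $f=\sum_{i\ge-m}c_i(\x)\hb^i$ with all $c_i\!\in\!R_0$, and $\Res{\hb=0}\{f\}=c_{-1}(\x)$. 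Since every element of $R_0$ is regular at $\x\!=\!\al_j$ for each $j\!\in\![n]$ (here $\Pi'(\al_j)=\prod_{k\ne j}(\al_j\!-\!\al_k)\ne0$ and $\al_j,\si_n,D_\al\ne0$), the substitution $\x\!=\!\al_j$ commutes with the expansion in $\hb$, so $\Res{\hb=0}\{f(\hb,\x\!=\!\al_j)\}=c_{-1}(\al_j)$.

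It then remains to replace $c_{-1}\!\in\!R_0$ by an element of $\Q_\al'[\x^{\pm1}]$ that agrees with it at $\x\!=\!\al_1,\ldots,\al_n$, i.e.~to clear the extraneous denominator $\Pi'(\x)$. Since $\Pi$ has formally distinct roots, $\Pi$ and $\Pi'$ are coprime in $\Q[\al]^{S_n}[\x]$, so a resultant/B\'ezout computation yields $U(\x)\Pi(\x)+V(\x)\Pi'(\x)=\pm D_\al$ with $U,V\!\in\!\Q[\al]^{S_n}[\x]$. Thus $\Pi'(\x)^{-1}\!\equiv\!\pm D_\al^{-1}V(\x)$ modulo $\Pi(\x)$, and by telescoping the same congruence holds for every power $\Pi'(\x)^{-\ell}$. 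Substituting these congruences into $c_{-1}$ produces $g\!\in\!\Q_\al'[\x^{\pm1}]$ (symmetric in $\al$) with $c_{-1}-g$ divisible by $\Pi(\x)$ in a suitable localization of $\Q[\al]^{S_n}[\x]$; evaluating at $\x\!=\!\al_j$ gives $g(\al_j)=c_{-1}(\al_j)=\Res{\hb=0}\{f(\hb,\x\!=\!\al_j)\}$ for all $j$, which is~(1).

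For part~(2), since $g\!\in\!\Q_\al'[\x^{\pm1}]$ is a Laurent polynomial in $\x$, the function $g(\x)/\Pi(\x)$ has only simple poles at $\x\!=\!\al_k$, with residue $g(\al_k)/\prod_{k'\ne k}(\al_k\!-\!\al_{k'})$, besides its poles at $\x\!=\!0,\i$; so the residue theorem on $\P^1$ gives $\Res{\x=0,\i}\{g(\x)/\Pi(\x)\}=-\sum_{k=1}^{k=n}g(\al_k)/\prod_{k'\ne k}(\al_k\!-\!\al_{k'})$. Expanding $g=\sum_i g_i\x^i$ with $g_i\!\in\!\Q_\al'$ reduces this to the fact that each $\phi_p\!\equiv\!\sum_{k=1}^{k=n}\al_k^p/\prod_{k'\ne k}(\al_k\!-\!\al_{k'})$ lies in $\Q_\al'$, which follows since $\phi_p$ is symmetric and, by Lagrange interpolation, equals a complete homogeneous symmetric polynomial in $\al$ for $p\!\ge\!0$ and such a polynomial divided by a power of $\si_n$ for $p\!<\!0$. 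For part~(3), part~(2) with $g\!=\!\x^p$ gives $-\Res{\x=0,\i}\{\x^p/\Pi(\x)\}=\phi_p$. Dividing the identity $\al_k^n=\wh\si_1\al_k^{n-1}+\ldots+\wh\si_n$ (valid because $\Pi(\al_k)\!=\!0$) by $\al_k^{n-p}\prod_{k'\ne k}(\al_k\!-\!\al_{k'})$ and summing over $k$ gives the recursion $\phi_p=\wh\si_1\phi_{p-1}+\ldots+\wh\si_n\phi_{p-n}$ valid for all $p\!\in\!\Z$; since $\wh\si_1,\ldots,\wh\si_{n-1}\!\in\!\cI$, this yields $\phi_p\sim\wh\si_n\phi_{p-n}$ and hence $\phi_p\sim\wh\si_n^{\,t}\phi_{p-nt}$ for every $t\!\in\!\Z$. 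Lagrange interpolation of $\x^j$ at $\al_1,\ldots,\al_n$ supplies the base values $\phi_0\!=\!\ldots\!=\!\phi_{n-2}\!=\!0$ and $\phi_{n-1}\!=\!1$; writing $p\!=\!j\!+\!nt$ with $j\!\in\!\{0,\ldots,n\!-\!1\}$ then gives $\phi_p\sim\wh\si_n^{\,t}$ when $j\!=\!n\!-\!1$ (equivalently $p\!+\!1\!\in\!n\Z$) and $\phi_p\sim0$ otherwise.

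The technical heart is part~(1); the three things to get right there are (a) identifying the ring $R_0$ that contains the $\hb$-Laurent coefficients of an arbitrary $f\!\in\!\Q_{\al;\hb,\x}'$, (b) justifying that $\x\!\mapsto\!\al_j$ commutes with the expansion around $\hb\!=\!0$, and (c) eliminating the spurious denominator $\Pi'(\x)$ modulo $\Pi(\x)$ through the B\'ezout relation, so that a \emph{single} symmetric $g$ works for all $j$ simultaneously. Parts~(2) and~(3) are then a routine application of the residue theorem on $\P^1$ together with standard symmetric-function bookkeeping.
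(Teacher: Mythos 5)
Your proof is correct, and all three parts are argued soundly. The paper itself offers no argument for this lemma: it points to Lemma~4.1 of an earlier paper of the same author and asserts that the proof applies verbatim, so there is nothing in-paper to compare against directly. Your self-contained route --- Laurent-expand in $\hb$ with coefficients controlled in the localization $R_0=\Q_\al'[\x^{\pm1},\Pi'(\x)^{-1}]$, use a B\'ezout relation $U\Pi+V\Pi'=\pm D_\al$ to replace $\Pi'(\x)^{-\ell}$ by a Laurent polynomial modulo $\Pi(\x)$ so that a single symmetric $g$ serves all $j$ at once, and then apply the residue theorem on $\P^1$ together with the recursion $\phi_p=\wh\si_1\phi_{p-1}+\cdots+\wh\si_n\phi_{p-n}$ and the base values $\phi_0=\cdots=\phi_{n-2}=0$, $\phi_{n-1}=1$ --- is the natural one and is presumably what the cited lemma does. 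Two points worth spelling out when writing this up carefully: the B\'ezout relation lands in $\Q[\al]^{S_n}[\x]$ with right-hand side exactly $\pm D_\al$ precisely because $\Pi$ is monic (so $\textnormal{Res}_\x(\Pi,\Pi')$ is the discriminant with no extraneous power of a leading coefficient), and the phrase ``divisible by $\Pi(\x)$ in a suitable localization'' should be unpacked as $g-c_{-1}=\Pi\cdot(\textnormal{element of }R_0)$ by factoring $(\pm D_\al^{-1}V)^\ell-\Pi'^{-\ell}$. Neither is a gap; the argument is complete.
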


\begin{proof} This is a modification of \cite[Lemma~4.1]{g0ci},
with the elements of~$\cI$, $\Q_{\al}'$, and $\Q_{\al;\hb,\x}'$ required to 
be symmetric in $\al_1,\ldots,\al_n$.
The proof of \cite[Lemma~4.1]{g0ci} applies verbatim in this setting.
\end{proof}

\noindent
We will also use  the Residue Theorem on~$\P^1$: 
$$\sum_{\x_0\in\P^1}\Res{\x=\x_0}\big\{f(\x)\big\}=0$$
for every  rational function $f\!=\!f(\x)$ on $\P^1\!\supset\!\C$.\\

\noindent
The most fundamental generating function for GW-invariants in the mirror symmetry computations
following~\cite{Gi} is
\BE{wtcZdfn_e}\begin{split}\wt\cZ(\hb,\x,q) &\equiv1+\wt\cZ^*(\hb,\x,q)\\
&\equiv 1+\sum_{d=1}^{\i}q^d\ev_{1*}^d\bigg\{\frac{1}{\hb-\psi_1}\bigg\}
\in H_{\T}^*(\P^{n-1})\big[\big[\hb^{-1},q\big]\big],
\end{split}\EE
where $\ev_1^d\!:\ov\M_{0,2}(\P^{n-1},d)\!\lra\!\P^{n-1}$.
By~\cite{Gi}, $\wt\cZ(\hb,\al_i,q)\!\in\!\Q_{\al}(\hb)$ for all $i\!\in\![n]$.
Thus, we can define
$$\ze(\al_i,q)=\Res{\hb=0}\big\{\ln\big(1+\wt\cZ^*(\hb,\al_i,q)\big)\big\}
\in q\cdot\Q_{\al}\big[\big[q\big]\big]$$
for each $i\!\in\![n]$.\\

\noindent
The proof of \cite[Proposition~4.2]{g0ci} provides power series
$\Psi_0,\Psi_1,\ldots\!\in\!\Q_{\al}'[\x^{\pm1}][[q]]$ such~that 
\begin{gather}\label{cZexp_e0}
\Psi_0(0)=1,\qquad \Psi_b(0)=0~~\forall\,b\!\in\!\Z^+,\qquad
\Psi_b(\x,q)\sim\Phi_b(q/\x^n)\x^{-b}~~\forall\,b\!\in\!\Z^{\ge0},\\
\label{cZexp_e}
\wt\cZ(\hb,\al_i,q)=e^{\ze(\al_i,q)/\hb}\sum_{b=0}^{\i}\Psi_b(\al_i,q)\hb^b
\quad\forall\,i\!\in\![n].
\end{gather}
Furthermore, for every $p\!\in\!\nset$ there exist
$\Psi_{b;0},\Psi_{b;1},\ldots\!\in\!\Q_{\al}'[\x^{\pm1}][[q]]$ such~that
\begin{alignat}{2}
\label{cZpexp_e0}
\Psi_{p;b}(\x,q)&\sim \Phi_{p;b}(q/\x^n)\x^{p-b}
&\qquad&\forall\,b\!\in\!\Z^{\ge0},\\
\label{cZpexp_e}
\cZ_p(\hb,\al_i,q)
&=e^{\ze(\al_i,q)/\hb}\sum_{b=0}^{\i}\Psi_{p;b}(\al_i,q)\hb^b
&\qquad&\forall\,i\!\in\![n]\,.
\end{alignat}
%The power series $\Psi_b$ and $\Psi_{p;b}$ provided by 
%the proof of \cite[Proposition~4.2]{g0ci} are symmetric in $\al_1,\ldots,\al_n$.

\begin{lmm}\label{pt2_lmm}
There exists a collection  
$\{\cC_{p_-p_+}\}_{p_{\pm}\in\nset}\!\subset\!\Q[\un\al]^{S_n}[[q]]$
such~that 
\begin{equation*}\begin{split}
&\Res{\hb_-=0}
\left\{\frac{e^{-\frac{\ze(\al_{i_-},q)}{\hb_-}}}
{\hb_-^{1+b_-}}
\cZ(\hb_-,\hb_+,\al_{i_-},\al_{i_+},q)\right\}\\
&\hspace{.5in}
=\sum_{b=0}^{b=b_-}\!\!\left( \frac{(-1)^b}{\hb_+^{b+1}}
\!\!\!\!\sum_{p_-,p_+\in\nset}\!\!\!\!\!\!\!
\cC_{p_-p_+}(q)\Psi_{p_-;b_--b}(\al_{i_-},q)\cZ_{p_+}(\hb_+,\al_{i_+},q)\!\!\right)
\end{split}\end{equation*}
for all $b_-\!\in\!\Z^{\ge0}$ and $i_-,i_+\!\in\![n]$ and 
\BE{p2cC_e} \cC_{p_-p_+}(q)\sim
\begin{cases}
1,&\textnormal{if}~p_-\!+\!p_+\!=\!n\!-\!1;\\
0,&\textnormal{otherwise}.\end{cases}\EE
\end{lmm}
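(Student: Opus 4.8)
The plan is to compute the two-pointed generating function $\cZ(\hb_-,\hb_+,\al_{i_-},\al_{i_+},q)$ by the divisor/splitting axiom, so as to reduce it to products of the one-pointed series $\cZ^*$ and $\cZ_p$ whose asymptotic expansions are already recorded in~\e_ref{cZpexp_e}, and then extract the residue at $\hb_-\!=\!0$. First I would recall the standard fact (a consequence of the string and divisor equations together with the WDVV relations, as in \cite[Section~29]{MirSym}) that the two-pointed descendant series can be written as a bilinear pairing in the one-pointed series: schematically,
\BE{p2pf_e}
\cZ(\hb_-,\hb_+,\al_{i_-},\al_{i_+},q)=
\frac{1}{\hb_-\!+\!\hb_+}\!\!\sum_{p_-,p_+\in\nset}\!\!\!
g^{p_-p_+}\cZ_{p_-}(\hb_-,\al_{i_-},q)\cZ_{p_+}(\hb_+,\al_{i_+},q),
\EE
where $g^{p_-p_+}$ is (a symmetrized form of) the inverse of the equivariant Poincar\'e pairing on $H_\T^*(\P^{n-1})$, so that $g^{p_-p_+}\!\sim\!\de_{p_-+p_+,n-1}$ modulo~$\cI$ by the same computation as in Lemma~\ref{ressum_lmm}(3). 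This is exactly the shape of the input used in \cite{bcov1} and \cite{g0ci} to break two-pointed strands, so I would cite the relevant statement there rather than reprove it.

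Granting~\e_ref{p2pf_e}, the next step is to substitute the expansion~\e_ref{cZpexp_e},
$\cZ_{p_-}(\hb_-,\al_{i_-},q)=e^{\ze(\al_{i_-},q)/\hb_-}\sum_b\Psi_{p_-;b}(\al_{i_-},q)\hb_-^b$,
multiply by $e^{-\ze(\al_{i_-},q)/\hb_-}\hb_-^{-1-b_-}$, and expand $\tfrac{1}{\hb_-+\hb_+}=\sum_{k\ge0}(-1)^k\hb_-^k\hb_+^{-k-1}$ as a power series in $\hb_-^{-1}$ times $\hb_+^{-1}$. The exponential prefactors cancel, leaving a Laurent series in $\hb_-$ whose residue (coefficient of $\hb_-^{-1}$) is a finite sum: the $\hb_-^{-1}$ term picks out $\Psi_{p_-;b_--b}$ against the $k\!=\!b$ term of the geometric series, for $0\le b\le b_-$. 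Collecting, the residue equals
$\sum_{b=0}^{b_-}\tfrac{(-1)^b}{\hb_+^{b+1}}\sum_{p_-,p_+}g^{p_-p_+}\Psi_{p_-;b_--b}(\al_{i_-},q)\,\cZ_{p_+}(\hb_+,\al_{i_+},q)$, which is precisely the claimed formula with $\cC_{p_-p_+}(q)\!:=\!g^{p_-p_+}$. Setting $\cC_{p_-p_+}=g^{p_-p_+}$ then gives~\e_ref{p2cC_e} directly. One also needs that $\cC_{p_-p_+}\!\in\!\Q[\un\al]^{S_n}[[q]]$: the inverse metric $g^{p_-p_+}$ is a symmetric polynomial in $\un\al$ divided by $\si_n$, but by working with the symmetrized basis and using the ideal $\cI$ one can arrange a genuinely polynomial representative, exactly as the denominators $\si_n$ are absorbed in the proof of Lemma~\ref{ressum_lmm}.

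The main obstacle I expect is pinning down the precise form of the pairing~\e_ref{p2pf_e} with the descendant insertions at \emph{both} points — the naive splitting axiom produces a node with no $\psi$-class, and one must use the topological recursion relation / dilaton trick to convert the two-pointed $\psi_1$-descendant series into the one-pointed series $\cZ_{p}$ of~\e_ref{cZpdfn_e}, keeping careful track of the $\tfrac{1}{\hb_-+\hb_+}$ factor (this is where the $-\tfrac{1}{\hb_1+\hb_2}$ in the definition of the $q^0$ coefficient of~\e_ref{cZdfn_e} comes from). This bookkeeping is routine in genus~0 and is carried out in \cite{bcov1,g0ci,MirSym}, so I would invoke it; the residue extraction and the verification of~\e_ref{p2cC_e} are then straightforward. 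A secondary point to check is that all manipulations take place in the ring $\Q'_{\al;\hb,\x}$ so that Lemma~\ref{ressum_lmm} applies and the $\sim$-relation is preserved — this follows because $\cZ_p(\hb,\al_i,q)\!\in\!\Q_\al(\hb)$ by~\cite{Gi} and the geometric-series expansion of $\tfrac1{\hb_-+\hb_+}$ only introduces allowed denominators.
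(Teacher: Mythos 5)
Your proposal is correct and follows essentially the same route as the paper: the paper's own proof is just a citation to \cite[Lemma~4.4]{g0ci}, whose argument is exactly your factorization-of-the-two-point-function-plus-residue-extraction, and the precise factorization you need is the identity~\e_ref{cZ2form_e} from \cite[(3.10)]{PoZ}, which this paper invokes in the neighboring Lemma~\ref{S3sum_lmm}. For $\P^{n-1}$ that identity is exact with coefficients $-\wh\si_r$ over $p_-\!+\!p_+\!+\!r\!=\!n\!-\!1$, so you may simply take $\cC_{p_-p_+}=-\wh\si_{n-1-p_--p_+}$ (and $0$ for $p_-\!+\!p_+\!>\!n\!-\!1$), which is $q$-independent and already lies in $\Q[\un\al]^{S_n}$, making your worry about $\si_n$-denominators in the inverse pairing unnecessary; the rest of your residue computation and the verification of~\e_ref{p2cC_e} go through as written.
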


\begin{proof}
The proof of \cite[Lemma~4.4]{g0ci} establishes the present lemma
(the equivalence~$\sim$ in the statement of \cite[Lemma~4.4]{g0ci}
is taken with respect to the ideal~$\cI$ inside of the entire ring~$\Q[\al]$). 
\end{proof}

\begin{crl}\label{pt2_crl} For all $b_-,b_+\!\in\!\Z^{\ge0}$ and $i_-,i_+\!\in\![n]$,
\begin{equation*}\begin{split}
&\Res{\hb_+=0}\left\{\Res{\hb_-=0}
\left\{\frac{e^{-\frac{\ze(\al_{i_-},q)}{\hb_-}-\frac{\ze(\al_{i_+},q)}{\hb_+}}}
{\hb_-^{1+b_-}\hb_+^{1+b_+}}
\cZ(\hb_-,\hb_+,\al_{i_-},\al_{i_+},q)\right\}\right\}\\
&\hspace{.5in}
=\sum_{b'=0}^{b'=b_-}\sum_{p_-,p_+\in\nset}\!\!\!\!\!\!\!
(-1)^{b'}\cC_{p_-p_+}(q)\Psi_{p_-;b_--b'}(\al_{i_-},q)\Psi_{p_+;b_++1+b'}(\al_{i_+},q).
\end{split}\end{equation*}
\end{crl}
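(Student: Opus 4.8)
The plan is to derive this directly from Lemma~\ref{pt2_lmm}: multiply both sides of the identity there by $e^{-\ze(\al_{i_+},q)/\hb_+}\hb_+^{-1-b_+}$, apply $\Res{\hb_+=0}$, and simplify the result using the expansion~\eqref{cZpexp_e}. First I would observe that $\Res{\hb_-=0}$ operates coefficientwise on power series in~$q$ and therefore commutes with multiplication by, and with the residue in, the independent variable~$\hb_+$. Consequently, performing the operation above on the left-hand side of Lemma~\ref{pt2_lmm} reproduces verbatim the nested residue on the left-hand side of the corollary.

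Next I would perform the same operation on the right-hand side of Lemma~\ref{pt2_lmm}. Pulling the residue through the finite sums over $b$ and over $p_-,p_+$ and through the factor $\Psi_{p_-;b_--b}(\al_{i_-},q)$, which does not involve~$\hb_+$, reduces the task to evaluating
\begin{equation*}
\Res{\hb_+=0}\left\{\frac{e^{-\ze(\al_{i_+},q)/\hb_+}}{\hb_+^{\,b_++b+2}}\,\cZ_{p_+}(\hb_+,\al_{i_+},q)\right\}
\end{equation*}
for each $b\!\in\!\{0,\ldots,b_-\}$ and $p_+\!\in\!\nset$. Here I would invoke~\eqref{cZpexp_e}, which asserts precisely that $e^{-\ze(\al_{i_+},q)/\hb_+}\cZ_{p_+}(\hb_+,\al_{i_+},q)=\sum_{b''\ge0}\Psi_{p_+;b''}(\al_{i_+},q)\hb_+^{\,b''}$ is an honest power series in~$\hb_+$; multiplying by $\hb_+^{-b_+-b-2}$ and extracting the coefficient of $\hb_+^{-1}$ then forces $b''\!=\!b_++b+1$, so the displayed residue equals $\Psi_{p_+;b_++b+1}(\al_{i_+},q)$.

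Substituting this back into the right-hand side and renaming the index $b$ as $b'$ yields exactly the claimed identity, with $\{\cC_{p_-p_+}\}$ the collection already furnished by Lemma~\ref{pt2_lmm}, so that~\eqref{p2cC_e} holds automatically. I do not anticipate a genuine obstacle: once the exponential factors cancel --- which is the entire point of~\eqref{cZpexp_e} --- the rest is formal bookkeeping. The only points needing a word of care are the interchanges of the two residue operations with the infinite sum over~$b''$ and with the sums over $b$ and $p_\pm$; these are harmless because $\nset$ is finite and, for each fixed power of~$q$, only finitely many~$b''$ contribute, so the whole computation stays within the $q$-adic formal framework in which the residues are defined.
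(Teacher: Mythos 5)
Your argument is correct and is precisely the paper's intended derivation: the paper's proof of this corollary reads in full ``This follows immediately from Lemma~\ref{pt2_lmm} and~\e_ref{cZpexp_e},'' and your write-up merely spells out the elementary residue computation that this one-liner elides. The expansion~\e_ref{cZpexp_e} cancels the exponential, the remaining Laurent series has residue $\Psi_{p_+;b_++b+1}(\al_{i_+},q)$, and the rest is bookkeeping.
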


\begin{proof}
This follows immediately from Lemma~\ref{pt2_lmm} and~\e_ref{cZpexp_e}.
\end{proof}

\noindent
Let $g\!\in\!(\Z)^{\ge0}$.
By the $\T$-equivariance of Euler's sequence for $\P^{n-1}$ and~\e_ref{CgIdfn_e0},
there exist \hbox{$h_{g,n;I}\!\in\!\cI[\x]$} with $I\!\in\!(\Z^{\ge0})^g$ such~that 
\BE{CgIdfn_e}\begin{split}
\E\big(\bE_g^*\!\otimes\!T_{P_i}\P^{n-1}\big)
&=\bigg(\sum_{I\in(\Z^{\ge0})^g}\!\!\!\!\!\!
\big(C_{g,n;I}\x^{(n-1)g-\|I\|}\!+\!h_{g,n;I}(\x)\big)\la_{g;I}\bigg)
\bigg|_{\x=\al_i}\\
&\qquad\in H_{\T}^*\big(\ov\cM_{g;m}\!\times\!P_i\big)=
 H^*\big(\ov\cM_{g;m}\big)\big[\un\al\big]
\end{split}\EE
for all $m\!\in\!\Z^{\ge0}$ with $2g\!+\!m\!\ge\!0$ and $i\!\in\![n]$.
For $m\!\in\!\Z^{\ge0}$ and \hbox{$k\!\in\![m]$},
we denote the $k$-th component of $\b\!\in\!(\Z^{\ge0})^m$ by $b_k\!\in\!\Z^{\ge0}$.
If $m\!\in\!\Z^{\ge0}$ with $2g\!+\!m\!\ge\!0$, $I\!\in\!(\Z^{\ge0})^g$,
$\b\!\in\!(\Z^{\ge0})^m$,  and $i\!\in\![n]$, let
\BE{wtcZgbIdfn_e}\begin{split}
&\wt\cZ_{\b,I}^{(g)}(\al_i,q)=
\big(C_{g,n;I}\al_i^{(n-1)g-\|I\|}\!+\!h_{g;n;I}(\al_i)\!\big)\\
&\qquad\times\!\!\sum_{m'=0}^{\i}\!\!\!\!\!\!
\sum_{\begin{subarray}{c}\b'\in(\Z^{\ge0})^{m'} \\
|\b|+|\b'|=\mu_g(I)+m+m'\end{subarray}}
\hspace{-.37in}
\Bigg(\!\frac{\llrr{\la_{g;I};\wt\tau_{\b\b'}}}{m'!}
\!\!\!\!\prod_{k\in[m']}\!
\Res{\hb=0}\Big\{\frac{(-\hb)^{-b_k'}}{b_k'!}\wt\cZ^*(\hb,\al_i,q)\!\Big\}\!\!\Bigg).
\end{split}\EE
Each residue above is an element of $\Q_{\al}[[q]]$.
Since the power series $\wt\cZ^*(\hb,\x,q)$ has no $q$-constant term,
the above sum is finite in each $q$-degree.
By Section~\ref{RecFormComp_subs}, the power series $\wt\cZ_{\b,I}^{(g')}(\al_i,q)$ 
describe the contributions to~\e_ref{cZval_e} of the strands~(S1) of the graphs~$\Ga$
with a fixed core~$\ov\Ga$ at a vertex~$v$ of~$\ov\Ga$ 
 with $|\ov\Fl_v(\ov\Ga)|\!=\!m$ and $\ov\mu(v)\!=\!i$.

\begin{prp}\label{pt1sum_prp}
Let $g,m\!\in\!\Z^{\ge0}$ with $2g\!+\!m\!\ge\!3$ and $I\!\in\!(\Z^{\ge0})^g$. 
There exist \hbox{$\Psi_{I;\c}^{(g,\un\ep)}\!\in\!\Q_{\al}'[\x^{\pm1}][[q]]$}
with $\c\!\in\!(\Z^{\ge0})^{\i}$ and $\un\ep\!\in\!(\Z^{\ge0})^m$ such~that 
\BE{cZmB_e}\begin{split}
\wt\cZ_{\b,I}^{(g)}(\al_i,q)&=
\!\!\!\!\!\sum_{\c\in(\Z^{\ge0})^{\i}}
\!\!\!
\sum_{\begin{subarray}{c} \un\ep\in (\Z^{\ge0})^m\\
|\un\ep|\le\mu_g(I)+m\\ \ep_k\le b_k~\forall k\in[m]\end{subarray}} \hspace{-.15in}
\Bigg(\!\!(-1)^{\mu_g(I)+m-\|\c\|}\Psi_{I;\c}^{(g,\un\ep)}(\al_i,q)  \\
&\hspace{.8in} \times\ze(\al_i,q)^{|\b|-(\mu_g(I)+m-\|\c\|)}
\binom{|\b|\!-\!|\un\ep|}{\mu_g(I)\!+\!m\!-\|\c\|\!-\!|\un\ep|}
\!\!\prod_{k=1}^m\!\!\frac{b_k!}{(b_k\!-\!\ep_k)!}\!\!\Bigg)
\end{split}\EE
for all $\b\!\in\!(\Z^{\ge0})^m$ and $i\!\in\![n]$ and
\BE{PsimcPhi_e}
\Psi_{I;\c}^{(g,\un\ep)}(\x,q)\sim 
\frac{\Phi_{I;\c}^{(g,\un\ep)}(q/\x^n)}{\Phi_0(q/\x^n)^m}\x^{(n-1)g-\|I\|-\|\c\|}\,.\EE
\end{prp}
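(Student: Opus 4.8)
\noindent
The power series $\Psi_{I;\c}^{(g,\un\ep)}$ in the statement are those given by~\e_ref{Psimcdfn_e}, the equivariant analogue of the series $\Phi_{I;\c}^{(g,\un\ep)}$ of~\e_ref{PsimiCdfn_e}: they are built from the power series~$\Psi_b$ of~\e_ref{cZexp_e}, the numbers $\wh A_{I;\c}^{(g,\un\ep)}$ of~\e_ref{hatAdfn_e}, and the equivariant Euler coefficient $C_{g,n;I}\x^{(n-1)g-\|I\|}\!+\!h_{g;n;I}(\x)$ of~\e_ref{CgIdfn_e}. Since $\Psi_b\!\in\!\Q_{\al}'[\x^{\pm1}][[q]]$, since $\Psi_0$ has $q$-constant term~$1$ and is therefore a unit of that ring, and since $h_{g;n;I}\!\in\!\cI[\x]$, this $\Psi_{I;\c}^{(g,\un\ep)}$ lies in $\Q_{\al}'[\x^{\pm1}][[q]]$; and~\e_ref{PsimcPhi_e} then holds by construction — it follows from the congruences $\Psi_b(\x,q)\!\sim\!\Phi_b(q/\x^n)\x^{-b}$ and $\Psi_0(\x,q)\!\sim\!\Phi_0(q/\x^n)$ of~\e_ref{cZexp_e0}, from $h_{g;n;I}\!\sim\!0$, and from a count of powers of~$\x$ (the $c_r$ factors~$\Psi_r$ together carry~$\x^{-\|\c\|}$, the Euler coefficient carries~$\x^{(n-1)g-\|I\|}$, for a net power~$\x^{(n-1)g-\|I\|-\|\c\|}$) together with a comparison of the powers of~$\Psi_0$ in~\e_ref{Psimcdfn_e} with those of~$\Phi_0$ in~\e_ref{PsimiCdfn_e} and in~\e_ref{PsimcPhi_e}. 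So the one substantive point is~\e_ref{cZmB_e}.

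To establish~\e_ref{cZmB_e} I would start from the definition~\e_ref{wtcZgbIdfn_e} and expand every residue occurring in it by means of~\e_ref{cZexp_e}. Writing $\wt\cZ^*\!=\!\wt\cZ\!-\!1$ and reading off the coefficient of~$\hb^{b'-1}$ in $e^{\ze(\al_i,q)/\hb}\sum_{b\ge0}\Psi_b(\al_i,q)\hb^b$, one obtains
$$\Res{\hb=0}\Big\{\tfrac{(-\hb)^{-b'}}{b'!}\wt\cZ^*(\hb,\al_i,q)\Big\}
=\tfrac{(-1)^{b'}}{b'!}\Big(\sum_{j\ge0,\,j+b'\ge1}\tfrac{\ze(\al_i,q)^{j}}{j!}\Psi_{j+b'-1}(\al_i,q)-\de_{b',1}\Big),$$
a series which in each $q$-degree is a finite combination of products $\ze^{a}\Psi_r$ (all the sums below are finite in each $q$-degree because $\wt\cZ^*$ has no $q$-constant term, which justifies the rearrangements). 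Substituting this into~\e_ref{wtcZgbIdfn_e} and expanding the $m'$-fold product, I would group the resulting multiple sum according to the multiset of indices~$r$ of the factors~$\Psi_r$ that appear; recording that multiset as $\c\!=\!(c_r)_{r\ge1}$ turns~$m'$ into~$|\c|$, converts $\frac1{m'!}\sum_{\textnormal{ordered}}$ into $\prod_r\frac1{c_r!}\sum_{\textnormal{unordered}}$, and produces the $\Psi$-dependent prefactor of~\e_ref{Psimcdfn_e}. By the degree condition $|\b|\!+\!|\b'|\!=\!\mu_g(I)\!+\!m\!+\!|\c|$ in~\e_ref{wtcZgbIdfn_e}, the total power of~$\ze$ is then constant on each group and equals $|\b|\!-\!(\mu_g(I)\!+\!m\!-\!\|\c\|)$, as in~\e_ref{cZmB_e}. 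What is left inside a group is a sum over the tuple~$\b'$ weighted by the Hodge integrals $\llrr{\la_{g;I};\wt\tau_{\b\b'}}$, by the binomial coefficients produced by the residue expansion, and by the dilaton corrections coming from the $\de_{b',1}$ above; handling these by the string and dilaton equations, exactly as in the genus-$0$ computation of \cite[Proposition~4.2]{g0ci}, identifies this sum with the relevant $A_{I,\b;\c}^{(g)}$ defined just before Proposition~\ref{HodgeInt_prp}. Inserting~\e_ref{DMstr_e} to rewrite $A_{I,\b;\c}^{(g)}$, with the index shift of~\e_ref{hatAdfn_e}, as the $\binom{|\b|-|\un\ep|}{\mu_g(I)+m-\|\c\|-|\un\ep|}\prod_k\frac{b_k!}{(b_k-\ep_k)!}$-weighted sum over~$\un\ep$ of the $\wh A_{I;\c}^{(g,\un\ep)}$ that enter~\e_ref{Psimcdfn_e}, and collecting everything (signs included), would then yield~\e_ref{cZmB_e}.

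The hard part is the combinatorial identification in the last step. The obstruction is that the sum defining~$\wt\cZ_{\b,I}^{(g)}$ in~\e_ref{wtcZgbIdfn_e} carries the rigid constraint $|\b|\!+\!|\b'|\!=\!\mu_g(I)\!+\!m\!+\!m'$, whereas $A_{I,\b;\c}^{(g)}$ is a factorial-weighted sum over~$\b'$ with no such constraint; reconciling the two amounts to tracking, through the string and dilaton equations, both the off-by-one between the ``$\ze$-level''~$j$ of a residue factor and the index $r\!=\!j\!+\!b'\!-\!1$ of the~$\Psi_r$ it contributes, and the $\de_{b',1}$ dilaton corrections. In genus~$0$ (where $I\!=\!()$, and only $\un\ep\!=\!\0$ survives) this is precisely the content of the argument in \cite[Proposition~4.2]{g0ci}; the sole new ingredient needed for $g\!\ge\!1$ is Proposition~\ref{HodgeInt_prp}, which repackages the string- and dilaton-equation relations among the Hodge integrals $\llrr{\la_{g;I};\wt\tau_{\b\b'}}$ into the numbers $A_{I;\c}^{(g,\un\ep)}$ with their $\b$-dependence made explicit through the binomials $\binom{b_k}{\ep_k}$, and which is already available at this point in the paper.
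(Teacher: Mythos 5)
Your first paragraph is fine and matches the paper: the series $\Psi_{I;\c}^{(g,\un\ep)}$ are defined by~\e_ref{Psimcdfn_e}, membership in $\Q_{\al}'[\x^{\pm1}][[q]]$ is immediate, and \e_ref{PsimcPhi_e} follows from the last statement of~\e_ref{cZexp_e0}, $h_{g;n;I}\!\sim\!0$, and the power count. Your plan for~\e_ref{cZmB_e} --- expand each residue in~\e_ref{wtcZgbIdfn_e} via~\e_ref{cZexp_e} and regroup --- is also the paper's route, except that the paper isolates the resulting identity as Proposition~\ref{HodgeIntGS_prp} (proved in Section~\ref{HodgeIntGS_subs}) and simply cites it here.

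The genuine gap is in the bookkeeping of that regrouping, exactly where the factor $\Psi_0(\al_i,q)^{-(2g-2+m+|\c|)}$ of~\e_ref{Psimcdfn_e} must be produced. In your expansion a residue factor can contribute $\Psi_0$ (e.g.\ $(j,b')=(0,1)$ or $(1,0)$), so the multiset of $\Psi$-indices is not recorded by $\c=(c_r)_{r\ge1}$: one has $m'=c_0+|\c|+(\hbox{number of pure-}\ze\hbox{ factors})$, not $m'=|\c|$, and at fixed $\c$ the inner $\b'$-sum is not $A_{I,\b;\c}^{(g)}$ (i.e.\ the $c_0\!=\!0$ case after the shift of~\e_ref{hatAdfn_e}) but a $c_0$-weighted series of the numbers $A_{I,\b;(c_0,c_1,c_2,\ldots)}^{(g)}$ over all $c_0\!\ge\!0$. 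No finite regrouping ``produces'' the negative powers of $\Psi_0$; they arise only from summing this infinite $c_0$-series, which needs two inputs you never invoke: Corollary~\ref{DM_crl3}, giving the factorial ratio $(2g\!+\!m\!-\!3\!+\!c_0\!+\!|\c|)!/(2g\!+\!m\!-\!3\!+\!|\c|)!$ when a slot is prepended to~$\c$, and the last identity of Lemma~\ref{comb_l0}, which resums $\sum_{c_0}(-1)^{c_0}\binom{2g-3+m+|\c|+c_0}{c_0}(\Psi_0\!-\!1)^{c_0}$ into $\Psi_0^{-(2g-2+m+|\c|)}$; Proposition~\ref{HodgeInt_prp} alone does not do this. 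Moreover, as literally described the step fails: since your $\Psi_0$ has $q$-constant term~$1$, the fixed-$\c$ groups (e.g.\ all factors of type $(j,b')=(0,1)$, giving $(-\Psi_0)^{m'}$ with $|\b|=\mu_g(I)+m$) are infinite sums whose individual $q$-coefficients do not stabilize, so the term-by-term regrouping is not $q$-adically legitimate. One must first absorb the $-\de_{b',1}$ corrections into $\Psi_0$, i.e.\ work with $\Psi_0\!-\!1\in q\Q_{\al}[[q]]$ as in~\e_ref{reslmm_e0} and~\e_ref{HodgeIntGS_e5}, after which every group is finite in each $q$-degree and the $c_0$-resummation goes through. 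The clean fix is either to add this $c_0$-argument or simply to quote Proposition~\ref{HodgeIntGS_prp}, as the paper does.
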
  

\begin{proof}
By the first two statements in~\e_ref{cZexp_e0}, \e_ref{cZexp_e}, 
and Proposition~\ref{HodgeIntGS_prp}, \e_ref{cZmB_e} holds with 
\BE{Psimcdfn_e}\begin{split}
\Psi_{I;\c}^{(g,\un\ep)}(\x,q)&=
\frac{(-1)^{\mu_g(I)+m+|\c|}
(C_{g,n;I}\x^{(n-1)g-\|I\|}\!+\!h_{g;n;I}(\x))
\wh{A}_{I;\c}^{(g,\un\ep)}}{\Psi_0(\x,q)^{2g-2+m}}\\
&\hspace{2in}
\times\prod_{r=1}^{\i}
\frac{1}{c_r!}\bigg(\frac{\Psi_r(\x,q)}{(r\!+\!1)!\,\Psi_0(\x,q)}\!\!\bigg)^{\!\!c_r}\,\,.
\end{split}\EE
By the last statement in~\e_ref{cZexp_e0} and \e_ref{PsimiCdfn_e}, 
$\Psi_{I;\c}^{(g,\un\ep)}$ satisfies~\e_ref{PsimcPhi_e}.
\end{proof}

\section{Proof of Theorem~\ref{equiv_thm}}
\label{maincomp_sec}

\noindent
We prove Theorem~\ref{equiv_thm}, with each of the two definitions 
of the structure constants~$\nc_{g;\p,\b}^{(d)}$, 
by summing up the contributions of the $\T$-fixed loci~$\cZ_{\Ga}$ of 
$\ov\M_{g,N}(\P^{n-1},d)$ as in~\e_ref{GPthm_e} over all possibilities for the graph~$\Ga$
as in~\e_ref{decortgraphdfn_e}. 
As outlined in Section~\ref{pfoutline_subs}, this will be done by breaking each~$\Ga$
(and correspondingly each fixed locus~$\cZ_{\Ga}$) either at every special vertex of~$\Ga$
or only at the special vertex $v\!=\!\ov\mu(N)$.

\subsection{Some preparation and notation}
\label{prelimcomp_subs}

\noindent
Sections~\ref{ClFormComp_subs} and~\ref{RecFormComp_subs} describe coefficients
$\cC_{g;\p,\b}^{(d)}\!\in\!\Q_{\al}$ by a closed formula and via a recursion,
respectively, so that~\e_ref{equivthm_e} is satisfied and 
\e_ref{equivthm_e2} holds with $t\!\in\!\Z$ instead of~$\Z^{\ge0}$.
These coefficients are symmetric in~$\al_1,\ldots,\al_n$
(this is also implied by the proof of the first claim below).
The full statement of Theorem~\ref{equiv_thm} then follows from the next observation.

\begin{lmm}\label{cCpolyn_lmm}
Let $g,n,N\!\in\!\Z^{\ge0}$ be as in Theorems~\ref{main_thm} and~\ref{equiv_thm}.
If $\cC_{g;\p,\b}^{(d)}\!\in\!\Q_{\al}$ are such that~\e_ref{equivthm_e} is satisfied,
then $\cC_{g;\p,\b}^{(d)}\!\in\!\Q[\al]$.
If in addition~\e_ref{equivthm_e2} holds with $t\!\in\!\Z$ instead of~$\Z^{\ge0}$
for some $\nc_{g;\p,\b}^{(d,t)}\!\in\!\Q$,
then it holds as stated with same coefficients~$\nc_{g;\p,\b}^{(d,t)}$.
\end{lmm}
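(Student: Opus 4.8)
The plan is to prove the two assertions in turn, the first by a $q$-adic induction that inverts~\eqref{equivthm_e}, the second by passing to the quotient $\Q_{\al}'/(\cI\!\cdot\!\Q_{\al}')$ and using the dimension constraint~\eqref{ncvan_e2}. For the first, recall from just after~\eqref{pncoh_e} that $\{\un\x^{\p}:\p\!\in\!\nset^N\}$ is a $\Q[\un\al]$-basis of $H_{\T}^*(\P^{n-1}_N)$, hence stays linearly independent over $\Q_{\al}$, so the coefficient of any monomial $\un\hb^{-\ga}\un\x^{\p}$ in an element of $H_{\T}^*(\P^{n-1}_N)[[\un\hb^{-1},q]]$ lies in $\Q[\un\al]$. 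By~\eqref{cZbpdfn_e}--\eqref{cZpdfn_e} one has $\cZ_{\p}(\un\hb,\un\x,q)=\un\hb^{-\1}\un\x^{\p}+O(q)$, where $\un\hb^{-\1}\un\x^{\p}$ has $\un\hb^{-1}$-multidegree exactly $\1=(1,\dots,1)$ while every $q$-positive term of $\cZ_{\p}$ has $\un\hb^{-1}$-multidegree $\ge\1$ componentwise and total $\un\hb^{-1}$-degree $\ge N\!+\!1$.

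Extracting the coefficient of $q^d$ from~\eqref{equivthm_e} then yields the identity
\[
\sum_{\p\in\nset^N}\sum_{\b}\cC_{g;\p,\b}^{(d)}\,\un\hb^{-(\b+\1)}\un\x^{\p}
=\bigl[\cZ^{(g)}\bigr]_{q^d}-\sum_{d'=0}^{d-1}\sum_{\p\in\nset^N}\sum_{\b}
\cC_{g;\p,\b}^{(d')}\,\un\hb^{-\b}\bigl[\cZ_{\p}\bigr]_{q^{d-d'}}
\]
in $H_{\T}^*(\P^{n-1}_N)[[\un\hb^{-1}]]$, in which, for each fixed $\un\hb^{-1}$-multidegree, only finitely many summands on either side contribute. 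I would induct on $d$: for $d\!=\!0$ the second sum is empty and the right side lies in $H_{\T}^*(\P^{n-1}_N)[[\un\hb^{-1}]]$, so the $\cC_{g;\p,\b}^{(0)}$ lie in $\Q[\un\al]$; for $d\!>\!0$ the right side again has $\Q[\un\al]$-coefficients (the $\cC^{(d')}$ with $d'\!<\!d$ by the inductive hypothesis, the terms $[\cZ^{(g)}]_{q^d}$ and $[\cZ_{\p}]_{q^{d-d'}}$ because they lie in $H_{\T}^*(\P^{n-1}_N)[[\un\hb^{-1}]]$), and reading off the coefficient of $\un\hb^{-(\b+\1)}\un\x^{\p}$ gives $\cC_{g;\p,\b}^{(d)}\!\in\!\Q[\un\al]$. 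This same computation shows $\{\cC_{g;\p,\b}^{(d)}\}$ is the unique solution of~\eqref{equivthm_e} over $\Q_{\al}$; since $\cZ^{(g)}$ and all the $\cZ_p$ are fixed by the Weyl $S_n$-action on $H_{\T}^*(\P^{n-1}_N)$ (the standard $\T$-action on $\P^{n-1}$ being $S_n$-equivariant, with $\x$ and each $\psi_s$ invariant), uniqueness forces $\cC_{g;\p,\b}^{(d)}\!\in\!\Q[\un\al]^{S_n}$, which also gives the symmetry claimed before the lemma.

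For the second assertion, the coefficients produced in Sections~\ref{ClFormComp_subs} and~\ref{RecFormComp_subs} lie a priori in $\Q_{\al}'$ and satisfy $\cC_{g;\p,\b}^{(d)}-\sum_{t\in\Z}\nc_{g;\p,\b}^{(d,t)}\wh\si_n^t\in\cI\!\cdot\!\Q_{\al}'$; by~\eqref{ncvan_e2} at most one integer $t_0$ can have $\nc_{g;\p,\b}^{(d,t_0)}\neq0$, so this reads $\cC_{g;\p,\b}^{(d)}\equiv\nc_{g;\p,\b}^{(d,t_0)}\wh\si_n^{t_0}$ modulo $\cI\!\cdot\!\Q_{\al}'$ (and $\cC_{g;\p,\b}^{(d)}\in\cI\!\cdot\!\Q_{\al}'$ if no such $t_0$ exists). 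I would then use the identifications $\Q[\un\al]^{S_n}/\cI\cong\Q[\si_n]$ and $\Q_{\al}'/(\cI\!\cdot\!\Q_{\al}')\cong\Q[\si_n^{\pm1}]$ — the latter because modulo $\si_1,\dots,\si_{n-1}$ the element $D_{\al}$ becomes a nonzero scalar multiple of $\si_n^{n-1}$, so inverting $\si_nD_{\al}$ amounts to inverting $\si_n$ — under which the composite $\Q[\un\al]^{S_n}\hookrightarrow\Q_{\al}'\twoheadrightarrow\Q[\si_n^{\pm1}]$ is the inclusion $\Q[\si_n]\hookrightarrow\Q[\si_n^{\pm1}]$; in particular $\Q[\un\al]^{S_n}\cap(\cI\!\cdot\!\Q_{\al}')=\cI$. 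Since $\cC_{g;\p,\b}^{(d)}\in\Q[\un\al]^{S_n}$ by the first part, its image in $\Q[\si_n^{\pm1}]$ has no negative powers of $\si_n$; yet that image equals $\pm\nc_{g;\p,\b}^{(d,t_0)}\si_n^{t_0}$, so $t_0\ge0$ or $\nc_{g;\p,\b}^{(d,t_0)}=0$. In all cases $\sum_{t\ge0}\nc_{g;\p,\b}^{(d,t)}\wh\si_n^t$ equals $\sum_{t\in\Z}\nc_{g;\p,\b}^{(d,t)}\wh\si_n^t$ (both vanish when $t_0<0$ or no $t_0$ exists) and lies in $\Q[\un\al]^{S_n}$, whence $\cC_{g;\p,\b}^{(d)}-\sum_{t\ge0}\nc_{g;\p,\b}^{(d,t)}\wh\si_n^t\in\Q[\un\al]^{S_n}\cap(\cI\!\cdot\!\Q_{\al}')=\cI$, i.e.~\eqref{equivthm_e2} holds as stated.

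The step I expect to require the most care is the inductive extraction of the $\cC_{g;\p,\b}^{(d)}$: one must control the combined $q$-then-$\un\hb^{-1}$ filtration precisely enough to be certain that the leading contribution of $q^d\un\hb^{-\b}\cZ_{\p}$ is exactly $\un\hb^{-(\b+\1)}\un\x^{\p}$ and is not overwritten by lower-$q$-degree terms coming from other pairs $(\p,\b)$, so that the coefficients are genuinely pinned down one index at a time. The commutative-algebra input $\Q[\un\al]^{S_n}\cap(\cI\!\cdot\!\Q_{\al}')=\cI$ is by contrast routine, since $\Q[\un\al]^{S_n}/\cI$ is the domain $\Q[\si_n]$ and localizing it at $\si_n$ is injective.
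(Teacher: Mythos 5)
Your proposal is correct and follows essentially the same strategy as the paper: the first assertion is proved by extracting the coefficient of $q^d\un\hb^{-\b-\1}$ from~\eqref{equivthm_e} and inducting on $d$, using that $\{\un\x^{\p}\}_{\p\in\nset^N}$ is a $\Q[\un\al]$-basis of $H_\T^*(\P^{n-1}_N)$ and that $\cZ^{(g)}$ and $\cZ_p$ have $H_\T^*$-coefficients; this is precisely the paper's equation~\eqref{cZcoeff_e}. Your multidegree bookkeeping (the $q^{>0}$ part of $\cZ_{\p}$ having $\un\hb^{-1}$-multidegree $\ge\1$ componentwise with total degree $\ge N\!+\!1$) is the accounting that makes the paper's recursion well-founded, and spelling it out is a useful clarification. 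You also add a Weyl-group / uniqueness argument for $S_n$-symmetry that the paper only gestures at.

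For the second assertion, the paper cites \cite[Lemma~3.3]{bcov1} (no power of $\si_nD_\al$ lies in $\cI$) together with algebraic independence of $\si_1,\ldots,\si_n$, which reduces to the injectivity of $\Q[\si_n]\!\hookrightarrow\!\Q[\si_n]_{\si_n\bar{D}_\al}$ plus the $\Q$-linear independence of $\{\si_n^t\}_{t\in\Z}$ inside $\Q(\si_n)$. You instead compute directly that $D_\al\equiv(\mathrm{const})\,\si_n^{n-1}\pmod{\cI}$, identifying $\Q_\al'/(\cI\Q_\al')$ with the Laurent ring $\Q[\si_n^{\pm1}]$; this is a self-contained reproof of the cited lemma via the discriminant of $x^n+(-1)^n\si_n$ and is slightly stronger than strictly needed, but it streamlines the ``negative powers cannot occur'' step. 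One minor caveat (present in both versions, implicit): for $\sum_{t\in\Z}\nc_{g;\p,\b}^{(d,t)}\wh\si_n^t$ to be an element of $\Q_\al'$ the sum must be finite, which in the application is guaranteed by~\eqref{ncvan_e2}; it is worth flagging that your ``at most one $t_0$'' remark is exactly what makes the hypothesis of the lemma's second sentence non-vacuous in context.
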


\begin{proof}
Let $\b\!\in\!(\Z^{\ge0})^N$ and $d\!\in\Z^{\ge0}$.
By \e_ref{cZpdfn_e}-\e_ref{equivthm_e}, the coefficient~of 
$$q^d \prod_{s=1}^{s=N}\!\big((\hb_s^{-1})^{b_s+1}\big)$$
in the power series $\cZ^{(g)}(\un\hb,\un\x,q)$ is
\BE{cZcoeff_e}\begin{split}
&\LRbr{\cZ^{(g)}(\un\hb,\un\x,q)}_{\un\hb^{-1},q;\b+\mathbf1,d}
=\sum_{\p\in\nset^N}\!\!\!\cC_{g;\p,\b}^{(d)}\un\x^{\p}\\
&\hspace{.2in}
+\!\!\!\sum_{d'\in\dset}
\sum_{\begin{subarray}{c}\bfd\in(\Z^{\ge0})^N\\ |\bfd|=d-d'\end{subarray}}
\sum_{\p\in\nset^N}
\sum_{\begin{subarray}{c} \b'\in(\Z^{\ge0})^N\\ b_s'\le b_s\,\forall s\end{subarray}} 
\!\!\!\!\!\cC_{g;\p,\b'}^{(d')}
\!\!\prod_{s=1}^{s=N}\!\! \LRbr{\cZ_{p_s}(\hb_s,\x_s,q) }_{\hb_s^{-1},q;b_s-b_s',d_s}\,,
\end{split}\EE
where $\LRbr{\cZ_p(\hb,\x,q) }_{\hb^{-1},q;b,d'}$
is the coefficient of $q^{d'}(\hb^{-1})^b$ in
$$\cZ_p(\hb,\x,q)\in H_{\T}^*\big(\P^{n-1}\big)\big[\big[\hb^{-1},q\big]\big].$$
Since $H_{\T}^*(\P^{n-1})$ and $H_{\T}^*(\P_N^{n-1})$ are free modules over $\Q[\al]$
with bases $\{\x^p\}_{p\in\nset}$ and $\{\un\x^{\p}\}_{\p\in\nset^N}$, respectively,
and
$$\LRbr{\cZ_p(\hb,\x,q) }_{\hb^{-1},q;b,d'}\in H_{\T}^*(\P^{n-1}),\qquad
\LRbr{\cZ^{(g)}(\un\hb,\un\x,q)}_{\un\hb^{-1},q;\b+\mathbf1,d}\in H_{\T}^*(\P_N^{n-1})$$
by \e_ref{cZpdfn_e} and \e_ref{cZdfn_e}, respectively, 
\e_ref{cZcoeff_e} and induction on~$d$ imply that 
$\cC_{g;\p,\b}^{(d)}\!\in\!\Q[\al]$.\\

\noindent
By \cite[Lemma~3.3]{bcov1}, the ideal $\cI\!\subset\!\Q[\al]^{S_n}$ does not contain any
power of~$\si_nD_{\al}$.
Along with the algebraic independence of elementary symmetric polynomials 
$\si_1,\ldots,\si_n$, this implies the second claim of the lemma.
\end{proof}

\noindent
For $g\!\in\!\Z^{\ge0}$ and a finite set $S$ with $2g\!+\!|S|\!\ge\!3$, 
we denote by $\ov\cM_{g,S}$ the Deligne-Mumford moduli space of
stable $S$-marked genus~$g$ curves and~by
$$\bE_g\lra \ov\cM_{g,S}$$
the Hodge vector bundle of holomorphic differentials.
For each $f\!\in\!S$, let
$$L_f\lra \ov\cM_{g,S}$$
be the universal tangent line bundle for the marked point labeled by~$f$.\\

\noindent
For $g\!\in\!\Z^{\ge0}$, a finite set $S$, and $d\!\in\!\Z^+$, 
we denote by $\ov\M_{g,S}(\P^{n-1},d)$
the moduli space of stable $S$-marked genus~$g$ degree~$d$ maps to~$\P^{n-1}$. 
For each $f\!\in\!S$, let
$$\psi_f\equiv c_1(L_f^*)\in H^2\big(\ov\M_{g,S}(\P^{n-1},d)\big)$$
be the first Chern class of 
the universal cotangent line bundle~$L_f$ over  $\ov\M_{g,S}(\P^{n-1},d)$
for the marked point labeled by~$f$.\\

\noindent
For an $[n]$-valued $N$-marked graph $\ov\Ga$ as in~\e_ref{ga0dfn_e}, let
$$\cA(\ov\Ga)=(\Z^{\ge0})^{\ov\Fl(\ov\Ga)}\!\!\times\!\!\!
\prod_{v\in\ov\Ver}\!\!\!(\Z^{\ge0})^{\ov\g(v)}\,.$$
Let $\Ga$ be a connected $[n]$-valued $N$-marked genus~$g$ weighted graph
as in~\e_ref{decortgraphdfn_e} and $\ov\Ga$ be its core as in~\e_ref{ga0dfn_e}.
Define
\begin{gather*}
\Fl(\Ga)=\bigsqcup_{v\in\ov\Ver}\!\!\!\Fl_v(\Ga),\qquad
\ov\Fl(\Ga)=\bigsqcup_{v\in\ov\Ver}\!\!\!\ov\Fl_v(\Ga)
=\big([N]\!\cap\!\eta^{-1}(\ov\Ver)\!\big)\!\sqcup\!\Fl(\Ga),\\
\Fl^*(\Ga)= \big([N]\!-\!\eta^{-1}(\ov\Ver)\!\big)\!\sqcup\!\ov\Edg,\qquad
\ov\Fl^*(\Ga)= [N]\!\sqcup\!\ov\Edg
=\big([N]\!\cap\!\eta^{-1}(\ov\Ver)\!\big)\!\sqcup\!\Fl^*(\Ga),\\
\cA^{\star}(\Ga)=\big\{(\b,(I_v)_{v\in\ov\Ver})\!\in\!
(\Z^{\ge0})^{\ov\Fl(\Ga)}\!\!\times\!\!\!\prod_{v\in\ov\Ver}\!\!\!(\Z^{\ge0})^{g_v}\!:
|\b|_{\ov\Fl_v(\Ga)}\!=\!\mu_{g_v}(I_v)\!+\!|\ov\Fl_v(\Ga)|~\forall\,v\!\in\!\ov\Ver\big\}.
\end{gather*}
For $v\!\in\!\ov\Ver$ and $I\!\in\!(\Z^{\ge0})^{g_v}$, let 
$$\cA_{v;I}^{\star}(\Ga)=\big\{\b\!\in\!(\Z^{\ge0})^{\ov\Fl_v(\Ga)}\!:
|\b|\!=\!\mu_{g_v}(I)\!+\!|\ov\Fl_v(\Ga)|\big\}.$$
We denote the components of an element $(\b,\bfI)$ of~$\cA^{\star}(\Ga)$ by 
$b_f\!\in\!\Z^{\ge0}$ for $f\!\in\!\ov\Fl(\Ga)$ and
\hbox{$I_v\!\in\!(\Z^{\ge0})^{g_v}$}  for $v\!\in\!\ov\Ver$.
For $f\!\in\!\Fl$, let
$$\mu(f)=\mu\big(\eta(f)\big)\in[n] \quad\hbox{and}\quad
\mu_c(f)=\mu\big(\eta(f')\big)\in[n]  ~~\hbox{if}~ e_f=\{f,f'\}$$
be the $\mu$-values at the two flags contained in the edge~$e_f$.

\subsection{The closed formula approach}
\label{ClFormComp_subs}

\noindent
We first break $\Ga$ and $\cZ_{\Ga}$ at all vertices~$v$ of $\ov\Ver\!\subset\!\Ver$ 
as described in Section~\ref{pfoutline_subs}.
The set of strands of type~(S2) is naturally indexed by 
the edges~$\ov\Edg$ of~$\ov\Ga$. 
By the constructions of the core and of the strands~$\Ga_{\fp}$ in Section~\ref{pfoutline_subs},
$$f_e^{\pm}\in \Fl_{\eta(f_e^{\pm})}(\Ga) \qquad\forall~
e\!\equiv\!\big\{f_e^+,f_e^-\big\}\in\ov\Edg$$
and the set~$S_e$ of the marked points on the strand~$\Ga_e$ corresponding to 
an edge~$e$ as above is $\{\wh{f}_e^+,\wh{f}_e^-\}$.
The set of strands of type~(S3) is naturally indexed by the subset
\hbox{$[N]\!-\!\eta^{-1}(\ov\Ver)$} of the original marked points. 
By the construction of the core in Section~\ref{pfoutline_subs},
for each such $s\!\in\![N]$ there exists a unique flag $f_s\!\in\!\Fl_{\ov\eta(s)}(\Ga)$ 
so that the edge $e_{f_s}\!\in\!\Edg$ splits the graph~$\Ga$ into~two,
one containing the vertex~$\ov\eta(s)$ and the other containing the vertex~$\eta(s)$.
The set~$S_s$ of the marked points on the corresponding strand~$\Ga_s$ 
is $\{s,\wh{f}_s\}$.
All of the flags $f_s$ with $s\!\in\![N]\!-\!\eta^{-1}(\ov\Ver)$ and
$f_e^+,f_e^-$ with $e\!\in\!\ov\Edg$ are distinct.
The set of strands of type~(S1) is naturally indexed by the complement
\BE{FlprGadfn_e}\Fl'(\Ga)\equiv\Fl(\Ga)-
\big\{f_s\!:s\!\in\![N]\!-\!\eta^{-1}(\ov\Ver)\big\}-
\big\{f_e^{\pm}\!:e\!\in\!\ov\Edg\big\}\subset\Fl\EE
of such flags inside of all flags of~$\Ga$ at the vertices $\ov\Ver\!\subset\!\Ver$.
The set~$S_f$ of the marked points on the strand~$\Ga_f$ corresponding to $f\!\in\!\Fl'(\Ga)$
is~$\{\wh{f}\}$.\\

\noindent
The~set
$$\Fl^{\dag}(\Ga)\equiv \Fl^*(\Ga)\sqcup \Fl'(\Ga)$$ 
of all strands~$\Ga_{\fp}$ of~$\Ga$ is thus a quotient of~$\Fl(\Ga)$
so that two flags $f_1,f_2\!\in\!\Fl(\Ga)$ determine the same element 
$f_1^{\dag}\!=\!f_2^{\dag}$ of $\Fl^{\dag}(\Ga)$
if and only if the marked points labeled by~$\wh{f}_1$ and~$\wh{f}_2$ lie on the same strand.
The~set
\BE{ovFldagdfn_e}
\ov\Fl^{\dag}(\Ga)\equiv \big([N]\!\cap\!\eta^{-1}(\ov\Ver)\!\big)\!\sqcup\!\Fl^{\dag}(\Ga)
=[N]\!\sqcup\!\ov\Edg\!\sqcup\!\Fl'(\Ga) \EE
is similarly a quotient of the set $\ov\Fl(\Ga)$.
For $\fp\!\in\!\ov\Fl^{\dag}(\Ga)$, we write $f\!\in\!\fp$ if
$f\!\in\!\ov\Fl(\Ga)$ and $f^{\dag}\!=\!\fp$.
Denote by $S_{\fp}^*\!\subset\![N]$ the empty set if $\fp\!\not\in\![N]$
and $\{\fp\}$ if $\fp\!\in\![N]$.
For each $v\!\in\!\ov\Ver$, let 
$$\Fl_v'(\Ga)\equiv \Fl_v(\Ga)\!\cap\!\Fl'(\Ga)$$
be the subset of strands of type~(S1) arising from~$v$.\\

\noindent
The fixed locus $\cZ_{\Ga}$ corresponding to~$\Ga$ and 
the Euler class of the virtual normal bundle of~$\cZ_{\Ga}$ are given~by
\begin{gather}
\label{Zreg_e5a}
\cZ_{\Ga}=\prod_{v\in\ov\Ver}\!\!\ov\cM_{g_v,\ov\Fl_v(\Ga)}\times
\prod_{\fp\in\Fl^{\dag}(\Ga)}\!\!\!\!\!\!\!\cZ_{\Ga_{\fp}},\\
\label{Zreg_e5b}
\frac{\prod\limits_{v\in\ov\Ver}\!\!\!\E(T_{P_{\mu(v)}}\P^{n-1})}{\E(\N\cZ_{\Ga}^{\vir})}
=\prod_{v\in\ov\Ver}\!\!\!\!\E(\bE_{g_v}^*\!\otimes\!T_{P_{\mu(v)}}\P^{n-1})
\!\!\!\!\!
\prod_{\fp\in\Fl^{\dag}(\Ga)}\!\frac{1}{\E(\N\cZ_{\Ga_{\fp}})}
\prod_{f\in\Fl(\Ga)}\!\!\!\!
\frac{\E(T_{P_{\mu(f)}}\P^{n-1})}{\hb_f'\!-\!\psi_{\wh{f}}}\,,
\end{gather}
where
$$\hb_f' \equiv c_1(L_f) \in H^*\big(\ov\cM_{g_v,\ov\Fl_v(\Ga)}\big)
\qquad\forall\,f\!\in\!\Fl_v(\Ga),\,v\!\in\!\ov\Ver\,.$$ 
By \cite[Section~27.2]{MirSym},
$$\psi_{\wh{f}}|_{\cZ_{\Ga_{\fp}}}
=\frac{\al_{\mu_c(f)}\!-\!\al_{\mu(f)}}{\d(e_f)}
\qquad\forall\,f\!\in\!\Fl(\Ga)\,.$$
Thus,
\BE{cMint_e}\begin{split}
&\int_{\ov\cM_{g_v,\ov\Fl_v(\Ga)}}\!\!\!\!\la_{g_v;I}
\Bigg\{\!\!\Bigg(\prod_{f\in\Fl_v(\Ga)}\!\frac{1}{\hb_f'\!-\!\psi_{\wh{f}}}\Bigg)
\!\!\Bigg(\prod_{s\in S_v}\!\!\frac{1}{\hb_s\!-\!\psi_s}\Bigg)\!\!\Bigg\}\\
&\hspace{.2in}
=(-1)^{|\Fl_v(\Ga)|}\!\!\!\sum_{\b\in(\Z^{\ge0})^{\ov\Fl_v(\Ga)}}\! 
\int_{\ov\cM_{g_v,\ov\Fl_v(\Ga)}}\!\!\la_{g_v;I}\Bigg\{\!\!
\Bigg(\prod_{f\in\Fl_v(\Ga)}\!\!\!\!\!\psi_{\wh{f}}^{-b_f-1}\hb_f'^{\,b_f}\!\!\Bigg)\!\!
\Bigg(\prod_{s\in S_v}\!\!\!\hb_s^{-b_s-1}\psi_s^{b_s}\!\!\Bigg)
\!\!\Bigg\}\\
&\hspace{.2in}
=\!\!\!\sum_{\b\in\cA_{v;I}^{\star}(\Ga)}\!\!
\Bigg\{\!\!\bllrr{\la_{g_v;I};\tau_{\b}}
\!\Bigg(\prod_{f\in\Fl_v(\Ga)}\!\!\! 
\bigg(\frac{\al_{\mu(f)}\!-\!\al_{\mu_c(f)}}{\d(e_f)}\bigg)^{\!\!-b_f-1}\!\Bigg)\!\!
\Bigg(\prod_{s\in S_v}\!\!\hb_s^{-b_s-1}\!\!\Bigg)
\!\Bigg\}
\end{split}\EE
for all $I\!\in\!(\Z^{\ge0})^{g_v}$ and $v\!\in\!\ov\Ver$.\\

\noindent  
Combining \e_ref{Zreg_e5a}-\e_ref{cMint_e}
with \e_ref{ETPn_e}, \e_ref{CgIdfn_e}, and \e_ref{phidfn_e}, we obtain
\BE{decomp2_e2}\begin{split}
&\bigg(\prod_{v\in\ov\Ver}
\prod_{k\neq\mu(v)}\!\!\!\!(\al_{\mu(v)}\!-\!\al_k)\!\!\bigg)\!\!
\int_{\cZ_{\Ga}}\!
\frac{1}{\E(\N\cZ_{\Ga}^{\vir})}\!\!
\prod_{s=1}^{s=N}\!\!\frac{\ev_s^*\phi_{i_s}}{\hb_s\!-\!\psi_s}\\
&\hspace{.1in}
=\hspace{-.1in}\sum_{(\b,\bfI)\in\cA^{\star}(\Ga)}
\!\!\left\{\prod_{v\in\ov\Ver}\!\!\!
\big(C_{g_v,n;I_v}\al_{\mu(v)}^{(n-1)g_v-\|I_v\|}\!+\!h_{g_v;n;I_v}(\al_{\mu(v)})\!\big)
\llrr{\la_{g_v;I_v};\wt\tau_{\b}}
\right.\\
&\hspace{.9in} \left.\times
\!\!\prod_{\fp\in\ov\Fl^{\dag}(\Ga)}\!\!\!
\Bigg(\!
\prod_{f\in\fp}\!\frac{1}{b_f!}\!
\bigg(\frac{\al_{\mu(f)}\!-\!\al_{\mu_c(f)}}{\d(e_f)}\bigg)^{\!\!-b_f-1}
\!\!\!\int_{\cZ_{\Ga_{\fp}}}\!\!\!
\frac{\prod\limits_{f\in\fp}\!\!\ev_f^*\phi_{\mu(f)}}
{\E(\N\cZ_{\Ga_{\fp}})}
\prod_{s\in S_{\fp}^*}\!\!\frac{\ev_s^*\phi_{i_s}}{\hb_s\!-\!\psi_s}
\!\!\Bigg)\!\!\right\},
\end{split}\EE
where
$$\prod_{f\in\fp}\!\frac{1}{b_f!}\!
\bigg(\frac{\al_{\mu(f)}\!-\!\al_{\mu_c(f)}}{\d(e_f)}\bigg)^{\!\!-b_f-1}
\!\!\!\!\int_{\cZ_{\Ga_{\fp}}}\!\!\!
\frac{\prod\limits_{f\in\fp}\!\!\ev_f^*\phi_{\mu(f)}}
{\E(\N\cZ_{\Ga_{\fp}})} \!\prod_{s\in S_{\fp}^*}\!\!
\frac{\ev_s^*\phi_{i_s}}{\hb_s\!-\!\psi_s}
\equiv\frac{1}{b_{\fp}!}
\bigg(\!\!\hb_{\fp}^{-b_{\fp}-1}\!\!
\prod_{k\neq i_{\fp}}\!\!\big(\al_{\mu(\eta(\fp))}\!-\!\al_k\big)\!\!\bigg)$$
if $\fp\!\in\![N]\!\cap\!\eta^{-1}(\ov\Ver)\!\subset\!\ov\Fl^{\dag}(\Ga)\!\cap\!\ov\Fl(\Ga)$.
The equality in~\e_ref{decomp2_e2} holds after taking into account the automorphism groups;
this is done below after summing over all possibilities for the strands~$\Ga_{\fp}$.

\begin{lmm}\label{S1sum_lmm}
If $\fp\!\in\!\Fl'(\Ga)$, then 
\BE{Zreg_e8}\begin{split}
&\sum_{\Ga_{\fp}}q^{|\Ga_{\fp}|}
\prod_{f\in\fp}\!\!
\bigg(\frac{\al_{\mu(f)}\!-\!\al_{\mu_c(f)}}{\d(e_f)}\bigg)^{\!\!-b_f-1}
\!\!\!\int_{\cZ_{\Ga_{\fp}}}\!\!\!
\frac{\prod\limits_{f\in\fp}\!\!\ev_f^*\phi_{\mu(f)}}
{\E(\N\cZ_{\Ga_{\fp}})}
\prod_{s\in S_{\fp}^*}\!\frac{\ev_s^*\phi_{i_s}}{\hb_s\!-\!\psi_s}\\
&\hspace{2.5in}
=\Res{\hb_{\fp}=0}\Big\{\!\big(\!-\!\hb_{\fp}\big)^{-b_{\fp}}
\wt\cZ^*(\hb_{\fp},\al_v',q)\Big\}\,,
\end{split}\EE  
where the sum is taken over all possibilities for the strand $\Ga_{\fp}$, 
leaving the vertex $v\!\equiv\!\eta(\fp)$, with $\al_v'\!\equiv\!\al_{\mu(v)}$
fixed.
\end{lmm}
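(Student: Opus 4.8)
The plan is to match the right-hand side of~\e_ref{Zreg_e8} with a sum of residues produced by the Virtual Localization Theorem~\e_ref{GPthm_e} applied to~$\ov\M_{0,1}(\P^{n-1},\cdot)$. Write $v'\!\equiv\!\mu(v)\!=\!\mu(f)$ for the common $\mu$-label of the vertex $v\!=\!\eta(\fp)$ being cut and of the flag~$f$; recall that $\fp\!=\!\{f\}$ and $b_{\fp}\!=\!b_f\!\ge\!0$, the latter because type-(S1) strands arise from expanding $\{\hb_f'\!-\!\psi_{\wh f}\}^{-1}$ in non-negative powers of~$\hb_f'$ in~\e_ref{cMint_e}. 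By the dilaton relation recorded after~\e_ref{cZprdfn_e}, $\wt\cZ^*(\hb_{\fp},\al_v',q)\!=\!\hb_{\fp}^{-1}\cZ^*(\hb_{\fp},\al_v',q)$, so the right-hand side of~\e_ref{Zreg_e8} equals $(-1)^{b_{\fp}}\Res{\hb_{\fp}=0}\big\{\hb_{\fp}^{-b_{\fp}-1}\cZ^*(\hb_{\fp},\al_v',q)\big\}$. Since $\cZ^*(\hb_{\fp},\al_v',q)$ is a power series in~$\hb_{\fp}^{-1}$ with no constant term, the function $(-\hb_{\fp})^{-b_{\fp}}\wt\cZ^*(\hb_{\fp},\al_v',q)$ --- which is rational in~$\hb_{\fp}$ by~\cite{Gi} --- is $O(\hb_{\fp}^{-b_{\fp}-2})$ at $\hb_{\fp}\!=\!\i$ and hence has vanishing residue there; by the Residue Theorem on~$\P^1$ the right-hand side of~\e_ref{Zreg_e8} equals minus the sum of the residues of $(-\hb_{\fp})^{-b_{\fp}}\wt\cZ^*(\hb_{\fp},\al_v',q)$ over its finite nonzero poles in~$\hb_{\fp}$.

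Next I would expand $\cZ^*(\hb_{\fp},\al_v',q)\!=\!\sum_{d\ge1}q^d\sum_{\Gamma\in\cA_{0,1}(n,d)}\int_{\cZ_{\Gamma}}\ev_1^*\phi_{v'}\big/\big(\E(\N\cZ_{\Gamma}^{\vir})(\hb_{\fp}\!-\!\psi_1)\big)$ by~\e_ref{GPthm_e}. A summand vanishes unless the marked point lies over~$P_{v'}$, and for such a~$\Gamma$ there are two cases. If the marked-point vertex carries three or more special points its component is contracted, $\psi_1|_{\cZ_{\Gamma}}$ is a nilpotent $\psi$-class on a Deligne--Mumford factor, and the summand is a polynomial in~$\hb_{\fp}^{-1}$, contributing a pole of $\wt\cZ^*$ only at $\hb_{\fp}\!=\!0$. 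If instead the marked-point vertex carries exactly one edge~$e$ together with the marked point, its component is unstable, the point lies on~$C_e$, and $\psi_1|_{\cZ_{\Gamma}}\!=\!\lambda\!\equiv\!(\al_{k}\!-\!\al_{v'})/\d(e)$, where $k$ is the label of the other end of~$e$. Since $\E(\N\cZ_{\Gamma}^{\vir})$ carries no~$\hb_{\fp}$, only graphs of the second kind contribute poles away from~$\{0,\i\}$, and at such a pole
$$-\Res{\hb_{\fp}=\lambda}\bigg\{(-\hb_{\fp})^{-b_{\fp}}\hb_{\fp}^{-1}\!\int_{\cZ_{\Gamma}}\!\frac{\ev_1^*\phi_{v'}}{\E(\N\cZ_{\Gamma}^{\vir})(\hb_{\fp}\!-\!\psi_1)}\bigg\}=(-\lambda)^{-b_{\fp}-1}\!\int_{\cZ_{\Gamma}}\!\frac{\ev_1^*\phi_{v'}}{\E(\N\cZ_{\Gamma}^{\vir})}\,.$$
Summing these over~$d$ and over all such~$\Gamma$ rewrites the right-hand side of~\e_ref{Zreg_e8} as a sum over connected $[n]$-valued genus~$0$ one-marked graphs whose marked-point vertex is labelled~$v'$ and carries exactly one edge.

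Finally, these graphs are exactly the strands~$\Ga_{\fp}$ of~\e_ref{stranddfn_e} occurring for the flag~$\fp\!\in\!\Fl'(\Ga)$: the bijection sends the marked-point vertex to~$v_f$, the marked point to~$\wh f$, and its unique edge to~$e_f$, and identifies $\cZ_{\Ga_{\fp}}\!=\!\cZ_{\Gamma}$, $|\Ga_{\fp}|\!=\!d$, $\psi_{\wh f}|_{\cZ_{\Ga_{\fp}}}\!=\!(\al_{\mu_c(f)}\!-\!\al_{\mu(f)})/\d(e_f)\!=\!\lambda$, and $\ev_f^*\phi_{\mu(f)}|_{\cZ_{\Ga_{\fp}}}\!=\!\prod_{k\neq v'}(\al_{v'}\!-\!\al_k)\!=\!\ev_1^*\phi_{v'}|_{\cZ_{\Gamma}}$; in particular the residue factor $(-\lambda)^{-b_{\fp}-1}$ becomes $\big((\al_{\mu(f)}\!-\!\al_{\mu_c(f)})/\d(e_f)\big)^{-b_{\fp}-1}$, exactly as on the left of~\e_ref{Zreg_e8}. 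The one genuinely delicate point is the equality of Euler classes $\E(\N\cZ_{\Ga_{\fp}})\!=\!\E(\N\cZ_{\Gamma}^{\vir})$, i.e.\ that the factor of $\E(\N\cZ_{\Ga}^{\vir})$ assigned to the strand~$\Ga_{\fp}$ in the decomposition~\e_ref{Zreg_e5a}--\e_ref{Zreg_e5b} agrees with the virtual normal Euler class of the corresponding fixed locus of $\ov\M_{0,1}(\P^{n-1},d)$. This is the bookkeeping of \cite[Section~2.1]{bcov1}: the smoothing of the node joining~$\Ga_{\fp}$ to~$v$ is precisely the factor $\E(T_{P_{\mu(f)}}\P^{n-1})\big/(\hb_f'\!-\!\psi_{\wh f})$ set aside in~\e_ref{Zreg_e5b} and reabsorbed into the Hodge-integral side in~\e_ref{decomp2_e2}, whereas at a one-edge marked-point vertex of the standalone space $\ov\M_{0,1}(\P^{n-1},d)$ no contracted component --- hence no such node --- is present, so the remaining deformation and obstruction contributions match term by term. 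Granting this, the rewritten right-hand side of~\e_ref{Zreg_e8} is exactly its left-hand side, proving Lemma~\ref{S1sum_lmm}. I expect the normal-bundle matching to be the main obstacle; the residue manipulations and the graph combinatorics are routine and parallel to those of~\cite{bcov1,g0ci}.
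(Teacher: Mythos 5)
Your proof is correct and reconstructs exactly the argument that underlies the paper's one-line proof, which simply invokes \cite[(2.14)]{bcov1}: you start from the right-hand side, apply the Residue Theorem on~$\P^1$ after checking that the residue at $\hb_{\fp}=\i$ vanishes, and then identify the residues at the nonzero finite poles of $\wt\cZ^*(\hb_{\fp},\al_v',q)$ --- which arise precisely from the one-marked genus-$0$ graphs whose marked-point vertex has a single edge --- with the summands on the left. This is the same mechanism the paper spells out in the proof of Lemma~\ref{S3sum_lmm} (there stated as a citation to~\cite[(3.24), Lemma~1.2]{bcov0} plus a Residue-Theorem step), so you are not taking a different route. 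You correctly flag the Euler-class identification $\E(\N\cZ_{\Ga_{\fp}})=\E(\N\cZ_{\Gamma}^{\vir})$ as the one genuinely delicate point; the paper handles it exactly as you do, namely by deferring to the bookkeeping of \cite[Section~2.1]{bcov1}, which is indeed the accounting behind the decomposition~\e_ref{Zreg_e5a}--\e_ref{Zreg_e5b}.
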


\begin{proof}
In this case, $S_{\fp}^*\!=\!\eset$ and
each $\Ga_{\fp}$ is a connected 1-marked genus~0 graph.
Thus, the claim of this lemma is \cite[(2.14)]{bcov1} with $\E(\V_0')\!=\!1$
and slightly different notation 
(the left-hand side of \cite[(2.14)]{bcov1} is also missing $q^{|\Ga_e|}$).
\end{proof}

\begin{lmm}\label{S3sum_lmm}
If $\fp\!\in\![N]$, then 
\BE{Zreg_e8b}\begin{split}
&\sum_{\Ga_{\fp}}q^{|\Ga_{\fp}|}
\prod_{f\in\fp}\!\
\bigg(\frac{\al_{\mu(f)}\!-\!\al_{\mu_c(f)}}{\d(e_f)}\bigg)^{\!\!-b_f-1}
\!\!\!\int_{\cZ_{\Ga_{\fp}}}\!\!\!
\frac{\prod\limits_{f\in\fp}\!\!\ev_f^*\phi_{\mu(f)}}
{\E(\N\cZ_{\Ga_{\fp}})}
\prod_{s\in S_{\fp}^*}\!\!\frac{\ev_s^*\phi_{i_s}}{\hb_s\!-\!\psi_s}\\
&\hspace{1.8in}
=-\Res{\hb_f=0}\Big\{\!\big(-\hb_f\big)^{-b_f-1}\!
\cZ\big(\hb_f,\hb_{\fp},\al_v',\al_{i_{\fp}},q\big)\!\Big\},
\end{split}\EE
where the sum is taken over 
\begin{enumerate}[label=$\bullet$,leftmargin=*]

\item all possibilities for the strand $\Ga_{\fp}$,  
leaving the vertex $v\!\equiv\!\ov\eta(\fp)$, with $\al_v'\!\equiv\!\al_{\mu(v)}$ 
fixed and $|\Ga_{\fp}|\!\in\!\Z^+$, 
and carrying the marked point indexed by~$\fp$, 

\item and  the special case~$\Ga_{\fp}$ corresponding to the case $\eta(\fp)\!=\!v$.  

\end{enumerate}
\end{lmm}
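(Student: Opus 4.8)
The plan is to prove~\e_ref{Zreg_e8b} as the two-pointed analogue of~\e_ref{Zreg_e8}, i.e.~of \cite[(2.14)]{bcov1}, the sole difference being the extra descendant insertion $\frac{\ev_s^*\phi_{i_{\fp}}}{\hb_{\fp}-\psi_s}$ at the original marked point $s\!=\!\fp$; it should be derived by running the virtual localization theorem of~\cite{GP} ``backwards'' exactly as in the proof of Lemma~\ref{S1sum_lmm}, but now on $\ov\M_{0,2}(\P^{n-1},d)$ rather than $\ov\M_{0,1}(\P^{n-1},d)$.
A strand $\Ga_{\fp}$ of type~(S3) is a connected genus~$0$ graph carrying two marked points: the new point $\wh{f}$, on a stub copy $v_{f_s}$ of the special vertex $v$ (hence on a vertex of label $\mu(v)$), and the original point $\fp$, on $\eta(\fp)$ (of label $i_{\fp}$, since $\ev_{\fp}^*\phi_{i_{\fp}}$ forces this in~\e_ref{decomp2_e2}).
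The first step is the elementary residue identity
$$-\Res{\hb_f=0}\Big\{(-\hb_f)^{-b_f-1}\tfrac{1}{\hb_f-\psi}\Big\}=(-\psi)^{-b_f-1}
\qquad\forall\,b_f\!\in\!\Z^{\ge0},~\psi\!\neq\!0\,,$$
which, combined with $\psi_{\wh{f}}|_{\cZ_{\Ga_{\fp}}}=(\al_{\mu_c(f)}\!-\!\al_{\mu(f)})/\d(e_f)$ from \cite[Section~27.2]{MirSym} and the fact that $\psi_{\wh{f}}$ restricts to a scalar on $\cZ_{\Ga_{\fp}}$, shows that the weight $\big(\tfrac{\al_{\mu(f)}-\al_{\mu_c(f)}}{\d(e_f)}\big)^{-b_f-1}$ on the left-hand side of~\e_ref{Zreg_e8b} is precisely what the operator $-\Res{\hb_f=0}\{(-\hb_f)^{-b_f-1}(\cdot)\}$ produces when applied to $\tfrac{1}{\hb_f-\psi_{\wh{f}}}$ brought inside the integral over $\cZ_{\Ga_{\fp}}$.

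With that rewriting, the strands $\Ga_{\fp}$ over which~\e_ref{Zreg_e8b} sums (those leaving $v\!=\!\ov\eta(\fp)$ with $|\Ga_{\fp}|\!\ge\!1$ and carrying $\fp$) are exactly the $\T$-fixed loci contributing, via~\e_ref{GPthm_e} applied to $\ov\M_{0,2}(\P^{n-1},d)$, to $\int_{\ov\M_{0,2}(\P^{n-1},d)}\frac{\ev_1^*\phi_{\mu(v)}}{\hb_f-\psi_1}\frac{\ev_2^*\phi_{i_{\fp}}}{\hb_{\fp}-\psi_2}$, and their~\cite{GP}-contributions coincide term by term with the summands of~\e_ref{Zreg_e8b} once~\e_ref{ETPn_e} and~\e_ref{phidfn_e} are used for the vertex-Euler-class and the $\phi$-restriction bookkeeping (identical to that already carried out in~\e_ref{decomp2_e2}).
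Summing over $d\!\ge\!1$ and invoking the definition~\e_ref{cZ2stardfn_e} of~$\cZ^*$, this identifies the positive-degree part of the left-hand side of~\e_ref{Zreg_e8b} with $-\Res{\hb_f=0}\{(-\hb_f)^{-b_f-1}\cZ^*(\hb_f,\hb_{\fp},\al_v',\al_{i_{\fp}},q)\}$, where $\al_v'\!=\!\al_{\mu(v)}$.

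It then remains to match the single remaining summand of~\e_ref{Zreg_e8b} --- the degenerate ``strand'' with $\eta(\fp)\!=\!v$, whose contribution is $\hb_{\fp}^{-b_{\fp}-1}\prod_{k\neq i_{\fp}}(\al_{\mu(v)}\!-\!\al_k)$ (the factor $1/b_{\fp}!$ being stripped here and restored with the Hodge factor $\llrr{\la_{g_v;I_v};\wt\tau_{\b}}$ of~\e_ref{decomp2_e2}, cf.~\e_ref{DMdfn_e2}) --- with the $q^0$ term of~$\cZ$.
Using the residue identity above and the definition of the coefficient of $q^0$ in~\e_ref{cZdfn_e} for $(g,N)\!=\!(0,2)$, one computes $-\Res{\hb_f=0}\{(-\hb_f)^{-b_f-1}(q^0\text{-term})\}=-\hb_{\fp}^{-b_f-1}\sum_{p_1+p_2+r=n-1}\wh\si_r(\al_v')^{p_1}(\al_{i_{\fp}})^{p_2}$; since $\eta(\fp)\!=\!v$ forces $i_{\fp}\!=\!\mu(v)$, hence $\al_v'\!=\!\al_{i_{\fp}}$ (otherwise both sides vanish), the classical identity $\sum_{p_1+p_2+r=n-1}\wh\si_r\al^{p_1}\be^{p_2}=-\big(\prod_k(\al-\al_k)-\prod_k(\be-\al_k)\big)/(\al-\be)$ specializes at $\al\!=\!\be\!=\!\al_{i_{\fp}}$ to $-\prod_{k\neq i_{\fp}}(\al_{i_{\fp}}-\al_k)$, so that this summand equals $\hb_{\fp}^{-b_{\fp}-1}\prod_{k\neq i_{\fp}}(\al_{\mu(v)}-\al_k)$, as required.
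Adding the two pieces and using $\cZ=\cZ^*+(q^0\text{-term})$ yields~\e_ref{Zreg_e8b}.

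The main obstacle I expect is not the localization itself --- a routine two-pointed upgrade of \cite[(2.14)]{bcov1} --- but the sign, $\hb$-exponent, and normalization bookkeeping: one must verify that $-\Res{\hb_f=0}\{(-\hb_f)^{-b_f-1}(\cdot)\}$ reproduces the strand weight with the correct overall sign and exponent (note the shift relative to~\e_ref{Zreg_e8}, which traces back to $\wt\cZ^*\!=\!\hb^{-1}\cZ^*$), and that the non-geometric $q^0$-term convention built into~$\cZ$ (as opposed to~$\cZ^*$) is accounted for exactly once, namely by the degenerate strand.
As with~\e_ref{decomp2_e2} and Lemma~\ref{S1sum_lmm}, the equality~\e_ref{Zreg_e8b} is to be read strand by strand, before the automorphism factors $|\Aut(\Ga)|$ are reinstated in the global evaluation of~\e_ref{cZval_e}.
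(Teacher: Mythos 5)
Your approach is genuinely different from the paper's, and it would work, but there is a gap in the step where you identify the sum over strands with the two\nobreakdash-pointed GW generating function.

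The paper's proof does not take residues at $\hb_f\!=\!0$ strand by strand. Instead it cites (3.24) and Lemma~1.2 of~\cite{bcov0} to identify the sum of the left\nobreakdash-hand side of~\e_ref{Zreg_e8b} over all strands $\Ga_{\fp}$ with fixed edge data $(\d(e),\mu_c(f_{\fp}))\!=\!(d,j)$ as the residue of $(-\hb_f)^{-b_f-1}\cZ^*(\hb_f,\hb_{\fp},\al_v',\al_{i_{\fp}},q)\,\tnd\hb_f$ at the \emph{nonzero} point $\hb_f\!=\!(\al_j\!-\!\al_v')/d$; summing these residues over $d\!\in\!\Z^+$ and $j\!\in\![n]\!-\!\mu(v)$, checking via~\e_ref{cZ2form_e} (from~\cite{PoZ}) that the $1$\nobreakdash-form has no pole at $\hb_f\!=\!\infty$, and applying the Residue Theorem on~$\P^1$ then yields $-\Res_{\hb_f=0}\{\ldots\}$. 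You, by contrast, bring the residue operator inside at the origin and match each strand weight with $-\Res_{\hb_f=0}\{(-\hb_f)^{-b_f-1}/(\hb_f\!-\!\psi_{\wh f})\}=(-\psi_{\wh f})^{-b_f-1}$, which is valid because $\psi_{\wh f}$ is a scalar on the stub. This sidesteps both the Residue Theorem step and the no\nobreakdash-pole\nobreakdash-at\nobreakdash-$\infty$ verification.

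The gap: you assert that the strands $\Ga_{\fp}$ ``are exactly the $\T$\nobreakdash-fixed loci contributing'' to the two\nobreakdash-pointed integral over $\ov\M_{0,2}(\P^{n-1},d)$. This is false. The $\ev_1^*\phi_{\mu(v)}$ insertion only kills fixed loci where $\wh{f}$ is \emph{not} at a $\mu(v)$\nobreakdash-labeled vertex; it does not exclude loci where $\wh{f}$ sits at a $\mu(v)$\nobreakdash-labeled vertex of valence $\ge2$ (e.g., with two edges and hence a genuine moduli factor $\ov\cM_{0,3+\cdots}$). Such loci contribute to $\cZ^*(\hb_f,\hb_{\fp},\al_v',\al_{i_{\fp}},q)$ but are not strands, so your sum over strands is \emph{not} equal to $\cZ^*$. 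What rescues the argument is the observation that you do not state: on any such non\nobreakdash-strand fixed locus, $\psi_{\wh f}$ is a nilpotent cohomology class (not a scalar), so the $\hb_f$\nobreakdash-dependence of its contribution to $\cZ^*$ is a finite Laurent polynomial in $\hb_f^{-1}$ containing only strictly negative powers. Multiplying by $(-\hb_f)^{-b_f-1}$ with $b_f\!\ge\!0$ pushes every exponent to $\le\!-b_f\!-\!2\!<\!-1$, so the residue at $\hb_f\!=\!0$ of every non\nobreakdash-strand term vanishes, and only the strand contributions survive the residue operator. With that one additional lemma supplied, your argument becomes a correct alternative to the paper's; without it, the identification of the left\nobreakdash-hand side with $-\Res_{\hb_f=0}\{(-\hb_f)^{-b_f-1}\cZ^*\}$ is unjustified. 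Your treatment of the degree~$0$ term agrees with the paper's~\e_ref{cZ2deg0_e} and is correct.
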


\begin{proof}
In this case, $S_{\fp}^*\!=\!\{\fp\}$,
$\eta(f)\!=\!v$ for the unique element $f\!\in\!\fp$,
and each $\Ga_{\fp}$ is a connected 2-marked genus~0 graph.
There is a unique edge $e\!\equiv\!e_{f_{\fp}}$ from the vertex $v\!\equiv\!\ov\eta(\fp)$
which is contained in~$\Ga_{\fp}$.
By~(3.24) and Lemma~1.2 in~\cite{bcov0} with $m\!=\!2$ and $\E(\V_0'')\!=\!1$,
the left-hand side of~\e_ref{Zreg_e8b} summed over~$\Ga_{\fp}$
with $\d(e)\!=\!d$ for $d\!\in\!\Z^+$
and $\mu_c(f_{\fp})\!=\!j$ for $j\!\in\![n]\!-\!\mu(v)$ is the residue~of 
$$\big(\!-\!\hb_f\big)^{-b_f-1}
\cZ^*\big(\hb_f,\hb_{\fp},\al_v',\al_{i_{\fp}},q\big)\tnd\hb_f
\in \Q_{\al}\big(\hb_f,\hb_{\fp}\big)\big[\big[q\big]\big]\tnd\hb_f$$
at $\hb_f\!=\!(\al_j\!-\!\al_v')/d$.
Furthermore, this meromorphic 1-form on~$\P^1$ has no poles outside of
$\hb_f\!=\!(\al_j\!-\!\al_v')/d$ with $d\!\in\!\Z^+$ and $j\!\in\![n]\!-\!\mu(v)$
and $\hb\!=\!0,\i$.\\

\noindent
By \cite[(3.10)]{PoZ} with $l\!=\!0$ and $\lr\a\!=\!1$,
\BE{cZ2form_e}
\cZ\big(\hb_f,\hb_{\fp},\al_v',\al_{i_{\fp}},q\big)
=-\frac{1}{\hb_f\!+\!\hb_{\fp}} 
\!\!\!\!\sum_{\begin{subarray}{c} p_-,p_+,r\in\Z^{\ge0}\\
p_-+p_++r=n-1\end{subarray}} \hspace{-.27in}
\wh\si_r\cZ_{p_-}\big(\hb_f,\al_v',q\big)\cZ_{p_+}\big(\hb_{\fp},\al_{i_{\fp}},q\big).\EE
By \cite[(3.11)]{PoZ}, $\hb_f^{-2}\cZ_{p_-}(\hb_f,\al_v',q)\tnd\hb_f$
has no pole at $\hb_f\!=\!\i$.
Combining these statements with the Residue Theorem on~$\P^1$, we find~that 
\begin{equation*}\begin{split}
&\sum_{\Ga_{\fp}}q^{|\Ga_{\fp}|}
\prod_{f\in\fp}\!
\bigg(\frac{\al_{\mu(f)}\!-\!\al_{\mu_c(f)}}{\d(e_f)}\bigg)^{\!\!-b_f-1}
\!\!\!\int_{\cZ_{\Ga_{\fp}}}\!\!\!
\frac{\prod\limits_{f\in\fp}\!\!\ev_f^*\phi_{\mu(f)}}
{\E(\N\cZ_{\Ga_{\fp}})}
\prod_{s\in S_{\fp}^*}\!\frac{\ev_s^*\phi_{i_s}}{\hb_s\!-\!\psi_s}\\
&\hspace{1.8in}
=-\Res{\hb_f=0}\Big\{\!\big(\!-\!\hb_f\big)^{-b_f-1}\!
\cZ^*\big(\hb_f,\hb_{\fp},\al_v',\al_{i_{\fp}},q\big)\!\Big\}
\end{split}\end{equation*}  
if  the sum is taken over all possibilities for the strand $\Ga_{\fp}$,  
leaving the vertex $v\!\equiv\!\ov\eta(\fp)$, 
with \hbox{$\al_v'\!\equiv\!\mu(v)$} fixed and $|\Ga_{\fp}|\!\in\!\Z^+$, 
and carrying the marked point indexed by~$\fp$.\\

\noindent
By the definition of the degree~0 term in the $(g,m)\!=\!(0,2)$ case of~\e_ref{cZdfn_e}, 
\BE{cZ2deg0_e}\begin{split}
\hb_{\fp}^{-b_{\fp}-1}\!\!\prod_{k\neq i_{\fp}}\!(\al_v'\!-\!\al_k)
=(-1)^{b_{\fp}}\Res{\hb_f=0}\bigg\{\hb_f^{-b_{\fp}-1}
\!\LRbr{\cZ\big(\hb_f,\hb_{\fp},\al_v',\al_{i_{\fp}},q\big)}_{q;0}\bigg\}\,.
\end{split}\EE
This is the summand for the special case~$\Ga_{\fp}$ corresponding to 
the case $\eta(\fp)\!=\!v$.
\end{proof}

\begin{lmm}\label{S2sum_lmm}
If $\fp\!\in\!\ov\Edg$, then 
\BE{Zreg_e8d}\begin{split}
&\sum_{\Ga_{\fp}}q^{|\Ga_{\fp}|}
\prod_{f\in\fp}\!
\bigg(\frac{\al_{\mu(f)}\!-\!\al_{\mu_c(f)}}{\d(e_f)}\bigg)^{\!\!-b_f-1}
\!\!\!\int_{\cZ_{\Ga_{\fp}}}\!\!\!
\frac{\prod\limits_{f\in\fp}\!\!\ev_f^*\phi_{\mu(f)}}
{\E(\N\cZ_{\Ga_{\fp}})}
\prod_{s\in S_{\fp}^*}\!\frac{\ev_s^*\phi_{i_s}}{\hb_s\!-\!\psi_s}\\
&\hspace{.5in}
=\Res{\hb_+=0}\Big\{\Res{\hb_-=0}\Big\{
\!\big(\!-\!\hb_-\big)^{-b_{\fp}^--1}\!\big(\!-\!\hb_+\big)^{-b_{\fp}^+-1}
\cZ\big(\hb_-,\hb_+,\al_{\fp}^-,\al_{\fp}^+,q\big)\!\!\Big\}\!\!\Big\},
\end{split}\EE
where $b_{\fp}^{\pm}\!\equiv\!b_{f^{\pm}}$ if $\fp\!=\!\{f^+,f^-\}$ and
 the sum is taken over all possibilities for the strand $\Ga_{\fp}$ between the vertices
$v_-\!\equiv\!\ov\eta(f_{\fp}^-)$ and $v_+\!\equiv\!\ov\eta(f_{\fp}^+)$,
with $\al_{\fp}^-\!\equiv\!\mu(v_-)$ and $\al_{\fp}^+\!\equiv\!\mu(v_+)$ fixed. 
\end{lmm}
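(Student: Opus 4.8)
The plan is to run the two-ended version of the argument used for Lemma~\ref{S3sum_lmm}, performing the residue computation at \emph{both} endpoints of the strand. Since $\fp\!\in\!\ov\Edg$ is an edge of the core, $S_{\fp}^*\!=\!\eset$ and every strand $\Ga_{\fp}$ in the sum is a connected $2$-marked genus~$0$ graph of positive degree joining a genus-$0$ copy of the special vertex $v_-\!\equiv\!\ov\eta(f_{\fp}^-)$ to one of $v_+\!\equiv\!\ov\eta(f_{\fp}^+)$, with the new marked points $\wh f_{\fp}^-$ and $\wh f_{\fp}^+$ attached to these copies; in particular, unlike in Lemma~\ref{S3sum_lmm}, there is no degree-$0$ exceptional strand to account for separately. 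Let $e^-$ and $e^+$ be the edges of $\Ga_{\fp}$ meeting the two endpoint vertices (they coincide when $\Ga_{\fp}$ has a single edge). First I would apply (3.24) and Lemma~1.2 of~\cite{bcov0}, with $m\!=\!2$ and $\E(\V_0'')\!=\!1$, at each of the two ends: the left-hand side of~\e_ref{Zreg_e8d}, restricted to the strands with $\d(e^\pm)\!=\!d^\pm\!\in\!\Z^+$ and $\mu_c(f_{\fp}^\pm)\!=\!j^\pm\!\in\![n]\!-\!\mu(v_\pm)$, is the iterated residue at $\hb_-\!=\!(\al_{j^-}\!-\!\al_{\fp}^-)/d^-$ and $\hb_+\!=\!(\al_{j^+}\!-\!\al_{\fp}^+)/d^+$ of the form
\[
\big(\!-\!\hb_-\big)^{-b_{\fp}^--1}\big(\!-\!\hb_+\big)^{-b_{\fp}^+-1}
\cZ^*\big(\hb_-,\hb_+,\al_{\fp}^-,\al_{\fp}^+,q\big)\,\tnd\hb_-\,\tnd\hb_+\in
\Q_{\al}\big(\hb_-,\hb_+\big)\big[\big[q\big]\big]\,\tnd\hb_-\,\tnd\hb_+\,,
\]
and, moreover, viewed as a meromorphic $1$-form in $\hb_-$ with $\hb_+$ generic (respectively in $\hb_+$ with $\hb_-$ generic), this form has no poles other than the listed ones and $\hb_-\!=\!0,\i$ (respectively $\hb_+\!=\!0,\i$).

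Next I would combine~\e_ref{cZ2form_e}, which expresses $\cZ(\hb_-,\hb_+,\al_{\fp}^-,\al_{\fp}^+,q)$ as a finite $\Q[\un\al]$-linear combination of products $\cZ_{p_-}(\hb_-,\al_{\fp}^-,q)\cZ_{p_+}(\hb_+,\al_{\fp}^+,q)$ divided by $\hb_-\!+\!\hb_+$, with \cite[(3.11)]{PoZ}, which says that $\hb^{-2}\cZ_p(\hb,\al,q)\,\tnd\hb$ has no pole at $\hb\!=\!\i$: since $(\hb_-\!+\!\hb_+)^{-1}$ is regular at $\hb_-\!=\!\i$ and at $\hb_+\!=\!\i$, the form above has vanishing residue at $\hb_-\!=\!\i$ (for generic $\hb_+$) and at $\hb_+\!=\!\i$ (for generic $\hb_-$). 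Applying the Residue Theorem on~$\P^1$ in the variable $\hb_-$ then replaces $\sum_{d^-,j^-}\Res{\hb_-=(\al_{j^-}-\al_{\fp}^-)/d^-}$ by $-\Res{\hb_-=0}$, and a second application in $\hb_+$ replaces $\sum_{d^+,j^+}\Res{\hb_+=(\al_{j^+}-\al_{\fp}^+)/d^+}$ by $-\Res{\hb_+=0}$; the two sign changes compose to $+1$, so summing over all strands $\Ga_{\fp}$ gives
\[
\Res{\hb_+=0}\Big\{\Res{\hb_-=0}\big\{\big(\!-\!\hb_-\big)^{-b_{\fp}^--1}\big(\!-\!\hb_+\big)^{-b_{\fp}^+-1}\cZ^*\big(\hb_-,\hb_+,\al_{\fp}^-,\al_{\fp}^+,q\big)\big\}\Big\}\,.
\]
Finally, by the degree-$0$ convention following~\e_ref{cZdfn_e} one has $\cZ\!-\!\cZ^*=-(\hb_-\!+\!\hb_+)^{-1}\sum_{p_-+p_++r=n-1}\wh\si_r(\al_{\fp}^-)^{p_-}(\al_{\fp}^+)^{p_+}$, whose inner $\hb_-$-residue against $(-\hb_-)^{-b_{\fp}^--1}(-\hb_+)^{-b_{\fp}^+-1}$ is a constant times $\hb_+^{-b_{\fp}^--b_{\fp}^+-2}$ and hence has no residue at $\hb_+\!=\!0$; so $\cZ^*$ may be replaced by $\cZ$ in the last display, which is~\e_ref{Zreg_e8d}.

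I expect the main obstacle to be essentially bookkeeping rather than mathematical: one must check that the two-variable form $\cZ(\hb_-,\hb_+,\cdot,\cdot,q)$ has the stated pole structure \emph{separately} in each of $\hb_-$ and $\hb_+$, so that the Residue Theorem may legitimately be applied one variable at a time; that the residues at $\hb_\pm\!=\!\i$ both vanish via~\e_ref{cZ2form_e} and \cite[(3.11)]{PoZ}; and that the accumulated signs and the final passage from $\cZ^*$ to $\cZ$ are tracked correctly. No input beyond the GW-theoretic identities of~\cite{bcov0,PoZ} already invoked for Lemmas~\ref{S1sum_lmm} and~\ref{S3sum_lmm} should be needed.
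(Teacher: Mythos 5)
Your proposal is correct and follows essentially the same route as the paper: apply the residue argument of Lemma~\ref{S3sum_lmm} at both ends of the strand (using \cite{bcov0}, \e_ref{cZ2form_e}, \cite[(3.11)]{PoZ}, and the Residue Theorem in each variable, with the two sign changes cancelling) to obtain the double residue of $(-\hb_-)^{-b_{\fp}^--1}(-\hb_+)^{-b_{\fp}^+-1}\cZ^*$, and then pass from $\cZ^*$ to $\cZ$ because the double residue of the $q^0$ term vanishes. Your explicit check that the inner residue of the degree-$0$ term is a constant multiple of $\hb_+^{-b_{\fp}^--b_{\fp}^+-2}$, hence residue-free at $\hb_+=0$, is exactly the vanishing the paper asserts.
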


\begin{proof} By the reasoning as in the proof of Lemma~\ref{S3sum_lmm} applied twice, 
\begin{equation*}\begin{split}
&\sum_{\Ga_{\fp}}q^{|\Ga_{\fp}|}
\prod_{f\in\fp}\!
\bigg(\frac{\al_{\mu(f)}\!-\!\al_{\mu_c(f)}}{\d(e_f)}\bigg)^{\!\!-b_f-1}
\!\!\!\int_{\cZ_{\Ga_{\fp}}}\!\!\!
\frac{\prod\limits_{f\in\fp}\!\!\ev_f^*\phi_{\mu(f)}}
{\E(\N\cZ_{\Ga_{\fp}})}
\prod_{s\in S_{\fp}^*}\!\frac{\ev_s^*\phi_{i_s}}{\hb_s\!-\!\psi_s}\\
&\hspace{.5in}
=\Res{\hb_+=0}\Big\{\Res{\hb_-=0}\Big\{
\!\big(\!-\!\hb_-\big)^{-b_{\fp}^--1}\!\big(\!-\!\hb_+\big)^{-b_{\fp}^+-1}
\cZ^*\big(\hb_-,\hb_+,\al_{\fp}^-,\al_{\fp}^+,q\big)\!\!\Big\}\!\!\Big\}.
\end{split}\end{equation*}
Since 
$$\Res{\hb_+=0}\Big\{\Res{\hb_-=0}\Big\{\hb_-^{-b_{\fp}^--1}\hb_+^{-b_{\fp}^+-1}
\!\LRbr{\cZ\big(\hb_-,\hb_+,\al_{\fp}^-,\al_{\fp}^+,q\big)}_{q;0}\!\!\Big\}\!\!\Big\}=0
\quad\forall\,b_{\fp}^-,b_{\fp}^+\!\in\!\Z^{\ge0},$$
we can replace $\cZ^*$ in the previous expression by $\cZ$.
\end{proof} 

\noindent
We now combine \e_ref{decomp2_e2} with \e_ref{ovFldagdfn_e} and 
Lemmas~\ref{S1sum_lmm}-\ref{S2sum_lmm} and sum over all possibilities for~$\Fl'(\Ga)$.
Taking into account the automorphism groups, we~obtain
\BE{decomp2_e2b}\begin{split}
&\big|\Aut(|\ov\Ga|)\big|\bigg(\prod_{v\in\ov\Ver}
\prod_{k\neq\mu(v)}\!\!\!\!(\al_v'\!-\!\al_k)\!\!\bigg)\!\!
\sum_{\Ga}q^{|\Ga|}\!\!\!\int_{\cZ_{\Ga}}\!
\frac{1}{\E(\N\cZ_{\Ga}^{\vir})}\!\!
\prod_{s=1}^{s=N}\!\!\frac{\ev_s^*\phi_{i_s}}{\hb_s\!-\!\psi_s}\\
&\hspace{.3in}
=\sum_{(\b,\bfI)\in\cA(\ov\Ga)}\!\!
\Bigg\{\prod_{v\in\ov\Ver}\!\!\!
\wt\cZ_{\b_v,I_v}^{(g_v)}(\al_v',q)
\prod_{s\in[N]}\Res{\hb=0}\Big\{\frac{(-\hb\big)^{-b_s}}{b_s!\,\hb}
\cZ\big(\hb,\hb_s,\al_{\ov\eta(s)}',\al_{i_s},q\big)\!\Big\}\\
&\hspace{.8in}\times\!\!\!\!\prod_{e\in\ov\Edg}\!
\Res{\hb_+=0}\!\bigg\{\Res{\hb_-=0}\!\bigg\{
\frac{(-\hb_-)^{-b_e^-}(-\hb_+)^{-b_e^+}}
{b_e^-!\,b_e^+!\,\hb_-\hb_+}
\cZ\big(\hb_-,\hb_+,\al_e^-,\al_e^+,q\big)\!\!\bigg\}\!\!\bigg\}
\!\!\Bigg\},
\end{split}\EE
with $\wt\cZ_{\b_v,I_v}^{(g_v)}$ as in~\e_ref{wtcZgbIdfn_e}.
The sum on the left-hand side above is taken over all equivalence classes of
connected $[n]$-valued $N$-marked genus~$g$ weighted graphs~$\Ga$
as in~\e_ref{decortgraphdfn_e} with a fixed core~$\ov\Ga$ as in~\e_ref{ga0dfn_e}.\\

\noindent
By the first statement of Lemma~\ref{comb_l0}, 
\begin{equation*}\begin{split}
&(-1)^{\mu_{g_v}(I_v)+m_v-c}
\ze(\al_v',q)^{|\b_v|-(\mu_{g_v}(I_v)+m_v-c)}
\binom{|\b_v|\!-\!|\un\ep_v|}{\mu_{g_v}(I_v)\!+\!m_v\!-\!c\!-\!|\un\ep_v|}
\!\!\!\prod_{f\in\ov\Fl_v(\ov\Ga)}\!\!\!\binom{b_f}{\ep_f}\\
&\hspace{1.5in}=\sum_{\begin{subarray}{c}\b'_v\in(\Z^{\ge0})^{\ov\Fl_v(\ov\Ga)}\\
|\b_v'|+|\un\ep_v|+c=\mu_{g_v}(I_v)+m_v\\
b_f'+\ep_f\le b_f~\forall\,f\in \ov\Fl_v(\ov\Ga)\end{subarray}}
\!\!\!\!\!\!\prod_{f\in \ov\Fl_v(\ov\Ga)}\!\!\!\bigg(\frac{(-1)^{b_f}b_f!}{\ep_f!b_f'!} 
\,\frac{(-\ze(\al_v',q))^{b_f-b_f'-\ep_f}}{(b_f\!-\!b_f'\!-\!\ep_f)!}\bigg)
\end{split}\end{equation*}
for all $c\!\in\!\Z$ and 
$\b,\un\ep\!\in\!(\Z^{\ge0})^{\ov\Fl(\ov\Ga)}$,
where $m_v\!=\!|\ov\Fl_v(\ov\Ga)|$.
Combining~\e_ref{decomp2_e2b} with~\e_ref{cZmB_e}, we thus obtain
\BE{decomp2_e15}\begin{split}   
&\big|\Aut(|\ov\Ga|)\big|\bigg(\prod_{v\in\ov\Ver}
\prod_{k\neq\mu(v)}\!\!\!\!(\al_v'\!-\!\al_k)\!\!\bigg)\!\!
\sum_{\Ga}q^{|\Ga|}\!\!\!\int_{\cZ_{\Ga}}\!
\frac{1}{\E(\N\cZ_{\Ga}^{\vir})}\!\!
\prod_{s=1}^{s=N}\!\!\frac{\ev_s^*\phi_{i_s}}{\hb_s\!-\!\psi_s}\\
&\hspace{.2in}=\sum_{(\b',\un\ep,\c,\bfI)\in\wt\cA^{\star}(\ov\Ga)}\!\!
\left\{\! \prod_{v\in\ov\Ver}\!\!\!\!\Psi_{I_v;\c_v}^{(g_v,\un\ep_v)}(\al_v',q)\times
\prod_{s=1}^{s=N}\!\!\bigg(\!   
\frac{1}{b_s'!}
\Res{\hb=0}\bigg\{\frac{e^{-\frac{\ze(\al_{\ov\eta(s)}',q)}{\hb}}}{\hb^{\,b_s'+\ep_s+1}}
\!\cZ\big(\hb,\hb_s,\al_{\ov\eta(s)}',\al_{i_s},q\big)\!\bigg\}
\!\!\bigg)\!\!\right.\\
&\hspace{.68in}
\times\!\!\!\left.\prod_{e\in\ov\Edg}\!\!\Bigg(
\frac{1}{b_e'^-!b_e'^+!}
\Res{\hb_+=0}\bigg\{\Res{\hb_-=0}\bigg\{
\frac{e^{-\frac{\ze(\al_e^-,q)}{\hb_-}-\frac{\ze(\al_e^+,q)}{\hb_+}}}
{\hb_-^{b_e'^-+\ep_e^-+1}\hb_+^{b_e'^++\ep_e^++1}}
\!\cZ\big(\hb_-,\hb_+,\al_e^-,\al_e^+,q\big)\!\bigg\}\!\!\bigg\}
\!\!\Bigg)\!\!\right\},
\end{split}\EE
where $b_e'^{\pm}\!=\!b_{f^{\pm}}'$ and $\ep_e'^{\pm}\!=\!\ep_{f^{\pm}}'$
if $e\!=\!\{f^+,f^-\}$ and $\wt\cA^{\star}(\ov\Ga)$ is as in~\e_ref{wtcAstaredfn_e}.\\

\noindent
The first statement of
Lemma~\ref{pt2_lmm} and Corollary~\ref{pt2_crl} reduce the last two products above~to
\begin{equation*}\begin{split}
\sum_{\begin{subarray}{c}\p\in\nset^N\\ \b\in(\Z^{\ge0})^N\end{subarray}}
\!\!\Bigg\{\un\hb^{-\b}\!\cZ_{\p}(\un\hb,\al_{i_1\ldots\al_N},q) 
\!\!\!\sum_{\begin{subarray}{c}\p'\in\nset^{\ov\Fl(\ov\Ga)}\\  
\b''\in(\Z^{\ge0})^{\ov\Edg}\end{subarray}}\hspace{-.22in}
(-1)^{|\b|+|\b''|}\!\!
\prod_{s=1}^{s=N}\frac{\cC_{p_sp_s'}(q)\Psi_{p_s';b_s'+\ep_s-b_s}(\al_{\ov\eta(s)}',q)}{b_s'!}&\\
\times\!\prod_{e\in\ov\Edg}
\!\!\!\frac{\cC_{p_e'^-p_e'^+}(q)\Psi_{p_e'^-;b_e'^-+\ep_e^--b_e''}(\al_e^-,q)
\Psi_{p_e'^+;b_e'^++\ep_e^++1+b_e''}(\al_e^+,q)}{b_e'^-!b_e'^+!}&\Bigg\}.
\end{split}\end{equation*}
By the Residue Theorem on~$\P^1$,
\begin{equation*}\begin{split}
&\sum_{j=1}^n
\frac{\Psi_{I_v;\c_v}^{(g_v,\un\ep_v)}(\al_j,q)}{\prod\limits_{k\neq j}\!\!(\al_j\!-\!\al_k)}
\!\!\prod_{s\in\ov\eta^{-1}(v)}\!\!\!\!\!\!\!\!
\frac{\Psi_{p_s';b_s'+\ep_s-b_s}(\al_j,q)}{b_s'!}
\!\!\!\prod_{f\in\Fl_v^-(\ov\Ga)}\!\!\!\!\!\!\!\!
\frac{\Psi_{p_f';b_f'+\ep_f-b_{e_f}''}(\al_j,q)}{b_f'!}
\!\!\!\prod_{f\in\Fl_v^+(\ov\Ga)}\!\!\!\!\!\!\!\!
\frac{\Psi_{p_f';b_f'+\ep_f+1+b_{e_f}''}(\al_j,q)}{b_f'!}\\
&\hspace{3.8in}=-\Res{\x=0,\i}\Bigg\{
\frac{\F^{(g_v,\un\ep_v)}_{I_v;\c_v;\p_v',\b_v',\b_v''}(\x,q)}
{\prod\limits_{k=1}^{k=n}\!\!(\x\!-\!\al_k)}\Bigg\},
\end{split}\end{equation*}
where 
\begin{equation*}\begin{split}
\F^{(g_v,\un\ep_v)}_{I_v;\c_v;\p_v',\b_v',\b_v''}(\x,q)
&=\Psi_{I_v;\c_v}^{(g_v,\un\ep_v)}(\x,q)
\!\!\!\!\!
\prod_{s\in\ov\eta^{-1}(v)}\!\!\!\!\!\!\!\!\frac{\Psi_{p_s';b_s'+\ep_s-b_s}(\x,q)}{b_s'!}
\!\!\!\prod_{f\in\Fl_v^-(\ov\Ga)}\!\!\!\!\!\!\!\!
\frac{\Psi_{p_f';b_f'+\ep_f-b_{e_f}''}(\x,q)}{b_f'!}\\
&\hspace{2.25in}
\times
\!\!\!\prod_{f\in\Fl_v^+(\ov\Ga)}\!\!\!\!\!\!\!\!
\frac{\Psi_{p_f';b_f'+\ep_f+1+b_{e_f}''}(\x,q)}{b_f'!}.
\end{split}\end{equation*}

\vspace{.2in}

\noindent
We now divide both sides of~\e_ref{decomp2_e15} by the first two factors on
the left-hand side and sum over all possibilities for $\mu(v)\!\in\![n]$
and $\ov\Ga\!\in\!\cA_{g,N}$;
we replace~$\ov\Ga$ with~$\Ga$ below.
Using the equations after~\e_ref{decomp2_e15}, we obtain an explicit formula
for the coefficients~$\cC_{g;\p,\b}^{(d)}$ in Theorem~\ref{equiv_thm}:
\BE{cCformula_e}\begin{split}
&\cC_{g;\p,\b}^{(d)} =
\sum_{\Ga\in\cA_{g,N}}\!\frac{1}{|\Aut(\Ga)|}
\sum_{d'=0}^{d'=d}
\sum_{\begin{subarray}{c}\bfd\in(\Z^{\ge0})^{\Ver}\\  
|\bfd|=d-d'\end{subarray}}\!\!
\sum_{\begin{subarray}{c}\p'\in\nset^{\ov\Fl(\Ga)}\\  
\b''\in(\Z^{\ge0})^{\Edg}\end{subarray}}\hspace{-.22in}
(-1)^{|\b|+|\b''|}\hspace{-.3in}
\sum_{(\b',\un\ep,\c,\bfI)\in\wt\cA^{\star}(\Ga)}\\
&\hspace{.5in}
\Bigg\llbracket \prod_{s=1}^{s=N}\!\!\cC_{p_sp_s'}(q) \prod_{e\in\Edg}\!\!\!\!\cC_{p_e'^-p_e'^+}(q)
\Bigg\rrbracket_{q;d'}
\prod_{v\in\Ver}\!\!(-1)\!\!\!\Res{\x=0,\i}\Bigg\llbracket
\frac{\F^{(g_v,\un\ep_v)}_{I_v;\c_v;\p_v',\b_v',\b_v''}(\x,q)}
{\prod\limits_{k=1}^{k=n}\!\!(\x\!-\!\al_k)} \Bigg\rrbracket_{q;d_v}\,.
\end{split}\EE
This establishes~\e_ref{equivthm_e}.\\

\noindent
It remains to show that~\e_ref{equivthm_e2} with the summation over $t\!\in\!\Z$
instead of~$\Z^{\ge0}$ holds for some \hbox{$\nc_{g;\p,\b}^{(d,t)}\!\in\!\Q$} such that 
$\nc_{g;\p,\b}^{(d,0)}\!=\!\nc_{g;\p,\b}^{(d)}$ with $\nc_{g;\p,\b}^{(d)}$
given by~\e_ref{ncCdfn2_e}.
Let $(\b',\un\ep,\c,\bfI)\!\in\!\wt\cA^{\star}(\ov\Ga)$ and $\p,\p',\b,\b''$ be as above.
For $v\!\in\!\ov\Ver$, define
$$|\p_v'|=\sum_{f\in\ov\Fl_v(\ov\Ga)}\!\!\!\!\!p_f',\quad
|\b_v|=\sum_{s\in\ov\eta^{-1}(v)}\!\!\!\!\!b_s, \quad
|\b_v''^-|=\sum_{f\in\Fl_v^-(\ov\Ga)}\!\!\!\!\!b_{e_f}'',\quad
|\b_v''^+|=\sum_{f\in\Fl_v^+(\ov\Ga)}\!\!\!\!\!b_{e_f}''\,.$$
By~\e_ref{p2cC_e},
$$\prod_{s=1}^{s=N}\!\!\cC_{p_sp_s'}(q) \prod_{e\in\ov\Edg}\!\!\!\!\cC_{p_e'^-p_e'^+}(q)
\sim\begin{cases}1,&\hbox{if}~
p_s\!+\!p_s',p_e'^-\!+\!p_e'^+\!=\!n\!-\!1~\forall\,s\!\in\![N],\,e\!\in\!\ov\Edg;\\
0&\hbox{otherwise}.\end{cases}$$
By~\e_ref{PsimcPhi_e}, \e_ref{cZpexp_e0}, and~\e_ref{wtcAstaredfn_e}, 
\begin{equation*}\begin{split}
\Big\llbracket \F^{(g_v,\un\ep_v)}_{I_v;\c_v;\p_v',\b_v',\b_v''}(\x,q)
\Big\rrbracket_{q;d_v}
&\sim\! \x^{(n-1)-nd_v+(n-4)(g_v-1)-|\ov\Fl_v(\ov\Ga)|-|\Fl_v^+(\ov\Ga)|
+|\p_v'|+|\b_v|+|\b_v''^-|-|\b_v''^+|}\\
&\hspace{2.5in}\times\Bigg\llbracket F^{(g_v,\un\ep_v)}_{I_v;\c_v;\p_v',\b_v',\b_v''}(q)
\Bigg\rrbracket_{q;d_v},
\end{split}\end{equation*}
where 
$$F^{(g_v,\un\ep_v)}_{I_v;\c_v;\p_v',\b_v',\b_v''}(q)
=\Phi_{I_v;\c_v}^{(g_v,\un\ep_v)}(q)
\!\!\!\prod_{s\in\ov\eta^{-1}(v)}\!\!\!\!\!\!\!
\frac{\Phi_{p_s';b_s'+\ep_s-b_s}(q)}{b_s'!\Phi_0(q)}
\!\!\!\prod_{f\in\Fl_v^-(\ov\Ga)}\!\!\!\!\!\!\!
\frac{\Phi_{p_f';b_f'+\ep_f-b_{e_f}''}(q)}{b_f'!\Phi_0(q)}
\!\!\!\prod_{f\in\Fl_v^+(\ov\Ga)}\!\!\!\!\!\!\!
\frac{\Phi_{p_f';b_f'+\ep_f+1+b_{e_f}''}(q)}{b_f'!\Phi_0(q)}\,.$$
Along with the last two statements in Lemma~\ref{ressum_lmm}, this implies~that
\BE{cFres_e}-\Res{\x=0,\i}\Bigg\llbracket
\frac{\F^{(g_v,\un\ep_v)}_{I_v;\c_v;\p_v',\b_v',\b_v''}(\x,q)}
{\prod\limits_{k=1}^{k=n}\!\!(\x\!-\!\al_k)} \Bigg\rrbracket_{q;d_v}
\sim F^{(g_v,\un\ep_v)}_{I_v;\c_v;\p_v',\b_v',\b_v''}(q)\wh\si_n^{t_v}\EE
with $t_v\!\in\!\Z$ defined by 
$$|\p_v'|+|\b_v|+|\b_v''^-|-|\b_v''^+|-\big|\Fl_v^+(\ov\Ga)\big|
=(n\!-\!4)(1\!-\!g_v)+|\ov\Fl_v(\ov\Ga)|+n(d_v\!+\!t_v)\,;$$
if an integer $t_v$ satisfying the above condition does not exist, 
we define $\wh\si_n^{t_v}$  to be~0.\\

\noindent
By~\e_ref{cCformula_e}, the above paragraph, and
the middle statement in Lemma~\ref{ressum_lmm},   
\begin{equation*}\begin{split}
\cC_{g;\p,\b}^{(d)}&\sim
\sum_{\Ga\in\cA_{g,N}}\!\frac{1}{|\Aut(\Ga)|}
\!\!
\sum_{\begin{subarray}{c}\bfd\in(\Z^{\ge0})^{\Ver}\\  
|\bfd|=d\end{subarray}}\!\!
\sum_{\begin{subarray}{c}\p'\in\nset^{\Edg}\\  
\b'\in(\Z^{\ge0})^{\Edg}\end{subarray}}\hspace{-.22in}
(-1)^{|\b|+|\b'|}\wh\si_n^{|\bft|}\hspace{-.3in}
\sum_{(\b'',\un\ep,\c,\bfI)\in\wt\cA^{\star}(\Ga)}\\
&\prod_{v\in\Ver}\! 
\Bigg\llbracket \!\Phi_{I_v;\c_v}^{(g_v,\un\ep_v)}(q)\!\!
\prod_{s\in S_v}\!\frac{\Phi_{\wh{p}_s;b_s''+\ep_s-b_s}\!(q)}{b_s''!\,\Phi_0(q)}
\!\!\!\!\!\prod_{f\in\Fl_v^-(\Ga)}\!\!\!\!\!\!
\frac{\Phi_{p_{e_f}';b_f''+\ep_f-b_{e_f}'}\!(q)}{b_f''!\Phi_0(q)}
\!\!\!\!\!\prod_{f\in\Fl_v^+(\Ga)}\!\!\!\!\!\!
\frac{\Phi_{\wh{p}_{e_f}';b_f''+\ep_f+1+b_{e_f}'}\!(q)}{b_f''!\,\Phi_0(q)}\Bigg\rrbracket_{q;d_v}
\end{split}\end{equation*}
with $\bft\!\equiv\!(t_v)_{v\in\Ver}\!\in\!\Z^{\Ver}$ defined by
\begin{equation*}\begin{split}
&\sum_{s\in S_v}\!\!\big(\wh{p}_s\!+\!b_s\big)
+\!\!\sum_{f\in\Fl_v^-(\Ga)}\!\!\!\!\!\!\big(p_{e_f}'\!+\!b_{e_f}'\big)
+\!\!\sum_{f\in\Fl_v^+(\Ga)}\!\!\!\!\!\!\big(\wh{p}_{e_f}'\!-\!1\!-\!b_{e_f}'\big)\\
&\hspace{1in}
=(n\!-\!4)(1\!-\!g_v)+\big|\ov\Fl_v(\Ga)\big|+n(d_v\!+\!t_v) 
\qquad\forall~v\!\in\!\Ver;
\end{split}\end{equation*}
the corresponding summand above is taken to be~0 if an integer~$t_v$ 
satisfying the above condition does not exist for some $v\!\in\!\Ver$.
This confirms~\e_ref{equivthm_e2} with $\nc_{g;\p,\b}^{(d)}\!\equiv\!\nc_{g;\p,\b}^{(d,0)}$ 
as defined in~\e_ref{ncCdfn2_e}
(and describes $\nc_{g;\p,\b}^{(d,t)}$ with $t\!\in\!\Z^+$ as~well).

\subsection{The recursion approach}
\label{RecFormComp_subs}

\noindent 
We next show that \e_ref{equivthm_e2} holds with the coefficients $\nc_{g;\p,\b}^{(d,t)}$
as defined recursively at the end of Section~\ref{Mainform_subs}.
Let $\Ga$ be a connected $[n]$-valued $N$-marked genus~$g$ weighted graph
and $\ov\Ga$ be its core as before.
This time we break $\Ga$ only at the vertex
$$v\equiv \ov\eta(N) \in \ov\Ver\subset\Ver$$ 
into strands~$\Ga_{\fp}$ as in~\e_ref{stranddfn_e}.
Each edge $e\!\equiv\!\{f,f'\}$ of the original graph~$\Ga$ with $\eta(f)\!=\!v$
keeps a copy~$v_f$ of the vertex~$v$ satisfying~\e_ref{stranddfn_e2} 
and carrying an additional marked point labeled by~$\wh{f}$.
The set of strands (i.e.~of the connected components of the graph) obtained 
from~$\Ga$ in this way is indexed by a quotient~$\Fl_v^{\dag}(\Ga)$
of the set~$\Fl_v(\Ga)$ of flags of~$\Ga$ at~$v$ so that 
$f_1,f_2\!\in\!\Fl_v(\Ga)$ determine the same element $f_1^{\dag}\!=\!f_2^{\dag}$
of $\Fl_v^{\dag}(\Ga)$
if and only if the marked points labeled by~$\wh{f}_1$ and~$\wh{f}_2$ lie on the same strand.
Since the strands~$\Ga_{\fp}$ are obtained from the genus~$g$ graph~$\Ga$
by breaking all edges at~$v$,
$$g_v\!+\!\sum_{\fp\in\Fl_v^{\dag}(\Ga)}\!\!\!\!\!\fa(\Ga_{\fp})+\big|\Fl_v(\Ga)\big|
=g\!+\!|\Fl_v^{\dag}(\Ga)\big|\,.$$

\vspace{.2in}

\noindent
The set of strands of type~(S1) is now the subset
$$\Fl_v'(\Ga)\equiv \Fl'(\Ga)\!\cap\!\Fl_v(\Ga)$$
of the set of strands of type~(S1) in Section~\ref{ClFormComp_subs} that leave from the vertex~$v$.
With the notation as in~\e_ref{FlprGadfn_e},
the set~$\Fl_v^*(\Ga)$ of the remaining strands~$\fp$ is a quotient of the~set 
$$\Fl_v(\Ga)\!-\!\Fl_v'(\Ga)\equiv 
\big\{f_s\!:s\!\in\![N],~\ov\eta(s)\!=\!v\!\neq\!\eta(s)\big\}\sqcup \Fl_v(\ov\Ga).$$
With $S_v\!\equiv\![N]\!\cap\!\eta^{-1}(v)$  as before, let
$$\ov\Fl_v^*(\Ga) \equiv S_v\!\sqcup\!\Fl_v^*(\Ga)\,.$$
The~set
\BE{ovFldagdfn_e2}
\ov\Fl_v^{\dag}(\Ga)\equiv S_v\!\sqcup\!\Fl_v^{\dag}(\Ga)
=S_v\!\sqcup\!\Fl_v^*(\Ga)\!\sqcup\!\Fl'(\Ga) 
=\ov\Fl_v^*(\Ga)\!\!\sqcup\!\Fl'(\Ga)\EE
is similarly a quotient of the set $\ov\Fl_v(\Ga)$.
For $\fp\!\in\!\ov\Fl_v^{\dag}(\Ga)$, we write $f\!\in\!\fp$ if
$f\!\in\!\ov\Fl_v(\Ga)$ and $f^{\dag}\!=\!\fp$.\\

\noindent
For $s\!\in\!S_v$, define 
$$\fa(\Ga_s)\!=\!0,\qquad S_s^*=\{s\}, \qquad |s|=|S_s'|\!=\!1.$$
For $\fp\!\in\!\Fl_v^{\dag}(\Ga)$, let 
$$S_{\fp}^*\equiv S_{\fp}\!\cap\![N], \qquad
S_{\fp}'\equiv S_{\fp}\!-\![N]=\big\{\wh{f}\!:f^{\dag}\!=\!\fp\big\},
\qquad\hbox{and}\quad |\fp|\equiv\big|S_{\fp}'\big|$$
be the set of the original marked points carried by~$\fp$,
the set of the additional marked points, and the cardinality of the latter,
respectively.
Thus,
\begin{gather*}
|\fp|\in\Z^+~~\forall\,\fp\!\in\!\ov\Fl^{\dag}(\Ga), \quad
\Fl_v'(\Ga)=\big\{\fp\!\in\!\ov\Fl^{\dag}(\Ga)\!:
\fa(\Ga_{\fp})\!=\!0,\,S_{\fp}^*\!=\!\eset,\,|\fp|\!=\!1\big\},\\
[N]\!-\!S_v=\bigsqcup_{\fp\in\Fl^*(\Ga)}\!\!\!\!\!\!S_{\fp}^*, \quad
\Fl_v(\Ga)\!-\!\Fl_v'(\Ga)=\bigsqcup_{\fp\in\Fl^*(\Ga)}\!\!\!\!\!\!S_{\fp}'\,,
\quad
g_v\!+\!\sum_{\fp\in\ov\Fl_v^*(\Ga)}\!\!\!\!\!\!
\big(\fa(\Ga_{\fp})\!+\!|\fp|\big)=g\!+\!|\ov\Fl_v^*(\Ga)\big|.
\end{gather*}
By the choice of~$v$, either $N\!\in\!S_v$ or 
$S_{\fp}\!=\!\{N\}$ for some $\fp\!\in\!\Fl_v^*(\Ga)$ with $\fa(\Ga_{\fp})\!=\!0$ 
and $|\fp|\!=\!1$.
Thus,  
\BE{Comp2typ_e}\big(g_v,(\fa(\Ga_{\fp}),S_{\fp}^*,|\fp|)_{\fp\in\ov\Fl_v^*(\Ga)} \big)
\in \cP_{g,N}^{(|\ov\Fl_v^*(\Ga)|)}.\EE

\vspace{.2in}

\noindent
The analogues of the decompositions~\e_ref{Zreg_e5a} and~\e_ref{Zreg_e5b} in this case are 
\begin{gather*}
\cZ_{\Ga}=\ov\cM_{g_v,\ov\Fl_v(\Ga)}\times
\prod_{\fp\in\Fl_v^{\dag}(\Ga)}\!\!\!\!\!\!\!\cZ_{\Ga_{\fp}},\\
\frac{\E(T_{P_{\mu(v)}}\P^{n-1})}{\E(\N\cZ_{\Ga}^{\vir})}=
\E(\bE_{g_v}^*\!\otimes\!T_{P_{\mu(v)}}\P^{n-1})\!\!\!\!\!\!\!
\prod_{\fp\in\Fl_v^{\dag}(\Ga)}\!\frac{1}{\E(\N\cZ_{\Ga_{\fp}}^{\vir})}
\prod_{f\in\Fl_v(\Ga)}\!\!\!\!
\frac{\E(T_{P_{\mu(v)}}\P^{n-1})}{\hb_f'\!-\!\psi_{\wh{f}}}\,.
\end{gather*} 
For each $I\!\in\!(\Z^{\ge0})^{g_v}$, \e_ref{cMint_e} still applies.
The analogue of~\e_ref{decomp2_e2} is~now
\BE{Zreg_e6}\begin{split}
&\prod_{k\neq\mu(v)}\!\!\!\!(\al_{\mu(v)}\!-\!\al_k)
\int_{\cZ_{\Ga}}\!
\frac{1}{\E(\N\cZ_{\Ga}^{\vir})}
\prod_{s=1}^{s=N}\!\!\bigg(\frac{\ev_s^*\phi_{i_s}}{\hb_s\!-\!\psi_s}\bigg)\\
&\hspace{.1in}
=\sum_{I\in(\Z^{\ge0})^{g_v}}\sum_{\b\in\cA_{v;I}^{\star}(\Ga)}\!\!\Bigg\{\!\!
\big(C_{g_v,n;I}\al_{\mu(v)}^{(n-1)g_v-\|I\|}\!+\!h_{g_v;n;I}(\al_{\mu(v)})\!\big)
\bllrr{\la_{g_v;I};\wt\tau_{\b}}\\
&\hspace{1in} \left.\times
\!\!\prod_{\fp\in\ov\Fl_v^{\dag}(\Ga)}\!\!\!
\Bigg(\!
\prod_{f\in\fp}\!\frac{1}{b_f!}\!
\bigg(\frac{\al_{\mu(v)}\!-\!\al_{\mu_c(f)}}{\d(e_f)}\bigg)^{\!\!-b_f-1}
\!\!\!\int_{\cZ_{\Ga_{\fp}}}\!\!\!
\frac{\prod\limits_{f\in\fp}\!\!\ev_f^*\phi_{\mu(v)}}
{\E(\N\cZ_{\Ga_{\fp}}^{\vir})}
\prod_{s\in S_{\fp}^*}\!\!\frac{\ev_s^*\phi_{i_s}}{\hb_s\!-\!\psi_s}
\!\!\Bigg)\!\!\right\},
\end{split}\EE
where
$$\prod_{f\in\fp}\!\frac{1}{b_f!}\!
\bigg(\frac{\al_{\mu(v)}\!-\!\al_{\mu_c(f)}}{\d(e_f)}\bigg)^{\!\!-b_f-1}
\!\!\!\!\int_{\cZ_{\Ga_{\fp}}}\!\!\!
\frac{\prod\limits_{f\in\fp}\!\!\ev_f^*\phi_{\mu(v)}}
{\E(\N\cZ_{\Ga_{\fp}}^{\vir})} \!\prod_{s\in S_{\fp}^*}\!\!
\frac{\ev_s^*\phi_{i_s}}{\hb_s\!-\!\psi_s}
\equiv\frac{1}{b_{\fp}!}
\bigg(\!\!\hb_{\fp}^{-b_{\fp}-1}\!\!
\prod_{k\neq i_{\fp}}\!\!\big(\al_{\mu(v)}\!-\!\al_k\big)\!\!\bigg)$$
if $\fp\!\in\!S_v\!\subset\!\ov\Fl_v^{\dag}(\Ga)\!\cap\!\ov\Fl_v(\Ga)$.
The equality in~\e_ref{Zreg_e6} holds after taking into account the automorphism groups;
this is done below after summing over all possibilities for the strands~$\Ga_{\fp}$.\\

\noindent
The relevant sum  over all possibilities for the strands~$\Ga_{\fp}$ with 
$\fp\!\in\!\Fl_v'(\Ga)$ is described by~\e_ref{Zreg_e8} with $\mu(f)\!=\!\mu(v)$.
For $\fp\!\in\!\ov\Fl_v^*(\Ga)$ such that $\fa(\Ga_{\fp})\!=\!0$, $|\fp|\!=\!1$,
and $S_{\fp}\!=\!\{s_{\fp}\}$ for some \hbox{$s_{\fp}\!\in\!\ov\eta^{-1}(v)$},
the corresponding sum is described by~\e_ref{Zreg_e8b} with $\mu(f)\!=\!\mu(v)$,
$\hb_{\fp}\!=\!\hb_{s_{\fp}}$, and $i_{\fp}\!=\!i_{s_{\fp}}$.
Lemma~\ref{stSsum_lmm} below extends~\e_ref{Zreg_e8b} to the remaining cases.
For each $\fp\!\in\!\ov\Fl_v^*(\Ga)$, we order the elements of 
\hbox{$S_{\fp}'\!\subset\!\Fl_v(\Ga)$}
as $f_1,\ldots,f_{|\fp|}$ and define
$$\Res{(\hb_f=0)_{\!f\in\fp}}\!\bigg\{\ldots\bigg\}
=\Res{\hb_{f_{|\fp|}}=0}\!\bigg\{\ldots
\Res{\hb_{f_1}=0}\bigg\{\ldots\bigg\}\ldots\bigg\}.$$

\begin{lmm}\label{stSsum_lmm}
If $\fp\!\in\!\ov\Fl_v^*(\Ga)$ with $2(\fa(\Ga_{\fp})\!+\!|\fp|)\!+\!|S_{\fp}^*|\!>\!3$, 
then 
\BE{Zreg_e8e}\begin{split}
&\sum_{\Ga_{\fp}}q^{|\Ga_{\fp}|}
\prod_{f\in\fp}\!
\bigg(\frac{\al_{\mu(v)}\!-\!\al_{\mu_c(f)}}{\d(e_f)}\bigg)^{\!\!-b_f-1}
\!\!\!\int_{\cZ_{\Ga_{\fp}}}\!\!\!
\frac{\prod\limits_{f\in\fp}\!\!\ev_f^*\phi_{\mu(f)}}
{\E(\N\cZ_{\Ga_{\fp}}^{\vir})}
\prod_{s\in S_{\fp}^*}\!\frac{\ev_s^*\phi_{i_s}}{\hb_s\!-\!\psi_s}\\
&\hspace{.5in}
=(-1)^{|\fp|}\!\!\!
\Res{(\hb_f=0)_{\!f\in\fp}}\!\Bigg\{\prod_{f\in\fp}\!\!\big(\!-\!\hb_f\big)^{-b_f-1}
\cZ^{(\fa(\Ga_{\fp}))}\!\big((\hb_f)_{\!f\in S_{\fp}},
(\al_v')_{\!f\in\fp},(\al_{i_f})_{\!f\in S_{\fp}^*},q\big)\!\!\Bigg\},
\end{split}\EE
where the sum is taken over all possibilities for the strand $\Ga_{\fp}$ 
with $v\!=\!\ov\eta(f)$ for all $f\!\in\!S_{\fp}'$ and $\al_v'\!\equiv\!\al_{\mu(v)}$ fixed.
\end{lmm}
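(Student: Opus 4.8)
\textbf{Proof proposal for Lemma~\ref{stSsum_lmm}.}
The plan is to proceed exactly as in the proof of Lemma~\ref{S3sum_lmm}, but now iterating the residue-extraction argument once for each of the $|\fp|$ flags in $S_\fp'$ rather than just once. First I would observe that a strand $\Ga_\fp$ as in the statement consists of: the copy $v_f$ of the vertex $v$ for each $f\!\in\!\fp$, a unique edge $e_{f}$ emanating from each such $v_f$, and some connected $[n]$-valued genus-$\fa(\Ga_\fp)$ graph carrying the marked points $S_\fp^*$ and the $|\fp|$ new marked points $\wh f$. The condition $2(\fa(\Ga_\fp)\!+\!|\fp|)\!+\!|S_\fp^*|\!>\!3$ guarantees that the graph obtained by cutting all the edges $e_f$ at $v_f$ is itself a legitimate (possibly non-stable-free) object whose localization contributions assemble into $\cZ^{(\fa(\Ga_\fp))}$. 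By the virtual localization theorem \cite{GP} applied to $\ov\M_{\fa(\Ga_\fp),S_\fp}(\P^{n-1},d)$ and the edge-contraction bookkeeping of \cite[Sections~2.1,2.2]{bcov1}, the left-hand side of~\e_ref{Zreg_e8e}, summed over $\Ga_\fp$ with $\d(e_f)\!=\!d_f$ and $\mu_c(f)\!=\!j_f$, equals the iterated residue of
$$\prod_{f\in\fp}\!\big(\!-\!\hb_f\big)^{-b_f-1}
\cZ^{(\fa(\Ga_\fp))}\!\big((\hb_f)_{f\in S_\fp},(\al_v')_{f\in\fp},(\al_{i_f})_{f\in S_\fp^*},q\big)$$
over $\hb_f\!=\!(\al_{j_f}\!-\!\al_v')/d_f$ for $f\!\in\!\fp$, with the sign $(-1)^{|\fp|}$ coming from the $|\fp|$ factors of $(-\hb_f)^{-1}$ and the orientation conventions for the residue at a finite point versus at $0$.

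Next I would pass from the sum of residues at the finite points $(\al_{j_f}\!-\!\al_v')/d_f$ to (minus) the residue at $\hb_f\!=\!0$, one variable at a time, using the Residue Theorem on $\P^1$ as in Lemma~\ref{S3sum_lmm}. The key input is that, for each $f$, the meromorphic $1$-form in $\hb_f$ has poles only at the points $(\al_{j_f}\!-\!\al_v')/d_f$ with $d_f\!\in\!\Z^+$, $j_f\!\neq\!\mu(v)$, together with $\hb_f\!=\!0,\i$, and that it has no pole at $\hb_f\!=\!\i$: the latter follows from the analogue of \cite[(3.11)]{PoZ}, i.e.~from the fact that $\cZ^{(\fa(\Ga_\fp))}$ is (after the appropriate normalization) a power series in each $\hb_f^{-1}$ with no constant term in the relevant sense, so $\hb_f^{-2}$ times each $\hb_f$-coefficient is regular at infinity. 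Applying this successively for $f\!=\!f_1,\dots,f_{|\fp|}$ — which is why the iterated residue $\Res{(\hb_f=0)_{f\in\fp}}$ is defined with a fixed ordering — converts the sum over all $(d_f,j_f)$ into $(-1)^{|\fp|}$ times the iterated residue at $\hb_f\!=\!0$ of the same $1$-form, which is the right-hand side of~\e_ref{Zreg_e8e}. As in Lemma~\ref{S3sum_lmm}, I would also have to separately account for the degenerate strands where one of the copies $v_f$ coincides with a vertex of the rest of the graph (the ``$\eta(\fp)\!=\!v$'' type contributions); these are exactly captured by the degree-$0$ term built into the definition of $\cZ^{(\fa(\Ga_\fp))}$ in the $g\!=\!0$, $N\!=\!1,2$ cases via~\e_ref{cZdfn_e}, so that after including them the equality holds on the nose after dividing by the automorphism factor.

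The main obstacle I expect is handling the cases where $\fa(\Ga_\fp)\!=\!0$ and $|\fp|\!+\!|S_\fp^*|$ is small — precisely the strands that are ``almost unstable'' — so that $\cZ^{(\fa(\Ga_\fp))}$ on the right-hand side needs its stipulated degree-$0$ correction terms (the $1$ for one-pointed, and the $-(\hb_1\!+\!\hb_2)^{-1}\sum\wh\si_r\x_1^{p_1}\x_2^{p_2}$ for two-pointed) in order for the localization sum to match. Verifying that the iterated residue of these correction terms reproduces exactly the ``$\eta(\fp)\!=\!v$'' degenerate contributions — generalizing~\e_ref{cZ2deg0_e} and the vanishing identity for $\LRbr{\cZ}_{q;0}$ used in Lemma~\ref{S2sum_lmm} — is the delicate combinatorial point; everything else is a mechanical iteration of the one-variable argument already carried out for $|\fp|\!=\!1$ in Lemmas~\ref{S3sum_lmm} and~\ref{S2sum_lmm}. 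Once Lemma~\ref{stSsum_lmm} is in hand, substituting \e_ref{Zreg_e8}, \e_ref{Zreg_e8b}, and \e_ref{Zreg_e8e} into~\e_ref{Zreg_e6}, summing over $\Fl_v^{\dag}(\Ga)$ and over the cores $\ov\Ga$, and using~\e_ref{Comp2typ_e} to recognize the resulting combinatorial data as an element of $\cP_{g,N}^{(m)}$, will produce the recursion~\e_ref{coeffdfn_e} and hence~\e_ref{equivthm_e2} with the recursively defined $\nc_{g;\p,\b}^{(d,t)}$, completing the second proof of Theorem~\ref{equiv_thm}.
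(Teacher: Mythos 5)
Your main line of argument is the one the paper uses: for fixed edge degrees $d_f$ and fixed labels $j_f\!\in\![n]\!-\!\mu(v)$, identify the localization sum over strands with the iterated residue of $\prod_{f\in\fp}(-\hb_f)^{-b_f-1}\cZ^{(\fa(\Ga_{\fp}))}$ at $\hb_f\!=\!(\al_{j_f}\!-\!\al_v')/d_f$ (the paper gets this from the proofs of (3.24) and Lemma~1.2 in~\cite{bcov0} applied with arbitrary genus and $\E(\V_0'')\!=\!1$, rather than from \cite{bcov1}), observe that the resulting $1$-form in each $\hb_f$ has no pole at $\hb_f\!=\!\i$, and apply the Residue Theorem on~$\P^1$ once per flag; the factor $(-1)^{|\fp|}$ arises from this last step, not from the identification at the finite points as your first paragraph suggests (you state it correctly in the second paragraph).

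The one substantive correction concerns the step you single out as the delicate point. Under the hypothesis $2(\fa(\Ga_{\fp})\!+\!|\fp|)\!+\!|S_{\fp}^*|\!>\!3$ there are no degenerate ``$\eta(\fp)\!=\!v$'' configurations to account for: that phenomenon occurs only in the case $\fa(\Ga_{\fp})\!=\!0$, $|\fp|\!=\!1$, $|S_{\fp}^*|\!=\!1$ of Lemma~\ref{S3sum_lmm}, which is precisely what the hypothesis excludes, and every strand appearing in the sum carries an edge of positive degree for each $f\!\in\!\fp$, so no copy $v_f$ can ``coincide with'' another vertex. What must be checked instead is only that the $q^0$ part of $\cZ^{(\fa(\Ga_{\fp}))}$ contributes nothing to the right-hand side of~\e_ref{Zreg_e8e}, and this is easy rather than delicate. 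If $(\fa(\Ga_{\fp}),|\fp|,|S_{\fp}^*|)\!=\!(0,2,0)$, this is the vanishing of the double residue of the $(g,N)\!=\!(0,2)$ convention term, exactly as in the proof of Lemma~\ref{S2sum_lmm} (the paper in fact disposes of this case by quoting Lemma~\ref{S2sum_lmm} directly and treats only $2\fa(\Ga_{\fp})\!+\!|S_{\fp}|\!\ge\!3$ by the residue argument). If $2\fa(\Ga_{\fp})\!+\!|S_{\fp}|\!\ge\!3$, the $q^0$ coefficient is a finite Laurent polynomial in each $\hb_f$ with only negative powers, so after multiplication by $\hb_f^{-b_f-1}$ all exponents are at most $-b_f\!-\!2$; hence it has no poles at the finite attachment points and zero residue at $\hb_f\!=\!0$, so including the full $\cZ^{(\fa(\Ga_{\fp}))}$ on the right-hand side is harmless. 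With this simple verification in place of your anticipated matching of correction terms against degenerate strands, your proposal coincides with the paper's proof.
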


\begin{proof}
If $\fa(\Ga_{\fp})\!=\!0$, $|\fp|\!=\!2$, and $S_{\fp}^*\!=\!\eset$,
\e_ref{Zreg_e8e} is the case of~\e_ref{Zreg_e8d} with $\mu(f)\!=\!\mu(v)$
and $\al_{\fp}^{\pm}\!=\!\al_v'$.
Suppose \hbox{$2\fa(\Ga_{\fp})\!+\!|\fp|\!+\!|S_{\fp}^*|\!\ge\!3$}.
By the proofs of (3.24) and Lemma~1.2 in~\cite{bcov0} 
applied with arbitrary genus and $\E(\V_0'')\!=\!1$,
the left-hand side of~\e_ref{Zreg_e8e} summed over~$\Ga_{\fp}$
with $\d(e_f)\!=\!d_f$ for $d_f\!\in\!\Z^+$
and $\mu_c(f)\!=\!j_f$ for $j_f\!\in\![n]\!-\!\mu(v)$ 
fixed for each $f\!\in\!\fp$ is the $|\fp|$-fold residue~of 
$$\prod_{f\in\fp}\!\!\big(\!-\!\hb_f\big)^{-b_f-1}
\cZ^{(\fa(\Ga_{\fp}))}\!\big((\hb_f)_{f\in S_{\fp}},
(\al_v')_{\!f\in\fp},(\al_{i_f})_{\!f\in S_{\fp}^*},q\big)$$
at $\hb_f\!=\!(\al_{j_f}\!-\!\al_v')/d_f$ for each $f\!\in\!\fp$
(i.e.~first take the residue of the above power series in rational functions
 at $\hb_{f_1}\!=\!(\al_{j_{f_1}}\!-\!\al_v')/d_{f_1}$,
then the residue of the resulting power series at 
$\hb_{f_1}\!=\!(\al_{j_{f_1}}\!-\!\al_v')/d_{f_1}$, and so~on).
Furthermore, for each $f\!\in\!\fp$ the associated meromorphic 1-form on~$\P^1$ 
in~$\hb_f$ has no poles outside of
$\hb_f\!=\!(\al_j\!-\!\al_v')/d$ with $d\!\in\!\Z^+$ and $j\!\in\![n]\!-\!\mu(v)$
and $\hb_f\!=\!0$.
The claim now follows from the Residue Theorem on~$\P^1$,
as in the proof of Lemma~\ref{S3sum_lmm}, but applied $|\fp|$~times.
\end{proof}

\noindent
We now combine \e_ref{Zreg_e6} with 
Lemmas~\ref{S1sum_lmm}, \ref{S3sum_lmm}, and~\ref{stSsum_lmm}
and sum over all possibilities for~$\Fl_v'(\Ga)$.
Taking into account the automorphism groups, we~obtain an analogue~\e_ref{decomp2_e2b}:
\BE{decomp2_e2d}\begin{split}
&\prod_{k\neq\mu(v)}\!\!\!\!(\al_v'\!-\!\al_k)\!\!
\sum_{\Ga}q^{|\Ga|}\!\!\!\int_{\cZ_{\Ga}}\!
\frac{1}{\E(\N\cZ_{\Ga}^{\vir})}\!\!
\prod_{s=1}^{s=N}\!\!\frac{\ev_s^*\phi_{i_s}}{\hb_s\!-\!\psi_s}
=\sum_{I\in(\Z^{\ge0})^{g_v}}\sum_{\b\in\cA_{v;I}^{\star}(\Ga)}\!\!
\Bigg\{\wt\cZ_{\b,I}^{(g_v)}(\al_v',q)\\
&\hspace{.8in}\times\!\!\!\!
\prod_{\fp\in\ov\Fl_v^*(\Ga)}\!\!\!\!\!
\frac{(-1)^{|\fp|}}{|\fp|!}\!\!\!
\Res{(\hb_f=0)_{\!f\in\fp}}\!\Bigg\{\prod_{f\in\fp}\!
\frac{(-\hb_f)^{-b_f-1}}{b_f!}
\cZ^{(\fa(\Ga_{\fp}))}\!\big((\hb_f)_{\!f\in S_{\fp}},
(\al_v')_{\!f\in\fp},(\al_{i_f})_{\!f\in S_{\fp}^*},q\big)\!\!\Bigg\}.
\end{split}\EE
The sum on the left-hand side above is taken over all equivalence classes of
connected $[n]$-valued $N$-marked genus~$g$ weighted graphs~$\Ga$
as in~\e_ref{decortgraphdfn_e} determining a fixed element~\e_ref{Comp2typ_e}
of $\cP_{g,N}^{(m)}$ with $m\!=\!|\ov\Fl_v^*(\Ga)|$.
The analogue of~\e_ref{decomp2_e15} is~now
\BE{decomp2_e15b}\begin{split}   
&\prod_{k\neq\mu(v)}\!\!\!\!(\al_v'\!-\!\al_k)\!\!
\sum_{\Ga}q^{|\Ga|}\!\!\!\int_{\cZ_{\Ga}}\!
\frac{1}{\E(\N\cZ_{\Ga}^{\vir})}\!\!
\prod_{s=1}^{s=N}\!\!\frac{\ev_s^*\phi_{i_s}}{\hb_s\!-\!\psi_s}
=\sum_{(\b',\un\ep,\c,I)\in\wt\cA^{\star}_{g_v,(|\fp|)_{\fp\in\ov\Fl_v^*(\Ga)}}}\!\!
\Bigg\{\! \Psi_{I;\c}^{(g_v,\un\ep)}(\al_v',q)\\
&\hspace{.5in}
\times\!\prod_{\fp\in\ov\Fl_v^*(\Ga)}\!\!\frac{1}{|\fp|!}
\Res{(\hb_f=0)_{\!f\in\fp}}\!\Bigg\{\prod_{f\in\fp}
\frac{e^{-\frac{\ze(\al_v',q)}{\hb_f}}}
{b_f'!\hb_f^{b_f'+\ep_f+1}}\cZ^{(\fa(\Ga_{\fp}))}\!\big((\hb_f)_{\!f\in S_{\fp}},
(\al_v')_{\!f\in\fp},(\al_{i_f})_{\!f\in S_{\fp}^*},q\big)\!\!\Bigg\},
\end{split}\EE
with $\wt\cA^{\star}_{g_v,(|\fp|)_{\fp\in\ov\Fl_v^*(\Ga)}}$ as in~\e_ref{Phimcdfn_e0}.\\

\noindent
Suppose $\fp\!\in\!\ov\Fl_v^*(\Ga)$ and $2\fa(\Ga_{\fp})\!+\!|S_{\fp}|\!\ge\!3$.
Since $3\fa(\Ga_{\fp})\!+\!|S_{\fp}|\!<\!3g\!+\!N$,
by induction Theorem~\ref{equiv_thm} implies that 
\begin{equation*}\begin{split}
&\cZ^{(\fa(\Ga_{\fp}))}\!\big((\hb_f)_{f\in S_{\fp}},(\x_f)_{f\in S_{\fp}},q\big)\\
&\hspace{.5in}= \sum_{\p\in\nset^{S_{\fp}}}\sum_{\b\in(\Z^{\ge0})^{S_{\fp}}}
\sum_{d=0}^{\i}\cC_{\fa(\Ga_{\fp});\p,\b}^{(d)}q^d \!
\prod_{f\in S_{\fp}}\!\!\hb_f^{-b_f-1}\!\cZ_{p_f}\big(\hb_f,\x_f,q\big)
\end{split}\end{equation*}
with $\cC_{\fa(\Ga_{\fp});\p,\b}^{(d)}\!\in\!\Q[\al]$ satisfying~\e_ref{equivthm_e2}.
We set $\cC_{\fa(\Ga_{\fp});\p,\b}^{(d)}\!=\!0$ if
\hbox{$2\fa(\Ga_{\fp})\!+\!|S_{\fp}|\!\ge\!3$} and $b_f'\!<\!0$ 
for some  $f\!\in\!S_{\fp}$.
Along with~\e_ref{cZpexp_e}, the last equation implies that 
\BE{stresprod_e}\begin{split}
&\Res{(\hb_f=0)_{\!f\in\fp}}\!\Bigg\{\prod_{f\in\fp}
\frac{e^{-\frac{\ze(\al_v',q)}{\hb_f}}}
{\hb_f^{b_f'+\ep_f+1}}\cZ^{(\fa(\Ga_{\fp}))}\!\big((\hb_f)_{\!f\in S_{\fp}},
(\al_v')_{\!f\in\fp},(\al_{i_f})_{\!f\in S_{\fp}^*},q\big)\!\!\Bigg\}\\
&\hspace{.5in} =\sum_{\begin{subarray}{c}\p\in\nset^{S_{\fp}^*}\\
\b\in(\Z^{\ge0})^{S_{\fp}^*}\end{subarray}}
\Bigg\{\prod_{f\in S_{\fp}^*}\!\!\!\hb_f^{-b_f-1}\cZ_{p_f}\big(\hb_f,\al_{i_f},q\big)\\
&\hspace{1.4in}
\times\!\!\!\sum_{\begin{subarray}{c}\p'\in\nset^{S_{\fp}'}\\
\b''\in\Z^{S_{\fp}'}\end{subarray}}
\sum_{d=0}^{\i}\cC_{\fa(\Ga_{\fp});\p\p',\b\b''}^{(d)}q^d
\prod_{f\in\fp}\!\Psi_{p_f';b_f'+\ep_f+1+b_f''}\big(\al_v',q\big)\!\!\Bigg\},
\end{split}\EE
if $2\fa(\Ga_{\fp})\!+\!|S_{\fp}|\!\ge\!3$.
By Lemma~\ref{pt2_lmm} and Corollary~\ref{pt2_crl}, \e_ref{stresprod_e} with
\BE{cC2base_e}\cC_{0;(p_+,p_-),(b_+,b_-)}^{(d)}=\begin{cases}(-1)^{b_+}
\lrbr{\cC_{p_-p_+}\!(q)}_{q;d},&\hbox{if}~b_+\!\ge\!0,\,b_-\!+\!b_+\!=\!-1;\\
0,&\hbox{otherwise};\end{cases}\EE
holds if $\fa(\Ga_{\fp})\!=\!0$ and $|S_{\fp}|\!=\!2$.\\

\noindent
We note that 
$$\prod_{\fp\in\ov\Fl_v^*(\Ga)}
\prod_{f\in S_{\fp}^*}\!\!\hb_f^{-b_f-1}\!\cZ_{p_f}\big(\hb_f,\al_{i_f},q\big)
=\un\hb^{-\b}\!\cZ_{\p}\big(\un\hb,\al_{i_1\ldots i_N},q\big)
\quad\forall\,\p\!\in\!\nset^N,\,\b\!\in\!(\Z^{\ge0})^N.$$
We now divide both sides of~\e_ref{decomp2_e15b} by the first factor on
the left-hand side, plug in~\e_ref{stresprod_e}, and sum up 
over all possibilities for $\mu(v)\!\in\![n]$ using 
the Residue Theorem on~$\P^1$, as after~\e_ref{decomp2_e15}.
Summing up the result over all possibilities for~\e_ref{Comp2typ_e}, 
we obtain a recursion for the coefficients~$\cC_{g;\p,\b}^{(d)}$ in Theorem~\ref{equiv_thm}:
\BE{cCformula_e2}\begin{split}
&\cC_{g;\p,\b}^{(d)} =
\sum_{\begin{subarray}{c}m\in\Z^+\\ d'\in\Z^{\ge0}\end{subarray}}\!\frac{(-1)}{m!}
\sum_{\begin{subarray}{c}(g',\bfg,\bfS,\bfN)\in\cP_{g,N}^{(m)}\\
(\b',\un\ep,\c,I)\in\wt\cA^{\star}_{g',\bfN}\end{subarray}}
\sum_{\begin{subarray}{c}\p'\in\!\!\!\prod\limits_{i\in[m]}\!\!\!\!\nset^{N_i}\\  
\b''\in\!\!\!\prod\limits_{i\in[m]}\!\!\!\!(\Z^{\ge0})^{N_i}\end{subarray}} 
\sum_{\begin{subarray}{c}\bfd\in(\Z^{\ge0})^m\\ |\bfd|=d-d'\end{subarray}}  
\Bigg(\prod_{i=1}^{i=m}\frac{\cC_{g_i;\p|_{S_i}\p_i',\b|_{S_i}\b_i''}^{(d_i)}}{N_i!}\\
&\hspace{1.5in}\times\!\!\!\!
\Res{\x=0,\i}\Bigg\llbracket
\frac{\Psi_{I;\c}^{(g',\un\ep)}(\x,q)}{\prod\limits_{k=1}^{k=n}\!\!(\x\!-\!\al_k)} 
\prod_{i=1}^{i=m}\prod_{f\in[N_i]}\!\!\!\!
\frac{\Psi_{p_{i;f}';b_{i;f}'+\ep_{i;f}+1+b_{i;f}''}\!(\x,q)}{b_{i;f}'!}
\Bigg\rrbracket_{q;d'}\Bigg)\,.
\end{split}\EE
It remains to show that~\e_ref{equivthm_e2} with the summation over $t\!\in\!\Z$
instead of~$\Z^{\ge0}$ holds for some \hbox{$\nc_{g;\p,\b}^{(d,t)}\!\in\!\Q$} such that
$\nc_{g;\p,\b}^{(d,t)}$ with $t\!\ge\!0$ satisfy~\e_ref{coeffdfn_e}.\\ 

\noindent
By~\e_ref{cCformula_e2} and~\e_ref{cFres_e}, 
\BE{cCformula_e4}\begin{split}
&\cC_{g;\p,\b}^{(d)} \sim
\sum_{m,d'\in\Z^{\ge0}}\!\frac{1}{m!}
\sum_{\begin{subarray}{c}(g',\bfg,\bfS,\bfN)\in\cP_{g,N}^{(m)}\\
(\b',\un\ep,\c,I)\in\wt\cA^{\star}_{g',\bfN}\end{subarray}}
\sum_{\begin{subarray}{c}\p'\in\!\!\!\prod\limits_{i\in[m]}\!\!\!\!\nset^{N_i}\\  
\b''\in\!\!\!\prod\limits_{i\in[m]}\!\!\!\!(\Z^{\ge0})^{N_i}\end{subarray}} 
\sum_{\begin{subarray}{c}\bfd\in(\Z^{\ge0})^m\\ |\bfd|=d-d'\end{subarray}}  
\Bigg(\wh\si_n^t
\prod_{i=1}^{i=m}\frac{\cC_{g_i;\p|_{S_i}\p_i',\b|_{S_i}\b_i''}^{(d_i)}}{N_i!}\\
&\hspace{2.2in}\times
\Bigg\llbracket \Phi_{I;\c}^{(g',\un\ep)}(q)\!\!
\prod_{i=1}^{i=m}\prod_{f\in[N_i]}\!\!\!
\frac{\Phi_{p_{i;f}';b_{i;f}'+\ep_{i;f}+1+b_{i;f}''}\!(q)}{b_{i;f}'!\Phi_0(q)}\Bigg\rrbracket_{q;d'}
\Bigg)\,,
\end{split}\EE
with $t\!\in\!\Z$ defined by 
$$|\p'|-|\b''|=(n\!-\!4)(1\!-\!g')+2|\bfN|+n(d'\!+\!t)
\quad\Llra\quad (\p',\b'')\!\in\!\cS_{g',\bfN}(d',t);$$
if an integer $t$ satisfying the above condition does not exist, 
we define $\wh\si_n^t$  to be~0.\\

\noindent
We now confirm~\e_ref{equivthm_e2} by induction on $3g\!+\!N$.
For $(g,N)\!=\!(0,2)$, 
\e_ref{equivthm_e2} holds by \e_ref{cC2base_e}, \e_ref{p2cC_e}, and \e_ref{coeff2_e}.
Suppose $3g\!+\!N\!\ge\!0$ and~\e_ref{equivthm_e2} holds for smaller values of
this sum.
If $(g',\bfg,\bfS,\bfN)$ is an element of~$\cP_{g,N}^{(m)}$ for some $m\!\in\!\Z^+$,
then 
$$3g_i\!+\!|S_i|\!+\!N_i<3g\!+\!N \qquad\forall\,i\!\in\![m].$$
Thus, \e_ref{cCformula_e4} with the roles of~$\b'$ and~$\b''$ interchanged
and~\e_ref{equivthm_e2} with $(g,N)$ replaced by $(g_i,|S_i|\!+\!N_i)$ imply~that 
\begin{equation*}\begin{split}
&\cC_{g;\p,\b}^{(d)}\sim \sum_{t\in\Z}\wh\si_n^t
\sum_{\begin{subarray}{c}m,d'\in\Z^{\ge0}\\ t'\in\Z\end{subarray}}\!\frac{1}{m!}
\sum_{\begin{subarray}{c}(g',\bfg,\bfS,\bfN)\in\cP_{g,N}^{(m)}\\
(\b'',\un\ep,\c,I)\in\wt\cA^{\star}_{g',\bfN}\\
(\p',\b')\in\cS_{g',\bfN}(d',t')\end{subarray}}
\sum_{\begin{subarray}{c}\bfd,\bft\in(\Z^{\ge0})^m\\ 
|\bfd|=d-d',|\bft|=t-t'\end{subarray}}\!\!\!\!\!\!\!\!  
\Bigg(
\prod_{i=1}^{i=m}\frac{\nc_{g_i;\p|_{S_i}\p_i',\b|_{S_i}\b_i'}^{(d_i,t_i)}}{N_i!}\\
&\hspace{2.4in}\times
\Bigg\llbracket \Phi_{I;\c}^{(g',\un\ep)}(q)\!\!
\prod_{i=1}^{i=m}\prod_{f\in[N_i]}\!\!\!
\frac{\Phi_{p_{i;f}';b_{i;f}''+\ep_{i;f}+1+b_{i;f}'}\!(q)}{b_{i;f}''!\Phi_0(q)}\Bigg\rrbracket_{q;d'}
\Bigg)\,.
\end{split}\end{equation*}
By~\e_ref{coeffdfn_e}, this expression reduces to~\e_ref{equivthm_e2}.

\section{Key combinatorial identities}
\label{combin_sec}

\noindent
We now establish the key combinatorial statements, 
Propositions~\ref{HodgeInt_prp} and~\ref{HodgeIntGS_prp}, 
which characterize the Hodge integrals~\e_ref{DMdfn_e2} and
sums of residues of well-behaved generating series of rational functions
as in Proposition~\ref{pt1sum_prp}, respectively.
Their proofs make use of the basic identities of Lemma~\ref{comb_l0} below.
Lemma~\ref{grcomb_lmm} is used in the proof of Theorem~\ref{GWbound_thm}.

\begin{lmm}[{\cite[Lemma~B.1]{g0ci}}]\label{comb_l0}
The following identities hold:
\begin{alignat*}{2}
\sum_{\begin{subarray}{c}\b'\in(\Z^{\ge0})^m\\
|\b'|=b'\end{subarray}}
\prod_{k=1}^{k=m}\binom{b_k}{b_k'} &=\binom{b_1\!+\!\ldots\!+b_m}{b'}
&\qquad&\forall~m\!\in\!\Z^+,\,b_1,\ldots,b_m,b'\!\in\!\Z^{\ge0}\,,\\
\sum_{b=0}^{\i}(-1)^b\binom{p}{b}\binom{B\!+\!b}{s}
&=(-1)^p \binom{B}{s\!-\!p}
&\qquad&\forall~B,p,s\!\in\!\Z^{\ge0}\,,\\ 
\sum_{p=0}^{\i}(-1)^p\binom{m\!+\!p}{p}\Psi^p &=\frac{1}{(1\!+\!\Psi)^{m+1}}
&\qquad&\forall~m\!\in\!\Z^{\ge0}.
\end{alignat*}
\end{lmm}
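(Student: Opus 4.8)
The plan is to establish each of the three identities by a short generating-function argument, keeping every sum either finite or interpreted as a formal power series so that no convergence question intervenes. For the first (Vandermonde-type) identity I would expand both sides of the polynomial identity $\prod_{k=1}^{k=m}(1+x)^{b_k}=(1+x)^{b_1+\ldots+b_m}$ by the Binomial Theorem: the left-hand side becomes $\sum_{\b'\in(\Z^{\ge0})^m}\big(\prod_{k=1}^{k=m}\binom{b_k}{b_k'}\big)x^{|\b'|}$, while the right-hand side is $\sum_{b'\ge0}\binom{b_1+\ldots+b_m}{b'}x^{b'}$, and comparing the coefficients of $x^{b'}$ gives the claim.

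For the alternating identity the first observation is that $\binom{p}{b}$ vanishes for $b>p$, so the sum on the left is finite and all manipulations below are legitimate. Writing $\binom{B+b}{s}=\big[x^s\big](1+x)^{B+b}$ and pulling the coefficient extraction outside the finite sum, I would compute
\[
\sum_{b=0}^{\infty}(-1)^b\binom{p}{b}\binom{B+b}{s}
=\big[x^s\big]\,(1+x)^B\sum_{b=0}^{\infty}\binom{p}{b}\big(-(1+x)\big)^b
=\big[x^s\big]\,(1+x)^B(-x)^p.
\]
Since $(-x)^p(1+x)^B=(-1)^p x^p(1+x)^B$, the right-hand side equals $(-1)^p\big[x^{s-p}\big](1+x)^B=(-1)^p\binom{B}{s-p}$, which is $0$ when $s<p$, matching the convention implicit in the statement (namely $\binom{B}{s-p}=0$ for a negative lower index).

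For the negative-binomial identity I would invoke the generalized Binomial Theorem for the formal power series $(1+\Psi)^{-(m+1)}=\sum_{p=0}^{\infty}\binom{-(m+1)}{p}\Psi^p$, together with the elementary evaluation $\binom{-(m+1)}{p}=\dfrac{\prod_{j=1}^{j=p}\big(-(m+j)\big)}{p!}=(-1)^p\binom{m+p}{p}$, which immediately yields the stated form.

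I do not anticipate a genuine obstacle; the only points that deserve a sentence of care are recording that the sum in the second identity terminates, so that exchanging it with $\big[x^s\big](\cdot)$ is valid, and stating explicitly that the third identity is meant as an equality of formal power series in $\Psi$ (equivalently, of analytic functions for $|\Psi|<1$). Since the paper already cites this as \cite[Lemma~B.1]{g0ci}, it would also be acceptable simply to refer to that source.
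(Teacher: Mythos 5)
Your proof is correct; all three generating-function arguments are sound, the termination of the sum in the second identity is rightly noted, and the evaluation $\binom{-(m+1)}{p}=(-1)^p\binom{m+p}{p}$ is exactly the standard reflection. The paper itself offers no proof of this lemma — it simply cites \cite[Lemma~B.1]{g0ci} — so there is no in-text argument to compare against, and your self-contained derivation is an acceptable substitute for (or supplement to) the citation.
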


\begin{lmm}\label{grcomb_lmm}
Let $g,N\!\in\!\Z^{\ge0}$ with $2g\!+\!N\!\ge\!3$ and 
$\Ga\!\in\!\cA_{g,N}$ be a connected trivalent $N$-marked genus~$g$ graph
as in~\e_ref{GaNdfn_e}.
Then,
\begin{equation*}\begin{split}
&\sum_{\c\in((\Z^{\ge0})^{\i})^{\Ver}}\hspace{-.15in}
2^{-\|\c\|}\!\!\!
\prod_{v\in\Ver}\!\!\Bigg(\!
\frac{(3(g_v\!-\!1)\!+\!|\ov\Fl_v(\Ga)|\!+\!|\c_v|)!}{(3(g_v\!-\!1)\!+\!|\ov\Fl_v(\Ga)|)!|\c_v|!}
\binom{|\c_v|}{\c_v}\!
\prod_{r=1}^{\i}\!\!\bigg(\frac{1}{r\!+\!1}\bigg)^{\!c_{v;r}}\!\!\Bigg)\\
&\hspace{3.5in}=\big(2(1\!-\!\ln2)\big)^{-(3g+N-2-g_{\Ga})}\,.
\end{split}\end{equation*}
\end{lmm}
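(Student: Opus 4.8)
The plan is to use the fact that the left-hand side \emph{factors over the vertices} of $\Ga$. Writing $2^{-\|\c\|}=\prod_{v\in\Ver}2^{-\|\c_v\|}$ with $\|\c_v\|=\sum_{r\ge1}rc_{v;r}$, the sum over $\c\!\in\!((\Z^{\ge0})^{\i})^{\Ver}$ becomes a product over $v$ of single-vertex sums. So the first step is to reduce the whole statement to evaluating, for each fixed $v$ and with $m\!\equiv\!m_v(\Ga)\!=\!3(g_v\!-\!1)\!+\!|\ov\Fl_v(\Ga)|$, the single-vertex sum
$$S(m)\equiv\sum_{\c_v\in(\Z^{\ge0})^{\i}}\frac{(m\!+\!|\c_v|)!}{m!\,|\c_v|!}\binom{|\c_v|}{\c_v}\prod_{r=1}^{\i}\Big(\frac{1}{2^r(r\!+\!1)}\Big)^{\!c_{v;r}}.$$

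Next I would collect the terms of $S(m)$ according to $k\!=\!|\c_v|$. For fixed $k$, the multinomial theorem collapses $\sum_{|\c_v|=k}\binom{k}{\c_v}\prod_r x_r^{c_{v;r}}$ to $X^k$, where $x_r\!=\!(2^r(r\!+\!1))^{-1}$ and $X\!=\!\sum_{r\ge1}x_r$; since every term is positive and, as I will check, $X\!<\!1$, both the multinomial expansion (with infinitely many variables) and the rearrangement are legitimate. This leaves $S(m)=\sum_{k\ge0}\binom{m+k}{k}X^k$, which by the third identity in Lemma~\ref{comb_l0} (applied with $\Psi\!=\!-X$) equals $(1\!-\!X)^{-(m+1)}$. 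To pin down $X$ I would plug $t\!=\!1/2$ into $\sum_{r\ge1}t^r/(r\!+\!1)=t^{-1}\big(-\ln(1\!-\!t)-t\big)$ (which follows from $-\ln(1\!-\!t)=\sum_{r\ge1}t^r/r$), obtaining $X=2\ln2-1$ and hence $1\!-\!X=2(1\!-\!\ln2)$. Therefore $S(m_v)=\big(2(1\!-\!\ln2)\big)^{-(m_v+1)}$, and multiplying over $v$ the left-hand side of the lemma equals $\big(2(1\!-\!\ln2)\big)^{-\sum_{v\in\Ver}(m_v+1)}$.

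It then remains to recognize the exponent as $3g\!+\!N\!-\!2\!-\!g_{\Ga}$. Here I would use that $\Ga\!\in\!\cA_{g,N}$ has arithmetic genus $\fa(\Ga)\!=\!g$ and $S\!=\![N]$: the second identity in~\e_ref{mvsum_e} gives $\sum_{v\in\Ver}m_v(\Ga)=3(g\!-\!1)+N-|\Edg|$, and~\e_ref{gGAcond_e} gives $|\Edg|=g_{\Ga}-1+|\Ver|$, so $\sum_{v\in\Ver}(m_v+1)=3(g\!-\!1)+N-(g_{\Ga}-1+|\Ver|)+|\Ver|=3g+N-2-g_{\Ga}$, as needed.

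Every step is routine; the only points that merit care are (i) the convergence/interchange issues, all dispatched by positivity of the summands together with the bound $0\!<\!X\!=\!2\ln2\!-\!1\!<\!1$, and (ii) the bookkeeping identity $\sum_v m_v(\Ga)+|\Ver|=3g+N-2-g_{\Ga}$, which is the only place where the structure of $\Ga$ (via~\e_ref{mvsum_e} and~\e_ref{gGAcond_e}) enters. I do not expect a genuine obstacle.
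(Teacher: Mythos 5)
Your proof is correct and follows essentially the same route as the paper's: factor the sum over vertices, collapse the inner sum over $\c_v$ via the multinomial theorem to a geometric-type series in $X=\sum_{r\ge1}2^{-r}/(r+1)=2\ln2-1$, apply the binomial identity to get $(1-X)^{-(m_v+1)}$, and assemble the exponent from~\e_ref{mvsum_e} and~\e_ref{gGAcond_e}. The only cosmetic difference is that you cite the third identity of Lemma~\ref{comb_l0} explicitly where the paper just says ``Binomial Theorem''.
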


\begin{proof} For each $v\!\in\!\Ver$, let $m_v\!=\!3(g_v\!-\!1)\!+\!|\ov\Fl_v(\Ga)|$
as before.
Since $\Ga$ is trivalent, $m_v\!\ge\!0$.
Since \hbox{$|2\ln2\!-\!1|\!<\!1$}, the Binomial Theorem gives
\begin{equation*}\begin{split}
&\sum_{\c_v\in(\Z^{\ge0})^{\i}}\!\!\!\!\!\!\!\!
2^{-\|\c_v\|}\!\frac{(m_v\!+\!|\c_v|)!}{m_v!|\c_v|!}\binom{|\c_v|}{\c_v}
\!\!\prod_{r=1}^{\i}\!\!\bigg(\frac{1}{r\!+\!1}\bigg)^{\!c_{v;r}}
=\sum_{c=0}^{\i}\!\frac{(m_v\!+\!c)!}{m_v!c!}
\bigg(\sum_{r=1}^{\i}\frac{2^{-r}}{r\!+\!1}\bigg)^{\!\!c}\\
&\hspace{2in}
=\sum_{c=0}^{\i}\!\frac{(m_v\!+\!c)!}{m_v!c!}\big(2\ln2\!-\!1\big)^{\!c}
=\big(2(1\!-\!\ln2)\big)^{-(m_v+1)}\,.
\end{split}\end{equation*}
Combining this with the second equality in~\e_ref{mvsum_e}
and with the first equality in~\e_ref{gGAcond_e}, we obtain the claim.
\end{proof}

\subsection{Proof of Proposition~\ref{HodgeInt_prp}}
\label{HodgeInt_subs}

\noindent
Let $m\!\in\!\Z^{\ge0}$. 
For a tuple $\b\!\equiv\!(b_k)_{k\in[m]}$ in $(\Z^{\ge0})^m$, we define
$$\binom{|\b|}{\b}=\binom{|\b|}{b_1,\ldots,b_m}
\equiv \frac{|\b|!}{b_1!\ldots\!b_m!}\,.$$
We extend this definition to arbitrary tuples~$\b$ in $\Z^m$ by setting
$$\binom{|\b|}{\b}=0 \qquad\hbox{if}\quad b_k\!<\!0~\hbox{for some}~k\!\in\![m].$$
For  $g,m\!\in\!\Z^{\ge0}$ with $2g\!+\!m\!\ge\!3$ and $I\!\in\!(\Z^{\ge0})^g$, 
%let $$\mu_{g,m}(I)=\begin{cases}\mu_g(I)\!+\!m,&\hbox{if}~(g,I)\!\neq\!(1,(0));\\
%m\!-\!1,&\hbox{if}~(g,I)\!=\!(1,(0)).\end{cases}$$
we extend the definition in~\e_ref{DMdfn_e2} to arbitrary tuples~$\b$
in $(\Z^{\ge0})^m$ by setting
$$\bllrr{\la_{g;I};\tau_{\b}},\bllrr{\la_{g;I};\wt\tau_{\b}}= 0 
\qquad\hbox{if}\quad b_k\!<\!0~\hbox{for some}~k\!\in\![m].$$

\begin{lmm}\label{DM_lmm}
Let $g\!\in\!\Z^{\ge0}$ and $I\!\in\!(\Z^{\ge0})^g$.
There exists a collection
\BE{DMlmm_e0}
C_{I;\un\ep}^{(g)}\in\Q \qquad\hbox{with}\quad
\un\ep\in(\Z^{\ge0})^m,~m\!\in\!\Z^{\ge0},~2g\!+\!m\!\ge\!3,
\EE
which is invariant under the permutations of the components of~$\un\ep$ 
such~that
\BE{DMlmm_e}  \bllrr{\la_{g;I};\tau_{\b}} =
\sum_{\begin{subarray}{c}\un\ep\in (\Z^{\ge0})^m\\
|\un\ep|\le \mu_g(I)+m\end{subarray}} \hspace{-.22in}
C_{I;\un\ep}^{(g)} \! \binom{|\b|\!-\!|\un\ep|}{\b\!-\!\un\ep}\,\EE
for all $m\!\in\!\Z^{\ge0}$ and $\b\!\in\!\Z^m$  
 with $2g\!+\!m\!\ge\!3$ and $|\b|\!\ge\!\mu_g(I)\!+\!m$,
\BE{DMlmm_e1}
C_{I;\un\ep0}^{(g)} =C_{I;\un\ep}^{(g)}\,, \qquad
C_{I;\un\ep1}^{(g)} =\big(|\un\ep|\!+\!\|I\|\!-\!g\big)C_{I;\un\ep}^{(g)}
~~\hbox{if}~~(g,I)\!\neq\!(1,(0))~\hbox{or}~|\un\ep|\!>\!1\,,\EE
and $C_{I;\un\ep}^{(g)}\!=\!0$ if $|\un\ep|_S\!>\!\mu_g(I)\!+\!|S|$
for some subset $S\!\subset\![m]$ with $2g\!+\!|S|\!\ge\!3$.
If $(g,I)\!=\!(1,(0))$, then the numbers in~\e_ref{DMlmm_e0} can be chosen so 
that in addition
\BE{DMlmm_e1g1} C_{(0);(0)}^{(1)}=0, \qquad
C_{(0);(1,1)}^{(1)}=-C_{(0);(1)}^{(1)}\,.\EE
\end{lmm}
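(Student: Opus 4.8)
\emph{Proof strategy.}
The plan is to prove Lemma~\ref{DM_lmm} by induction on~$m$, using only the string and dilaton equations on the Deligne--Mumford spaces~$\ov\cM_{g,m}$ together with the elementary binomial identities of Lemma~\ref{comb_l0}. Since the Hodge bundle~$\bE_g$ is pulled back under the morphisms forgetting a marked point, the classes~$\la_{g;I}$ are inert in these equations; thus, whenever $2g\!+\!m\!\ge\!3$, forgetting one of the ``extra'' marked points that carries~$\psi^0$ in the integral~\eqref{DMdfn_e} preserves stability, and the string equation yields
\begin{equation*}
\llrr{\la_{g;I};\tau_{\b}}=\sum_{k=1}^m\llrr{\la_{g;I};\tau_{\b-e_k}} \qquad\text{whenever}~|\b|>\mu_g(I)+m,
\end{equation*}
while the dilaton equation yields $\llrr{\la_{g;I};\tau_{\b,1}}=(|\b|+\|I\|-g+1)\llrr{\la_{g;I};\tau_{\b}}$ (the factor being $2g\!-\!2$ plus the number of remaining points). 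Iterating the string relation to remove all $|\b|\!-\!\mu_g(I)\!-\!m$ extra points expresses $\llrr{\la_{g;I};\tau_{\b}}$ as a multinomial-weighted sum of the dimensionally constrained Hodge integrals $\int_{\ov\cM_{g,m}}\la_{g;I}\prod_k\psi_k^{a_k}$ with $|\a|\!=\!\mu_g(I)\!+\!m$, the weight of each being exactly $\binom{|\b|-|\a|}{\b-\a}$; this is already~\eqref{DMlmm_e} for the (non-canonical) choice of $C^{(g)}_{I;\un\ep}$ supported on tuples with $|\un\ep|\!=\!\mu_g(I)\!+\!m$.

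To produce a collection that \emph{also} satisfies~\eqref{DMlmm_e1} and the subset vanishing, I would exploit the identity $\binom{|\b|-|\un\ep|}{\b-\un\ep}=\sum_{k=1}^m\binom{|\b|-|\un\ep+e_k|}{\b-(\un\ep+e_k)}$ (a restatement of $\binom{B}{\a}=\sum_k\binom{B-1}{\a-e_k}$): it shows that coefficients supported on levels $|\un\ep|\!<\!\mu_g(I)\!+\!m$ contribute only ``redistributable'' shifts to the right side of~\eqref{DMlmm_e}, so those coefficients are free. I would then fix the collection by induction on~$m$: at the bottom level $m_0\!=\!\max(0,3\!-\!2g)$ take the choice above; passing from~$m$ to $m\!+\!1$, declare $C^{(g)}_{I;(\un\ep,0)}\!:=\!C^{(g)}_{I;\un\ep}$ and $C^{(g)}_{I;(\un\ep,1)}\!:=\!(|\un\ep|\!+\!\|I\|\!-\!g)C^{(g)}_{I;\un\ep}$ (with the $(1,(0))$ exception below), extend to every tuple having an entry in $\{0,1\}$ by permutation, set $C^{(g)}_{I;\un\ep}\!=\!0$ for $|\un\ep|\!>\!\mu_g(I)\!+\!(m\!+\!1)$, and determine the remaining tuples---those with all entries $\ge2$ and $|\un\ep|\!=\!\mu_g(I)\!+\!(m\!+\!1)$---by forcing~\eqref{DMlmm_e} on the minimal level set $|\b|\!=\!\mu_g(I)\!+\!(m\!+\!1)$, where the top-level binomials become Kronecker deltas and the resulting system is triangular.

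The bulk of the work is then verifying that this recursively defined family is permutation invariant and reproduces~\eqref{DMlmm_e} for \emph{all}~$\b$, not just on the minimal level set. I expect each check to collapse to string, dilaton, or Lemma~\ref{comb_l0}: the append-$0$ and append-$1$ operations commute and compose symmetrically (e.g.\ $C^{(g)}_{I;(\un\ep,0,1)}\!=\!(|\un\ep|\!+\!\|I\|\!-\!g)C^{(g)}_{I;\un\ep}\!=\!C^{(g)}_{I;(\un\ep,1,0)}$), so permutation invariance is stable; \eqref{DMlmm_e} with last entry~$0$ reduces, via $\llrr{\la_{g;I};\tau_{\b,0}}\!=\!\llrr{\la_{g;I};\tau_{\b}}$, to $C^{(g)}_{I;(\un\ep,0)}\!=\!C^{(g)}_{I;\un\ep}$; with last entry~$1$ it reduces, via the dilaton equation and the first two identities of Lemma~\ref{comb_l0}, to $C^{(g)}_{I;(\un\ep,1)}\!=\!(|\un\ep|\!+\!\|I\|\!-\!g)C^{(g)}_{I;\un\ep}$; with last entry $\ge2$ it follows by a secondary induction on~$|\b|$ using the string relation, the base being precisely the defining equations of the all-$\ge2$ coefficients. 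For the subset vanishing, setting $b_k\!=\!0$ for $k\!\notin\!S$ in~\eqref{DMlmm_e} isolates the coefficients with $\ep_k\!=\!0$ for $k\!\notin\!S$ and, after the string reduction down to~$\ov\cM_{g,S}$ and a M\"obius-type binomial inversion, expresses them through integrals $\int_{\ov\cM_{g,S}}\la_{g;I}\prod_{k\in S}\psi_k^{a_k}$ that vanish once the $\psi$-degree exceeds $\mu_g(I)\!+\!|S|$; tracing this through the recursion forces $C^{(g)}_{I;\un\ep}\!=\!0$ whenever $|\un\ep|_S\!>\!\mu_g(I)\!+\!|S|$ for some admissible~$S$, the degenerate genus-$1$ cases (where $2g\!+\!|S|\!=\!2$ can intrude) being absorbed by Mumford's relation $\la_1^2\!=\!0$, which kills every~$C^{(1)}_{(i_1);\bullet}$ with $i_1\!\ge\!2$.

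Finally, the case $(g,I)\!=\!(1,(0))$ needs separate handling, since there $\la_{1;(0)}\!=\!1$, so $\llrr{1;\tau_\b}\!=\!\int_{\ov\cM_{1,|\b|}}\prod_k\psi_k^{b_k}$, and the dilaton reduction would land on the unstable space~$\ov\cM_{1,0}$. I would instead run the induction for this pair from the single seed $\int_{\ov\cM_{1,1}}\psi_1\!=\!\tfrac1{24}$, reducing all other genus-$1$ pure $\psi$-integrals by string and the $\ov\cM_{1,1}$-dilaton; this forces $C^{(1)}_{(0);(0)}\!=\!0$, and one further use of the dilaton identity (legitimate once two marked points remain) yields $C^{(1)}_{(0);(1,1)}\!=\!-C^{(1)}_{(0);(1)}$, i.e.~\eqref{DMlmm_e1g1}, which then feeds the append rules for longer tuples under the stated caveat $|\un\ep|\!>\!1$. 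The main obstacle I anticipate is exactly this simultaneous bookkeeping---keeping the recursively built coefficients consistent with~\eqref{DMlmm_e}, symmetric, and compatible with the subset vanishing---together with correctly absorbing the unstable-$\ov\cM_{1,0}$ boundary of the $(1,(0))$ case.
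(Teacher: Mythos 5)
Your proposal is correct and follows essentially the same route as the paper's own proof: reduce \e_ref{DMlmm_e} via the string and dilaton equations to the minimal level $|\b|\!=\!\mu_g(I)\!+\!m$ (and then to tuples with all $b_k\!\ge\!2$ or the low-$m$ base cases), build the collection by induction on~$m$ through the append-$0$/append-$1$ rules dictated by~\e_ref{DMlmm_e1} together with permutation symmetry, determine the remaining all-entries-$\ge\!2$ top-level coefficients from the resulting diagonal/triangular system at the minimal level, and treat $(g,I)\!=\!(1,(0))$ separately starting from $\int_{\ov\cM_{1,1}}\!\psi_1\!=\!\tfrac1{24}$, exactly as the paper does. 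The one point you should make explicit is the assignment of the all-entries-$\ge\!2$ tuples with $|\un\ep|\!<\!\mu_g(I)\!+\!m$ (the paper sets them to zero in its analogue of your recursion), since otherwise your minimal-level system is not yet well-posed; your own redistribution remark already licenses that choice, so this is a presentational fix rather than a gap.
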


\begin{proof} We set the numbers in~\e_ref{DMlmm_e0} to be 0 whenever 
$|\un\ep|_S\!>\!\mu_g(I)\!+\!|S|$
for some subset $S\!\subset\![m]$ with $2g\!+\!|S|\!\ge\!3$.
Thus,
\begin{gather*}
C_{I;\un\ep}^{(g)}=0~~\hbox{if}~\|I\|\!>\!\max(g,3g\!-\!3), \\
C_{();\un\ve}^{(0)},C_{(1);\un\ve}^{(1)}=0~~\hbox{if}~\un\ve\!\in\!\Z^m\!-\!\{0^m\},
\quad
C_{(0);\un\ve}^{(1)}=0~~\hbox{if}~\un\ve\!\in\Z^m\!-\!\!\{0,1\}^m.
\end{gather*}
With $\0_m\!\in\!\Z^m$ denoting the zero vector, we also define
\BE{DMlmm_e1a}\begin{aligned}
C_{();\0_m}^{(0)}&=C_{();\0_3}^{(0)} &&\!\!\!\forall\,m\!>\!3,\\
C_{(1);\0_m}^{(1)}&=C_{(1);\0_1}^{(1)} &&\!\!\!\forall\,m\!>\!1,
\end{aligned}
\quad
C_{(0);\un\ve}^{(1)}=\begin{cases}
0,&\hbox{if}~\un\ve\!=\!\0_m;\\
-(|\un\ve|\!-\!2)!C_{(0);(1)}^{(1)},&\hbox{if}~\un\ve\!\in\!\{0,1\}^m,\,|\ve|\!\ge\!2.
\end{cases}\EE
The coefficients defined in this way are invariant under the permutations of 
the components of~$\un\ep$ and satisfy~\e_ref{DMlmm_e1},
the vanishing condition after~\e_ref{DMlmm_e1}, and
\e_ref{DMlmm_e1g1} if $(g,I)\!=\!(1,(0))$.\\

\noindent
Let $g\!\ge\!2$.
We~set 
\BE{DMlmm_e2a}C_{I;\un\ep}^{(g)}=0  \qquad\hbox{if}\quad
|\un\ep|\!<\!\mu_g(I)\!+\!m~~\hbox{and}~~ \ve_k\!\ge\!2~\forall\,k\!\in\![m].\EE
Suppose $m\!\in\!\Z^+$ and
the numbers $C_{I;\un\ep}^{(g)}\!\in\!\Q$ with $\un\ep\in(\Z^{\ge0})^{m-1}$
are invariant  under the permutations of the components of~$\un\ep$.
If $m\!\ge\!2$, assume in addition~that
\BE{DMlmm_e1b} C_{I;\un\ep1}^{(g)}=\big(|\un\ep|\!+\!\|I\|\!-\!g\big)C_{I;\un\ep0}^{(g)}
\qquad\forall~\un\ep\in(\Z^{\ge0})^{m-2}\,.\EE
The conditions~\e_ref{DMlmm_e1} and the symmetry requirement then determine the numbers 
\BE{DMlmm_e1d}C_{I;\un\ep}^{(g)}\in\Q \qquad\hbox{with}\quad 
\un\ep\!\equiv\!(\ep_1,\ldots,\ep_m)\in(\Z^{\ge0})^m~~\hbox{s.t.}~~
\ve_k\!<\!2~\hbox{for some}~k\!\in\![m].\EE
If $m\!\ge\!2$, the condition~\e_ref{DMlmm_e1b} holds if 
the numbers $C_{I;\un\ep}^{(g)}\!\in\!\Q$ with $\un\ep\in(\Z^{\ge0})^{m-1}$
satisfy~\e_ref{DMlmm_e1}.
Thus, the numbers~\e_ref{DMlmm_e0} invariant under the permutations of 
the components of~$\un\ep$ and satisfying~\e_ref{DMlmm_e1},
the vanishing condition after~\e_ref{DMlmm_e1}, and 
the additional condition~\e_ref{DMlmm_e2a} are determined by the numbers
\BE{DMlmm_e1e}
C_{I;\un\ep}^{(g)}\in\Q \quad\hbox{with}~~
\un\ep\!\equiv\!(\ep_1,\ldots,\ep_m)\in(\Z^{\ge0})^m,\,m\!\in\!\Z^{\ge0},~~
\ve_k\!\ge\!2~\forall\,k\!\in\![m],~|\un\ep|\!=\!\mu_g(I)\!+\!m\EE
invariant under the permutations of the components of~$\un\ep$.\\

\noindent
Let $g\!\in\!\Z^{\ge0}$ be arbitrary.
For $m\!\in\!\Z^{\ge0}$ and $i\!\in\![m]$, denote by $e_i\!\in\!\Z^m$
the $i$-th standard coordinate vector.
For all \hbox{$\b\!\in\!\Z^m\!-\!\{0\}$}, 
\BE{binomred_e} \binom{|\b|}{\b}=\sum_{i=1}^{i=m}\!
\binom{|\b|\!-\!1}{\b\!-\!e_i}\,.\EE
For $m\!\in\!\Z^{\ge0}$ and $\b\!\in\!(\Z^{\ge0})^m$  with $2g\!+\!m\!\ge\!3$ and
$|\b|\!=\!\mu_g(I)\!+\!m$, let 
$$\llrr{\b}_{g;I}=\sum_{\begin{subarray}{c}\un\ep\in (\Z^{\ge0})^m\\
|\un\ep|\le \mu_g(I)+m\end{subarray}} \hspace{-.2in}
C_{I;\un\ep}^{(g)} \! \binom{|\b|\!-\!|\un\ep|}{\b\!-\!\un\ep}\,.$$
By the first relation in~\e_ref{DMlmm_e1},
the vanishing condition after~\e_ref{DMlmm_e1}, and~\e_ref{binomred_e}, 
\BE{DMlmm_e2c}\llrr{\b0}_{g;I}=
\sum_{i=1}^{i=m}\bllrr{\b\!-\!e_i}_{g;I}\EE
for all $m\!\in\!\Z^{\ge0}$ and $\b\!\in\!(\Z^{\ge0})^m$ with $2g\!+\!m\!\ge\!3$ and
$|\b|\!=\!\mu_g(I)\!+\!m\!+\!1$.
By~\e_ref{DMlmm_e1} and the vanishing condition after~\e_ref{DMlmm_e1},
\BE{DMlmm_e2d}\llrr{\b1}_{g;I}=\big(|\b|\!+\!1\!-\!g\!+\|I\|\big)\llrr{\b}_{g;I}\EE
for all $m\!\in\!\Z^{\ge0}$ and $\b\!\in\!(\Z^{\ge0})^m$  with $2g\!+\!m\!\ge\!3$ and
$|\b|\!=\!\mu_g(I)\!+\!m$;
if $(g,I)\!=\!(1,(0))$, we also need to use~\e_ref{DMlmm_e1g1} and~\e_ref{binomred_e}
to obtain~\e_ref{DMlmm_e2d}.\\

\noindent
Let $m\!\in\!\Z^{\ge0}$ with $2g\!+\!m\!\ge\!3$.
By the string and dilaton equations \cite[Section~26.3]{MirSym},
\BE{StrDil_e}\begin{split}
\bllrr{\la_{g;I};\tau_{\b0}}&=
\sum_{i=1}^{i=m}\bllrr{\la_{g;I};\tau_{\b-e_i}}
\qquad\hbox{and}\\
\bllrr{\la_{g;I};\tau_{\b1}}&=\big(|\b|\!+\!1\!-\!g\!+\|I\|\big)\bllrr{\la_{g;I};\tau_{\b}},
\end{split}\EE
respectively.
By the first case in~\e_ref{DMdfn_e} and the first identity in~\e_ref{StrDil_e},
$$ \bllrr{\la_{g;I};\tau_{\b}}=\bllrr{\la_{g;I};\tau_{\b0}}
=\sum_{i=1}^{i=m}\!\bllrr{\la_{g;I};\tau_{\b-e_i}}
\quad\hbox{if}~~  |\b|\!>\!\mu_g(I)\!+\!m\,.$$
Along with~\e_ref{DMlmm_e2c}, this implies that 
\e_ref{DMlmm_e} for all $\b\!\in\!(\Z^{\ge0})^m$  
and $|\b|\!\ge\!\mu_g(I)\!+\!m$ is equivalent~to
\BE{DMlmm_e2} \bllrr{\la_{g;I};\tau_{\b}} = \llrr{\b}_{g;I}
\quad\forall~\b\!\in\!(\Z^{\ge0})^m~\hbox{s.t.}~|\b|\!=\!\mu_g(I)\!+\!m.\EE
By the symmetry of the numbers on the two sides of~\e_ref{DMlmm_e2} 
and \e_ref{DMlmm_e2c}-\e_ref{StrDil_e},
\e_ref{DMlmm_e2} for all $m\!\in\!\Z^{\ge0}$ with $2g\!+\!m\ge\!3$
is in turn equivalent
to~\e_ref{DMlmm_e2} for all $m\!\in\!\Z^{\ge0}$ and $\b\!\in\!(\Z^{\ge0})^m$ 
with  either \hbox{$2g\!+\!m\!=\!3$} or $2g\!+\!m\!>\!3$ and $b_k\!\ge\!2$ for all $k\!\in\![m]$.\\

\noindent
For $g\!=\!0,1$, \e_ref{DMlmm_e} thus reduces to 
\BE{g01base_e}\int_{\ov\cM_{0,3}}\!\!\!\!\!\!1=C^{(0)}_{();\0_3},\qquad
\int_{\ov\cM_{1,1}}\!\!\!\!\!\!\la_1=C^{(1)}_{(1);\0_1},\qquad
\int_{\ov\cM_{1,1}}\!\!\!\!\!\!\psi_1=C^{(1)}_{(0);(1)}\,.\EE
For $g\!\ge\!2$, \e_ref{DMlmm_e} reduces to 
\BE{DMlmm_e3}
\int_{\ov\cM_{g,m}}\!\!\!\!\!\!\la_{g;I}\!\!\prod_{k=1}^m\!\!\psi_k^{b_k}
=\sum_{\begin{subarray}{c}\un\ep\in (\Z^{\ge0})^m\\
|\un\ep|\le \mu_g(I)+m\end{subarray}} \hspace{-.2in}
C_{I;\un\ep}^{(g)} \! \binom{|\b|\!-\!|\un\ep|}{\b\!-\!\un\ep}\EE
for all $m\!\in\!\Z^{\ge0}$ and $\b\!\equiv\!(b_k)_{k\in[m]}\!\in\!(\Z^{\ge0})^m$ 
with $|\b|\!=\!\mu_g(I)\!+\!m$ and $b_k\!\ge\!2$ for all $k\!\in\![m]$.
We take $C^{(g)}_{I;()}$ to be the $m\!=\!0$ number on the left-hand side of~\e_ref{DMlmm_e3};
it vanishes unless $\mu_g(I)\!=\!0$.\\

\noindent
Suppose $g\!\ge\!2$, $\mu_g(I)\!\ge\!0$, $m^*\!\!\in\!\Z^+$, 
and we have constructed the numbers~\e_ref{DMlmm_e0} 
with the required properties for all~$m\!<\!m^*$.
The only numbers~\e_ref{DMlmm_e0} for $m\!=\!m^*$ that remain to be determined
are the numbers~$C_{I;\un\ep}^{(g)}$ in~\e_ref{DMlmm_e1e} with $m\!=\!m^*$.
The only one of these numbers appearing in the equation~\e_ref{DMlmm_e3} 
corresponding to $\b\!\in\!(\Z^{\ge0})^{m^*}$ 
with $|\b|\!=\!\mu_g(I)\!+\!m^*$
with a nonzero coefficient is the number indexed by $\un\ep\!=\!\b$.
Thus, the equations~\e_ref{DMlmm_e3} 
with $|\b|\!=\!\mu_g(I)\!+\!m^*$ determine the numbers~$C_{I;\un\ep}^{(g)}$ 
in~\e_ref{DMlmm_e1e}  with $m\!=\!m^*$.
By the invariance of the sides two of~\e_ref{DMlmm_e3} under the permutations 
of the components of~$\b$, the numbers~$C_{I;\un\ep}^{(g)}$ in~\e_ref{DMlmm_e1e} determined
by these equations are invariant 
 under the permutations of the components of~$\un\ep$.
This establishes the existence of the numbers~$C_{I;\un\ep}^{(g)}$ 
in~\e_ref{DMlmm_e0} satisfying all 
requirement of the lemma.
\end{proof}

\begin{lmm}\label{DM_lmm2}
Let $g\!\in\!\Z^{\ge0}$ and $I\!\in\!(\Z^{\ge0})^g$.
With $C_{I;\un\ep}^{(g)}$ as in Lemma~\ref{DM_lmm},
\begin{equation*}\begin{split}
A_{I,\b;\c}^{(g)}&=~~(-1)^{\|\c\|}\hspace{-.5in}
\sum_{\begin{subarray}{c}
(\un\ep,\un\ep')\in (\Z^{\ge0})^m\times (\Z^{\ge0})^{S(\c)}\\
|\un\ep|+|\un\ep'|\le\mu_g(I)+m+|\c|\end{subarray}} \!\!\!
\Bigg(C_{I;\un\ep\un\ep'}^{(g)}\,
\big(\mu_g(I)\!+\!m\!+\!|\c|\!-\!(|\un\ep|\!+\!|\un\ep'|)\big)!
\!\!\prod_{(r,j)\in S(\c)}\!\!\!\!\!\!\!\ep_{r,j}'!\binom{r}{\ep_{r,j}'}\\
&\hspace{2.3in}
\times
\binom{|\b|\!-\!|\un\ep|}{\mu_g(I)\!+\!m\!+\!|\c|\!-\!\|\c\|\!-\!|\un\ep|}
\prod_{k=1}^m\!\ep_k!\binom{b_k}{\ep_k}\Bigg)\,.
\end{split}\end{equation*}
for all $m\!\in\!\Z^{\ge0}$ with $2g\!+\!m\!\ge\!3$,
$\b\!\equiv\!(b_k)_{k\in[m]}\in\!(\Z^{\ge0})^m$, and
$\c\!\in\!(\Z^{\ge0})^{\i}$.
\end{lmm}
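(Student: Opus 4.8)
\textbf{Proof plan for Lemma~\ref{DM_lmm2}.}
The plan is to substitute the closed form~\e_ref{DMlmm_e} for $\bllrr{\la_{g;I};\tau_{\b}}$, provided by Lemma~\ref{DM_lmm}, into the definition of $A_{I,\b;\c}^{(g)}$ and then reorganize the resulting multiple sum. Recall from Section~\ref{HodgeInt_subs0} that
$$A_{I,\b;\c}^{(g)}=
\sum_{\b'\in(\Z^{\ge0})^{S(\c)}}\!\!\!\Bigg(\!\! (-1)^{|\b'|}
\frac{\llrr{\la_{g;I};\wt\tau_{\b\b'}}}
{(|\b|\!+\!|\b'|\!-\!\mu_g(I)\!-\!m\!-\!|\c|)!}
\prod_{(r,j)\in S(\c)}\!\!\binom{r}{b_{r,j}'}\!\!\Bigg).$$
First I would use~\e_ref{DMdfn_e2} to write $\llrr{\la_{g;I};\wt\tau_{\b\b'}}=\bigl(\prod_k b_k!\bigr)\bigl(\prod_{(r,j)}b_{r,j}'!\bigr)\bllrr{\la_{g;I};\tau_{\b\b'}}$, and then apply~\e_ref{DMlmm_e} to the $(m\!+\!|\c|)$-tuple $\b\b'$: the Hodge integral becomes $\sum_{(\un\ep,\un\ep')} C_{I;\un\ep\un\ep'}^{(g)}\binom{|\b|+|\b'|-|\un\ep|-|\un\ep'|}{\b-\un\ep,\b'-\un\ep'}$, where $\un\ep\in(\Z^{\ge0})^m$ and $\un\ep'\in(\Z^{\ge0})^{S(\c)}$ range over tuples with $|\un\ep|+|\un\ep'|\le\mu_g(I)+m+|\c|$ (valid since the factorial argument $|\b|+|\b'|-\mu_g(I)-m-|\c|$ is nonnegative on every nonzero summand). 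Then I would interchange the order of summation, fixing $\un\ep,\un\ep'$ on the outside and summing over $\b'$ on the inside.

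The key computation is the inner sum over $\b'\!\equiv\!(b_{r,j}')_{(r,j)\in S(\c)}$. After pulling out the $\un\ep,\un\ep'$-dependent factors, what remains is
$$\sum_{\b'}(-1)^{|\b'|}\frac{1}{(|\b|+|\b'|-\mu_g(I)-m-|\c|)!}
\prod_{(r,j)}\!\binom{r}{b_{r,j}'}b_{r,j}'!\binom{b_{r,j}'-\ep_{r,j}'+\text{(shift)}}{\cdots},$$
which I expect to collapse via the binomial identities of Lemma~\ref{comb_l0}. Specifically, the multinomial coefficient $\binom{|\b|+|\b'|-|\un\ep|-|\un\ep'|}{\b-\un\ep,\b'-\un\ep'}$ factors as $\binom{|\b|+|\b'|-|\un\ep|-|\un\ep'|}{|\b|-|\un\ep|}\binom{|\b|-|\un\ep|}{\b-\un\ep}\binom{|\b'|-|\un\ep'|}{\b'-\un\ep'}$; the first factor combines with the $(\cdots)!$ denominator and the sign to produce, after summing $|\b'|$ against the $\prod\binom{r}{b_{r,j}'}$ data using the second identity in Lemma~\ref{comb_l0} (the alternating Vandermonde-type identity $\sum_b(-1)^b\binom{p}{b}\binom{B+b}{s}=(-1)^p\binom{B}{s-p}$), exactly the sign $(-1)^{\|\c\|}$ together with the binomial $\binom{|\b|-|\un\ep|}{\mu_g(I)+m+|\c|-\|\c\|-|\un\ep|}$ and the factorial $(\mu_g(I)+m+|\c|-|\un\ep|-|\un\ep'|)!$ that appear in the target formula; the residual $\b'$-sum over $\b'-\un\ep'$ against $\binom{|\b'|-|\un\ep'|}{\b'-\un\ep'}$ and $\prod\binom{r}{b_{r,j}'}$ should reduce, using the first identity in Lemma~\ref{comb_l0}, to $\prod_{(r,j)}\binom{r}{\ep_{r,j}'}$ times the factor $\ep_{r,j}'!$ bookkeeping, matching the stated right-hand side. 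The factors $\prod_k\ep_k!\binom{b_k}{\ep_k}=\prod_k b_k!/(b_k-\ep_k)!$ come straight from combining $\prod_k b_k!$ with $\binom{|\b|-|\un\ep|}{\b-\un\ep}$ and then noting $\binom{|\b|-|\un\ep|}{\b-\un\ep}$ is absorbed in the way just described.

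The main obstacle I anticipate is bookkeeping the $|\c|$ versus $\|\c\|$ discrepancy correctly: the set $S(\c)$ has $|S(\c)|\!=\!|\c|$ elements but the exponent $\|\c\|\!=\!\sum_r r\,c_r$ enters through the $\binom{r}{b_{r,j}'}$ weights, so the alternating-sum identity must be applied once for each pair $(r,j)\in S(\c)$, each contributing a factor $(-1)^r$ and shrinking one binomial argument by $r$, for a net $(-1)^{\sum r}=(-1)^{\|\c\|}$ and a net argument shift of $\|\c\|$. Getting every index range to line up so that the combinatorial identities apply with nonnegative arguments — and checking that the boundary terms where some $b_k-\ep_k<0$ or $b_{r,j}'-\ep_{r,j}'<0$ vanish by the conventions established before Lemma~\ref{DM_lmm} and just before this proof — is the delicate part, but it is purely mechanical once the sum is set up. I would carry it out by first treating the case $\c\!=\!\0$ (where the statement is essentially~\e_ref{AgIb0_e2} combined with~\e_ref{DMlmm_e}) to fix notation, then a single standard coordinate vector $\c\!=\!e_r$ to see the sign and shift mechanism, and finally the general case by iterating the one-variable identity over the coordinates of~$\c$.
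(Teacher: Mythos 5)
Your plan is essentially the paper's argument. The paper's proof substitutes the closed form~\e_ref{DMlmm_e} into the defining sum for $A_{I,\b;\c}^{(g)}$ (its intermediate expression~\e_ref{DM2_e1} is exactly your reorganized multiple sum with the multinomial expanded as $\prod_k\ep_k!\binom{b_k}{\ep_k}\prod_{(r,j)}\frac{r!}{(r-\ep'_{r,j})!}\binom{r-\ep'_{r,j}}{b'_{r,j}-\ep'_{r,j}}$), then first collapses the inner $\b'$-sum at fixed $|\b'|$ via the first identity in Lemma~\ref{comb_l0} to get a single binomial $\binom{\|\c\|-|\un\ep'|}{\,\cdot\,}$, and finally applies the alternating identity (second identity in Lemma~\ref{comb_l0}) once to the resulting single-variable sum, yielding $(-1)^{\|\c\|}$ and the remaining binomial. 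You describe the two ingredients in the reverse order and gesture at iterating the alternating identity coordinate-by-coordinate, but in fact the first identity must be applied before the second (since the alternating identity operates on a one-variable sum), and then one application suffices; when you carry it out you will find this is forced, and the sign and argument-shift bookkeeping you flag works out exactly as you anticipate. The case $\c=\0$ is, as you say, immediate from~\e_ref{DMlmm_e}.
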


\begin{proof}
By ~\e_ref{DMlmm_e} and the second case in~\e_ref{DMdfn_e}, 
\BE{DM2_e1}\begin{split}
A_{I,\b;\c}^{(g)}&=\!\!\!
\sum_{\begin{subarray}{c}\b'\in(\Z^{\ge0})^{S(\c)}\\
(\un\ep,\un\ep')\in (\Z^{\ge0})^m\times (\Z^{\ge0})^{S(\c)}\\
|\un\ep|+|\un\ep'|\le \mu_g(I)+m+|\c|\le |\b|+|\b'|\end{subarray}}
\hspace{-.5in}\Bigg(
\frac{(-1)^{|\b'|}C_{I;\un\ep\un\ep'}^{(g)}\,(|\b|\!+\!|\b'|\!-\!(|\un\ep|\!+\!|\un\ep'|))!}
{(|\b|\!+\!|\b'|-(\mu_g(I)\!+\!m+\!|\c|))!}\\
&\hspace{1.5in}
\times\prod_{k=1}^m\!\ep_k!\binom{b_k}{\ep_k}
\prod_{(r,j)\in S(\c)}\!\!\!\frac{r!}{(r\!-\!\ep_{r,j}')!}
\binom{r\!-\!\ep_{r,j}'}{b_{r,j}'\!-\!\ep_{r,j}'}\!\!\Bigg).
\end{split}\EE
If $\c\!=\!\0$ and $|\b|\!\ge\!\mu_g(I)\!+\!m$, this expression becomes
\begin{equation*}\begin{split}
A_{I,\b;\0}^{(g)}=\!\!\!
\sum_{\begin{subarray}{c}
\un\ep\in (\Z^{\ge0})^m\\ |\ep|\le \mu_g(I)+m\end{subarray}}
\hspace{-.1in}
\frac{C_{I;\un\ep}^{(g)}\,(|\b|\!-\!|\un\ep|)!}
{(|\b|-(\mu_g(I)\!+\!m))!}
\prod_{k=1}^m\!\ep_k!\binom{b_k}{\ep_k}.
\end{split}\end{equation*}
If $\c\!=\!\0$ and $|\b|\!<\!\mu_g(I)\!+\!m$, the first sum vanishes.
This establishes the claim in both $\c\!=\!\0$ cases.\\

\noindent
Suppose $\c\!\neq\!\0$.
By~\e_ref{DM2_e1} and the first statement in Lemma~\ref{comb_l0},
\begin{equation*}\begin{split}
A_{I,\b;\c}^{(g)}&=\!\!\!
\sum_{\begin{subarray}{c}
(\un\ep,\un\ep')\in (\Z^{\ge0})^m\times (\Z^{\ge0})^{S(\c)}\\
|\un\ep|+|\un\ep'|\le \mu_g(I)+m+|\c|\end{subarray}} 
\hspace{-.07in}\Bigg(\!\!
(-1)^{|\un\ep'|}C_{I;\un\ep\un\ep'}^{(g)}
\big(\mu_g(I)\!+\!m\!+\!|\c|\!-\!(|\un\ep|\!+\!|\un\ep'|)\big)!
\!\!\!\prod_{(r,j)\in S(\c)}\!\!\!\!\!\!\!\!\ep_{r,j}'!\binom{r}{\ep_{r,j}'}\\
&\hspace{1.2in}\times
\prod_{k=1}^m\!\ep_k!\binom{b_k}{\ep_k}
\sum_{b'=0}^{\i}(-1)^{b'}\!
\binom{|\b|\!+\!b'\!-\!|\un\ep|}{\mu_g(I)\!+\!m\!+\!|\c|\!-\!|\ep|\!-\!|\ep'|}\!\!
\binom{\|\c\|\!-\!|\un\ep'|}{b'}\!\!\!\Bigg)\,.
\end{split}\end{equation*}
The claim now follows from the second statement in Lemma~\ref{comb_l0}.
\end{proof}

\begin{proof}[{\bf{\emph{Proof of Proposition~\ref{HodgeInt_prp}}}}]
By Lemma~\ref{DM_lmm2} and the vanishing statement after~\e_ref{DMlmm_e1}, 
\e_ref{DMstr_e} holds~with
\BE{DMcrl2_e1}A^{(g,\un\ep)}_{I;\c}= \prod_{k=1}^m\!\!\ep_k!
\!\!\!\!\!\!
\sum_{\begin{subarray}{c}
\un\ep'\in(\Z^{\ge0})^{S(\c)}\\
|\un\ep|+|\un\ep'|\le \mu_g(I)+m+|\c|\end{subarray}} \hspace{-.45in}
C_{I;\un\ep\un\ep'}^{(g)}\,
\big(\mu_g(I)\!+\!m\!+\!|\c|\!-\!(|\un\ep|\!+\!|\un\ep'|)\big)!
\!\!\prod_{(r,j)\in S(\c)}\!\!\!\!\!\!\!\ep_{r,j}'!\binom{r}{\ep_{r,j}'}\,.\EE
Since the coefficients $C_{I;\un\ep\un\ep'}^{(g)}$ provided by Lemma~\ref{DM_lmm}
are invariant under the permutations of the components of~$\un\ep$,
so are the numbers~\e_ref{DMcrl2_e1}.\\

\noindent
By the vanishing statement in Lemma~\ref{DM_lmm},
$$\sum_{\begin{subarray}{c}1\le i\le m\\ \ep_i\ge2\end{subarray}}
\!\!\!\!\ep_i\le 6g\!+\!3 
\qquad\hbox{if}~\un\ep\!\in\!(\Z^{\ge0})^m,~I\!\in\!(\Z^{\ge0})^g,~
2g\!+\!m\!\ge\!3,~C_{I;\un\ep}^{(g)}\!\neq\!0.$$
Along with~\e_ref{DMlmm_e1}, this implies that there exists 
$C_g\!\in\!\R$ such~that 
\BE{Abnd_e3}\begin{split}
\prod_{i=1}^{i=m}\!\!\ep_i!\,\big|C_{I;\un\ep}^{(g)}\big|\,
\big(3(g\!-\!1)\!+\!m\!-\!\|I\|\!-\!|\un\ep|\big)!\
&\le C_g\big(|\un\ep|\!+\!\|I\|\big)!\big(3(g\!-\!1)\!+\!m\!-\!\|I\|\!-\!|\un\ep|\big)!\\
&\le C_g\,\big(3(g\!-\!1)\!+\!m\big)!
\end{split}\EE
for all $I\!\in\!(\Z^{\ge0})^g$,   
$\un\ep\!\in\!(\Z^{\ge0})^m$, and $m\!\in\!\Z^{\ge0}$ with $2g\!+\!m\!\ge\!3$.
By~\e_ref{DMcrl2_e1}, \e_ref{Abnd_e3}, and the first statement in Lemma~\ref{comb_l0},
\begin{equation*}\begin{split}
\big|A_{I;\c}^{(g,\un\ep)}\big|&\le  C_g\,\big(3(g\!-\!1)\!+\!m\!+\!|\c|\big)!
\!\!\!
\sum_{\begin{subarray}{c}\un\ep'\in(\Z^{\ge0})^{S(\c)}\\
|\un\ep|+|\un\ep'|\le \mu_g(I)+m+|\c|\end{subarray}} 
\hspace{-.2in}\prod_{(r,j)\in S(\c)}\!\!\!\!\binom{r}{\ep_{r,j}'}\\
&\le 
C_g\,\big(3(g\!-\!1)\!+\!m\!+\!|\c|\big)!\!\!
\sum_{\ep'=0}^{\ep'=\|\c\|}\!\binom{\|\c\|}{\ep'}
= C_g\,2^{\|\c\|}
\big(3(g\!-\!1)\!+\!m\!+\!|\c|\big)!\,.
\end{split}\end{equation*}
This establishes~\e_ref{DMstr_e2}.
\end{proof}

\begin{crl}\label{DM_crl3}
Let $g\!\in\!\Z^{\ge0}$ and $I\!\in\!(\Z^{\ge0})^g$. Then,
$$A_{I,\b;(c_1,c_2,\ldots)}^{(g)}
=(-1)^{c_1}
\frac{(2g\!+\!m\!-\!3\!+\!c_1\!+\!c_2\!+\!\ldots)!}{(2g\!+\!m\!-\!3\!+\!c_2\!+\!\ldots)!}
A_{I,\b;(0,c_2,\ldots)}^{(g)}$$
for all $m\!\in\!\Z^{\ge0}$ with $2g\!+\!m\!\ge\!3$,
$\b\!\in\!(\Z^{\ge0})^m$, and
$\c\!\equiv\!(c_r)_{r\in\Z^+}\!\in\!(\Z^{\ge0})^{\i}$.
\end{crl}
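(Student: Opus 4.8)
The plan is to isolate the effect of adjoining a single index at level $r\!=\!1$ and then iterate. Fix $g,m,I,\b$ as in the statement and write $\c=(c_1,c_2,\ldots)$. Let $\c^+$ be obtained from $\c$ by replacing $c_1$ with $c_1\!+\!1$, so that $S(\c^+)=S(\c)\sqcup\{(1,c_1\!+\!1)\}$, $|\c^+|=|\c|\!+\!1$, and $\|\c^+\|=\|\c\|\!+\!1$. The heart of the argument is the one-step identity
\BE{onestep_e}
A_{I,\b;\c^+}^{(g)}=-\big(2g\!-\!2\!+\!m\!+\!|\c|\big)\,A_{I,\b;\c}^{(g)}=-\big(2g\!+\!m\!-\!3\!+\!|\c^+|\big)\,A_{I,\b;\c}^{(g)}\,.
\EE
Granting \e_ref{onestep_e}, Corollary~\ref{DM_crl3} follows by applying it $c_1$ times, starting from $(0,c_2,\ldots)$ and raising the first entry by one at each step: the product of the $c_1$ scalars so obtained is $\prod_{i=1}^{c_1}\big(\!-(2g\!+\!m\!-\!3\!+\!i\!+\!c_2\!+\!\ldots)\big)=(-1)^{c_1}(2g\!+\!m\!-\!3\!+\!c_1\!+\!c_2\!+\!\ldots)!\big/(2g\!+\!m\!-\!3\!+\!c_2\!+\!\ldots)!$, the two factorials being of nonnegative integers since $2g\!+\!m\!\ge\!3$.

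To prove \e_ref{onestep_e} I would separate the new index $(1,c_1\!+\!1)$ out of the sum defining $A_{I,\b;\c^+}^{(g)}$. Since $\binom{1}{e}$ vanishes for $e\!\ge\!2$, its summation variable $e$ runs over $\{0,1\}$ with weight $1$. Writing the remaining indices as $\b'\in(\Z^{\ge0})^{S(\c)}$ and setting $N=|\b|\!+\!|\b'|\!-\!\mu_g(I)\!-\!m\!-\!|\c|$ (so that the $\b'$-summand of $A_{I,\b;\c}^{(g)}$ is $(-1)^{|\b'|}\big(\prod_{(r,j)\in S(\c)}\binom{r}{b_{r,j}'}\big)\llrr{\la_{g;I};\wt\tau_{\b\b'}}/N!$), the $\b'$-summand of $A_{I,\b;\c^+}^{(g)}$ equals $(-1)^{|\b'|}\big(\prod_{(r,j)\in S(\c)}\binom{r}{b_{r,j}'}\big)$ times
\BE{bracket_e}
\frac{\llrr{\la_{g;I};\wt\tau_{\b\b'0}}}{(N\!-\!1)!}-\frac{\llrr{\la_{g;I};\wt\tau_{\b\b'1}}}{N!}\,.
\EE
Because $0!=1!=1$, adjoining a $0$ or a $1$ leaves the factorial prefactor in~\e_ref{DMdfn_e2} unchanged, so \e_ref{bracket_e} equals $F\big(\llrr{\la_{g;I};\tau_{\b\b'0}}/(N\!-\!1)!-\llrr{\la_{g;I};\tau_{\b\b'1}}/N!\big)$ with $F=\big(\prod_{k=1}^m b_k!\big)\big(\prod_{(r,j)\in S(\c)}b_{r,j}'!\big)$, and $F\,\llrr{\la_{g;I};\tau_{\b\b'}}=\llrr{\la_{g;I};\wt\tau_{\b\b'}}$.

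Now one invokes two reductions, valid for all $N\in\Z$ under the convention that a ratio with negative factorial in its denominator is $0$. First, by the first case of~\e_ref{DMdfn_e}, adjoining a $\psi^0$-point merely relabels the moduli space once $N\!\ge\!1$, whence $\llrr{\la_{g;I};\tau_{\b\b'0}}/(N\!-\!1)!=N\,\llrr{\la_{g;I};\tau_{\b\b'}}/N!$ (both sides vanishing when $N\!\le\!0$, by the second case of~\e_ref{DMdfn_e}). Second, the dilaton equation recalled in~\e_ref{StrDil_e} gives $\llrr{\la_{g;I};\tau_{\b\b'1}}=(|\b|\!+\!|\b'|\!+\!1\!-\!g\!+\!\|I\|)\,\llrr{\la_{g;I};\tau_{\b\b'}}$. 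Hence \e_ref{bracket_e} equals $F\,\llrr{\la_{g;I};\tau_{\b\b'}}\big(N-(|\b|\!+\!|\b'|\!+\!1\!-\!g\!+\!\|I\|)\big)/N!$, and substituting $\mu_g(I)=3(g\!-\!1)\!-\!\|I\|$ into $N$ collapses the numerator to $-(2g\!-\!2\!+\!m\!+\!|\c|)$, a constant independent of $\b'$. Summing the $\b'$-summands then yields \e_ref{onestep_e}.

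The one delicate point is checking the two displayed reductions at the boundary $N\!\le\!0$, where the relabelling identity degenerates and one must appeal to the vanishing in the second case of~\e_ref{DMdfn_e} together with the factorial convention; this is a short case split on $N\!=\!0$ versus $N\!<\!0$, and I expect it to be the main, if minor, obstacle. One could instead derive the corollary from Lemma~\ref{DM_lmm2} and the recursion~\e_ref{DMlmm_e1} for the numbers $C_{I;\un\ep}^{(g)}$, but that route forces a separate treatment of the exceptional pair $(g,I)=(1,(0))$ via~\e_ref{DMlmm_e1g1}, whereas the string/dilaton argument above is uniform in $(g,I)$.
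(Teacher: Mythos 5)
Your proof is correct and takes a genuinely different route from the paper's. The paper derives the one-step identity $A_{I,\b;\c+e_1}^{(g)}=-(2g\!-\!2\!+\!m\!+\!|\c|)\,A_{I,\b;\c}^{(g)}$ at the level of the auxiliary coefficients $A_{I;\c}^{(g,\un\ep)}$ of Proposition~\ref{HodgeInt_prp}, using the recursion~\e_ref{DMlmm_e1} for the numbers $C_{I;\un\ep}^{(g)}$ of Lemma~\ref{DM_lmm}; the exceptional pair $(g,I)=(1,(0))$ produces an extra per-$\un\ep$ term via~\e_ref{DMlmm_e1g1}, and one must then check that those terms cancel after the sum in~\e_ref{DMstr_e}. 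You instead prove the same one-step identity directly from the defining sum for $A_{I,\b;\c}^{(g)}$, isolating the new summation variable at $(1,c_1\!+\!1)$ (which effectively runs over $\{0,1\}$ since $\binom{1}{e}$ vanishes for $e\!\ge\!2$) and invoking the dilaton equation from~\e_ref{StrDil_e} for the $e\!=\!1$ term and the trivial observation that adjoining a $\psi^0$-insertion leaves the Hodge integral~\e_ref{DMdfn_e} unchanged for the $e\!=\!0$ term. This argument is uniform in $(g,I)$ and bypasses the $C_{I;\un\ep}^{(g)}$ machinery altogether; the price is a short boundary analysis at $N\!\le\!0$, which, as you anticipate, resolves cleanly under the factorial conventions already built into the definition of $A_{I,\b;\c}^{(g)}$. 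One terminological slip: you call this a ``string/dilaton argument'' at the end, but the string equation from~\e_ref{StrDil_e} is never used---only the dilaton equation together with the $\psi^0$-relabeling identity---so this does not affect correctness.
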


\begin{proof}
Let $A_{I;\c}^{(g,\un\ep)}$ be as in~\e_ref{DMcrl2_e1}
and $e_1\!=\!(1,0,0,\ldots)\!\in\!(\Z^{\ge0})^{\i}$.
If $(g,I)\!\neq\!(1,(0))$ or $|\un\ve|\!\ge\!2$, then
\e_ref{DMlmm_e1} and the vanishing statement after~\e_ref{DMlmm_e1} imply~that 
$$A_{I;\c+e_1}^{(g,\un\ep)}= \big(2g\!-\!2\!+\!m\!+\!|\c|\big) A_{I;\c}^{(g,\un\ep)}\,.$$
If $(g,I)\!=\!(1,(0))$, then
\e_ref{DMlmm_e1}, the vanishing statement after~\e_ref{DMlmm_e1}, and
\e_ref{DMlmm_e1g1} imply that 
$$A_{I;\c+e_1}^{(g,\un\ep)}= \big(2g\!-\!2\!+\!m\!+\!|\c|\big) A_{I;\c}^{(g,\un\ep)}
+C^{(1)}_{(0);(1)}\big(m\!+\!|\c|\!-\!1\big)!\cdot
\begin{cases}
(-1),&\hbox{if}~|\un\ep|\!=\!1;\\
(m\!+\!|\c|\!-\!\|\c\|\big),&\hbox{if}~|\un\ep|\!=\!0.
\end{cases}$$
Combining these observations with~\e_ref{DMstr_e}, we obtain 
$$A_{I,\b;\c+e_1}^{(g)}=-\big(2g\!-\!2\!+\!m\!+\!|\c|\big) A_{I,\b;\c}^{(g)}\,.$$
The claim follows from this identity. 
\end{proof}

\begin{eg}\label{AgepIc_eg01} 
By~\e_ref{DMlmm_e1a} and~\e_ref{g01base_e}, 
$$C_{();\un\ve}^{(0)}=\begin{cases}1,&\hbox{if}~\un\ve\!=\!\0;\\
0,&\hbox{otherwise};\end{cases} \qquad
C_{(1);\un\ve}^{(1)}=\frac1{24}\cdot\begin{cases}1,&\hbox{if}~\un\ve\!=\!\0;\\
0,&\hbox{otherwise}.\end{cases}$$
Combining this with~\e_ref{DMcrl2_e1}, we obtain
$$A^{(0,\un\ve)}_{();\c}=\begin{cases}(m\!-\!3\!+\!|\c|)!,
&\hbox{if}~\un\ve\!=\!\0\!\in\!(\Z^{\ge0})^m;\\
0,&\hbox{if}~\un\ve\!\neq\!\0;\end{cases} \quad
A^{(1,\un\ve)}_{(1);\c}=\begin{cases}\frac{(m-1+|\c|)!}{24},
&\hbox{if}~\un\ve\!=\!\0\!\in\!(\Z^{\ge0})^m;\\
0,&\hbox{if}~\un\ve\!\neq\!\0.\end{cases}$$
Since
$$\bllrr{\la_{0;()};\wt\tau_{\b}}=
\begin{cases}|\b|!,&\hbox{if}~|\b|\!\ge\!m\!-\!3;\\
0,&\hbox{otherwise}; \end{cases}  \quad
\bllrr{\la_{1;(1)};\wt\tau_{\b}}=\frac1{24}\cdot
\begin{cases}|\b|!,&\hbox{if}~|\b|\!\ge\!m\!-\!1;\\
0,&\hbox{otherwise}; \end{cases}$$
the $(g,I)\!=\!(0,()),(1,(1))$ cases of~\e_ref{DMstr_e} reduce~to
\begin{equation*}\begin{split}
\sum_{\b'\in(\Z^{\ge0})^{S(\c)}}\!\!\!\Bigg(\!\! (-1)^{|\b'|}
\binom{|\b|+\!|\b'|}{m\!-\!3\!+\!|\c|}\!\!
\prod_{(r,j)\in S(\c)}\!\!\binom{r}{b_{r,j}'}\!\!\Bigg)
&=(-1)^{\|\c\|}\big(m\!+\!|\c|\!-\!3\big)!\binom{|\b|}{m\!-3\!+\!|\c|\!-\!\|\c\|},\\
\sum_{\b'\in(\Z^{\ge0})^{S(\c)}}\!\!\!\Bigg(\!\! (-1)^{|\b'|}
\binom{|\b|+\!|\b'|}{m\!-\!1\!+\!|\c|}\!\!
\prod_{(r,j)\in S(\c)}\!\!\binom{r}{b_{r,j}'}\!\!\Bigg)
&=(-1)^{\|\c\|}\big(m\!+\!|\c|\!-\!1\big)!\binom{|\b|}{m\!-1\!+\!|\c|\!-\!\|\c\|}
\end{split}\end{equation*}
for $m\!\ge\!3$ and $m\!\ge\!1$, respectively. 
These two identities are immediate consequences 
of the first two statements in Lemma~\ref{comb_l0}.
\end{eg}

\begin{eg}\label{AgepIc_eg1} 
By~\e_ref{DMlmm_e1a} and~\e_ref{g01base_e}, 
$$C_{(0);\un\ve}^{(1)}=\frac1{24}\cdot\begin{cases}
1,&\hbox{if}~|\un\ve|\!=\!1;\\
-(|\un\ve|\!-\!2)!,&\hbox{if}~\un\ve\!\in\!\{0,1\}^m,\,|\ve|\!\ge\!2;\\
0,&\hbox{otherwise}.
\end{cases}$$
Combining this with~\e_ref{DMcrl2_e1}, we obtain
$$A^{(1,\un\ep)}_{(0);\c}=-\frac{1}{24}\!\!\sum_{\c'\in(\Z^{\ge0})^{\i}}
\!\!\!\!\!\!\!\! \big(|\un\ep|\!+\!|\c'|\!-\!2\big)!
\big(m\!+\!|\c|\!-\!(|\un\ep|\!+\!|\c'|)\!\big)!
\prod_{r=1}^{\i}\!\binom{c_r}{c_r'}r^{c_r'}
\quad\hbox{if}~\un\ve\!\in\!\{0,1\}^m,$$
with $(-1)!\!\equiv\!-1$ and $(-2)!\!\equiv\!0$;
all other coefficients $A^{(g,\un\ep)}_{I;\c}$ with $(g,I)\!=\!(1,(0))$ provided
by~\e_ref{DMcrl2_e1}  vanish.
Thus,
$$A^{(1,\un\ve)}_{(0);\0}=\frac{(m\!-\!|\un\ve|)!}{24}\cdot\begin{cases}
1,&\hbox{if}~\un\ve\!\in\!\{0,1\}^m,\,|\un\ve|\!=\!1;\\
-(|\un\ve|\!-\!2)!,&\hbox{if}~\un\ve\!\in\!\{0,1\}^m,\,|\ve|\!\ge\!2;\\
0,&\hbox{otherwise}.\end{cases}$$
If $r\!\in\!\Z^+$ and $e_r\!\in\!(\Z^{\ge0})^{\i}$ denotes the $r$-th standard coordinate vector,
then
$$A^{(1,\un\ve)}_{(0);e_r}=\frac{(m\!-\!|\un\ve|)!}{24}\cdot\begin{cases}
r,&\hbox{if}~\un\ve\!\in\!\{0,1\}^m,\,|\un\ve|\!=\!0;\\
(m\!-\!r),&\hbox{if}~\un\ve\!\in\!\{0,1\}^m,\,|\un\ve|\!=\!1;\\
-(|\un\ve|\!-\!2)!((|\un\ve|\!-\!1)(r\!-\!1)\!+\!m),
&\hbox{if}~\un\ve\!\in\!\{0,1\}^m,\,|\ve|\!\ge\!2;\\
0,&\hbox{otherwise}.
\end{cases}$$
\end{eg}

\subsection{Sums of residues of generating series}
\label{HodgeIntGS_subs}

\noindent
For $g,m\!\in\!\Z^{\ge0}$ with $2g\!+\!m\!\ge\!3$, $I\!\in\!(\Z^{\ge0})^g$,
 $\c\!\in\!(\Z^{\ge0})^{\i}$, and $\un\ep\!\in\!(\Z^{\ge0})^m$, let
\hbox{$\wh{A}_{I;\c}^{(g,\un\ep)}\!\in\!\Q$} be as in~\e_ref{hatAdfn_e}.

\begin{prp}\label{HodgeIntGS_prp}
Let $g,m\!\in\!\Z^{\ge0}$ with $2g\!+\!m\!\ge\!3$, $I\!\in\!(\Z^{\ge0})^g$,
and $\b\!\in\!(\Z^{\ge0})^m$.
If
\BE{reslmm_e0}\ze,\Psi_0,\Psi_1,\ldots\!\in\!q\Q_{\al}(\hb)[[q]]
\quad\hbox{and}\quad
1+\cZ^*(\hb,q)=e^{\ze(q)/\hb}\bigg(1+\sum_{b=0}^{\i}\Psi_b(q)\hb^b\bigg),\EE
then
\BE{res_e}\begin{split}
&\sum_{m'=0}^{\i} 
\sum_{\begin{subarray}{c}\b'\in(\Z^{\ge0})^{m'} \\
|\b|+|\b'|=\mu_g(I)+m+m'\end{subarray}}
\hspace{-.37in}
\Bigg(\!\frac{\llrr{\la_{g;I};\wt\tau_{\b\b'}}}{m'!}
\prod_{k=1}^{k=m'}\!\!
\Res{\hb=0}\Big\{\frac{(-\hb)^{-b_k'}}{b_k'!}\cZ^*(\hb,q)\!\Big\}\!\!\Bigg)\\
&\hspace{.5in}=
\sum_{\c\in(\Z^{\ge0})^{\i}} \!\!\!
\sum_{\begin{subarray}{c} \un\ep\in (\Z^{\ge0})^m\\
|\un\ep|\le\mu_g(I)+m\\ \ep_k\le b_k~\forall k\in[m]\end{subarray}} \hspace{-.15in}
\Bigg(\!\frac{(-1)^{|\c|+\|\c\|}\wh{A}_{I;\c}^{(g,\un\ep)}}{(1\!+\!\Psi_0(q))^{2g-2+m}}
\prod_{r=1}^{\i}\frac{1}{c_r!}
\bigg(\!\frac{\Psi_r(q)}{(r\!+\!1)!\,(1\!+\!\Psi_0(q))}\!\bigg)^{\!c_r}\\
&\hspace{1.5in}
\times\ze(q)^{|\b|-(\mu_g(I)+m-\|\c\|)}
\binom{|\b|\!-\!|\un\ep|}{\mu_g(I)\!+\!m\!-\|\c\|\!-\!|\un\ep|}
\!\!\prod_{k=1}^m\!\!\frac{b_k!}{(b_k\!-\!\ep_k)!}
\!\Bigg)      
\end{split}\EE
in $\Q_{\al}[[q]]$.
\end{prp}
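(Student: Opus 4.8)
The plan is to substitute the expansion \e_ref{reslmm_e0} into the $\hb$-residues on the left-hand side of \e_ref{res_e}, evaluate them, and then reorganize the resulting multi-index sum so that, after invoking Proposition~\ref{HodgeInt_prp}, it collapses to the right-hand side. First I would record the generating function of the residue factors: writing $R_{b'}(q)\equiv\Res{\hb=0}\big\{(-\hb)^{-b'}\cZ^*(\hb,q)/b'!\big\}$, one has $\sum_{b'\ge0}R_{b'}(q)x^{b'}=\Res{\hb=0}\big\{e^{-x/\hb}\cZ^*(\hb,q)\big\}$, and \e_ref{reslmm_e0} turns the right side into
$$\sum_{b'=0}^{\i}R_{b'}(q)\,x^{b'}=\ze(q)+\sum_{b=0}^{\i}\frac{\big(\ze(q)-x\big)^{b+1}}{(b+1)!}\,\Psi_b(q).$$
Hence each $R_{b'}(q)$ is a finite $\Q$-combination of the monomials $\ze^a$ (present only for $b'\!=\!0$) and $\ze^a\Psi_b$ with $b\!\ge\!0$; its $\Psi_b$-part is $(-1)^{b'}\binom{b+1}{b'}\ze^{b+1-b'}\Psi_b/(b+1)!$, so in every monomial of the expansion of $\prod_{k=1}^{m'}R_{b_k'}$ each new marked point carries at most one factor $\Psi_b$.

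Next I would expand $\prod_kR_{b_k'}$ inside \e_ref{res_e} and sort the $m'$ new marked points according to the index $b$ of the $\Psi_b$ decorating them, a point carrying the pure $\ze$-monomial being given the dummy index $-1$. For $r\!\ge\!1$ let $c_r$ be the number of points decorated by $\Psi_r$; summing over the $\psi$-powers of those points reproduces, through the definition of $A^{(g)}_{I,\b;\c}$ following \e_ref{DMdfn_e2} with $\c=(0,c_1,c_2,\ldots)$, the combinatorial set $S(\c)$ and its binomials — the leading $0$ and the shift $r\mapsto r\!+\!1$ reflecting that a $\Psi_r$-point carries the weight $\binom{r+1}{\,\cdot\,}/(r+1)!$. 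The points decorated by $\Psi_0$ together with those carrying the pure $\ze$-monomial are the delicate ones: among them the $\psi$-power-$0$ points are eliminated one at a time by the string equation in \e_ref{StrDil_e} and the $\psi$-power-$1$ points by the dilaton equation there, while the constraint $|\b|\!+\!|\b'|\!=\!\mu_g(I)\!+\!m\!+\!m'$ forces the number of pure-$\ze$ points to equal $|\b|\!+\!|\b'|\!-\!\mu_g(I)\!-\!m\!-\!|\c|$, which is exactly the argument of the single inverse factorial in $A^{(g)}_{I,\b;\c}$.

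Carrying out these cancellations — with $1/m'!$ splitting into the factors $1/c_r!$ and the inverse factorial in $A^{(g)}_{I,\b;\c}$ — one arrives at the intermediate identity
$$\big(\text{LHS of~\e_ref{res_e}}\big)=\sum_{\c\in(\Z^{\ge0})^{\i}}\frac{\ze(q)^{|\b|-(\mu_g(I)+m-\|\c\|)}}{\big(1+\Psi_0(q)\big)^{2g-2+m}}\,A^{(g)}_{I,\b;(0,c_1,c_2,\ldots)}\prod_{r=1}^{\i}\frac{1}{c_r!}\bigg(\frac{\Psi_r(q)}{(r+1)!\,\big(1+\Psi_0(q)\big)}\bigg)^{\!c_r},$$
where one checks the overall power of $\ze(q)$ to be independent of the remaining indices. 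Substituting Proposition~\ref{HodgeInt_prp}, and using $\|(0,c_1,c_2,\ldots)\|=\|\c\|+|\c|$ and $|(0,c_1,c_2,\ldots)|=|\c|$, turns $A^{(g)}_{I,\b;(0,c_1,c_2,\ldots)}$ into a sum over $\un\ep$ with the sign $(-1)^{\|\c\|+|\c|}$, the binomial $\binom{|\b|-|\un\ep|}{\mu_g(I)+m-\|\c\|-|\un\ep|}$, and the factor $\prod_k\binom{b_k}{\ep_k}=\prod_k\frac{1}{\ep_k!}\cdot\prod_k\frac{b_k!}{(b_k-\ep_k)!}$; absorbing $\prod_k\frac{1}{\ep_k!}$ together with the index shift $(c_1,c_2,\ldots)\mapsto(0,c_1,c_2,\ldots)$ into $\wh A^{(g,\un\ep)}_{I;\c}$ by \e_ref{hatAdfn_e} gives precisely the right-hand side of \e_ref{res_e}. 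The main obstacle is the second step: coordinating the number of $\Psi_0$- and pure-$\ze$-marked points, their $\psi$-powers, and the powers of $\ze(q)$ so that the string and dilaton equations \e_ref{StrDil_e} assemble into the single factor $\big(1+\Psi_0(q)\big)^{-(2g-2+m)}$, where the third identity of Lemma~\ref{comb_l0}, namely $\sum_p(-1)^p\binom{m+p}{p}\Psi^p=(1+\Psi)^{-m-1}$, supplies the geometric resummation.
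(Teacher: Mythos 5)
Your proposal traces the same path as the paper's proof: the generating-function identity for the residues $R_{b'}$ is the $x$-parameterized form of~\e_ref{HodgeIntGS_e5}; sorting the $m'$ new points by the index $b$ of their $\Psi_b$-decoration is exactly the paper's extraction of the coefficient of $\Psi_0^{c_0}\prod_{r\ge1}\Psi_r^{c_r}$; and the final substitution of Proposition~\ref{HodgeInt_prp} into $A^{(g)}_{I,\b;(0,c_1,c_2,\ldots)}$, with the bookkeeping $|(0,c_1,\ldots)|=|\c|$ and $\|(0,c_1,\ldots)\|=|\c|+\|\c\|$, matches the paper verbatim.

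The one place where your outline is underspecified is the passage to the intermediate identity, i.e.\ producing the factor $(1+\Psi_0(q))^{-(2g-2+m)}$. Invoking the string and dilaton equations ``one at a time'' on the $\Psi_0$- and pure-$\ze$-points is the right intuition but not a working mechanism: the string equation generates a sum over which other point's $\psi$-power is decremented, and it is not obvious how to resum these choices directly into a geometric series. What the paper actually does is package the $\Psi_0$-points together with the $\Psi_r$-points into $A^{(g)}_{\b,I;c_0,\c}\!=\!A^{(g)}_{I,\b;(c_0,c_1,c_2,\ldots)}$, apply Corollary~\ref{DM_crl3} to strip $c_0$ to $0$ at the price of the factor $(-1)^{c_0}(2g\!+\!m\!-\!3\!+\!c_0\!+\!|\c|)!/(2g\!+\!m\!-\!3\!+\!|\c|)!$, and only then resum over $c_0$ via the third identity of Lemma~\ref{comb_l0}. (Corollary~\ref{DM_crl3} is precisely where the string/dilaton content, packaged through \e_ref{DMlmm_e1} and~\e_ref{DMlmm_e1g1}, does the work.) A related imprecision: for $c_0\neq 0$ the number of pure-$\ze$-points is $m'-c_0-|\c|=|\b|+|\b'|-\mu_g(I)-m-c_0-|\c|$; the $-c_0$ only disappears after the Corollary~\ref{DM_crl3} shift and the ensuing $c_0$-resummation, so it does not directly equal the argument of the inverse factorial in $A^{(g)}_{I,\b;(0,c_1,\ldots)}$ as you stated. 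With Corollary~\ref{DM_crl3} inserted, your intermediate identity coincides exactly with what the paper obtains from~\e_ref{hatA_e} and Lemma~\ref{comb_l0}, and the remainder of your argument is correct.
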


\begin{proof}
Fix $c_0\!\in\!\Z^{\ge0}$ and $\c\!\equiv\!(c_r)_{r\in\Z^+}\!\in\!(\Z^{\ge0})^{\i}$.
Let
\begin{gather*}
\Psi^{\c}=\prod_{r=1}^{\i}\!\Psi_r^{c_r}\,, \qquad
\om(\c)=\prod_{r=1}^{\i}\!\!\big(\!(r\!+\!1)!\big)^{c_r}\,,\\
S(c_0,\c)=\big\{(r,j)\!\in\!\Z^{\ge0}\!\times\!\Z^+\!\!:
(r,j)\!\in\!\{r\}\!\times\![c_r]~\forall\,r\!\in\!\Z^{\ge0}\big\},\\
A_{\b,I;c_0,\c}^{(g)}=
\sum_{\b'\in(\Z^{\ge0})^{S(c_0,\c)}}\!\!\!\!\!\!\!\!\!\!\! (-1)^{|\b'|}
\frac{\llrr{\la_{g;I};\wt\tau_{\b\b'}}}
{(|\b|\!+\!|\b'|\!-\!\mu_g(I)\!-\!m\!-\!c_0\!-\!|\c|)!}  \!\!
\prod_{(r,j)\in S(c_0,\c)}\!\!\!\binom{r\!+\!1}{b_{r,j}'}.
\end{gather*}
In particular, $|S(c_0,\c)|\!=\!c_0\!+\!|\c|$ and
the numerator above vanishes whenever
the argument of the factorial in the denominator is negative.
By Corollary~\ref{DM_crl3} and Proposition~\ref{HodgeInt_prp}, 
\BE{hatA_e}\begin{split}
A_{\b,I;c_0,\c}^{(g)}
&=(-1)^{c_0+|\c|+\|\c\|}
\frac{(2g\!+\!m\!-\!3\!+\!c_0\!+\!|\c|)!}{(2g\!+\!m\!-\!3\!+\!|\c|)!}\\
&\hspace{.8in}
\times\sum_{\begin{subarray}{c} \un\ep\in (\Z^{\ge0})^m\\
|\un\ep|\le \mu_g(I)+m\\ \ep_k\le b_k~\forall k\in[m]\end{subarray}} \hspace{-.25in}
\wh{A}_{I;\c}^{(g,\un\ep)}
\binom{|\b|\!-\!|\un\ep|}{\mu_g(I)\!+\!m\!-\|\c\|\!-\!|\un\ep|}
\!\!\prod_{k=1}^m\!\!\frac{b_k!}{(b_k\!-\!\ep_k)!}\,,
\end{split}\EE
with $\wh{A}_{I;\c}^{(g,\un\ep)}$ as in~\e_ref{hatAdfn_e}.\\

\noindent
We establish~\e_ref{res_e} by comparing  
the coefficients of $\Psi_0^{c_0}\Psi^{\c}$ on the two sides.
By~\e_ref{reslmm_e0}, 
\BE{HodgeIntGS_e5}\Res{\hb=0}\Big\{\hb^{-b} \cZ^*(\hb,q)\Big\}
=\sum_{r=\max(b-1,0)}^{\i}\!\frac{\ze(q)^{r+1-b}}{(r\!+\!1\!-\!b)!}\Psi_r(q)
+\begin{cases} \ze(q),&\hbox{if}~b\!=\!0;\\
0,&\hbox{if}~b\!\ge\!1.\end{cases}\EE
The coefficient $\LHS_{m'}(c_0,\c)$  of $\Psi_0^{c_0}\Psi^{\c}$ in the 
$m'$-th summand on the left-hand side of~\e_ref{res_e}
is a sum over the collections of disjoint subsets $S_0,S_1,\ldots$ of $[m']$
of cardinalities $c_0,c_1,\ldots$ and tuples $\b'\!\in\!(\Z^{\ge0})^{S(c_0,\c)}$ such~that  
$$|\b|\!+\!|\b'|=\mu_g(I)+m+m'\in\Z^{\ge0}\,.$$
The factors in the $m'$-fold product in \e_ref{res_e} 
that contribute $\Psi_r$ are indexed by the elements of~$S_r$;
the $j$-th such factor arises from $\Res{\hb=0}\{\hb^{-b_{r,j}'}\cZ^*(\hb,q)\}$
with $r\!\ge\!b_{r,j}'\!-\!1$.
This leaves $m'\!-\!c_0\!-\!|\c|$ factors that contribute~$\ze(q)$
from $\Res{\hb=0}\{\cZ^*(\hb,q)\}$.
The associated summand contributing to $\LHS_{m'}(c_0,\c)$ is~then
\begin{equation*}\begin{split}
&\frac{\llrr{\la_{g;I};\wt\tau_{\b\b'}}}{m'!}
\,\ze^{m'-c_0-|\c|} \!\!\!\!\!\!
\prod_{(r,j)\in S(c_0,\c)}\!\!\!
\bigg(\frac{(-1)^{b_{r,j}'}}{b_{r,j}'!}\cdot
\frac{\ze^{r+1-b_{r,j}'}}{(r\!+\!1\!-\!b_{r,j}')!}\bigg)\\
&\hspace{1in}
=\frac{\ze^{|\b|-\mu_g(I)-m+\|\c\|}}{\om(\c)}\,
(-1)^{|\b'|}\frac{\llrr{\la_{g;I};\wt\tau_{\b\b'}}}{m'!}
\prod_{(r,j)\in S(c_0,\c)}\!\!\!\binom{r\!+\!1}{b_{r,j}'} \,;
\end{split}\end{equation*}
the first expression above is defined to be~0 if 
$b_{r,j}'\!>\!r\!+\!1$ for some $(r,j)\!\in\!S(c_0,\c)$.
Since the number of collections of subsets above~is
$$\binom{m'}{c_0,\c,m'\!-\!c_0\!-\!|\c|}
\equiv \frac{m'!}{c_0!\c!(|\b|\!+\!|\b'|\!-\!\mu_g(I)\!-\!m\!-\!c_0\!-\!|\c|)!} \,,$$
it follows that the coefficient of $\Psi_0^{c_0}\Psi^{\c}$ 
on the left-hand side of~\e_ref{res_e} is
$$\sum_{m'=0}^{\i}\!\LHS_{m'}(c_0,\c)
=\frac{\ze^{|\b|-\mu_g(I)-m+\|\c\|}}{\om(\c)c_0!\c!}A_{\b,I;c_0,\c}^{(g)}.$$
The claim now follows from \e_ref{hatA_e} and the last statement in Lemma~\ref{comb_l0}.
\end{proof}

\vspace{.1in}

\noindent
{\it Department of Mathematics, Stony Brook University, Stony Brook, NY 11794\\
azinger@math.stonybrook.edu}

\end{document}